\tikzset{
    labl/.style={anchor=south, rotate=90, inner sep=.5mm}
}
\gdef\spandiagram#1{%
    \let\spandiagram@rowone\empty
    \let\spandiagram@rowtwo\empty
    \spandiagram@start#1\separator-\null<\null\nullll
}
\gdef\spandiagram@start#1<{%
    \pgfutil@in@-{#1}%
    \ifpgfutil@in@
        \def\next{\spandiagram@start@top#1<}%
    \else
        \pgfutil@g@addto@macro\spandiagram@rowtwo{#1}%
        \let\next\spandiagram@top
    \fi
    \next
}
\gdef\spandiagram@start@top#1-{%
    \pgfutil@g@addto@macro\spandiagram@rowone{#1}%
    \spandiagram@forward
}
\gdef\spandiagram@top#1-#2-{%
    \pgfutil@g@addto@macro\spandiagram@rowone{& \dlar["#1"']#2}%
    \pgfutil@g@addto@macro\spandiagram@rowtwo{&}%
    \@ifnextchar\null{%
       \spandiagram@trimseparator\spandiagram@rowone
       \spandiagram@finish
    }{\spandiagram@forward}%
}
\gdef\spandiagram@forward#1>#2<{%
    \pgfutil@g@addto@macro\spandiagram@rowone{\drar["#1"]&}%
    \pgfutil@g@addto@macro\spandiagram@rowtwo{&#2}%
    \@ifnextchar\null{%
        \spandiagram@trimseparator\spandiagram@rowtwo
        \spandiagram@finish
    }{\spandiagram@top}%
}
\gdef\spandiagram@trimseparator#1{%
    \edef#1{\unexpanded\expandafter\expandafter\expandafter{\expandafter\spandiagram@trimseparator@helper#1\nulll}}%
}
\gdef\spandiagram@trimseparator@helper#1\separator#2\nulll{#1}
\gdef\spandiagram@finish{%
    \begin{tikzcd}[row sep=small,column sep=small]
    \spandiagram@rowone\\
    \spandiagram@rowtwo\\
    \end{tikzcd}%
    \spandiagram@gobbletonullll
}
\gdef\spandiagram@gobbletonullll#1\nullll{}
\theoremstyle{remark}
\newtheorem*{thmArough}{Theorem A}
\newtheorem*{thmBrough}{Theorem B}
\newtheorem*{thmCrough}{Theorem C}
\theoremstyle{definition}
\newtheorem{nul}{}[section]
\newtheorem{defn}[nul]{Definition}
\newtheorem{defnprop}[nul]{Definition/Proposition}
\newtheorem{rmk}[nul]{Remark}
\newtheorem{cnstr}[nul]{Construction}
\newtheorem{notation}[nul]{Notation}
\newtheorem{exm}[nul]{Example}
\newtheorem{cnv}[nul]{Convention}
\newtheorem{obs}[nul]{Observation}
\newtheorem{rec}[nul]{Recollection}
\newtheorem{warn}[nul]{Warning}
\newtheorem{qst}[nul]{Question}
\newtheorem*{defn*}{Definition}
\newtheorem*{axm*}{Axiom}
\newtheorem*{notation*}{Notation}
\newtheorem*{exm*}{Example}
\newtheorem*{exr*}{Exercise}
\newtheorem*{int*}{Intuition}
\newtheorem*{qst*}{Question}
\newtheorem*{obs*}{Observation}
\newtheorem*{rmk*}{Remark}
\newtheorem*{cnv*}{Convention}
\newtheorem{thm}[nul]{Theorem}
\newtheorem{prop}[nul]{Proposition}
\newtheorem{lem}[nul]{Lemma}
\newtheorem{var}[nul]{Variant}
\newtheorem{cnj}[nul]{Conjecture}
\newtheorem{cor}{Corollary}[nul]
\newtheorem*{thm*}{Theorem}
\newtheorem*{thmA}{Theorem A}
\newtheorem*{thmB}{Theorem B}
\newtheorem*{thmC}{Theorem C}
\newtheorem*{thmA'}{Theorem A'}
\newtheorem*{thmA-}{Theorem A-}
\newtheorem*{thmAnat}{Theorem $\text{A}^{\natural}$}
\newtheorem*{thmAcirc}{Theorem $\text{A}^{\circ}$}
\newtheorem*{prop*}{Proposition}
\newtheorem*{cor*}{Corollary}
\newtheorem*{claim*}{Claim}
\newtheorem*{claim1*}{Claim 1}
\newtheorem*{claim2*}{Claim 2}
\newtheorem*{lem*}{Lemma}
\newtheorem*{conj*}{Conjecture}
    \newtheoremstyle{TheoremNum}
        {8pt}{\topsep}              
        {}
        {}                              
        {\bfseries}                     
        {.}                             
        { }                             
        {\thmname{#1}\thmnote{ \bfseries #3}}
    \theoremstyle{TheoremNum}
    \newtheorem{thmn}{Theorem}
\DeclareMathOperator*{\colim}{\mathrm{colim}}
\DeclareMathOperator{\Map}{\mathrm{Map}}
\DeclareMathOperator{\fib}{\mathrm{fib}}
\DeclareMathOperator{\Hom}{\mathrm{Hom}}
\newcommand{\sslash}{\mathbin{/\mkern-6mu/}}
\DeclareMathOperator{\A}{\mathcal{A}}
\DeclareMathOperator{\C}{\mathcal{C}}
\DeclareMathOperator{\D}{\mathcal{D}}
\DeclareMathOperator{\Z}{\mathbb{Z}}
\DeclareMathOperator{\bS}{\mathbb{S}}
\DeclareMathOperator{\E}{\mathbb{E}}
\DeclareMathOperator{\mE}{\mathcal{E}}
\DeclareMathOperator{\Q}{\mathbb{Q}}
\DeclareMathOperator{\F}{\mathbb{F}}
\newcommand{\pS}{{\mathbb{S}^{\wedge}_p}}
\newcommand{\pSbar}{{\overline{\mathbb{S}}^{\wedge}_p}}
\newcommand{\pSff}{({\mathbb{S}^{\wedge}_p})_{\varphi=1}}
\newcommand{\bSff}{\mathbb{S}_{\varphi=1}}
\DeclareMathOperator{\Span}{\mathrm{Span}}
\DeclareMathOperator{\Gpd}{\mathrm{Gpd}}
\DeclareMathOperator{\Mod}{\mathrm{Mod}}
\DeclareMathOperator{\Mon}{\mathrm{Mon}}
\DeclareMathOperator{\Cat}{\mathrm{Cat}}
\DeclareMathOperator{\Fin}{\mathrm{Fin}}
\DeclareMathOperator{\red}{\mathrm{red}}
\DeclareMathOperator{\End}{\mathrm{End}}
\DeclareMathOperator{\Fun}{\mathrm{Fun}}
\DeclareMathOperator{\Seg}{\mathrm{Seg}}
\DeclareMathOperator{\CplSeg}{\mathrm{CplSeg}}
\DeclareMathOperator{\Alg}{\mathrm{Alg}}
\DeclareMathOperator{\CAlg}{\mathrm{CAlg}}
\DeclareMathOperator{\can}{\mathrm{can}}
\newcommand{\QVect}{{Q\mathrm{Vect}_{\mathbb{F}_p}}}
\newcommand{\CAlgperfp}{{\CAlg^{\mathrm{perf}}_p}}
\newcommand{\CAlgperf}{{\CAlg^{\mathrm{perf}}}}
\DeclareMathOperator{\Sp}{\mathrm{Sp}}
\DeclareMathOperator{\Spaces}{\mathcal{S}}
\newcommand{\Spacesfnp}{{\mathcal{S}^{\mathrm{fsc}}_p}}
\newcommand{\Spacesfn}{{\mathcal{S}^{\mathrm{fsc}}}}
\newcommand{\Spacesf}{{\mathcal{S}^{\mathrm{f}}}}
\newcommand{\CAlgffp}{{\CAlg^{\varphi =1}_p}}
\newcommand{\CAlgff}{{\CAlg^{\varphi =1}}}
\newcommand{\Fpbar}{{\overline{\F}_p}}
\newcommand{\ff}{{\varphi =1}}
\DeclareMathOperator{\Set}{\mathrm{Set}}
\DeclareMathOperator{\id}{\mathrm{id}}
\DeclareMathOperator{\res}{\mathrm{res}}
\DeclareMathOperator{\sect}{\mathrm{sect}}
\DeclareMathOperator{\op}{\mathrm{op}}
\DeclareMathOperator{\triv}{\mathrm{triv}}
\DeclareMathOperator{\Bor}{\mathrm{Bor}}
\DeclareMathOperator{\Glo}{\mathrm{Glo}}
\DeclareMathOperator{\Glop}{\mathrm{Glo}^+}
\DeclareMathOperator{\GloSp}{\mathrm{GloSp}}
\DeclareMathOperator{\GlopSp}{\mathrm{Glo}^+\mathrm{Sp}}
\DeclareMathOperator{\GloSpBor}{\mathrm{Glo}\mathrm{Sp}_{\mathrm{Bor}}}
\DeclareMathOperator{\Sq}{\mathrm{Sq}}
\begin{document}

\begin{abstract}
We give a fully faithful integral model for simply connected finite complexes in terms of $\E_{\infty}$-ring spectra and the Nikolaus--Scholze Frobenius.  The key technical input is the development of a homotopy coherent Frobenius action on a certain subcategory of $p$-complete $\E_{\infty}$-rings for each prime $p$.  Using this, we show that the data of a simply connected finite complex $X$ is the data of its Spanier-Whitehead dual, as an $\E_{\infty}$-ring, together with a trivialization of the Frobenius action after completion at each prime.  

In producing the above Frobenius action, we explore two ideas which may be of independent interest.  The first is a more general action of Frobenius in equivariant homotopy theory; we show that a version of Quillen's $Q$-construction acts on the $\infty$-category of $\E_{\infty}$-rings with ``genuine equivariant multiplication," which we call global algebras.  The second is a ``pre-group-completed" variant of algebraic $K$-theory which we call \emph{partial $K$-theory}.  We develop the notion of partial $K$-theory and give a computation of the partial $K$-theory of $\F_p$ up to $p$-completion.  

\end{abstract}

\title{Integral models for spaces via the higher Frobenius}
\author{Allen Yuan}


\setcounter{tocdepth}{1}
\maketitle

\tableofcontents


\section{Introduction}


In algebra, one often studies questions over the integers by understanding them over $\Q$ and after completion at each prime $p$.  The analog of this idea in topology, due to Sullivan \cite{Sullivan}, is that any space $X$ can be approximated by its rationalization $X_{\Q}$ and its $p$-completions $X^{\wedge}_p$.  These approximations, in turn, can be understood in terms of their algebras of cochains.  Work of Sullivan (and Quillen in a dual setting) shows that for sufficiently nice $X$, the rationalization $X_{\Q}$ is captured completely by a commutative differential graded algebra (cdga) which is quasi-isomorphic to $C^*(X,\Q)$.  More precisely:
\begin{thm}[Sullivan \cite{SullivanQ}, Quillen \cite{QuillenRatl}]\label{thm:RHT}
The assignment $X\mapsto C^*(X,\Q)$ determines a fully faithful functor from the $\infty$-category of simply connected rational spaces of finite type to the $\infty$-category $\CAlg_{\Q}$ of rational cdgas.  
\end{thm}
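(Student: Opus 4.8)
The plan is to realize the functor $X \mapsto C^{*}(X,\Q)$ through Sullivan's strictly commutative model and to exhibit it as one half of a derived adjunction whose unit is rationalization. First I would replace $C^{*}(X,\Q)$, a priori an $\E_{\infty}$-$\Q$-algebra, by the commutative differential graded algebra $A_{PL}(X)$ of polynomial de Rham forms on the simplicial set $X$: since $\Q$ has characteristic zero, the forgetful functor from $\mathrm{cdga}_{\Q}$ to $\CAlg_{\Q}$ is an equivalence of $\infty$-categories (rectification of $\E_{\infty}$-algebras in characteristic zero), under which $A_{PL}(X) \simeq C^{*}(X,\Q)$ naturally. The simplicial cdga $A_{PL}$ assembles into a functor $A_{PL}\colon \Spaces \to (\CAlg_{\Q})^{\op}$ which is left adjoint to the spatial realization $A \mapsto \langle A\rangle := \Map_{\CAlg_{\Q}}(A, A_{PL}(\Delta^{\bullet}))$; this adjunction arises from a Quillen adjunction of model categories.

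The technical heart is the claim that for $X$ simply connected of finite type the unit $\eta_{X}\colon X \to \langle A_{PL}(X)\rangle$ is a rationalization. I would prove this by induction up the Postnikov tower $\{X\langle n\rangle\}$ of $X$. The base case is $X = K(\Z,n)$ for $n \geq 2$: here $A_{PL}(K(\Z,n))$ is quasi-isomorphic to the free graded-commutative $\Q$-algebra $\Lambda(x_{n})$ on one degree-$n$ generator with zero differential, and a direct computation of closed polynomial forms on simplices identifies $\langle \Lambda(x_{n}),0\rangle$ with $K(\Q,n)$, so $\eta$ is a rationalization in this case. For the inductive step, the defining square $X\langle n\rangle = X\langle n-1\rangle \times_{K(\pi_{n}X,\, n+1)} \mathrm{pt}$ has $2$-connected base, so $A_{PL}$ carries it to a relative Sullivan extension $A_{PL}(X\langle n-1\rangle) \to A_{PL}(X\langle n-1\rangle)\otimes \Lambda(V_{n})$, with $V_{n}$ a finite-dimensional $\Q$-vector space in degree $n$ (finiteness uses finite type of $X$) and $d|_{V_{n}}$ encoding the $k$-invariant; this is the Eilenberg--Moore statement $A_{PL}(X\langle n\rangle) \simeq A_{PL}(X\langle n-1\rangle) \otimes^{\mathbb{L}}_{A_{PL}(K(\pi_{n}X,\,n+1))} \Q$, valid because the base is simply connected. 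Since $\langle-\rangle$, being a right adjoint, sends this pushout of cdgas back to a pullback of spaces, and rationalization commutes with such pullbacks, the five lemma propagates the statement that $\eta$ is a rationalization from $X\langle n-1\rangle$ to $X\langle n\rangle$. Finally $X \simeq \lim_{n} X\langle n\rangle$ with vanishing $\lim^{1}$ (again by finite type and simple connectivity), and both $\langle-\rangle$ and rationalization commute with this limit, so $\eta_{X}$ is a rationalization.

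Granting this, full faithfulness on simply connected rational spaces of finite type is formal: for such $X$ and $Y$,
\[
\Map_{\CAlg_{\Q}}\big(C^{*}(Y,\Q),\, C^{*}(X,\Q)\big) \;\simeq\; \Map_{\Spaces}\big(X,\, \langle A_{PL}(Y)\rangle\big) \;\simeq\; \Map_{\Spaces}(X, Y),
\]
where the first equivalence is the adjunction together with characteristic-zero rectification and the second uses that $\eta_{Y}$ is a rationalization and $Y$ is already rational; chasing the unit identifies this composite with the map induced by $C^{*}(-,\Q)$. The main obstacle is the inductive computation of $\eta_{X}$: one must know that $A_{PL}$ has the Eilenberg--Moore property on the Postnikov fibrations — that it sends them to relative Sullivan extensions — and that $\langle-\rangle$ is exact enough to send these back to fiber sequences of rational spaces, together with convergence of the tower. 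This is precisely where simple connectivity (so the $k$-invariant bases are $2$-connected and the algebras in sight are generated in degrees $\geq 2$, making minimal Sullivan models available and well behaved) and finite type (so each extension adds finitely many generators and there is no $\lim^{1}$) are essential. The remaining ingredients — characteristic-zero rectification, the adjunction $A_{PL} \dashv \langle-\rangle$, and the closing manipulation of mapping spaces — are standard.
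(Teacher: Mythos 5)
The paper does not prove this theorem; it is quoted as background and attributed to Sullivan \cite{SullivanQ} and Quillen \cite{QuillenRatl}, so there is no in-paper argument to compare against. Your sketch is the classical Bousfield--Gugenheim route through Sullivan's $A_{PL}$ functor and is sound in outline: rectify $\E_{\infty}$-$\Q$-algebras to cdgas using characteristic zero, invoke the Quillen adjunction $A_{PL} \dashv \langle-\rangle$, show the unit is rationalization by Postnikov induction on simply connected finite-type spaces, and deduce full faithfulness by the mapping-space manipulation you give at the end.

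Two places where the wording is looser than the mathematics it stands for. First, ``rationalization commutes with such pullbacks'' is only valid because the base of the principal fibration, $K(\pi_{n}X,\,n+1)$ with $n\geq 2$, is simply connected; this is exactly where the simple-connectivity hypothesis earns its keep in the induction and should be stated explicitly rather than absorbed into ``such.'' Second, to pass from the Postnikov stages to $X$ itself you need $A_{PL}(X)\simeq \colim_{n}A_{PL}(X\langle n\rangle)$ in $\CAlg_{\Q}$, which is \emph{not} automatic: $A_{PL}$ is the left adjoint in the opposite variance, so it preserves colimits of spaces, not the inverse limit $X\simeq \lim_{n}X\langle n\rangle$. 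The identification does hold, but because of degreewise finiteness and Postnikov convergence rather than because ``$\langle-\rangle$ and rationalization commute with this limit''; your parenthetical gestures at the right input, but the sentence as written attributes the work to the wrong functor. Neither point is a gap in the sense of a missing idea --- both are among the ``standard ingredients'' you flag, all established in Bousfield--Gugenheim and F\'elix--Halperin--Thomas --- but a careful write-up should make them precise.
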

In the $p$-adic case, Mandell proves an analogous result with cdgas over $\Q$ replaced by $\E_{\infty}$-algebras over $\Fpbar$:
\begin{thm}[Mandell \cite{Mandell}] \label{thm:Mandell}
The assignment $X \mapsto C^*(X, \overline{\F}_p)$ determines a fully faithful functor from the $\infty$-category of simply connected $p$-complete spaces of finite type to the $\infty$-category $\CAlg_{\overline{\F}_p}$ of $\E_{\infty}$-algebras over $\overline{\F}_p$.  
\end{thm}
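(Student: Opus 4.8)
The plan is to establish full faithfulness directly, by working up the Postnikov tower of the target. Concretely, one must show that for simply connected $p$-complete spaces $X,Y$ of finite type the comparison map
\[
\Map_{\Spaces}(X,Y)\longrightarrow \Map_{\CAlg_{\Fpbar}}\bigl(C^*(Y;\Fpbar),\,C^*(X;\Fpbar)\bigr)
\]
induced by the functor $C^*$ is an equivalence. Fix $X$ and write $Y=\lim_n Y_n$ for the Postnikov tower, so that each $Y_n$ is a pullback $Y_{n-1}\times_{K(\pi_nY,\,n+1)}\ast$ along a $k$-invariant. Since every space occurring here is simply connected, the Eilenberg--Moore spectral sequence converges strongly and identifies $C^*(Y_n;\Fpbar)$ with the pushout $C^*(Y_{n-1};\Fpbar)\otimes_{C^*(K(\pi_nY,\,n+1);\Fpbar)}\Fpbar$ in $\CAlg_{\Fpbar}$, and $C^*(Y;\Fpbar)=\colim_n C^*(Y_n;\Fpbar)$. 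As $\Fpbar$ is the initial object of $\CAlg_{\Fpbar}$, mapping this pushout into $C^*(X;\Fpbar)$ produces a fiber sequence
\[
\Map_{\CAlg_{\Fpbar}}(C^*Y_n,C^*X)\to\Map_{\CAlg_{\Fpbar}}(C^*Y_{n-1},C^*X)\to\Map_{\CAlg_{\Fpbar}}(C^*K(\pi_nY,n+1),C^*X),
\]
which the comparison map carries to the fiber sequence $\Map(X,Y_n)\to\Map(X,Y_{n-1})\to\Map(X,K(\pi_nY,n+1))$ coming from the same $k$-invariant. By naturality of $C^*$ the two towers match, so the whole statement reduces to the case $Y=K(A,m)$ with $m\ge2$ and $A$ a finitely generated $\Z_p$-module; splitting $A$ into cyclic and free parts, using the fiber sequences $K(\Z/p,m)\to K(\Z/p^k,m)\to K(\Z/p^{k-1},m)$ and the identification $K(\Z_p,m)=\lim_k K(\Z/p^k,m)$ together with the continuity of $C^*$ along that tower, one reduces further to $A=\Z/p$.

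The heart of the argument is the Eilenberg--MacLane base case,
\[
\Map_{\Spaces}\bigl(X,K(\Z/p,m)\bigr)\xrightarrow{\ \sim\ }\Map_{\CAlg_{\Fpbar}}\bigl(C^*(K(\Z/p,m);\Fpbar),\,C^*(X;\Fpbar)\bigr),
\]
whose left-hand side has $\pi_i=H^{m-i}(X;\F_p)$. The key input is an explicit description of $C^*(K(\Z/p,m);\Fpbar)$ as an $\E_\infty$-$\Fpbar$-algebra: it is built from the free $\E_\infty$-$\Fpbar$-algebra on a cohomological degree-$m$ class by imposing the ``unstable'' relations, and hence corepresents the functor on $\CAlg_{\Fpbar}$ whose value on $B$ is a space of degree-$m$ classes in $B$ constrained by the $p$-th-power (Tate-valued Frobenius) operation. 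This is the homotopy-coherent refinement of the classical Cartan--Serre computation that $H^*(K(\Z/p,m);\F_p)$ is the free unstable algebra over the Steenrod algebra on a degree-$m$ generator: the Steenrod operations are the shadow of the $\E_\infty$-structure, freeness on a single class accounts for the admissible monomials of excess $<m$, and the instability relation identifying the top operation with the $p$-th power is exactly the Frobenius constraint. Evaluating this corepresentable functor at $B=C^*(X;\Fpbar)$ and comparing with the corresponding Frobenius-fiber description of $\Map(X,K(\Z/p,m))$ --- which amounts to the statement that mod-$p$ cohomology with its Steenrod action determines $p$-adic homotopy types in the Eilenberg--MacLane range, and is precisely why one must pass from $\F_p$ to $\Fpbar$ so that enough points become visible --- completes the base case. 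Reassembling via the Postnikov induction yields the comparison equivalence at each finite stage, and one passes to the limit $Y=\lim_n Y_n$ using the finite-type and simple-connectivity hypotheses to kill the $\lim^1$ terms, since the comparison map is an isomorphism on $\pi_i$ in a range growing with $n$.

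I expect the main obstacle to be exactly this Eilenberg--MacLane computation: the identification of $C^*(K(\Z/p,m);\Fpbar)$ as an $\E_\infty$-$\Fpbar$-algebra via the corepresentability statement above. This requires genuine control of the homotopy-coherent multiplicative structure on the cochains of Eilenberg--MacLane spaces --- not merely the existence of the Steenrod operations, but the assertion that they generate all of the $\E_\infty$-structure and satisfy precisely the expected relations --- and it is the technical core of Mandell's work. By comparison, the spectral-sequence convergence and the tower bookkeeping are routine. It is the wish for a cleaner and more structural account of this Frobenius phenomenon, and of how it behaves integrally rather than one prime at a time, that motivates the constructions carried out in the body of the present paper.
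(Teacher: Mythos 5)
There is nothing in the paper to compare against: Theorem \ref{thm:Mandell} is imported as a black box from Mandell's work and never proved here, so your proposal can only be measured against Mandell's original argument. Measured that way, your outline does track the known strategy (resolve the target by principal fibrations with Eilenberg--MacLane fibers, use Eilenberg--Moore convergence to turn each stage into a pushout of cochain $\E_\infty$-algebras, reduce to $K(\Z/p,m)$), but as a proof it has a genuine gap that you yourself flag: the base case is asserted, not established, and the base case \emph{is} the theorem. The precise statement needed is that $C^*(K(\Z/p,m);\Fpbar)$ is the pushout, in $\CAlg_{\Fpbar}$, of the free $\E_\infty$-$\Fpbar$-algebra on a degree-$m$ class along the map imposing $P^0x=x$, so that $\Map_{\CAlg_{\Fpbar}}(C^*(K(\Z/p,m);\Fpbar),B)$ is the fiber of $1-P^0$ on the degree-$m$ space of $B$. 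Proving this requires computing the homotopy of free $\E_\infty$-algebras over $\Fpbar$ (Dyer--Lashof operations and their relations, not just the existence of Steenrod operations) and then showing the comparison map from this pushout to the actual cochains is an equivalence; none of that is supplied or reducible to the Cartan--Serre computation you invoke, which is only its associated-graded shadow. Also, the role of $\Fpbar$ is not that ``more points become visible'' but that $1-P^0$ is Frobenius-semilinear there and the Artin--Schreier sequence $0\to\F_p\to\Fpbar\to\Fpbar\to 0$ is exact, so taking the fiber of $1-P^0$ recovers $\F_p$-cohomology with no derived correction; over $\F_p$ the operation $P^0-1$ vanishes on homotopy and the same fiber produces the loop-space discrepancy recalled in the introduction.

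There are also secondary steps you treat as formal that are not. The functor $C^*(-;\Fpbar)$ does not commute with the inverse limits you use ($Y=\lim_n Y_n$ and $K(\Z_p,m)=\lim_k K(\Z/p^k,m)$) for general reasons; one needs convergence arguments using the finite-type and simple-connectivity hypotheses (in Mandell this is handled by a resolution/totalization argument rather than the naive colimit identity $C^*(Y)\simeq\colim_n C^*(Y_n)$ you assert), and the $\lim^1$ issue you mention at the end must be controlled at the level of mapping spaces, not just homotopy groups. Likewise the fiber sequence obtained by mapping out of the Eilenberg--Moore pushout requires identifying the basepoint with the unit composite $C^*(K(\pi_nY,n+1);\Fpbar)\to\Fpbar\to C^*(X;\Fpbar)$, and Eilenberg--Moore convergence must be checked for these non-finite Postnikov stages. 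None of these is fatal, but together with the unproved Eilenberg--MacLane computation they mean your text is an accurate road map of Mandell's proof rather than a proof.
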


In the above theorems, we say a rational (resp. $p$-complete) space is of finite type if each homotopy group is finitely generated over $\Q$ (resp. $\Z_p)$.  
The next natural question is whether a similar cochain model for spaces exists \emph{integrally}.  

\begin{warn}
Mandell \cite{MandellZ} shows that the assignment $X \mapsto C^*(X;\Z)$ determines a functor from spaces to $\E_{\infty}$-algebras over $\Z$ that is faithful but \emph{not} full.  

Let us illustrate the difficulty in producing such an integral model: Sullivan showed that any sufficiently nice space $X$ can be recovered from its rationalization and $p$-completions together with the additional data of a map $$X_{\Q} \to \big(\prod_p X^{\wedge}_p\big)_{\Q}$$ via a homotopy pullback square
\begin{equation*}
\begin{tikzcd}
X \arrow[r]\arrow[d] &\displaystyle{ \prod_p X^{\wedge}_p }\arrow[d]\\
X_{\Q} \arrow[r] &\displaystyle{\big(\prod_p X^{\wedge}_p\big)_{\Q}}.
\end{tikzcd}
\end{equation*}
One might hope to construct the desired integral model for spaces by assembling the above cochain models via an analogous procedure.  However, it is unclear how this assembly would work: for instance, one might hope to relate the algebra models -- $C^*(X;\Q)$ for $X_{\Q}$ and $C^*(X;\Fpbar)$ for $X^{\wedge}_p$ -- via some comparison over $\Q_p$; but it does not seem obvious how to obtain a $\Q_p$ algebra from $C^*(X;\Fpbar)$.  \end{warn}

\subsection{Summary of results}

In this paper, we give an \emph{integral} cochain model for simply connected finite spaces.  We accomplish this by producing a Frobenius action on certain $\E_{\infty}$-ring spectra and modeling spaces in terms of the fixed points of this action. 
We now state our main theorems informally, with more refined statements to follow later in this introduction.  

\begin{thmArough}[Frobenius Action] For each prime $p$, the Nikolaus--Scholze Frobenius determines an action of the monoidal category $B\Z_{\geq 0}$ on the $\infty$-category of $p$-complete $\E_{\infty}$-rings which are finite over the $p$-complete sphere.
\end{thmArough}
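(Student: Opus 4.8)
\emph{Proof strategy.} The plan is to produce the asserted action as the homotopy-coherent refinement of the Nikolaus--Scholze Tate-valued Frobenius. Recall that for each prime $p$ the Tate construction $(-)^{tC_p}\colon\Sp\to\Sp$ (for the trivial $C_p$-action) is exact and lax symmetric monoidal and carries an essentially unique lax symmetric monoidal Tate diagonal $\id_{\Sp}\Rightarrow(-)^{tC_p}\circ(-)^{\otimes p}$; composing with the multiplication yields on $\E_\infty$-rings the Tate-valued Frobenius, a natural transformation $\varphi_p\colon\id_{\CAlg}\Rightarrow\Phi_p$ with $\Phi_p$ the endofunctor $R\mapsto R^{tC_p}$. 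The argument then has two parts: (i) isolate a full subcategory $\CAlg_p\subseteq\CAlg(\Sp^{\wedge}_p)$ on which $\Phi_p$ restricts to an endofunctor and $\varphi_p$ becomes a natural equivalence, and (ii) --- the crux --- upgrade the resulting datum to an action of the monoidal $\infty$-category $B\Z_{\geq 0}$, that is, to a coherent recipe for iterating the Frobenius.

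For (i), I would take $\CAlg_p$ to be (essentially) the closure of the $p$-complete sphere $\pS$ inside $\CAlg(\Sp^{\wedge}_p)$ under the operations needed to build Spanier--Whitehead duals of $p$-finite spaces: finite colimits, retracts, and the uniformly-bounded-below cofiltered limits and totalizations computing such duals. That $\varphi_p$ is an equivalence on $\pS$ is exactly (a consequence of) the Segal conjecture for $C_p$ --- Lin's theorem for $p=2$, Gunawardena's for $p$ odd --- which identifies $(\pS)^{tC_p}$ with $\pS$ compatibly with the Frobenius map. This propagates through the generating operations because $(-)^{tC_p}$ is exact, hence commutes with finite (co)limits, and commutes with the remaining limits since the relevant diagrams are uniformly bounded below. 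The point needing care is that the Segal-conjecture identification must be realized as a map of $\E_\infty$-rings and naturally in $\CAlg_p$, so that $\varphi_p|_{\CAlg_p}$ is an equivalence of functors rather than merely objectwise.

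For (ii), observe that since $\Z_{\geq 0}$ is the free $\E_1$-monoid on one generator, an action of $B\Z_{\geq 0}$ amounts to a coherent choice of the Frobenius operator --- the endofunctor $\Phi_p$ together with the tower $\id\to\Phi_p\to\Phi_p^{\circ 2}\to\cdots$ and all higher coherences among its iterates. The difficulty, and what I expect to be the main obstacle, is that the naive iterates of $(-)^{tC_p}$ (for instance $(R^{tC_p})^{tC_p}$ versus $R^{tC_{p^2}}$) are linked only through an intricate web of maps, so rather than build the coherences by hand the plan is to realize $(-)^{tC_p}$ and all its iterates as shadows of a single genuine-equivariant structure: present the objects of $\CAlg_p$ as fixed points of global $\E_\infty$-rings (``global algebras''), on which a version of Quillen's $Q$-construction acts, and check that passing to fixed points recovers $\id\to\Phi_p\to\Phi_p^{\circ 2}\to\cdots$ with all coherences, thereby producing the functor out of $B\Z_{\geq 0}$ in one stroke. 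Defining the Frobenius is formal; organizing its iterates is what forces this detour through equivariant and global homotopy theory. Finally, it is an action of the monoid $\Z_{\geq 0}$ and not of the group $\Z$ because the construction is intrinsically forward-directed --- one iterates the Tate construction but has no canonically natural inverse --- even though by the Segal conjecture the operator happens to be invertible on each object of $\CAlg_p$; and it is precisely the $B\Z_{\geq 0}$-action, through which one may speak of an $\E_\infty$-ring equipped with a \emph{trivialization} of its Frobenius, that the integral model is built from.
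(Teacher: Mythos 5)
Your outline reproduces the first half of the paper's strategy (generalized Frobenii organized through equivariant/global homotopy theory and an action of a $Q$-construction-type span category), but the step you describe as happening ``in one stroke'' is precisely where the real content lies, and your proposal contains no argument for it. An action of $\mathcal{Q}$ (or $\QVect$) is not an action of $B\Z_{\geq 0}$: as the paper emphasizes, the free $\E_1$-structure of $\Z_{\geq 0}$ gives the tower $\id\to\Phi_p\to\Phi_p^{\circ 2}\to\cdots$ for free, and the theorem is exactly the promotion of this $\E_1$-datum to an $\E_2$-datum. In the paper this is done by (a) checking that on the subcategory $\CAlg^F_p$ the oplax $\mathcal{Q}$-action becomes strong and sends the backward morphisms $\mathcal{L}$ to equivalences, so it factors through the localization $\QVect[\mathcal{L}^{-1}]$; (b) identifying that localization with $BK^{\mathrm{part}}(\F_p)$, which requires developing partial $K$-theory and the $Q$-versus-$S_\bullet$ comparison (Theorem \ref{thm:QequalsS}); (c) computing that $K^{\mathrm{part}}(\F_p)\to\Z_{\geq 0}$ is an $\F_p$-homology equivalence (Theorem \ref{thm:KpartFp}), the partial analogue of Quillen's theorem; and (d) observing that $\End(\id_{\CAlg^F_p})$ is a $p$-complete space, so that the $\E_2$-map out of $K^{\mathrm{part}}(\F_p)$ descends along this homology equivalence to $\Z_{\geq 0}$. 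None of (b)--(d) appear in your proposal, and without them there is no bridge from ``the $Q$-construction acts'' to ``$B\Z_{\geq 0}$ acts.''

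There are also two structural missteps. First, ``present the objects of $\CAlg_p$ as fixed points of global $\E_\infty$-rings'' begs the question: a global algebra structure is extra data (genuine multiplications $N^GA\to\triv^GA$, i.e.\ Frobenius lifts), not a property, and equipping $(\pS)^X$ with it is essentially the trivialization one is trying to manufacture. The paper instead proves $\CAlg\simeq\CAlg^{\Glo}_{\Bor}$ (Borel global algebras) and thereby obtains only an \emph{oplax} $\mathcal{Q}$-action on all of $\CAlg$, which becomes strong after restriction. Second, you define your subcategory by asking that the Frobenius be an equivalence, whereas the paper's $F_p$-stability inverts the \emph{canonical} maps $\can^V$ (for all elementary abelian $V$, not just $C_p$); the Frobenius is then an endomorphism but need not be invertible. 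This distinction is the reason the answer is a $B\Z_{\geq 0}$-action rather than an $S^1$-action and the reason partial $K$-theory is needed at all; if you shrink to rings with invertible Frobenius you are proving Theorem $\text{A}^{\circ}$, where ordinary $K(\F_p)$ and Quillen's computation suffice, but even there the descent to $B\Z$ still needs steps (b)--(d) in their group-complete form, which your sketch omits.
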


In fact, as we explain shortly, the action exists on a larger $\infty$-category of $\E_\infty$-rings which we call $F_p$-stable.  We may then consider the homotopy fixed points for this Frobenius action, which we call \emph{$p$-Frobenius fixed $\E_{\infty}$-rings}.  We will explain that a $p$-Frobenius fixed $\E_{\infty}$-ring is the data of an $\E_{\infty}$-ring $A$ equipped with a particular sequence of homotopies, the first of which is a trivialization of its Frobenius endomorphism (cf. Warning \ref{warn:Fptriv}).  Using this notion, we  give a $p$-adic model for spaces:

\begin{thmBrough}[$p$-adic Model]
Let $p$ be a prime and let $X$ be a simply connected finite complex.  Then:
\begin{itemize}
\item The $\E_{\infty}$-algebra $(\pS)^X$ of cochains on $X$ with values in the $p$-complete sphere is naturally a $p$-Frobenius fixed $\E_{\infty}$-ring.
\item The data of the space $X^{\wedge}_p$ is captured completely by $(\pS)^X$ as a $p$-Frobenius fixed $\E_{\infty}$-ring.
\end{itemize}
\end{thmBrough}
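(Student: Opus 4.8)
The plan is to deduce both assertions from Mandell's theorem (Theorem~\ref{thm:Mandell}) and the Segal conjecture, with the Frobenius action of Theorem~A providing the category $\CAlgffp$ of $p$-Frobenius fixed $\E_\infty$-rings (its homotopy fixed points) in which to work. Throughout I will use that, $X$ being a finite complex, $\Sigma^\infty_+X$ is dualizable, so that $(\pS)^X = F(X_+,\pS)$ is the limit $\lim_{x\in X}\pS$ of the constant diagram over the $\infty$-groupoid $X$ (formed in $\CAlg_p$), and that $F(X_+,-)\simeq(-)\otimes D\Sigma^\infty_+X$ commutes with $(-)^{tC_p}$.

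\textbf{First bullet.} By the classical Segal conjecture the Tate-valued Frobenius $\varphi_p\colon\pS\to(\pS)^{tC_p}$ is an equivalence; with the previous remark this places $(\pS)^X$ in the subcategory on which Theorem~A's Frobenius action is defined. Since $\pS$ is the initial object of $\CAlg_p$, and hence of that subcategory, all of its mapping spaces are contractible, so it carries an essentially unique $p$-Frobenius fixed structure and is the initial object of $\CAlgffp$. Because $X$ is finite the Frobenius endofunctor preserves the limit $\lim_{x\in X}\pS$, so that limit is created by $\CAlgffp\to\CAlg_p$; forming it in $\CAlgffp$ equips $(\pS)^X$ with a canonical $p$-Frobenius fixed structure, contravariantly natural in $X$. (The self-equivalence being trivialized is genuinely the Frobenius of $(\pS)^X$: although the Frobenius endofunctor does not commute with $\lim_X$, forming the limit in $\CAlgffp$ records the Frobenius of the limit object.) This produces the functor $X\mapsto(\pS)^X$ valued in $\CAlgffp$.

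\textbf{Second bullet, reduction.} To show this functor is fully faithful on $\Spacesfnp$ (simply connected finite complexes), I would factor it: post-composing with the base change $H\Fpbar\otimes_{\pS}(-)\colon\CAlgffp\to\CAlg_{\Fpbar}$, which merely forgets the fixed structure, recovers Mandell's functor, since $H\Fpbar\otimes_{\pS}(\pS)^X\simeq F(X_+,H\Fpbar)=C^*(X;\Fpbar)$ for $X$ finite. As Mandell's functor is fully faithful, it suffices that this base change be fully faithful on the image of $X\mapsto(\pS)^X$, i.e.\ that
\[
\Map_{\CAlgffp}\big((\pS)^Y,(\pS)^X\big)\longrightarrow\Map_{\CAlg_{\Fpbar}}\big(C^*(Y;\Fpbar),C^*(X;\Fpbar)\big)
\]
be an equivalence. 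Writing $(\pS)^X=\lim_{x\in X}\pS$ in $\CAlgffp$ and $C^*(X;\Fpbar)=\lim_{x\in X}\Fpbar$ in $\CAlg_{\Fpbar}$ and mapping into these limits, both sides become the cotensor of the space $X$ into the corresponding space of maps to the unit, so it is enough to treat the instance $X=*$, namely the equivalence
\[
\Map_{\CAlgffp}\big((\pS)^Y,\pS\big)\ \xrightarrow{\ \sim\ }\ \Map_{\CAlg_{\Fpbar}}\big(C^*(Y;\Fpbar),\Fpbar\big)\ \simeq\ Y^\wedge_p ,
\]
the last identification being the $*$-case of Mandell applied to $Y$. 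One inclusion is formal, since a space-level map $X\to Y$ induces a $p$-Frobenius fixed map on cochains by the naturality above, so the displayed comparison admits a section; the content is the reverse inclusion.

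\textbf{The main obstacle.} That reverse inclusion — the computation of the space of $p$-Frobenius fixed maps $(\pS)^Y\to\pS$ — is where I expect the real difficulty to lie. Unwinding the definition of $\CAlgffp$, this space is the homotopy fixed points for the $\Z$-action (the Frobenius being invertible on the relevant subcategory) on $\Map_{\CAlg_p}\big((\pS)^Y,\pS\big)$ given by conjugation by the Frobenius: the Segal-conjecture self-equivalence on the source $(\pS)^Y$, the canonical triviality on the target $\pS$. The unparametrised mapping space $\Map_{\CAlg_p}\big((\pS)^Y,\pS\big)$ is strictly larger than $Y^\wedge_p$ — this is exactly why sphere-valued cochains fail to be full — and the crux is that passing to Frobenius fixed points kills precisely the difference: the geometric points $Y^\wedge_p$ are coherently Frobenius fixed, while the remaining $\E_\infty$-ring maps to $\pS$ (transfers and their relatives) are not. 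Making this precise is where Theorem~A's action is used most essentially, and not merely to form the category $\CAlgffp$: one must control that $\Z$-action on the whole mapping space, which is a coherent and coefficient-bearing form of the Segal conjecture — the same sort of input needed to build the action in the first place — and then invoke Mandell to identify the fixed locus with $Y^\wedge_p$. Granting this, reversing the reductions above shows $X\mapsto(\pS)^X$ is fully faithful, which is exactly the assertion that $X^\wedge_p$ is recovered completely from $(\pS)^X$ as a $p$-Frobenius fixed $\E_\infty$-ring.
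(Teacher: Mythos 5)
Your first bullet and the formal reductions are reasonable (constructing the fixed structure on $(\pS)^X$ as a limit of the initial object of $\CAlgffp$, factoring through base change to $\Fpbar$, reducing full faithfulness to the case $X=*$), but the argument stops exactly where the content of the theorem begins: you ``grant'' that the coherent Frobenius fixed points of $\Map_{\CAlg_{\pS}}((\pS)^Y,\pS)$ are precisely $Y^{\wedge}_p$, i.e.\ that passing to fixed points kills the transfer-type components that the Segal conjecture puts into this mapping space. No argument is offered for this, and it is not a routine verification: you would need to control the full $S^1$-equivariant structure (not merely a $\Z$-action by conjugation, cf.\ Warning \ref{warn:Fptriv}) on a mapping space strictly larger than $Y^{\wedge}_p$, which amounts to a strengthened ``coefficient-bearing'' Segal-conjecture statement that neither Theorem A nor anything else in the paper supplies. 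As written, this is a genuine gap, not a deferred detail.

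The paper's proof is structured precisely to avoid that computation. It never analyzes the Frobenius action on $\Map((\pS)^Y,\pS)$; instead it builds the fixed structure by coinduction from the spherical Witt vectors $\pSbar=W^+(\Fpbar)$ (Example \ref{exm:sphericalWitt}), which are $p$-perfect. The forgetful functor $i^*\colon\CAlgffp\to\CAlgperfp$ has a right adjoint $i_*$ whose value has underlying ring $\fib(\id-\varphi)$ (Lemma \ref{lem:frobadj}); the Witt-vector fiber sequence $\pS\to\pSbar\xrightarrow{1-\varphi}\pSbar$ gives $i^*i_*\pSbar\simeq\pS$ (Lemma \ref{lem:counitwitt}); and one defines $(\pS)^X_{\varphi=1}:=i_*(\pSbar)^X$ (Construction \ref{cnstr:thmBfun}). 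The adjunction then converts the mapping space you could not compute into $\CAlg_{\pS}((\pS)^X,(\pSbar)^Y)\simeq\CAlg_{\pSbar}((\pSbar)^X,(\pSbar)^Y)$, and the only remaining input is a lift of Mandell's theorem from $\Fpbar$- to $\pSbar$-coefficients (Corollary \ref{cor:sfpbarlift}), proved by obstruction theory along the square-zero extensions $W_{n+1}(\Fpbar)\to W_n(\Fpbar)$ and $\pSbar\to W(\Fpbar)$ using the vanishing of $L_{\F_p^X/\F_p}$ (Proposition \ref{prop:formallyetale}). In short, descent from $\pSbar$ to $\pS$ along Frobenius fixed points replaces the fixed-point analysis of the big mapping space; to complete your proposal you would either need to adopt this mechanism or actually prove the claim you granted, for which no strategy is indicated.
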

These $p$-adic models are amenable to assembly into an integral model: in particular, Theorem B implies that the $\E_{\infty}$-ring $\bS^X$ of sphere-valued cochains on $X$ (i.e., its Spanier-Whitehead dual) is what we will call \emph{Frobenius fixed}, meaning that for each prime $p$, its $p$-completion is $p$-Frobenius fixed.  These Frobenius fixed $\E_{\infty}$-rings are our integral model for spaces:

\begin{thmCrough}[Integral Model]
The data of a simply connected finite complex $X$ is captured completely by its Spanier-Whitehead dual $\bS^X$ as a Frobenius fixed $\E_{\infty}$-ring.
\end{thmCrough}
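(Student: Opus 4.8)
The plan is to deduce Theorem C from Theorem B together with classical rational homotopy theory (Theorem~\ref{thm:RHT}), assembled along the arithmetic fracture square. First I would check that $X \mapsto S^X$ genuinely defines a functor $\Phi\colon \Spacesfn \to \CAlgff$: since $X$ is a finite complex, $S^X$ is the Spanier--Whitehead dual of $\Sigma^\infty_+ X$, hence $(S^X)^\wedge_p \simeq (\pS)^X$, and Theorem B equips the latter --- naturally in $X$ and compatibly across primes, using the coherence of the Frobenius action from Theorem A --- with the structure of a $p$-Frobenius fixed $\E_\infty$-ring. The assertion ``the data of $X$ is captured completely'' then unwinds to: $\Phi$ is fully faithful, i.e.\ for simply connected finite complexes $X, Y$ the natural map $\Map_{\Spacesfn}(X,Y) \to \Map_{\CAlgff}(S^Y, S^X)$ is an equivalence.

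Next I would express both mapping spaces as pullbacks indexed by the arithmetic fracture square and compare them corner by corner. On the target: the defining presentation of $\CAlgff$ --- an $\E_\infty$-ring $A$ together with compatible $p$-Frobenius fixed refinements of each $A^\wedge_p$ --- exhibits $\CAlgff$ as a limit of $\infty$-categories, so $\Map_{\CAlgff}(S^Y, S^X)$ is the matching limit of mapping spaces; feeding in the arithmetic fracture square for the \emph{finite} spectrum $S^X$ (a limit diagram of $\E_\infty$-rings, since limits of $\E_\infty$-rings are computed on underlying spectra) and the completion adjunction $\Map_{\CAlg}(S^Y, (S^X)^\wedge_p) \simeq \Map_{\CAlg^{\wedge}_p}((S^Y)^\wedge_p, (S^X)^\wedge_p)$, the ``bare $p$-complete'' corners cancel against the pullback over them, leaving a pullback of $\Map_{\CAlg}(S^Y, (S^X)_\Q)$ and $\prod_p \Map_{\CAlgffp}((\pS)^Y, (\pS)^X)$ over $\Map_{\CAlg}(S^Y, (\widehat{S^X})_\Q)$, where $\widehat{S^X} := \prod_p (S^X)^\wedge_p$. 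On the source, the arithmetic fracture square for the nilpotent finite-type space $Y$ gives the analogous pullback of $\Map_\Spaces(X, Y_\Q)$ and $\prod_p \Map_\Spaces(X^\wedge_p, Y^\wedge_p)$ over $\Map_\Spaces(X, (\widehat Y)_\Q)$ with $\widehat Y := \prod_p Y^\wedge_p$. The $p$-complete corners match by Theorem B. The rational corner matches because $(S^X)_\Q \simeq C^*(X;\Q)$ for $X$ finite, so it is precisely the fully faithfulness of Theorem~\ref{thm:RHT} applied to the simply connected finite-$\Q$-type spaces $X_\Q, Y_\Q$.

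The remaining --- and principal --- difficulty is the mixed ``rational-of-$p$-complete'' corner $\Map_\Spaces(X, (\widehat Y)_\Q) \simeq \Map_{\CAlg}(S^Y, (\widehat{S^X})_\Q)$. One cannot treat it naively: the cochain functor does not commute with infinite products, the natural map $(\widehat Y)_\Q \to \prod_p (Y^\wedge_p)_\Q$ is \emph{not} an equivalence (denominators are not bounded across primes), and $(X^\wedge_p)_\Q$ is not of finite $\Q$-type, so Theorem~\ref{thm:RHT} does not apply to it over $\Q$. My approach would be to reorganize the comparison so that it happens one prime at a time before any product is formed --- using that $X$ is a finite complex to commute $\Map_\Spaces(X,-)$ and $C^*(X;-)$ past the relevant products, and matching this against the corresponding splitting of the $\E_\infty$-ring mapping space --- and then, for the individual prime $p$, to observe that $(X^\wedge_p)_\Q$ is the base change of $X_\Q$ along $\Q \to \Q_p$, hence of finite type over $\Q_p$. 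At that point one invokes the analogue over $\Q_p$ of Theorem~\ref{thm:RHT} (rational homotopy theory is insensitive to the characteristic-zero ground field), identifying the $p$-adic-rational corner with $\Map_{\CAlg_{\Q_p}}(C^*(Y;\Q_p), C^*(X;\Q_p))$.

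Once all three corners, and the maps between them, are matched, fully faithfulness of $\Phi$ follows formally from the universal property of pullbacks; a description of the essential image follows the same way, since each of the three ``corner'' functors has known essential image. I expect the genuine work to lie not in any single computation but in bookkeeping the naturality: one needs the equivalences produced by Theorem B and Theorem~\ref{thm:RHT} to be natural enough --- in the space variable, and, for the $p$-adic pieces, uniformly enough across primes --- to glue into an equivalence of fracture \emph{diagrams} rather than a mere levelwise equivalence, which is exactly why the coherent Frobenius action of Theorem A (and not merely an endomorphism on homotopy categories) is needed as input.
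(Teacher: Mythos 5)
Your overall strategy is the same as the paper's: lift $X\mapsto S^X$ through the pullback defining $\CAlgff$ using Theorem B at each prime, and deduce full faithfulness from Theorem B, Sullivan's Theorem \ref{thm:RHT}, and the arithmetic fracture square by matching corners. The paper differs in two efficiencies: it first reduces to the target $Y=*$ (both sides send colimits in the target-space variable to limits of mapping spaces, since that space is finite), so the only space-level fracture input is Sullivan's arithmetic square for the single space $X$ rather than a fracture square of mapping spaces; and it handles the mixed corner by an explicit lemma, proved by the methods of \cite{Lurpadic}, that for a simply connected $p$-complete space $Z$ of finite type over $\Z_p$ the space $\CAlg_{\Q_p}(\Q_p^Z,\Q_p)$ is the rationalization $Z_\Q$. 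That lemma is exactly your ``Theorem \ref{thm:RHT} over $\Q_p$,'' so that ingredient of your plan is sound and matches the paper.

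The genuine gap is your treatment of the mixed corner, which you yourself flag as the principal difficulty. You set it up correctly in adelic form, $\Map_{\Spaces}(X,(\widehat{Y})_\Q)$ versus $\Map_{\CAlg}(S^Y,(\widehat{S^X})_\Q)$ with $\widehat{Y}=\prod_p Y^{\wedge}_p$, and you correctly note that $(\widehat{Y})_\Q\to\prod_p(Y^{\wedge}_p)_\Q$ is not an equivalence. But your proposed fix --- ``reorganize the comparison one prime at a time before any product is formed, using finiteness of $X$ to commute $\Map(X,-)$ and $C^*(X;-)$ past the relevant products'' --- does not engage the obstruction you identified: finiteness of $X$ lets you move the (co)tensor past $\prod_p$ and past rationalization, but the rationalization sits \emph{outside} the product, and on $\pi_0$ the two candidate corners differ by $(\prod_p\Z_p)\otimes\Q$ (finite adeles) versus $\prod_p\Q_p$; no commutation of $\Map(X,-)$ with limits splits the adelic object prime by prime (the fracture square works precisely because every finite adele is a rational plus an integral adele, not because the corner decomposes over primes). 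So the step ``then, for an individual prime $p$, invoke the $\Q_p$-analogue of Theorem \ref{thm:RHT}'' is never actually reached by the argument you sketch. To close it you would need either a genuinely adelic comparison (a cochain statement over the ring $(\prod_p \pS)_\Q$, whose $\pi_0$ is not a field, so ``insensitivity to the characteristic-zero ground field'' does not apply), or the paper's device: reduce to $Y=*$, identify the three corners of the outer rectangle via Theorem \ref{thm:RHT}, Theorem B, and the $\Q_p$-cochain lemma, and then quote Sullivan's arithmetic square for $X$ itself --- being careful, in either route, about exactly the product-of-rationalizations versus rationalization-of-products bookkeeping that your own remark raises (and which the paper's displayed diagrams also treat loosely).
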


We now proceed to a more detailed outline of the paper.

\subsection{Integral models for spaces and the Frobenius}

Let $p$ be a prime and $X$ be a simply connected $p$-complete space of finite type.  Mandell's theorem (Theorem \ref{thm:Mandell}) asserts that the functor $X\mapsto C^*(X;\Fpbar)$ is fully faithful, so $X$ can be recovered as the mapping space $$X\simeq \CAlg_{\overline{\F}_p}(C^*(X,\overline{\F}_p), \Fpbar).$$  On the other hand, the assignment $X\mapsto C^*(X, \F_p)$ fails to be fully faithful: one has that 
\begin{align*}
\CAlg_{\F_p}(C^*(X,\F_p), \F_p) &\simeq \CAlg_{\F_p}\big(C^*(X,\F_p), \Fpbar\big)^{h\Z} \\
&\simeq \CAlg_{\Fpbar}(C^*(X,\Fpbar) , \Fpbar)^{h\Z} \simeq X^{h\Z} \simeq \mathcal{L}X
\end{align*}
with $\Z$ acting on $\Fpbar$ by Frobenius \cite{Mandell}.   If this functor to $\CAlg_{\F_p}$ were fully faithful, the result of this calculation would be $X$; we see that the extra free loop space has to do with a failure to account for the Frobenius.  In analogy to classical algebra, one might hope that any $\E_\infty$-algebra over $\F_p$ has a natural Frobenius map, and that the $S^1$-action on $\mathcal{L}X = \Hom(S^1, X)$ arises intrinsically on the left-hand side from an $S^1$-action on the $\infty$-category $\CAlg_{\F_p}$ whose monodromy on any object is Frobenius.  Then, taking fixed points for $S^1$ would ``undo" the free loop space that appears in the above calculation.  

In fact, there is a candidate for this Frobenius map, first defined by Nikolaus-Scholze \cite{NS}.  For any $\E_{\infty}$-ring $A$ and prime $p$, they constructed a natural map of $\E_\infty$-rings $\varphi_A: A\to A^{tC_p}$ which we will call the \emph{$\E_{\infty}$-Frobenius} (or simply the \emph{Frobenius}).   This Frobenius map $\varphi_A$ is \emph{not} generally an endomorphism of $A$: it takes values in the $C_p$-Tate cohomology of $A$ (taken with trivial action).  Thus, $\varphi_A$ can be regarded as an $\E_{\infty}$ analog of the $p$th power map $R\to R/p$ for an ordinary ring $R$ (cf. Example \ref{exm:frobdiscretering}).  In particular, the $\E_{\infty}$-Frobenius is not an endomorphism for the $\E_{\infty}$-ring $\F_p$ because $\F_p^{tC_p}$ is not equivalent to $\F_p$, so the conjectural picture above is not correct as stated.  However, the main theme of this paper is that one \emph{can} realize this picture by working not with $\F_p$-algebras, but with algebras over the $p$-complete sphere.

\begin{exm}\label{exm:introlin}
Any $\E_{\infty}$-ring spectrum $A$ admits another canonical ring map $\can : A \to A^{tC_p}$ by the composite  $$A\to A^{hC_p} \to A^{tC_p}$$ of restriction to group cohomology of $C_p$ followed by projection to Tate cohomology.  
When $A=\pS$, the $p$-complete sphere, the theorems of Lin \cite{Lin} ($p=2$) and Gunawardena \cite{Gunawardena} ($p$ odd) assert that this canonical map is an equivalence.  Thus, the Frobenius \emph{can} be regarded as an endomorphism of $\pS$.  
\end{exm}

Motivated by this example, we extract a full subcategory $\CAlg^F_p$ of $p$-complete $\E_{\infty}$-rings which we call \emph{$F_p$-stable}: these are characterized by the requirement that the natural map $\can: A \to A^{tC_p}$, as well as certain generalized versions of it, $\can^V:  A\to A^{\tau V}$ for elementary abelian $p$-groups $V$, is an equivalence (cf. Definition \ref{defn:frobstable}).  Most relevantly, we will see that any $\E_{\infty}$-ring which is finite over the $p$-complete sphere is $F_p$-stable by the Segal conjecture.

By design, we may think of the Frobenius as a natural \emph{endomorphism} of any $F_p$-stable ring $A$ (given by $\can^{-1}\circ \varphi$).  This determines a functor 
$$\Phi: B\Z_{\geq 0} \to \Fun(\CAlg^F_p, \CAlg^F_p)$$ 
sending the unique object of $B\Z_{\geq 0}$ to the identity functor and the morphism $1\in \Z_{\geq 0}$ to the Frobenius endomorphism, considered as a natural transformation $\mathrm{id}\to \mathrm{id}$. 

\begin{qst*}
Does $\Phi$ lift to a monoidal functor and thus define an action of $B\Z_{\geq 0}$ on the $\infty$-category $\CAlg^F_p$?  
\end{qst*}

\begin{rmk}
This question is due to Jacob Lurie and Thomas Nikolaus, who formulated it in the dual setting of $p$-complete bounded below $\E_{\infty}$-coalgebras.  We address this closely related case in upcoming work \cite{AYCoalg}. 
\end{rmk}

Our first main theorem answers this question to the affirmative:

\begin{thmA}[Frobenius Action]
The $\infty$-category $\CAlg^F_p$ of $F_p$-stable $\E_{\infty}$-rings admits an action of $B\Z_{\geq 0}$ for which $1\in \Z_{\geq 0}$ acts by the Frobenius.  
\end{thmA}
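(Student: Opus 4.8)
The plan is to build the action in two stages: a direct attempt worked out on $\CAlg^{F}_{p}$ itself, and then — anticipating that the higher coherences and the control needed for the applications will not fall out formally — a robust construction that realizes the Frobenius and all of its iterates as a shadow of genuine equivariant structure, which is then transported back to $\CAlg^{F}_{p}$.

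First I would pin down what has to be produced. An action of the monoidal category $B\Z_{\geq 0}$ on $\CAlg^{F}_{p}$ is a monoidal functor $B\Z_{\geq 0}\to\Fun(\CAlg^{F}_{p},\CAlg^{F}_{p})$ refining $\Phi$, and since $\Z_{\geq 0}$ is freely generated by $1$ its underlying datum is a natural endomorphism of $\id_{\CAlg^{F}_{p}}$. Such an endomorphism is available in principle: $\varphi$ is a lax symmetric monoidal natural transformation $\id_{\CAlg}\to(-)^{tC_p}$ (Nikolaus--Scholze), the canonical map $\can\colon\id_{\CAlg}\to(-)^{tC_p}$ restricts to an equivalence on $\CAlg^{F}_{p}$ (by the definition of $F_p$-stability, the $\pS$-case being Lin's theorem), and so $\varphi^{\mathrm{end}} := \can^{-1}\circ\varphi$ is a natural self-map of the inclusion $\CAlg^{F}_{p}\hookrightarrow\CAlg$, hence of $\id_{\CAlg^{F}_{p}}$. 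The real content is the \emph{coherence}: promoting the assignment $n\mapsto(\varphi^{\mathrm{end}})^{\circ n}$, together with the interchange data coming from multiplicativity of the Tate diagonal, to a genuine monoidal functor — i.e. coherently identifying the $n$-th iterate with the $n$-th iterated Tate-valued Frobenius for all $n$ at once, in a way one can actually compute with. The relevant obstruction spaces are assembled from towers of iterated $C_p$-Tate constructions, so I would not expect this to follow by a formal manipulation; hence I would pivot.

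For the robust construction, the key point is that the natural home of the iterated Frobenius is \emph{global} (genuine equivariant) homotopy theory: for $A$ finite over $\pS$, Lin's theorem and the Segal conjecture more generally furnish genuine $G$-fixed-point and $G$-Tate data for all finite $p$-groups $G$, and multiplicativity of the Tate diagonal relates $(-)^{tC_p}$ iterated to $(-)^{tC_{p^n}}$. I would introduce the $\infty$-category of \emph{global algebras} — $\E_{\infty}$-rings with genuine equivariant multiplications — verify that $\pS$ and finite complexes over it are global algebras, and construct on this $\infty$-category an action of $B\Z_{\geq 0}$ in which $1$ acts by the equivariant Frobenius. Here $\Z_{\geq 0}$ arises as the monoid $\{[p^n] : n\geq 0\}$ of power self-maps of $B\Z/p^{\infty}$, and the decisive move is to package the combinatorics of finite $C_{p^n}$-sets — via a version of Quillen's $Q$-construction acting on global algebras — into a single coherent delooping rather than assembling the coherences by hand. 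Whether this $Q$-construction output is exactly a $B\Z_{\geq 0}$-action (rather than a delooping of something larger) is governed by a \emph{partial $K$-theory} computation; I would carry out the computation of the partial $K$-theory of $\F_p$ up to $p$-completion, which is what pins down the relevant homotopy type and hence certifies the coherences.

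Finally I would descend: transport the global action along the symmetric monoidal ``underlying $\E_{\infty}$-ring with Borel-completed equivariant structure'' functor from global algebras to $p$-complete $\E_{\infty}$-rings, check that it preserves $\CAlg^{F}_{p}$ (using $F_p$-stability to identify $A^{tC_p}$ with $A$ via $\can$) and that $1$ acts by $\varphi^{\mathrm{end}}$, so that it refines $\Phi$. The main obstacle is the middle stage — building the coherent action on global algebras, i.e. organizing the infinitely many compatibilities among iterated Tate-valued Frobenii through the $Q$-construction — and the single hardest input inside it is the partial $K$-theory computation of $\F_p$, on which the very existence of the coherences depends.
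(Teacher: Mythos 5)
Your overall route---realize the iterated Frobenius through genuine equivariant structure (global algebras), organize it by a version of Quillen's $Q$-construction, certify the coherences by computing partial $K$-theory of $\F_p$ up to $p$-completion, and descend to $\CAlg^F_p$ using $F_p$-stability---is in outline the strategy of the paper. But two of your steps are imprecise in ways that would matter if the plan were executed as written. First, the groups controlling the iterated $\E_\infty$-Frobenius here are the \emph{elementary abelian} $p$-groups, not the cyclic groups $C_{p^n}$: the $n$-fold composite of $\varphi$ is compared with the generalized Frobenius $\varphi^{C_p^{\times n}}$ landing in the proper Tate construction $(-)^{\tau C_p^{\times n}}$ (cf.\ Proposition \ref{prop:perfcrit}), and the monoidal category that acts is $\QVect$, Quillen's $Q$-construction on the exact category of finite-dimensional $\F_p$-vector spaces; it is precisely this that lets one identify the localization at the backward morphisms with $BK^{\mathrm{part}}(\F_p)$ (Corollary \ref{cor:QequalsS}), the object your own appeal to Theorem \ref{thm:KpartFp} presupposes. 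Your framing via power self-maps of $B\Z/p^{\infty}$ and the combinatorics of finite $C_{p^n}$-sets does not feed into that computation (cyclic $p$-groups are not closed under extensions, and no d\'evissage is available for partial $K$-theory), so taken literally that branch of the plan stalls; it is internally inconsistent with your later invocation of $K^{\mathrm{part}}(\F_p)$.

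Second, the descent step is where the real content lies, and as you have ordered it, it does not go through. An $F_p$-stable $\E_\infty$-ring is \emph{not} a global algebra---the Frobenius lift is exactly the extra data---so there is no functor along which a $B\Z_{\geq 0}$-action on ($p$-complete) global algebras can simply be transported to $\CAlg^F_p$. (Such an action does exist, but it is a byproduct of the argument, not its source.) The mechanism in the paper is: the $\mathcal{Q}$-action lives on sections of the genuine fibration; Borelification is a relative localization, so the action descends only \emph{oplaxly} to Borel global algebras, which are then identified with $\CAlg$ by a relative Kan extension argument (Theorem \ref{thm:globalalgs}), giving the integral oplax action of Theorem \ref{thm:mainalgaction}; restricting to $\CAlg^F_p$ and $\QVect$ makes the oplax structure maps and the backward morphisms into equivalences, yielding a strong monoidal functor $BK^{\mathrm{part}}(\F_p)\to \Fun(\CAlg^F_p,\CAlg^F_p)$. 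Only then is the action compressed to $B\Z_{\geq 0}$, and the compression needs an argument you only gesture at: the $\E_2$-space $\End(\id_{\CAlg^F_p})$ is a limit of mapping spaces between $p$-complete spectra, hence $p$-complete, so the $\F_p$-homology equivalence $K^{\mathrm{part}}(\F_p)\to\Z_{\geq 0}$ induces an equivalence on spaces of $\E_2$-maps into it. Compressing to $B\Z_{\geq 0}$ at the global level first and descending afterwards would both forfeit strictness under the (oplax) descent and leave you without the identification of the generator with an endomorphism of $\id_{\CAlg^F_p}$; the $p$-completeness argument must be run on $\CAlg^F_p$ itself.
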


We will elaborate on Theorem A and its proof starting in \S \ref{subsect:introproof} below.  For now, we return to the application in mind with a straightforward corollary of Theorem A.  

\begin{thmAcirc}
Let $\CAlgperfp \subset \CAlg^F_p$ denote the full subcategory of $F_p$-stable $\E_{\infty}$-rings for which the Frobenius acts invertibly.  Then the $\infty$-category $\CAlgperfp$ admits an action of $S^1$ whose monodromy induces the Frobenius \emph{automorphism} on each object.
\end{thmAcirc}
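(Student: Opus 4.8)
The plan is to obtain the $S^1$-action by group-completing the $B\Z_{\geq 0}$-action of Theorem~A. The point is that $S^1 \simeq B\Z$, and the monoidal category $B\Z$ is the localization of $B\Z_{\geq 0}$ obtained by inverting the generating morphism $1 \in \Z_{\geq 0}$; since $\CAlgperfp$ is, by definition, the locus on which the Frobenius --- that is, the endomorphism of the identity functor by which $1$ acts --- is invertible, the restricted action is forced to factor through $B\Z \simeq S^1$.

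In detail I would argue in four steps. \emph{(1) Restrict the action.} The $B\Z_{\geq 0}$-action of Theorem~A is a monoidal lift of the functor $\Phi$, which sends the unique object of $B\Z_{\geq 0}$ to the identity functor; the only nontrivial datum is the Frobenius natural transformation $\varphi\colon \id \Rightarrow \id$ together with its coherent powers. Since $\CAlgperfp \subset \CAlg^F_p$ is a full subcategory and the identity functor preserves it, the action restricts --- one simply restricts $\varphi$ --- so $\CAlgperfp$ acquires a $B\Z_{\geq 0}$-action with generator acting by the Frobenius. \emph{(2) The generator acts invertibly.} On $\CAlgperfp$ the transformation $\varphi$ is a pointwise equivalence, hence an equivalence of endofunctors; thus the associated monoidal functor $B\Z_{\geq 0} \to \Fun(\CAlgperfp, \CAlgperfp)$ sends the morphism $1$ to an equivalence. \emph{(3) Factor through $B\Z$.} Invoke the universal property of the canonical monoidal functor $B\Z_{\geq 0} \to B\Z$ as the localization of $B\Z_{\geq 0}$ at its generating morphism: any monoidal functor out of $B\Z_{\geq 0}$ carrying $1$ to an equivalence factors, essentially uniquely, through $B\Z$. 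This yields a monoidal functor $B\Z \to \Fun(\CAlgperfp, \CAlgperfp)$. \emph{(4) Reinterpret.} Under the standard identification of $B\Z$-actions on $\infty$-categories with $S^1$-actions (using $S^1 \simeq B\Z$ as a grouplike object), this is precisely an $S^1$-action on $\CAlgperfp$; its monodromy is the image of the generator $1 \in \Z = \pi_1 S^1$, which is the Frobenius --- now an equivalence, i.e.\ a Frobenius automorphism on each object.

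The main obstacle is step~(3): pinning down, in whichever model of monoidal $\infty$-categories and their actions underlies Theorem~A, that $B\Z_{\geq 0} \to B\Z$ really is the universal monoidal functor inverting the generator, and that this identification is compatible with the passage from $B\Z$-actions to $S^1$-actions. A convenient way to organize this is to regard $B\Z_{\geq 0}$ and $B\Z$ as ($\E_1$- or even $\E_\infty$-)monoid objects in $\Cat_\infty$, use that group completion of such monoids is detected on $\pi_0$ of the morphism objects (so that $B\Z_{\geq 0}$ group-completes to $B\Z$), and combine this with the equivalence between modules over $B\Z$ and $S^1$-equivariant $\infty$-categories. Granting this, steps~(1), (2), and (4) are the elementary observations above and carry no further content, consistent with this being a straightforward corollary of Theorem~A.
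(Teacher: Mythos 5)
Your proof is correct and follows essentially the same route as the paper: the paper's argument restricts the $B\Z_{\geq 0}$-action to the full subcategory $\CAlgperfp$, observes that the generating morphism now acts by an equivalence, and factors through the group-completion $B\Z_{\geq 0}\to B\Z\simeq S^1$. The only cosmetic difference is that you phrase the factorization via localization at the generator and flag the universal-property compatibility point explicitly, whereas the paper simply asserts the factorization through group completion; the mathematical content is identical.
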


We refer to such $\E_{\infty}$-rings $A \in \CAlgperfp$ as \emph{$p$-perfect}.

\begin{exm}
The $p$-complete sphere $\pS$ is $p$-perfect because the Frobenius is a ring map and $\pS$ is the initial $p$-complete $\E_{\infty}$-ring.  Since $\CAlgperfp \subset \CAlg$ is closed under finite limits, it follows that for any finite space $X$, the $\E_{\infty}$-algebra of cochains $(\pS)^X$ is $p$-perfect.  \end{exm}

In fact, the Frobenius is better than just an equivalence for the $p$-complete sphere -- since $\pS$ is initial among $p$-complete $\E_\infty$-rings, the Frobenius is the \emph{identity} (analogous to how the classical Frobenius is the identity on $\F_p$).  One can capture this idea as follows:

\begin{defn}
Let $\CAlgffp := (\CAlgperfp)^{hS^1}$ denote the $\infty$-category of homotopy fixed points under the $S^1$-action of Theorem $\text{A}^{\circ}$. 
We will refer to objects of $\CAlgffp$ as \emph{$p$-Frobenius fixed} $\E_{\infty}$-rings, and we will sometimes refer to a lift $A_{\varphi=1} \in \CAlgffp$ of a $p$-perfect $\E_{\infty}$-ring $A$ as providing a $p$-Frobenius fixed structure on $A$.
\end{defn}

\begin{exm}\label{exm:pSff}
Since $\pS$ is initial in $\CAlgperfp$, it admits a canonical lift to $\CAlgffp$ which we denote by $(\pS)_{\varphi=1}$.  
\end{exm}

Since $\CAlgffp$ admits finite limits which are computed on the underlying $\E_{\infty}$-ring, we see that for any finite complex $X$, the cochain algebra $(\pS)^X$ admits a canonical lift $(\pS)_{\varphi=1}^X \in \CAlgffp$ to a $p$-Frobenius fixed $\E_{\infty}$-ring.  This is essentially our $p$-adic model for spaces; to state it more precisely, we will need the following finiteness condition:

\begin{notation}
We say a simply connected space $X$ is \emph{$p$-complete finite} if $X$ is $p$-complete and $\bigoplus_{i\in \Z} H^i(X;\F_p)$ is a finite abelian group.  We let $\Spacesfnp$ denote the full subcategory of spaces which are simply connected and $p$-complete finite.
\end{notation}

\begin{rmk}
By a standard Hurewicz theorem argument, a simply connected $p$-complete space $X$ is $p$-complete finite if and only if $X$ is built out of a finite number of $p$-complete spheres via a (finite) sequence of cell attachments.  We remark that a (simply connected) $p$-complete finite space need not be the $p$-completion of a finite complex -- we thank Robert Burklund for clarifying this point for us and refer the reader to \cite[Counterexample 3.4]{BelfiWilkerson} for a discussion.   
\end{rmk}

 The following theorem gives a model for $\Spacesfnp$ in terms of $\E_{\infty}$-algebras and the Frobenius action of Theorem A.  

\begin{thmB}[$p$-adic Model]
The cochain functor $(\Spacesfnp)^{\op} \to \CAlg_{\pS}$ given by $X \mapsto (\pS)^X$ lifts to a fully faithful functor 
$$(\pS)^{(-)}_{\varphi=1}: (\Spacesfnp)^{\op} \to \CAlgffp.$$
The essential image of the functor $(\pS)^{(-)}_{\varphi=1}$ consists of those $p$-Frobenius fixed $\E_{\infty}$-rings whose underlying $\E_{\infty}$-ring $A$ satisfies the following two conditions:
\begin{enumerate}
\item $A$ is finite as a module over $\pS$.  
\item The unit map $\F_p \to \F_p\otimes A$ induces an isomorphism on $\pi_k$ for $k\geq -1$.  
\end{enumerate}
\end{thmB}

Informally, Theorem B asserts that for any simply connected $p$-complete finite space $X$, the algebra of cochains $(\pS)^X$ admits a canonical $p$-Frobenius fixed structure, and that $X$ is captured completely by the $\E_{\infty}$-ring $(\pS)^X$ together with this $p$-Frobenius fixed structure.  

\begin{warn}\label{warn:Fptriv}
A $p$-Frobenius fixed structure is \emph{not} simply a trivialization of the $\E_{\infty}$-Frobenius map.
To illustrate this, note first that the $S^1$ action of Theorem $\text{A}^{\circ}$ allows one to construct a fibration $q: (\CAlg^{\mathrm{perf},\simeq}_p)_{hS^1} \to BS^1$ of spaces (the \emph{action groupoid}). This begets the outer pullback square in the diagram:
\begin{equation*}
\begin{tikzcd}
\CAlg^{\mathrm{perf},\simeq}_p \arrow[d] \arrow[rrr] & & & (\CAlg^{\mathrm{perf},\simeq}_p)_{hS^1}\arrow[d,"q"] \\
* \arrow[r, hook] &\mathbb{C}P^1 \arrow[rru, dashed]\arrow[r,hook]& \cdots \arrow[r,hook] &BS^1\simeq \mathbb{C}P^{\infty}\\
\end{tikzcd}
\end{equation*}

The data of a $p$-Frobenius fixed algebra is the data of a section of $q$.  One can view such a section as being built cell-by-cell along the cell decomposition of $\mathbb{C}P^{\infty}$.  A section of $q$ over the point is the data of a $p$-perfect $\E_{\infty}$-ring $A$.  Extending this section to a section over $\mathbb{C}P^1$ \emph{is} just the data of a trivialization of the Frobenius map on $A$.  However, to promote this to a $p$-Frobenius fixed structure on $A$, one needs to further extend this section over all of $\mathbb{C}P^{\infty}$; this requires one to specify an additional homotopy for each additional cell of $\mathbb{C} P^{\infty}$.  
\end{warn}
As promised, this model of $p$-complete spaces is more compatible with the rational model of Theorem \ref{thm:RHT}; unlike $\CAlg_{\Fpbar}$, the $\infty$-category $\CAlgffp$ of $p$-Frobenius fixed algebras admits an obvious functor to $\CAlg_{\Q_p}$ by forgetting the $p$-Frobenius fixed structure and extending scalars along the map $\pS \to \Q_p$.  This allows our $p$-adic model to be compared to Sullivan's rational model for spaces.  We now describe the resulting integral model for spaces.

\begin{defn}\label{defn:assembly}
We will say that an $\E_{\infty}$-ring $A$ is \emph{perfect} if for each prime $p$, its $p$-completion $A^{\wedge}_p$ is $p$-perfect.  Let $\CAlgperf \subset \CAlg$ be the full subcategory of perfect $\E_{\infty}$-rings.  Additionally, let the $\infty$-category $\CAlg^{\varphi = 1}$ of \emph{Frobenius fixed} $\E_{\infty}$-rings be defined by the pullback square
\begin{equation*}
\begin{tikzcd}
\CAlg^{\varphi = 1} \arrow[d]\arrow[rr] \arrow[drr, phantom, "\ulcorner", very near start] & & \displaystyle{\prod_{p} \CAlg^{\varphi = 1}_p }\arrow[d]\\
\CAlgperf \arrow[rr, "\prod (-)^{\wedge}_p"] & &\displaystyle{ \prod_p \CAlgperfp } .
\end{tikzcd}
\end{equation*}
Informally, a Frobenius fixed $\E_{\infty}$-ring is a perfect $\E_{\infty}$-ring $A$ equipped with a $p$-Frobenius fixed structure on its $p$-completion $A^{\wedge}_p$ for each prime $p$.  
\end{defn}

By \Cref{exm:pSff}, the sphere spectrum $\bS$ admits a canonical lift to a Frobenius fixed $\E_{\infty}$-ring, which we denote by $\bS_{\varphi=1}\in \CAlgff$.  
Let $\Spacesf$ denote the full subcategory of spaces which are homotopy equivalent to a finite complex.  Then, since finite limits exist in $\CAlgff$ and are computed on the underlying $\E_{\infty}$-ring, $\bS_{\varphi=1}$ determines an essentially unique functor 
\[
\bS^{(-)}_{\varphi = 1} : (\Spacesf)^{\op} \to \CAlg^{\varphi = 1}
\]
which preserves finite limits and whose value on the point is $\bS_{\varphi=1}$.  Our final theorem, which is a corollary of Theorem B and Theorem \ref{thm:RHT}, is that this functor is an integral model for simply connected finite complexes:

\begin{thmC}[Integral Model]
The restriction of the functor $\bS^{(-)}_{\varphi=1}$ to the full subcategory $\Spacesfn \subset \Spacesf$ of simply connected finite complexes is fully faithful.  
\end{thmC}

In other words, for a simply connected finite complex $X$, the algebra $\bS^X$ of spherical cochains can be canonically promoted to a Frobenius fixed $\E_{\infty}$-ring $\bS^X_{\varphi=1}$, and the data of $X$ is completely captured by $\bS^X_{\varphi=1}.$   In contrast to the $p$-adic case, we have not succeeded in determining the essential image of $\bS^{(-)}_{\varphi=1}$ (cf. \Cref{qst:Zessim}).  

\begin{rmk}
Unlike Theorem \ref{thm:Mandell}, this spherical model only works for \emph{finite} simply connected spaces (see also \S \ref{subsect:extensions}).  Of course, since any simply connected space is a filtered colimit of finite ones,  one could model the whole $\infty$-category of simply connected spaces by pro-objects in Frobenius fixed $\E_{\infty}$-rings.  
\end{rmk}
  

\subsection{The Frobenius action}\label{subsect:introproof}
We return to the main technical focus of this paper, which is the development of the Frobenius action of Theorem A.  

\begin{rmk}\label{rmk:classical}
The analog of Theorem A in classical algebra is easy.  Letting $\CAlg^{\heartsuit}_{\F_p}$ denote the category of discrete commutative $\F_p$-algebras, we can define a functor $$\Phi^{\heartsuit}:B\Z_{\geq 0} \to \Fun(\CAlg^{\heartsuit}_{\F_p},\CAlg^{\heartsuit}_{\F_p})$$ by sending $1\in \Z_{\geq 0}$ to the Frobenius endomorphism for discrete $\F_p$-algebras.  This functor is monoidal if and only the corresponding map on centers (i.e., endomorphisms of the unit object) $$\Z_{\geq 0} \to \End (\mathrm{id}_{\CAlg^{\heartsuit}_{\F_p}})$$ is a map of commutative monoids.  This is automatic because the category of commutative monoids is a full subcategory of the category of monoids; in algebra, commutativity is a property.

This argument breaks down in homotopy theory.  The space $\End(\mathrm{id}_{\CAlg^F_p})$ is naturally $\E_2$-monoidal because it arises as endomorphisms of the unit object in a monoidal $\infty$-category.  The Frobenius $\varphi \in \End(\mathrm{id}_{\CAlg^F_p})$ determines a map of \emph{$\E_1$-spaces}
 $$\Z_{\geq 0} \to \End(\mathrm{id}_{\CAlg^F_p})$$ because $\Z_{\geq 0}$ is free as an $\E_1$-monoid, and one needs to show that it can be promoted to a map of $\E_2$-spaces.  This is no longer automatic and the content of Theorem A is that this is possible.  
 
Theorem A has the following concrete consequence: for any $\psi_1, \psi_2 \in \End(\mathrm{id}_{\CAlg^F_p}),$ the $\E_2$-structure provides a homotopy $\psi_1 \psi_2 \sim \psi_2 \psi_1 $.  This induces an action of the \emph{braid group on $n$ strands} on the natural transformation $\varphi^n : \mathrm{id}_{\CAlg^F_p}\to \mathrm{id}_{\CAlg^F_p}.$  Theorem A asserts that this action is \emph{trivial}, which can be thought of heuristically as saying that the Frobenius ``commutes with itself."
\end{rmk}

The majority of the paper is dedicated to proving Theorem A.  The proof proceeds in two steps,  which we believe may be of independent interest and which we outline in \S \ref{subsect:introglobal} and \S \ref{subsect:intropartK}, respectively.  Roughly, the first step is to use equivariant stable homotopy theory to produce a ``lax" version of the Frobenius action which exists for \emph{all} $\E_{\infty}$-rings, without $p$-completing or restricting to $\CAlg^F_p$ (Theorem $\text{A}^{\natural}$).  The second step is to use a ``pre-group-completed" variant of algebraic $K$-theory which we call \emph{partial $K$-theory} to descend this to an action of $B\Z_{\geq 0}$ on the full subcategory $\CAlg^F_p \subset \CAlg$.  

\subsection{Frobenius for genuine globally equivariant rings}\label{subsect:introglobal}
The Frobenius action of Theorem A arises by observing that one can associate a Frobenius operator to every finite group $G$ and then carefully studying the interaction between these operators.  We describe this using equivariant stable homotopy theory.  In \S \ref{sect:algact}, we construct an $\infty$-category $\CAlg^{\Glo}$ of \emph{global algebras} (Definition \ref{defn:gloalg}), which are roughly $\E_{\infty}$-rings $A$ together with the additional structure, for each finite group $G$, of a genuine $G$-equivariant multiplication map $N^G A \to \triv^G A$ from the norm of $A$ to the ring $A$ with trivial $G$-action.  The genuine multiplications on a global algebra can be thought of as lifting various composites of the $\E_{\infty}$-Frobenius to endomorphisms of $A$ (cf. Example \ref{exm:Cpequivariantmult}).  As such, global algebras admit natural Frobenius \emph{endomorphisms} for each $G$, and we denote the associated natural transformation by $\varphi^G: \id \to \id$.

The endomorphisms $\varphi^G$ exhibit interesting functorialities in the group $G$: we observe that the various Frobenius operators assemble into an action of a symmetric monoidal 1-category $\mathcal{Q}$ (roughly, Quillen's $Q$-construction on finite groups, cf. Definition~\ref{defn:Q}), whose objects are finite groups and whose morphisms from $H$ to $G$ are isomorphism classes of spans $(H \twoheadleftarrow K \hookrightarrow G)$.   
Namely, in Theorem \ref{thm:Qactgloalg}, we show that $\mathcal{Q}$ acts on the $\infty$-category $\CAlg^{\Glo}$ in a way such that each object in $\mathcal{Q}$ acts trivially and the span $(* \twoheadleftarrow * \hookrightarrow G)$ is sent to $\varphi^G$.  

The action of $\mathcal{Q}$ on the $\infty$-category of global algebras does not directly induce an action on the $\infty$-category of $\E_{\infty}$-rings.  A global algebra is, in general, more data than an $\E_{\infty}$-ring.  However, we explain that the $\infty$-categories $\CAlg$ and $\CAlg^{\Glo}$ are related via the notion of a \emph{Borel global algebra} (Definition \ref{defn:borgloalg}, Theorem \ref{thm:globalalgs}).  We then deduce that the $\E_{\infty}$-Frobenius $A\to A^{tC_p}$ extends to an \emph{oplax} action of $\mathcal{Q}$ on the $\infty$-category $\CAlg$ of $\E_{\infty}$-ring spectra; this oplax action is unwound explicitly in \S \ref{sub:genfrob} and is the content of Theorem \ref{thm:mainalgaction}, stated here in rough form:

 
\begin{thmAnat}[Integral Frobenius Action]
There is an oplax monoidal functor $$\mathcal{Q} \to \Fun(\CAlg, \CAlg)$$ extending the $\E_{\infty}$-Frobenius in the sense that $C_p\in \mathcal{Q}$ is sent to $(-)^{tC_p}$ and the span $(* \twoheadleftarrow * \hookrightarrow C_p)$ acts by the Frobenius $\id \to (-)^{tC_p}$.  
\end{thmAnat}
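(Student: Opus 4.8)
The plan is to obtain the oplax monoidal functor by transferring the strict $\mathcal{Q}$-action on $\CAlg^{\Glo}$ of Theorem~\ref{thm:Qactgloalg} along the Borel comparison of Theorem~\ref{thm:globalalgs}. The guiding principle is that transferring a strict action along (the adjoint pair of) a fully faithful localization yields an \emph{oplax} one, and that the ``geometric fixed point'' part of the global structure is exactly what converts the trivial action on $\CAlg^{\Glo}$ into the Tate constructions on $\CAlg$.

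First I would record the Borel comparison as an adjunction: Theorem~\ref{thm:globalalgs} identifies $\CAlg$ with the reflective full subcategory of Borel (cofree) global algebras in $\CAlg^{\Glo}$, so there is a fully faithful inclusion $\iota\colon\CAlg\hookrightarrow\CAlg^{\Glo}$ with left adjoint $L$ (Borelification), $L\iota\simeq\id$, and unit $\eta\colon\id_{\CAlg^{\Glo}}\to\iota L$. Next I would isolate the geometric $G$-fixed point endofunctors $\Phi^{G}$ of $\CAlg^{\Glo}$, built from the genuine multiplications $N^{G}A\to\triv^{G}A$ of a global algebra via the diagonal equivalence $\Phi^{G}N^{G}\simeq\id$; the upshot of Theorem~\ref{thm:Qactgloalg} should be that these organize into a monoidal functor $\Phi_{\bullet}\colon\mathcal{Q}\to(\Fun(\CAlg^{\Glo},\CAlg^{\Glo}),\circ)$ with $\Phi_{\bullet}(G)=\Phi^{G}$, $\Phi^{G\times H}\simeq\Phi^{G}\circ\Phi^{H}$, and the span $(\ast\twoheadleftarrow\ast\hookrightarrow G)$ sent to the geometric power operation $\id\to\Phi^{G}$ underlying $\varphi^{G}$. (Making precise how this repackages the ``trivial on objects'' action on $\CAlg^{\Glo}$ is itself part of the work; morally the two are interchanged by the equivalence $\Phi^{G}E\simeq E$ available on genuinely trivial global algebras, which fails on Borel ones.)

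Then I would set $T_{q}:=L\circ\Phi_{\bullet}(q)\circ\iota$ and check oplax monoidality. We have $T_{\mathbf 1}=L\iota\simeq\id$, with oplax counit the equivalence $L\iota\xrightarrow{\sim}\id$, and the oplax structure maps are obtained by inserting $\eta$:
\[
T_{q\otimes q'}=L\,\Phi_{\bullet}(q)\,\Phi_{\bullet}(q')\,\iota\;\xrightarrow{\ \eta\ }\;L\,\Phi_{\bullet}(q)\,(\iota L)\,\Phi_{\bullet}(q')\,\iota=T_{q}\circ T_{q'},
\]
the associativity and unit coherences following formally from the triangle identities for $L\dashv\iota$ and monoidality of $\Phi_{\bullet}$; equivalently, $F\mapsto LF\iota$ is an oplax monoidal functor $(\Fun(\CAlg^{\Glo},\CAlg^{\Glo}),\circ)\to(\Fun(\CAlg,\CAlg),\circ)$ and $T=(F\mapsto LF\iota)\circ\Phi_{\bullet}$. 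To unwind the result: a Borel global algebra $\iota(A)$ has cofree underlying genuine $C_{p}$-spectrum, and the geometric $C_{p}$-fixed points of a cofree $C_{p}$-spectrum is the Tate construction, so $T_{C_{p}}=L\Phi^{C_{p}}\iota\simeq(-)^{tC_{p}}$; the oplax map $T_{C_{p}\otimes C_{p}}\to T_{C_{p}}\circ T_{C_{p}}$ becomes the natural comparison $(-)^{t(C_{p}\times C_{p})}\to((-)^{tC_{p}})^{tC_{p}}$ (coming from re-Borelifying in the middle); and the span $(\ast\twoheadleftarrow\ast\hookrightarrow C_{p})$ goes to the natural transformation $\id\to(-)^{tC_{p}}$ obtained by applying $L(-)\iota$ to the power operation $\id\to\Phi^{C_{p}}$, which I would identify with the Nikolaus--Scholze Frobenius using that the Tate diagonal is $\Phi^{C_{p}}$ of the Borelification map $N^{C_{p}}A\to\iota L(N^{C_{p}}A)$, i.e.\ $A\to(A^{\otimes p})^{tC_{p}}$, postcomposed with the multiplication.

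The main obstacle I anticipate is the coherence lurking in this last step: verifying that the natural transformation produced abstractly agrees \emph{on the nose} with the $\E_{\infty}$-Frobenius (not merely with some natural ring map $A\to A^{tC_{p}}$), and, more seriously, that the pointwise equivalences $\Phi^{G}N^{G}\simeq\id$ and $\Phi^{G\times H}\simeq\Phi^{G}\circ\Phi^{H}$ are compatible enough, as $G,H$ range over the spans of $\mathcal{Q}$, to assemble into a genuinely functorial $\Phi_{\bullet}$. That coherence, rather than any individual computation, is where the real work lies, and Theorem~\ref{thm:Qactgloalg} is precisely what packages it; once it is in hand, the present statement is a formal consequence of the Borel comparison.
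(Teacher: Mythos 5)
Your overall mechanism is the right one — a strict action on the genuine side plus a ``reflective localization $\Rightarrow$ oplax action'' transfer (this is exactly the paper's Lemma \ref{lem:oplax}), and your unwinding that geometric fixed points of cofree spectra give Tate constructions is how the twists become $(-)^{\tau G}$. But there is a genuine gap at the pivot of the argument: the adjunction $L \dashv \iota$ with $\iota\colon \CAlg \hookrightarrow \CAlg^{\Glo}$ fully faithful does not exist, and Theorem \ref{thm:globalalgs} does not assert it. That theorem identifies $\CAlg$ with $\CAlg^{\Glo}_{\Bor}$, the $\infty$-category of sections of a \emph{different} fibration $\Psi_{\Bor}$ (coCartesian over left morphisms), and $\CAlg^{\Glo}_{\Bor}$ is not a full subcategory of $\CAlg^{\Glo}$: the inclusion $\GloSpBor \subset \GloSp$ fails to preserve coCartesian edges (the paper's Warning after Proposition \ref{prop:borcocart}), so composing a Borel global algebra with $j$ yields a section of $\Psi$ that is \emph{not} coCartesian over left morphisms, i.e.\ not a global algebra, unless the underlying ring already satisfies Segal-conjecture-type conditions ($\can^G$ an equivalence for all $G$). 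Indeed, if a fully faithful $\iota$ with $L\iota\simeq\id$ and underlying ring preserved existed, every $\E_{\infty}$-ring would functorially carry genuine multiplications $N^G A \to \triv^G A$, i.e.\ coherent Frobenius lifts $A\to A$ — which is false already for $\F_p$. So the composite $T_q = L\,\Phi_\bullet(q)\,\iota$ is not defined, and the identification $T_{C_p}\simeq(-)^{tC_p}$ cannot be run as stated.

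The repair is exactly where the paper routes the argument: apply your localization-to-oplax transfer not to a (nonexistent) adjunction between $\CAlg$ and $\CAlg^{\Glo}$, but to the adjunction $\beta^s \dashv j^s$ between the \emph{full} section $\infty$-categories $\sect(\Psi) \rightleftharpoons \sect(\Psi_{\Bor})$ (Corollary \ref{cor:sectboradj}), using that Theorem \ref{thm:Qactgloalg} gives the strict $\mathcal{Q}$-action on all of $\sect(\Psi)$, not merely on $\CAlg^{\Glo}$ — this is why the theorem is stated at the level of sections. One then checks by the explicit twist formula that the resulting oplax action on $\sect(\Psi_{\Bor})$ preserves the full subcategory $\CAlg^{\Glo}_{\Bor}$, where it evaluates to $A^G(BH)\simeq \beta\,\triv^H A^{\tau G}$, recovering the generalized Frobenius and canonical maps (Proposition \ref{prop:boractionoplax}); and finally one proves, nontrivially via a relative right Kan extension along $\Span(\Fin)\to\Glo$, that restriction $\CAlg^{\Glo}_{\Bor}\to\CAlg$ is an equivalence (Theorem \ref{thm:globalalgs}). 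The coherence issue you flagged (compatibility of $\Phi^G N^G\simeq \id$, $\Phi^{G\times H}\simeq\Phi^G\Phi^H$ across spans) is indeed packaged by Theorem \ref{thm:Qactgloalg}; the step your proposal is missing is the intermediate category $\CAlg^{\Glo}_{\Bor}$ and the comparison theorem identifying it with $\CAlg$, which cannot be replaced by an embedding of $\CAlg$ into global algebras.
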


We emphasize that Theorem $\text{A}^{\natural}$ applies to \emph{all} $\E_{\infty}$-rings and finite groups, and in particular does not require $p$-completion or passing to a subcategory of rings.  It can be seen as describing the relationship between various ``stable power operations" on $\E_{\infty}$-rings (cf. Remark \ref{rmk:adem}).

\subsection{Partial Algebraic $K$-theory}\label{subsect:intropartK}

Recall that the \emph{$Q$-construction} is a device introduced by Quillen in order to define higher algebraic $K$-theory.  In this light, the Frobenius acting through $\mathcal{Q}$ can be seen as articulating that the action of Frobenius is ``$K$-theoretic" or ``additive in exact sequences" in the sense that for a short exact sequence $G' \to G \to G''$ of groups, the Frobenius for $G$ is equivalent to the Frobenius for $G'$ followed by the Frobenius for $G''$.  Theorem A follows from Theorem $\text{A}^{\natural}$ by making this idea precise.  

Fixing a prime $p$, we restrict the \emph{oplax} action of Theorem $\text{A}^{\natural}$ to the full subcategories $\CAlg^F_p\subset \CAlg$ and $\QVect \subset \mathcal{Q}$ (spanned by the elementary abelian $p$-groups) to obtain a (\emph{strong}) action of $\QVect$ on $\CAlg^F_p$.  Recall the following computation of Quillen regarding $\QVect$:

\begin{thm}[Quillen \cite{Quillen}]\label{thm:introquillen}
The natural map $$\Omega |\QVect| =: K(\mathbb{F}_p) \to \pi_0 K(\mathbb{F}_p) \simeq \Z$$ induces an isomorphism in $\F_p$-homology.  
\end{thm}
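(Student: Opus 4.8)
The plan is to recall Quillen's argument \cite{Quillen}. First I would use Quillen's identification $\Omega|\QVect| \simeq \Z \times BGL(\F_p)^{+}$, where $GL(\F_p) := \colim_n GL_n(\F_p)$ and $(-)^{+}$ is the plus construction; under it the map of the statement becomes the projection to path components $\Z \times BGL(\F_p)^{+} \to \Z$, so, since the plus construction is a homology equivalence, the claim is equivalent to the vanishing $\widetilde{H}_*(GL(\F_p);\F_p) = 0$. Because each $GL_n(\F_p)$ is a finite group its rational homology is trivial, so what remains is to rule out $p$-primary torsion in $H_*(GL(\F_p);\Z)$.

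The substance is Quillen's computation of the groups $K_i(\F_p) = \pi_i BGL(\F_p)^{+}$, namely $K_0 = \Z$, $K_{2i-1}(\F_p) \cong \Z/(p^{i}-1)$ and $K_{2i}(\F_p) = 0$ for $i > 0$; in particular every positive-degree homotopy group of $BGL(\F_p)^{+}$ is finite of order prime to $p$, since $p^{i}-1 \equiv -1 \pmod p$. Quillen's route to this, in outline, is: Brauer lifting of the standard representations gives a map $BGL(\F_{p^{k}})^{+} \to BU$ factoring through the homotopy fibre of an Adams operation $\psi^{p^{k}}-1$, and the theory of Brauer characters identifies the mod-$\ell$ homology for every prime $\ell \neq p$; the mod-$p$ behaviour is then controlled by comparison with $GL(\Fpbar)$ through the Frobenius --- the Lang-type fibre sequence $BGL(\F_p)^{+} \to BGL(\Fpbar)^{+} \xrightarrow{\psi^{p}-1} BGL(\Fpbar)^{+}$ --- using that $BGL(\Fpbar)^{+}$ is itself mod-$p$ acyclic. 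I would invoke this as a black box.

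Granting that the positive homotopy groups of $BGL(\F_p)^{+}$ are finite of order prime to $p$, I would finish with the soft observation that any connected space with this property is mod-$p$ acyclic: induct up the Postnikov tower, noting that each layer $K(A,n)$ with $A$ finite of order prime to $p$ satisfies $\widetilde{H}_*(K(A,n);\F_p) = 0$, so the Serre spectral sequence of $K(A,n) \to X_{\leq n} \to X_{\leq n-1}$ forces $\widetilde{H}_*(X_{\leq n};\F_p) = 0$; passing to the limit yields $\widetilde{H}_*(BGL(\F_p)^{+};\F_p) = 0$, and hence the theorem. The one genuinely hard step is the middle one --- Quillen's computation itself, and within it the vanishing of the $p$-primary torsion, which (unlike the prime-to-$p$ part accessible from Brauer lifting) really requires passing to the algebraically closed field $\Fpbar$; everything around it is formal.
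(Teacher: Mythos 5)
The paper does not actually prove this statement: Theorem \ref{thm:introquillen} (equivalently Theorem \ref{thm:quillenKFp}) is quoted verbatim from Quillen's computation of the $K$-theory of finite fields, so the only comparison to make is with Quillen's own argument, which you are treating as a black box. Your reduction is correct as far as it goes: $\Omega|\QVect| \simeq \Z \times BGL(\F_p)^+$ by the ``$+=Q$'' theorem, the plus construction is a homology isomorphism, so the claim is $\widetilde{H}_*(BGL(\F_p)^+;\F_p)=0$; and since $BGL(\F_p)^+$ is a connected H-space (hence simple), the Postnikov/Serre argument legitimately upgrades ``all positive homotopy groups are finite of order prime to $p$'' (which holds because $K_{2i-1}(\F_p)\cong \Z/(p^i-1)$ and $K_{2i}(\F_p)=0$) to mod-$p$ acyclicity. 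The caveat is in your sketch of the inside of the black box: in Quillen's paper the logical order is the reverse of yours. He first proves the vanishing of $\widetilde{H}_*(GL(\F_q);\F_p)$ directly (via a vanishing range for the defining-characteristic cohomology of $GL_n(\F_q)$, stably), and this is an \emph{input} to showing that the Brauer-lift map $BGL(\F_q)^+\to F\psi^q$ is an equivalence, hence to the homotopy-group computation you invoke; similarly, the Lang-type fibre sequence relating $BGL(\F_p)^+$ to $BGL(\Fpbar)^+$ via $\psi^p-1$ is ordinarily a consequence of Quillen's theorem rather than an independent tool for proving it. So if your black box were unwound as written, the argument would be circular at exactly the mod-$p$ step, which is the content of the theorem; as a derivation from the published computation of $K_*(\F_p)$ — which is no more than the paper itself does by citing Quillen — it is perfectly valid.
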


Motivated by this computation, one might hope that our action of $\QVect$ extends to an action of the underlying $\E_{\infty}$-space $|\QVect |$.  Since $|\QVect| \simeq BK(\F_p)$ is $p$-adically equivalent to $B\Z \simeq S^1$, the resulting $S^1$ action would restrict to the desired action of $B\Z_{\geq 0}$.  Unfortunately, this is impossible because $S^1$ is group complete, and the Frobenius on $\CAlg^F_p$ does not necessarily act by equivalences.  We overcome this difficulty in Section \ref{sect:partK} by introducing a non-group-complete variant of algebraic $K$-theory which we call \emph{partial $K$-theory} (Definition \ref{defn:Kpart}). This construction takes a Waldhausen ($\infty$-)category
$\C$ and produces an $\E_{\infty}$-space $K^{\mathrm{part}}(\C)$ such that:
\begin{itemize}
\item There is a canonical equivalence of $\E_{\infty}$-spaces $K^{\mathrm{part}}(\C)^{\mathrm{gp}} \simeq K(\C)$.
\item The monoid $\pi_0 (K^{\mathrm{part}}(\C))$ is freely generated by the objects of $\mathcal{C}$ subject to the relation  $[A]+[C] = [B]$ for every short exact sequence $0\to A\to B\to C\to 0.$
\end{itemize} 
We show that many statements in algebraic $K$-theory have analogs in partial $K$-theory.  For instance, while the definition of $K^{\mathrm{part}}(\C)$ uses Waldhausen's $S_{\bullet}$-construction, we give an alternate construction in the case of an exact category $\C$ involving Quillen's $Q$-construction and show that the two definitions coincide (Theorem \ref{thm:QequalsS}).  

It follows from this latter construction of partial $K$-theory that the action of $\QVect$ on $\CAlg^F_p$ descends to an action of the monoidal $\infty$-category $BK^{\mathrm{part}}(\F_p)$ on $\CAlg^F_p$.  The $B\Z_{\geq 0}$ action of Theorem A arises by combining this action of $BK^{\mathrm{part}}(\F_p)$ with the following partial $K$-theory analog of Theorem \ref{thm:introquillen}:

\begin{thmn}[\ref{thm:KpartFp}]
The natural map $$K^{\mathrm{part}}(\F_p) \to \pi_0 K^{\mathrm{part}}(\F_p) \simeq \Z_{\geq 0}$$ induces an isomorphism in $\F_p$-homology.  
\end{thmn}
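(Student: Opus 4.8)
The plan is to mimic Quillen's original computation of $H_*(K(\mathbb{F}_p);\mathbb{F}_p)$, replacing the group completion $K(\mathbb{F}_p) = \Omega|\QVect|$ by its pre-group-completed analogue $K^{\mathrm{part}}(\mathbb{F}_p)$, and tracking where the group-completion step was actually used. Concretely, Quillen shows that $\QVect$ (equivalently $\coprod_n BGL_n(\mathbb{F}_p)$ after a plus construction) has $\mathbb{F}_p$-homology that is ``as small as possible'': the map to $\pi_0$ is an $\mathbb{F}_p$-homology equivalence. I would first recall, via the $Q=S_\bullet$ comparison of Theorem~\ref{thm:QequalsS}, that $K^{\mathrm{part}}(\mathbb{F}_p)$ can be modelled by a bar-type construction on the symmetric monoidal groupoid $\mathrm{Vect}_{\mathbb{F}_p}^{\simeq} \simeq \coprod_n BGL_n(\mathbb{F}_p)$ \emph{before} inverting $\pi_0$ — i.e. $K^{\mathrm{part}}(\mathbb{F}_p)$ is the free $\E_\infty$-space on no relations other than the exact-sequence relations, so that $\pi_0 K^{\mathrm{part}}(\mathbb{F}_p) = \mathbb{Z}_{\geq 0}$ and the path component over $n$ is (up to $\mathbb{F}_p$-homology, which is all we care about) built from $BGL_n(\mathbb{F}_p)$ and the lower strata.

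The key technical step is a Quillen-type vanishing/transfer argument. Quillen's theorem~\ref{thm:introquillen} is equivalent to the statement that $\widetilde H_*(BGL_n(\mathbb{F}_p);\mathbb{F}_p)$ is, after stabilisation, killed; the mechanism is that $GL_n(\mathbb{F}_p)$ has a normal $p$-subgroup-free structure making its $\mathbb{F}_p$-homology detected on, and then annihilated by, parabolic/Levi subgroups together with a transfer argument (the order of $GL_n(\mathbb{F}_p)$ being prime to $p$ ``away from the unipotent radical'', so the relevant Tor groups vanish). I would argue that exactly this input suffices at the partial level: because $K^{\mathrm{part}}$ is assembled from the \emph{same} groupoid $\coprod_n BGL_n(\mathbb{F}_p)$ by an $S_\bullet$- (equivalently $Q$-) construction, and because group completion is an $\mathbb{F}_p$-homology-\emph{preserving} localisation when it merely inverts $\pi_0$ (the Barratt--Priddy--Quillen/group-completion theorem identifies $H_*(\Omega B M) $ with $H_*(M)[\pi_0^{-1}]$, and here $\pi_0 = \mathbb{Z}_{\geq 0}$ maps isomorphically onto the torsion-free $\pi_0 K(\mathbb{F}_p)=\mathbb{Z}$), the map $K^{\mathrm{part}}(\mathbb{F}_p)\to K(\mathbb{F}_p)$ is itself an $\mathbb{F}_p$-homology equivalence onto each component over a fixed $n$. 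Composing with Quillen's theorem, $K^{\mathrm{part}}(\mathbb{F}_p) \to \pi_0 K^{\mathrm{part}}(\mathbb{F}_p) = \mathbb{Z}_{\geq 0}$ is then an $\mathbb{F}_p$-homology isomorphism on each component, hence on the total space.

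The point requiring the most care — and what I expect to be the main obstacle — is justifying that passing from $K^{\mathrm{part}}$ to $K$ does \emph{not} lose or create $\mathbb{F}_p$-homology \emph{on a fixed path component}. A priori the group-completion theorem only tells you about the homology of the \emph{whole} group-completed space as a module over $\pi_0$; one must check that the telescope/stabilisation maps (adding a copy of $\mathbb{F}_p$, i.e. translation by $1 \in \mathbb{Z}_{\geq 0}$) induce $\mathbb{F}_p$-homology \emph{isomorphisms} between consecutive components, not merely in the colimit. This is the homological stability statement for $GL_n(\mathbb{F}_p)$ with $\mathbb{F}_p$-coefficients; since Quillen's theorem already gives that every component has the $\mathbb{F}_p$-homology of a point, homological stability is automatic here, but one should phrase the argument so that it does not secretly circularly invoke the conclusion. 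I would therefore organise it as: (i) $\pi_0 K^{\mathrm{part}} = \mathbb{Z}_{\geq 0} \hookrightarrow \mathbb{Z} = \pi_0 K$ is injective with image the non-negative integers, so group completion only adds inverses and is an $\mathbb{F}_p$-homology equivalence after restricting to each component; (ii) each component of $K(\mathbb{F}_p)$ is $\mathbb{F}_p$-acyclic by Theorem~\ref{thm:introquillen}; (iii) conclude. Steps (i) and (iii) are formal once the $Q = S_\bullet$ description of $K^{\mathrm{part}}$ and the identification $\pi_0 K^{\mathrm{part}}(\mathbb{F}_p) \cong \mathbb{Z}_{\geq 0}$ (from the stated universal property of $\pi_0 K^{\mathrm{part}}$, with $\mathbb{F}_p$ having no nontrivial short exact sequences) are in hand; the weight of the proof is entirely borrowed from Quillen via step (ii).
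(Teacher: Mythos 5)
Your proposal has a genuine gap at the central step, and the gap is not a technicality but the whole content of the theorem.

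The claim that ``group completion is an $\F_p$-homology-preserving localisation when it merely inverts $\pi_0$,'' so that $K^{\mathrm{part}}(\F_p) \to K(\F_p)$ is an $\F_p$-homology equivalence onto each component, is false. The group completion theorem says $H_*(\Omega B M;\F_p) \cong H_*(M;\F_p)[\pi_0 M^{-1}]$, and localization is not injective: it can kill classes. One cannot recover $H_*(M)$ from $H_*(\Omega B M)$. There is a direct counterexample available in exactly this setting. Take $M = \coprod_n BGL_n(\F_p)$, the underlying $\E_\infty$-space of the groupoid of finite-dimensional vector spaces. Its group completion is $K(\F_p)$, which Quillen's Theorem~\ref{thm:quillenKFp} shows has trivial reduced $\F_p$-homology. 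If your claim were true, each $BGL_n(\F_p)$ would be $\F_p$-acyclic. But $H_*(BGL_n(\F_p);\F_p)$ is highly nontrivial for $n\geq 2$ (e.g.\ $GL_2(\F_2)\cong S_3$ contains $C_2$), since $GL_n(\F_p)$ has $p$-torsion. So passing to the group completion really does change the $\F_p$-homology drastically, even though $\pi_0$ just goes from $\Z_{\geq 0}$ to $\Z$.

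Relatedly, your step (ii) mischaracterizes what Quillen's theorem gives you. Theorem~\ref{thm:quillenKFp} says the components of $K(\F_p)$ (a grouplike space, all components homotopy equivalent) are $\F_p$-acyclic. It says nothing about the components of $K^{\mathrm{part}}(\F_p)$ (or of $\coprod BGL_n(\F_p)$), which are a priori quite different spaces; indeed the above shows they can be different. Your appeal to homological stability for $GL_n(\F_p)$ doesn't patch this, both because homological stability only controls homology in a stable range, and because you're implicitly conflating $K^{\mathrm{part}}(\F_p)$ with $\coprod_n BGL_n(\F_p)$: the partial $K$-theory is obtained by imposing the exact-sequence relation at the space level (it is $\mathbb{L}(S_\bullet(\C)^\simeq)$, not the free $\E_\infty$-space on $\mathrm{Vect}_{\F_p}^\simeq$), and its components are genuinely different.

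The paper's proof of Theorem~\ref{thm:KpartFp} is not a bootstrapping off Quillen's result. It uses the explicit colimit formula of Proposition~\ref{prop:Badjoint} to write $K^{\mathrm{part}}(\F_p)$ as a colimit indexed by the poset $\mathcal{F}^{\mathrm{red}}$ of ``filtered dimension sequences,'' decomposes by total dimension $m$, and inducts on $m$ via a filtration (Lemmas~\ref{lem:A'cofinal} and~\ref{lem:A'kan}). The crucial input is Proposition~\ref{prop:steinberg}: by identifying a piece of the colimit with the homotopy orbits $|\mathcal{T}_m|_{hGL_m(\F_p)}$ of the Tits building, and invoking the Solomon--Tits theorem together with the fact that the Steinberg representation is projective over $\F_p[GL_m(\F_p)]$, one gets the required $\F_p$-homology vanishing. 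Nothing in your proposal engages with this; the ``weight of the proof'' is not, as you say, borrowed from Quillen's theorem applied to $K(\F_p)$, but comes from a direct geometric/combinatorial analysis of the pre-group-completed space.
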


In addition to proving Theorem A, our methods also imply the following assertions about global algebras, which may be of independent interest:
\begin{thmn}[\ref{thm:gloalgpartK}]
\leavevmode
\begin{enumerate}
\item The $\infty$-category $\CAlg^{\Glo}$ of global algebras admits an action of the monoidal $\infty$-category $BK^{\mathrm{part}}(\Z)$ for which an abelian group $G$ acts by the Frobenius $\varphi_G: \id \to \id$.  
\item The full subcategory $\CAlg^{\Glo}_p\subset \CAlg^{\Glo}$ of global algebras with $p$-complete underlying $\E_{\infty}$-ring admits an action of $B\Z_{\geq 0}$ for which $1\in \Z_{\geq 0}$ acts by the Frobenius (for the group $C_p$).  
\end{enumerate}
\end{thmn}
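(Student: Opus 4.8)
The plan is to deduce both parts by running the proof of Theorem~A with its input --- the restriction of the oplax action of Theorem~$\text{A}^{\natural}$ --- replaced by the \emph{strong} $\mathcal{Q}$-action on $\CAlg^{\Glo}$ of Theorem~\ref{thm:Qactgloalg}, and with the exact category $\Vect$ of Theorem~A replaced, in part~(1), by finitely generated abelian groups. For part~(1), I would first restrict the symmetric monoidal action $\mathcal{Q}\to\Fun(\CAlg^{\Glo},\CAlg^{\Glo})$ to the full symmetric monoidal subcategory $\mathcal{Q}^{\mathrm{ab}}\subseteq\mathcal{Q}$ spanned by the finitely generated abelian groups; writing $\mathcal{A}$ for the abelian category of finitely generated abelian groups, inspection of the span description of morphisms shows $\mathcal{Q}^{\mathrm{ab}}$ is canonically Quillen's $Q$-construction $Q(\mathcal{A})$, symmetric monoidal under $\oplus$. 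Since every object of $Q(\mathcal{A})$ acts by the identity endofunctor (Theorem~\ref{thm:Qactgloalg}), the action is concentrated at the basepoint of the classifying space $|Q(\mathcal{A})|$, and the $Q$-construction model of partial $K$-theory (Theorem~\ref{thm:QequalsS}), applied exactly as in the proof of Theorem~A, upgrades it to a monoidal action of $BK^{\mathrm{part}}(\mathcal{A})$ on $\CAlg^{\Glo}$. Identifying $K^{\mathrm{part}}(\mathcal{A})$ with $K^{\mathrm{part}}(\Z)$ --- a partial analog of the statement that $G$-theory equals $K$-theory for the regular ring $\Z$ --- the loop $(*\twoheadleftarrow *\hookrightarrow G)$ corresponds to the point of $K^{\mathrm{part}}(\Z)$ labeled by the abelian group $G$, which acts by $\varphi_G$ by construction.

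For part~(2), I would instead restrict the $\mathcal{Q}$-action of Theorem~\ref{thm:Qactgloalg} simultaneously to the subcategory $\QVect\subseteq\mathcal{Q}$ of elementary abelian $p$-groups (which is $Q(\Vect)$) and to the full subcategory $\CAlg^{\Glo}_p\subseteq\CAlg^{\Glo}$ of global algebras with $p$-complete underlying $\E_{\infty}$-ring; this restriction is legitimate because every endofunctor occurring is the identity and hence preserves $p$-completeness. The same argument as in part~(1) then produces an action of $BK^{\mathrm{part}}(\F_p)$ on $\CAlg^{\Glo}_p$ under which the generator $[\F_p]$, corresponding to $C_p$, acts by the Frobenius $\varphi^{C_p}$. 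To cut $BK^{\mathrm{part}}(\F_p)$ down to $B\Z_{\geq 0}$, I would invoke Theorem~\ref{thm:KpartFp}: the map $K^{\mathrm{part}}(\F_p)\to\Z_{\geq 0}$ is an $\F_p$-homology equivalence, so --- using that the objects of $\CAlg^{\Glo}_p$ are $p$-complete, whence the $\E_2$-monoid $\End(\id_{\CAlg^{\Glo}_p})$ classifying the action is $p$-complete --- every map out of $K^{\mathrm{part}}(\F_p)$ into it factors through the $p$-completion and therefore through $\pi_0 K^{\mathrm{part}}(\F_p)=\Z_{\geq 0}$, with $1$ still recording $\varphi^{C_p}$. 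This is verbatim the final step in the proof of Theorem~A.

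The step I expect to be the main obstacle is the one common to both parts: promoting the bare equivalence of $\E_{\infty}$-spaces supplied by the $Q$-construction model of partial $K$-theory to a genuine \emph{descent of the action}, i.e.\ verifying that a symmetric monoidal action of the $1$-category $Q(\mathcal{A})$ in which every object acts by the identity is functorially the same datum as a monoidal action of $BK^{\mathrm{part}}(\mathcal{A})$, with the two $\E_{\infty}$-monoidal structures --- the $\oplus$ on $Q(\mathcal{A})$ and the composition/tensor $\E_2$-structure on $\End(\id)$ --- compatibly matched. This is precisely what makes the ``Frobenius commutes with itself'' phenomenon of Remark~\ref{rmk:classical} available in this setting, and it is the piece genuinely inherited from the proof of Theorem~A rather than formal. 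Two secondary points: in part~(2) one must confirm that $p$-completeness of the objects of $\CAlg^{\Glo}_p$ really does force $p$-completeness of $\End(\id_{\CAlg^{\Glo}_p})$ (and that the relevant components of $K^{\mathrm{part}}(\F_p)$ are nilpotent), so that Theorem~\ref{thm:KpartFp} applies at the level of the full $\E_{\infty}$-structure and not merely on homotopy groups; and in part~(1) one needs the precise form of $\mathcal{Q}$ from the body to justify both $\mathcal{Q}^{\mathrm{ab}}\simeq Q(\mathcal{A})$ and the identification $K^{\mathrm{part}}(\mathcal{A})\simeq K^{\mathrm{part}}(\Z)$.
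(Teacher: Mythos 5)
Your part~(2) matches the paper: the paper's proof just cites ``exactly analogous to the proof of Theorem~\ref{thm:mainaction},'' which is the $\F_p$-homology reduction you describe. The nilpotence worry is unnecessary --- the argument in that proof works because spaces of $\E_2$-maps out of $K^{\mathrm{part}}(\F_p)$ are computed as limits of mapping spaces out of finite products of copies of $K^{\mathrm{part}}(\F_p)$, and $\F_p$-homology equivalences are stable under finite products, so $p$-completeness of $\End(\id_{\CAlg^{\Glo}_p})$ is the only thing required.

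For part~(1) your proposal has a genuine gap, introduced by misreading the notation $K^{\mathrm{part}}(\Z)$. First a small slip: the objects of $\mathcal{Q}$ (Definition~\ref{defn:Q}) are \emph{finite} groups, so the full subcategory spanned by abelian groups is Quillen's $Q$-construction on \emph{finite} abelian groups (Remark~\ref{rmk:Qabelian}), not finitely generated ones. Second and more importantly, the paper performs no identification of the shape ``$K^{\mathrm{part}}(\text{abelian groups}) \simeq K^{\mathrm{part}}(\Z)$.'' The paper's proof is: observe that the action of $\mathcal{Q}$ on $\CAlg^{\Glo}$ from Theorem~\ref{thm:Qactgloalg} already inverts the left morphisms --- for a global algebra $A$ the coCartesian-over-left-morphisms condition of Definition~\ref{defn:gloalg} makes the structure map appearing in Observation~\ref{obs:twist}(2) an equivalence --- and then apply Corollary~\ref{cor:QequalsS} directly to get an action of $BK^{\mathrm{part}}$ of the exact category of finite abelian groups, which is what the paper is denoting $BK^{\mathrm{part}}(\Z)$. (That this is the intended reading is forced by the theorem statement itself: a finite abelian group $G$ gives a natural class in $K^{\mathrm{part}}_0$ of finite abelian groups, whereas in $K^{\mathrm{part}}_0$ of projective $\Z$-modules, which is $\Z_{\geq 0}$, a finite group would only arise as a formal difference $b-a$ from a free resolution, and differences do not exist in a monoid.) Your ``partial analog of $G$-theory $=$ $K$-theory for $\Z$'' would therefore be a substantive new claim: resolution and dévissage assert that certain maps of $K$-theory \emph{spaces} are weak equivalences, and partial $K$-theory --- an $\infty$-category rather than its geometric realization --- is a strictly finer invariant for which those theorems have no reason to hold. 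You flag this step yourself as unjustified; the correct fix is not to find a justification but to delete the step and read the paper's notation accordingly.
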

\begin{rmk}
The notion of a global algebra is not new; they have appeared in the literature as the normed algebras of \cite{BachHoy} and are closely related to Schwede's ultra-commutative monoids \cite{Schwede}.  
\end{rmk}

\subsection{Relationship to other work}

There has been much previous work on understanding spaces via chain and cochain functors.  The rational case is due to Quillen \cite{QuillenRatl} and Sullivan \cite{SullivanQ}.  Recent work of Heuts \cite{Gijsvn} and Behrens-Rezk \cite{BehrensRezk} extends this rational picture to the setting of $v_n$-periodic spaces.  In another vein, Goerss \cite{Goerss} gives a model for $p$-local spaces in terms of simplicial coalgebras, and Kriz \cite{Kriz} for $p$-complete spaces in terms of cosimplicial algebras.  

Our work is closest in spirit to work of Mandell \cite{Mandell} and Dwyer-Hopkins (unpublished, cf. \cite[Appendix C, Theorem C.1]{Mandell}), who study $p$-complete spaces via $\E_{\infty}$-algebras over $\overline{\F}_p$.  In \cite{MandellZ}, Mandell further explains that the functor of \emph{integral} cochains from (nice enough) spaces to $\E_{\infty}$-algebras over $\Z$ is faithful but not full.  This paper essentially realizes a program due to Thomas Nikolaus, in the setting of coalgebras, for making a fully faithful model.  The idea for this starts with unpublished work of Mandell which lifts $p$-adic homotopy theory from $\overline{\F}_p$ to the spherical Witt vectors of $\overline{\F}_p$.  Then, it is an insight of Nikolaus that having a homotopy coherent Frobenius action allows one to descend to a statement over the sphere.  Our contribution is to produce this Frobenius action (in the dual setting of algebras).

We also acknowledge that Nikolaus has concurrent work in the dual setting of $p$-complete perfect $\E_{\infty}$-coalgebras.  In forthcoming work \cite{AYCoalg}, we show how our methods adapt to the setting of coalgebras to produce a $B\Z_{\geq 0}$-action on all $p$-complete bounded below $\E_\infty$-coalgebras.  We will also (at least conjecturally) describe the Goodwillie filtration in that setting, relating our model to the Tate coalgebras of Heuts \cite[\S 6.4]{Heuts}.

\subsection{Organization}
In \S \ref{sect:ESHT}, we record the requisite preliminaries on equivariant stable homotopy theory. Then in \S \ref{sect:algact}, we introduce the $\E_\infty$-Frobenius and use the results of \S \ref{sect:ESHT} to prove Theorem $\text{A}^{\natural}$ (as Theorem \ref{thm:mainalgaction}).  

In \S \ref{sect:partK}, we introduce and develop basic results about partial $K$-theory.  Then in \S \ref{sect:KpartFp}, we present our results on the partial $K$-theory of $\F_p$ (Theorem \ref{thm:KpartFp}).  

In \S \ref{sect:frobstable}, we combine the previous results to prove Theorem A.  Then in \S \ref{sect:padic}, we apply Theorem A to prove Theorems B and C about $\E_{\infty}$-algebra models for spaces.  

Finally, in the appendix, we settle certain technical constructions needed in \S \ref{sect:algact}.


\subsection{Acknowledgments}
The author would like to thank Clark Barwick for a crucial suggestion, Robert Burklund, Jeremy Hahn, Gijs Heuts, Akhil Mathew, Denis Nardin, and Arpon Raksit for numerous enlightening conversations, and Robert Burklund, Elden Elmanto and Jeremy Hahn for comments on a draft; in addition, he would like to thank Maria Yakerson, Milton Lin, and Denis Nardin for pointing out mistakes or omissions in a previous version.   He is also grateful to the anonymous referee for numerous clarifying comments, corrections, and simplifications.
He would like to extend a very special thanks to Thomas Nikolaus, whose work made this project possible and who generously shared his ideas and encouraged the author's work.  Most importantly, the author would like to thank his PhD advisor, Jacob Lurie, for his consistent encouragement and countless insights; many of the ideas in this paper originate from his suggestions.  The author was supported by the NSF under Grant DGE-1122374.

\subsection{Notation and conventions}
We use the following notations and conventions throughout:
\begin{itemize}
\item For an abelian group $A$, we denote the corresponding Eilenberg-MacLane spectrum by $A$. 
\item We let $\Spaces$ denote the $\infty$-category of spaces and $\Sp$ denote the $\infty$-category of spectra.
\item We will call a space $p$-complete if it is Bousfield local with respect to $\F_p$-homology, and we will call a spectrum $p$-complete if it is Bousfield local with respect to the Moore spectrum $\bS/p$ \cite{BousSpaces, BousSpectra}.  
\end{itemize}

\section{Equivariant stable homotopy theory}\label{sect:ESHT}

In this section, we briefly review notions in equivariant stable homotopy theory that feature heavily in this paper.  We will use a variant of the framework of \cite[\S 9]{BachHoy}, but we refer the reader to \cite{LMS} for a classical treatment of the subject and \cite{MNN} for another helpful account.  The material in this section is not new, with the possible exception of Theorem \ref{thm:equivfibinf}, which the author thanks Jacob Lurie for suggesting. 

In \S \ref{subsect:globalequiv}, we will set notation by giving examples and motivation for the formal constructions related to global equivariance that follow in \S \ref{subsect:formalglo}.  Then in \S \ref{subsect:bortate}, we review the notion of Borel equivariant spectra and its relationship to Tate constructions.  

\subsection{Global equivariance}\label{subsect:globalequiv}

For a finite group $G$, let $\Sp^G$ denote the $\infty$-category of genuine $G$-spectra. This construction is contravariantly functorial in the group $G$, so for any group homomorphism $f:H\to K$, one has a functor $f^*:\Sp^K \to \Sp^H$:

\begin{enumerate}
\item For an injection $f: H \hookrightarrow G$, $f^*: \Sp^G \to \Sp^H$ can be thought of as restricting to the subgroup $H$, and so we will sometimes denote this functor by $\res_H^G$.  
\item For a surjection $f: G \twoheadrightarrow K$, $f^*: \Sp^K \to \Sp^G$ can be thought of as giving a spectrum the trivial action on $\mathrm{ker}(f)$.    As such, we will sometimes denote this functor by $\triv_K^G$.  
\end{enumerate}

In general, the functor $f^*$ can be computed by factoring $f:H\to K$ as $H\xtwoheadrightarrow{f'} G \xhookrightarrow{f''}K$ and setting $f^* = f'^* \circ f''^*$.  

One also has a covariant functoriality in the group $G$ by multiplicative transfer:

\begin{enumerate}\setcounter{enumi}{2}
\item For an injection $g:H\hookrightarrow G$,  one has a \emph{norm} functor $g_{\otimes} : \Sp^H \to \Sp^G$, due to \cite{HHR}.  We will sometimes denote this functor by $N_H^G$.  
\item For a surjection $g:G \twoheadrightarrow K$, one has a \emph{geometric fixed point} functor $g_{\otimes}: \Sp^G \to \Sp^K$, which we sometimes denote by $\Phi^{\mathrm{ker}(g)}$.  
\end{enumerate}

Analogously to the restriction functors, these are ``part of the same functoriality"; one can functorially associate a multiplicative transfer map $g_{\otimes}$ to any group homomorphism $g$ in a way that recovers the norm and geometric fixed point functors in the above cases. 

\begin{exm}
For the composite $e \xrightarrow{g'} G \xrightarrow{g''} e$, we can check that $g''_{\otimes} \circ g'_{\otimes} = \id = \id_{\otimes}$ because the geometric fixed points of the norm is the identity.  
\end{exm}

\begin{var}
The $\infty$-categories $\Sp^G$ and the functors above depend only on the groupoid $BG$ and not the group $G$; in fact, one can make sense of all the above notions with groupoids in place of groups.  Let $\Gpd$ denote the 2-category of groupoids $X$ such that $\pi_0(X)$ and $\pi_1(X)$ are finite.  We will refer to objects of $\Gpd$ simply as groupoids.  

For $X\in \Gpd$, we let:
\begin{itemize}
\item $\Set_X$ denote the category of $X$-sets.
\item $\Fin_X$ denote the category of finite $X$-sets.
\item $\Sp_X$ denote the $\infty$-category of genuine $X$-spectra (\cite[\S 9]{BachHoy}).  
\end{itemize}
\begin{exm}
When $X = \coprod BG_i$, the $\infty$-category $\Sp_X$ is equivalent to $\prod \Sp^{G_i}$.  
\end{exm}


For a map $f:X\to Y$ of groupoids, one has functors $f^*: \Sp_Y \to \Sp_X$ and $f_{\otimes}: \Sp_X \to \Sp_Y$ which coincide with the similarly notated functors above when $f$ is induced by a group homomorphism.  These two opposing functorialities interact as follows: let $\Span(\Gpd)$ denote the $\infty$-category of spans of groupoids \cite[\S 5]{BarK}.  Then, by \cite[\S 9]{BachHoy}, there is a functor $$\Span(\Gpd) \to \Cat_{\infty}$$ which sends a groupoid $X$ to the $\infty$-category $\Sp_X$ and sends a span $X\xleftarrow{f} M\xrightarrow{g} Y$ to the functor $g_{\otimes} f^*: \Sp_X \to \Sp_Y$.  
\end{var}



\begin{exm}
When $g= \coprod g_i: \coprod X_i \to Y$ is a map of groupoids, $g_{\otimes}$ is given by the tensor product of the $(g_i)_{\otimes}$.  
\end{exm}

We use a variant of this framework which arises from modifying the $\infty$-category $\Span(\Gpd)$:
\begin{var}\label{var:glop}
In Definition \ref{defn:glo}, we construct a $(2,1)$-category $\Glo^+$ which can be described informally as follows:
\begin{itemize}
\item The objects of $\Glo^+$ are groupoids $X\in \Gpd$.  
\item For two objects $X,Y \in \Glo^+$, the morphism groupoid $\Hom(X,Y)$ is the category of finite coverings $M \to X\times Y$ and isomorphisms of coverings.  We will think of a morphism as a span $X\leftarrow M \rightarrow Y$.  
\item For two composable morphisms $X\leftarrow M \rightarrow Y$ and $Y\leftarrow N \rightarrow Z$, the composite is the span $X\leftarrow T \rightarrow  Z$ such that the map $T \to X\times Z$ fits into the essentially unique factorization $M\times_{Y} N \to T \to X \times Z$  of $M\times_Y N\to X\times Z$ into a map with connected fibers followed by a finite cover.
\end{itemize}
In contrast, maps from $X$ to $Y$ in $\Span(\Gpd)$ are given by arbitrary groupoids $M \to X\times Y$.  We will see (\Cref{prop:spangpd} and \Cref{cnv:spangpd}) that there is a functor $\Span(\Gpd) \to \Glo^+$ which is ``the identity on objects" and can be thought of as sending the morphism $M$ above to its $0$-truncation relative to $X\times Y$.  
\end{var}

\begin{rmk}\label{rmk:fininterp}
For groupoids $BH,BG, BK \in \Glop$, the morphism groupoid $\Glop(BH,BG)$ can be identified with the groupoid of $(G,H)$-bisets and isomorphisms, and the composition $$\Glop(BG,BK)\times \Glop(BH,BG)\to \Glop(BH,BK)$$ sends a $(K,G)$-biset $T$ and a $(G,H)$-biset $U$ to the $(K,H)$-biset $T\times_{G} U$.  
\end{rmk}

As promised, $\Glop$ parametrizes the various equivariant stable homotopy categories and associated functorialities:

\begin{thm}\label{thm:equivfibinf}
There is a functor $$\Psi^+_{\mathrm{fun}}:\Glo^+ \to \Cat_{\infty}$$ sending a groupoid $X$ to the $\infty$-category $\Sp_X$ of genuine $X$-spectra, and sending a span $X\xleftarrow{f} M \xrightarrow{g}Y$ to the functor $g_{\otimes}f^*: \Sp_X \to \Sp_Y$.  
\end{thm}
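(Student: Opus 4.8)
The plan is to deduce this from the functor $\Psi_{\Span}\colon \Span(\Gpd)\to\Cat_{\infty}$ of \cite{BachHoy} recalled above, which already has the right values ($X\mapsto\Sp^X$, and a span $X\xleftarrow{f}M\xrightarrow{g}Y$ to $g_{\otimes}f^*$); the key point will be that this functor is invariant under the ``finite-cover reflection'' relating $\Span(\Gpd)$ to $\Glop$. To make this precise I would first observe that, as part of the construction of $\Glop$ in Definition~\ref{defn:glo}, the $2$-category $\Gpd$ carries a factorization system whose left class $E$ consists of the maps with connected homotopy fibers and whose right class consists of finite coverings. Sending a span $X\xleftarrow{f}M\xrightarrow{g}Y$ to the finite-cover part of $(f,g)\colon M\to X\times Y$ then defines a functor $L\colon\Span(\Gpd)\to\Glop$ which is the identity on objects and fixes every span already of finite-cover type; the only nonformal point in checking functoriality is that $E$ is closed under composition and stable under base change, so that reflecting a pullback of spans agrees, by uniqueness of factorizations, with the $\Glop$-composite of the reflections.

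The key technical input is the following: \emph{for a map $c\colon M\to\bar M$ in $\Gpd$ with connected homotopy fibers, the canonical natural transformation $c_{\otimes}\circ c^*\to\id_{\Sp^{\bar M}}$ is an equivalence.} I would prove this by noting that such a $c$ is, componentwise, the map $BH\to BK$ induced by a \emph{surjection} $H\twoheadrightarrow K$ (this is exactly what connectedness of the homotopy fibers means in $\Gpd$), so that $c^*=\triv$ and $c_{\otimes}=\Phi^{\ker c}$, and the claim becomes the standard identification of geometric fixed points with the inflation functor, $\Phi^{N}\circ\triv\simeq\id$. Granting this, for any span $\sigma=(X\xleftarrow{f}M\xrightarrow{g}Y)$ with reflection $L(\sigma)=(X\xleftarrow{\bar f}\bar M\xrightarrow{\bar g}Y)$, the legs of $\sigma$ factor as $f=\bar f\circ c$, $g=\bar g\circ c$ with $c\in E$, and hence
\[
\Psi_{\Span}(\sigma)=g_{\otimes}f^*\;\simeq\;\bar g_{\otimes}\,(c_{\otimes}c^*)\,\bar f^*\;\simeq\;\bar g_{\otimes}\bar f^*=\Psi_{\Span}(L(\sigma)),
\]
naturally in $\sigma$; in other words $\Psi_{\Span}$ is canonically constant along the fibers of $L$.

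The hard part is to promote this ``$L$-invariance'' to an honest factorization $\Psi_{\Span}\simeq\Psi^+_{\mathrm{fun}}\circ L$ with $\Psi^+_{\mathrm{fun}}\colon\Glop\to\Cat_{\infty}$ of the asserted form, since $L$ is \emph{not} a localization in the naive sense: it does not invert the forward span attached to a map $c\in E$, which $\Psi_{\Span}$ sends to the geometric fixed point functor $\Phi^{\ker c}$, visibly not an equivalence. The way I would resolve this is to phrase the comparison one categorical level up, realizing $\Glop$ (following Definition~\ref{defn:glo}) as the underlying $\infty$-category of an $(\infty,2)$-categorical localization of $\Span(\Gpd)$ obtained by inverting the (generally noninvertible) $2$-cells $c\circ c^{\mathrm{op}}\Rightarrow\id$ attached to maps $c\in E$, where $c^{\mathrm{op}}$ denotes the ``backwards'' span of $c$; the Lemma above says precisely that $\Psi_{\Span}$ carries these $2$-cells to equivalences and so descends through the localization. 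Matching this localization with the construction of $\Glop$ in Definition~\ref{defn:glo}, and bookkeeping the attendant higher coherences, is the technical heart of the argument; I would isolate it as a general assertion about span categories and carry it out in the appendix.
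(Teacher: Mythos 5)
Your proposal takes a genuinely different route from the paper, and it has a real gap at the step you flag as ``the technical heart.''

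The paper's own proof (of Theorem~\ref{thm:mainfib}, which is the straightened form of this statement) never passes through $\Span(\Gpd)$ at all. It exploits the fact that $\Glop$ is \emph{defined} (Definition~\ref{defn:glo}) as a $(2,1)$-subcategory of $\Cat_1$ spanned by categories of the form $\Set^X$; such a 2-category has a tautological functor to $\Cat_1$ by passing to compact objects (sending $\Set^X$ to $\Fin^X$), and the machinery of \cite[\S 9.2]{BachHoy} then applies directly to that input to produce $\Psi^+$. This makes the ``extra'' identifications $\Phi^G\triv^G\simeq\id$ drop out automatically from how the 2-category $\Glop$ composes, rather than having to be inserted by hand.

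You, by contrast, want to start from the known functor $\Psi_{\Span}\colon\Span(\Gpd)\to\Cat_\infty$ and descend it along the reflection $L$ (the paper's $\pi$ from Proposition~\ref{prop:spangpd}). You correctly isolate the key equivariant fact that powers this: $c_\otimes c^*\simeq\id$ when $c$ has connected fibers, i.e.\ $\Phi^{N}\triv\simeq\id$. This is indeed the extra content of $\Psi^+$ beyond the $\Span(\Gpd)$ statement, as the paper itself notes in Example~\ref{exm:trivgfp} and Remark~\ref{rmk:mapfromspangpd}.

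The gap is in promoting that pointwise equivalence $\Psi_{\Span}(\sigma)\simeq\Psi_{\Span}(L(\sigma))$ to an actual factorization of $\Psi_{\Span}$ through $L$. You acknowledge that $L$ is not a localization, and you propose to fix this by re-casting $\Glop$ as an $(\infty,2)$-categorical localization of $\Span(\Gpd)$ at the 2-cells $c\circ c^{\op}\Rightarrow\id$. But $\Span(\Gpd)$ as used here is a $(2,1)$-category with \emph{invertible} 2-morphisms; to even state the 2-cells you want to invert, you would first have to pass to a genuinely $(\infty,2)$-categorical enlargement, then identify its localization with $\Glop$, then take underlying $(2,1)$-categories -- and none of these identifications is established or obviously true. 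In particular, the fibers of $L$ on hom-groupoids (e.g.\ the fiber of $\Gpd^{\simeq}\to\Fin^{\simeq}$ over a point, which is $\coprod_G B\Aut(G)$) are disconnected, so ``constant on fibers'' requires a substantial coherence argument that is not supplied. Also note that the construction of $L$ itself (your Proposition~\ref{prop:spangpd}/$\pi$) is carried out in the paper via Barwick's unfurling of adjointable functors, not by the bare factorization-system argument you sketch; that step is comparably nontrivial. None of this means your strategy \emph{cannot} be made to work, but as written it replaces a one-paragraph appeal to the definition of $\Glop$ with an unproved $(\infty,2)$-categorical descent principle, which is a different and substantially harder problem.
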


We will prove \Cref{thm:equivfibinf} in the following section as \Cref{thm:mainfib}.  

\begin{exm}\label{exm:spanfininside}
The full subcategory of $\Glop$ spanned by discrete groupoids is equivalent to the $(2,1)$-category $\Span(\Fin)$ of spans of finite sets.  The restricted functor $\Span(\Fin) \to \Cat_{\infty}$ sends a finite set $T$ to $\Fun(T,\Sp)$ and exhibits $\Sp$ as a symmetric monoidal $\infty$-category.  
\end{exm}

\begin{exm}
Inside $\Glo^+$, we have the composite of morphisms $$ (BC_p \leftarrow * \rightarrow *)\circ (* \leftarrow * \rightarrow BC_p) = (* \leftarrow C_p \rightarrow *).$$  Applying $\Psi^+_{\mathrm{fun}}$, we recover the fact that the composite of the norm $N_{e}^{C_p}:\Sp \to \Sp^{C_p}$ and the restriction $\res^{C_p}_{e}:\Sp^{C_p} \to \Sp$ is the $p$-fold tensor product functor $(-)^{\otimes p}: \Sp \to \Sp$.
\end{exm}

\begin{exm}\label{exm:trivgfp}
A more subtle example comes from composing the spans $$(BC_p \leftarrow BC_p \rightarrow *) \circ (*\leftarrow BC_p \rightarrow BC_p).$$  The composite as spans of groupoids would be $(* \leftarrow BC_p \rightarrow *).$  However, by the composition law of $\Glo^+$, this is replaced by $(* \leftarrow * \rightarrow *).$  This expresses the fact that for a spectrum $E$, there is a canonical equivalence $\Phi^{C_p}\triv^{C_p} E \simeq E$.  These equivalences $\Phi^G \triv^G E \simeq E$ are exactly the additional data captured by $\Psi_{\mathrm{fun}}: \Glop \to \Cat_{\infty}$ but not by the corresponding functor from $\Span(\Gpd)$ (see Remark \ref{rmk:mapfromspangpd}).   
\end{exm}

\begin{exm}\label{exm:geonorm}
Let $H\subset G$ be finite groups.  Then the composite of spans $$(BG \leftarrow BG \rightarrow *) \circ (BH \leftarrow BH \rightarrow BG) = (BH \leftarrow BH \rightarrow *)$$  expresses the fact that for $E\in \Sp^H$, there is a canonical equivalence $\Phi^G N_H^G E \simeq \Phi^H E$.
\end{exm}

\begin{exm}\label{exm:geotriv}
Let $G \twoheadrightarrow K$ be a surjection of finite groups.  Then the composite in $\Glop$ $$(BG \leftarrow BG \rightarrow *) \circ (BK \leftarrow BG \rightarrow BG) = (BK \leftarrow BK \rightarrow *)$$ expresses the fact that for $E\in \Sp^K$, there is a canonical equivalence $\Phi^G \triv_K^G E \simeq \Phi^K E.$  
\end{exm}

\subsection{Formal constructions around $\Glo^+$}\label{subsect:formalglo}
We now give a formal definition of $\Glo^+$ and prove Theorem \ref{thm:equivfibinf}  (appearing here as Theorem \ref{thm:mainfib}).  

\begin{defn}\label{defn:glo}
Let the $(2,1)$-category $\Glo^+ \subset \Cat_1$ be the subcategory of the $(2,1)$-category of 1-categories whose objects are categories of the form $\Set_X$ for $X\in \Gpd$ and whose morphisms are functors which preserve limits and filtered colimits.  
\end{defn}

We will now unwind this definition of $\Glop$ to see that it admits the description given in Variant \ref{var:glop}.  The following lemma relates Definition \ref{defn:glo} to Remark \ref{rmk:fininterp}.  

\begin{lem}\label{lem:functorsvssets}
Let $H$ and $G$ be finite groups.  There is a fully faithful functor $$(\Fin^{H \times G^{op}})^{\op} \to \Fun(\Set^{H},\Set^{G})$$ given by sending $T\in \Fin^{H \times G^{\op}}$ to the functor $X\mapsto 
\Hom_H(T, X).$  The essential image is the full subcategory $\Fun^{R,\omega}(\Set^H, \Set^G)\subset \Fun(\Set^H,\Set^G)$ of functors which preserve limits and filtered colimits.    
\end{lem}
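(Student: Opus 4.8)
The plan is to exhibit the functor $T \mapsto \Hom_H(T,-)$ as a composite of two equivalences: one coming from the universal property of presheaf categories, the other from the adjoint functor theorem.

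First I would observe that $\Hom_H(T,-)$ is the right adjoint of a more transparent functor. Given a finite $(H,G)$-biset $T$, the balanced product $Y \mapsto T\times_G Y$, equipped with its residual left $H$-action, defines a colimit-preserving functor $T\times_G(-)\colon \Set_G\to\Set_H$, and the tensor--hom adjunction for bisets supplies a natural isomorphism $\Hom_{\Set_H}(T\times_G Y,\, X)\cong \Hom_{\Set_G}(Y,\, \Hom_H(T,X))$, natural in $X$, $Y$ and (contravariantly) in $T$. So it suffices to prove: (i) $T\mapsto (T\times_G(-))$ is an equivalence from $\Fin_{H\times G^{\op}}$ onto the subcategory $\Fun^{L,\omega}(\Set_G,\Set_H)\subset\Fun(\Set_G,\Set_H)$ of colimit-preserving functors which also preserve compact objects; and (ii) passing to right adjoints is an equivalence $\Fun^{L,\omega}(\Set_G,\Set_H)^{\op}\xrightarrow{\ \sim\ }\Fun^{R,\omega}(\Set_H,\Set_G)$. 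Composing (i) with (ii) and unwinding the adjunction then gives the lemma, since the composite $\Fin_{H\times G^{\op}}^{\op}\to\Fun(\Set_H,\Set_G)$ sends $T$ to the right adjoint of $T\times_G(-)$, which is $\Hom_H(T,-)$.

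For (i): since $\Set_G\simeq\mathrm{PSh}\big((BG)^{\op}\big)$, the universal property of presheaf categories gives $\Fun^{L}(\Set_G,\Set_H)\simeq \Fun\big((BG)^{\op},\Set_H\big)= \Set_{H\times G^{\op}}$, the category of all $(H,G)$-bisets, with a biset $T$ corresponding to $T\times_G(-)$ (note $T\times_G G\cong T$, so this is a genuine inverse). It then remains to match compact-preservation with finiteness of $T$: the compact objects of $\Set_G$ are exactly the finite $G$-sets, and the regular $G$-set $G$ generates them under finite colimits, so a colimit-preserving functor preserves compacts if and only if it sends $G$ to a compact object, i.e.\ if and only if $T\cong T\times_G G$ is a finite $H$-set. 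This yields the equivalence $\Fin_{H\times G^{\op}}\simeq \Fun^{L,\omega}(\Set_G,\Set_H)$. For (ii): $\Set_H$ and $\Set_G$ are locally finitely presentable, so a functor $F\colon\Set_H\to\Set_G$ preserving all small limits and filtered colimits is accessible, and hence by the adjoint functor theorem has a left adjoint $L$; testing the natural isomorphism $\Hom(La,-)\cong\Hom(a,F(-))$ on a set of compact generators $a$ shows that $F$ preserves filtered colimits precisely when $L$ preserves compact objects. Conversely the right adjoint of any colimit-preserving, compact-preserving functor preserves limits and filtered colimits. Thus $F\mapsto L$ gives mutually inverse equivalences between $\Fun^{R,\omega}(\Set_H,\Set_G)$ and $\Fun^{L,\omega}(\Set_G,\Set_H)^{\op}$, which is exactly (ii).

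The only genuinely non-formal ingredient is the equivalence in (ii) between ``$F$ preserves filtered colimits'' and ``the left adjoint of $F$ preserves compacts,'' together with the invocation of the adjoint functor theorem; these are standard facts about locally finitely presentable categories (see e.g.\ Ad\'amek--Rosick\'y), but I would state them precisely rather than leave them implicit, since the claimed identification of the essential image rests on them. A more pedestrian alternative for full faithfulness is a direct computation of the natural transformations $\Hom_H(T,-)\Rightarrow\Hom_H(T',-)$ by evaluating on the orbits $H/K$ and invoking the co-Yoneda lemma; but pinning down the essential image that way is messier than the adjoint-functor argument, so I would not take that route.
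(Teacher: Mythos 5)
Your proof takes essentially the same route as the paper's: identify $\Fun^L(\Set_G,\Set_H)$ with $\Set_{H\times G^{\op}}$ via the universal property of the presheaf category $\Set_G\simeq\mathrm{PSh}(BG^{\op})$, then pass to right adjoints and track how finiteness of the biset corresponds to the compactness/filtered-colimit condition. The paper is terser on this last correspondence (simply asserting the essential image is the finite bisets), so your explicit matching of "$T$ finite" with "$T\times_G(-)$ preserves compacts" with "$\Hom_H(T,-)$ preserves filtered colimits" via testing on compact generators is a welcome filling-in of detail, but not a different argument.
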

\begin{proof}
Let $\Fun^{L}(\Set^G,\Set^H)$ denote the full subcategory of left adjoint functors, which coincides with the full subcategory of colimit preserving functors by the adjoint functor theorem.  Since $\Set_G$ is freely generated under colimits by the free transitive $G$-set, we have the equivalences 
\begin{align*}
\Fun^{L}(\Set^G,\Set^H) &\simeq \Fun(BG^{\op},\Set^H)\\
&\simeq \Fun(BH\times BG^{\op}, \Set) \simeq \Set^{H\times G^{\op}}.
\end{align*}  By taking adjoints, we obtain a fully faithful embedding 
$$\Fun^{R,\omega}(\Set^H,\Set^G) \subset \Fun^L(\Set^G,\Set^H)^{\op} \simeq (\Set^{H\times G^{\op}})^{\op}$$ with essential image exactly the finite $G$-$H$-bisets.  
\end{proof}

\begin{rmk}
By the adjoint functor theorem, the opposite category $(\Glo^+)^{\op}$ can be described as the subcategory of $\Cat_1$ spanned by categories of the form $\Set_X$ for $X\in \Gpd$ and colimit preserving functors which send compact objects to compact objects.  This acquires a symmetric monoidal structure under the tensor product of presentable categories (cf. \cite[\S 4.8]{HA}), and so $\Glo^+$ also inherits a symmetric monoidal structure.  On objects, one has $\Set_X \otimes \Set_Y \simeq \Set_{X\times Y}$.
\end{rmk}

We now relate $\Glop$ to the $\infty$-category $\Span(\Gpd)$.  Recall that the latter has the structure of a symmetric monoidal $\infty$-category under the Cartesian product of groupoids \cite[\S 7]{BarSMF2}.

\begin{prop}\label{prop:spangpd}
There is a symmetric monoidal functor $$\pi: \Span(\Gpd) \to \Glop$$ which sends a groupoid $X$ to the category $\Set_X$ and sends a span $(X\xleftarrow{f} M \xrightarrow{g} Y)$ to the functor $g_{\times} f^*: \Set_X \to \Set_Y,$ where $g_{\times}$ denotes the right adjoint to $g^*$.  
\end{prop}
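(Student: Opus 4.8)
The plan is to construct $\pi$ via the universal property of the span $\infty$-category, following the same pattern as the functor $\Span(\Gpd)\to\Cat_{\infty}$, $X\mapsto\Sp^X$ recalled above from $\cite[\S 9]{BachHoy}$, but now in the simpler setting of sets and with ordinary right adjoints in place of multiplicative transfers. Concretely, to produce a functor out of $\Span(\Gpd)$ it suffices (see $\cite[\S 5]{BarK}$) to give a contravariant functor and a covariant functor from $\Gpd$ to $\Cat_1$ which agree on objects and such that for every homotopy-cartesian square the resulting square of forward and backward maps satisfies base change (the Beck--Chevalley condition). For the contravariant (``backward'') part I take the evident $\Cat_1$-valued presheaf $X\mapsto\Set^X$, $f\mapsto f^{*}:=(-)\circ f$, on $\Gpd$. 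Because each $\Set^X$ is presentable and each $f^{*}$ preserves colimits, $f^{*}$ has a right adjoint $f_{\times}$; since this holds for all $f$, the pointwise right adjoints assemble into a covariant (``forward'') functor $X\mapsto\Set^X$, $g\mapsto g_{\times}$, agreeing with the first on objects. The span $(X\xleftarrow{f}M\xrightarrow{g}Y)$ is then sent to $g_{\times}f^{*}\colon\Set^X\to\Set^Y$.

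The Beck--Chevalley condition is the only point requiring a computation, and it is immediate from the pointwise formula for right Kan extension along a groupoid map: for $g\colon M\to Y$ and $F\in\Set^M$ one has $(g_{\times}F)(y)\simeq\lim_{M_y}F|_{M_y}$, where $M_y=M\times_Y\{y\}$ is the homotopy fibre of $g$ over $y$ --- a finite groupoid since $M,Y\in\Gpd$. Given a homotopy-cartesian square with $g\colon M\to Y$ and $f\colon X\to Y$ along two sides and base changes $f'\colon P\to M$, $g'\colon P\to X$ along the others, stability of homotopy fibres under base change identifies the fibre $P_x$ of $g'$ over $x\in X$ with $M_{f(x)}$, compatibly with $f'$; hence both $f^{*}g_{\times}F$ and $g'_{\times}(f')^{*}F$ evaluate at $x$ to the same finite limit $\lim_{M_{f(x)}}F$, matching the mate transformation. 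This produces $\pi\colon\Span(\Gpd)\to\Cat_1$ with the stated values.

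Next I check that $\pi$ factors through $\Glop\subseteq\Cat_1$. On objects this is the definition of $\Glop$. On a morphism $g_{\times}f^{*}$: the functor $f^{*}$ preserves all limits and colimits, being simultaneously a left and a right adjoint; and $g_{\times}$ preserves limits as a right adjoint, and preserves filtered colimits because $(g_{\times}F)(y)\simeq\lim_{M_y}F$ is a \emph{finite} limit ($M_y$ being a finite groupoid) and finite limits commute with filtered colimits in $\Set$, hence in $\Set^Y$. So $g_{\times}f^{*}$ preserves limits and filtered colimits and is a morphism of $\Glop$.

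Finally, for the symmetric monoidal enhancement I would use the canonical equivalences $\Set^{X\times Y}\simeq\Set^X\otimes\Set^Y$ (presheaves on a product are the tensor product of the presheaf categories) to promote $X\mapsto\Set^X$, together with both of its functorialities and the Beck--Chevalley data, to the symmetric monoidal setting, and then invoke the symmetric monoidal form of the span universal property for the product symmetric monoidal structure on $\Span(\Gpd)$ induced by the Cartesian structure on $\Gpd$ $\cite[\S 7]{BarSMF2}$; one checks that the induced structure on the image of $\pi$ is the symmetric monoidal structure on $\Glop$ of the preceding Remark, both being restrictions of $\otimes$ on $\Cat_1$. I expect this last step to be the main obstacle: the Beck--Chevalley check is a tautology once the pointwise formula is available and the factorization through $\Glop$ is routine, but assembling the monoidal coherences and matching them against the implicitly specified monoidal structure on $\Glop$ is the one place demanding genuine care.
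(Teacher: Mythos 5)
Your proposal is correct and takes essentially the same route as the paper: the paper's proof likewise starts from the presheaf functor $X \mapsto \Set^X$ with restrictions $f^*$, notes that the right adjoints satisfy the Beck--Chevalley conditions, and invokes the symmetric monoidal unfurling construction of \cite[Construction 7.6]{BarSMF2} to produce the functor out of $\Span(\Gpd)$. Your extra verifications --- the pointwise Beck--Chevalley computation via homotopy fibres and the observation that $g_{\times}f^*$ preserves filtered colimits because those fibres are finite groupoids, so $g_{\times}$ is computed by finite limits --- are correct and simply make explicit what the paper's two-sentence proof leaves implicit.
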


\begin{proof}
The ($\Set$-valued) Yoneda embedding gives a functor $\Gpd^{\op} \to \Glop$ which takes products of groupoids to tensor products of presentable categories.  The restriction functors admit right adjoints satisfying the necessary Beck-Chevalley conditions, so by \cite[Construction 7.6]{BarSMF2}, we obtain the desired symmetric monoidal functor $\pi: \Span(\Gpd) \to \Glop$.  

\end{proof}

\begin{cnv}\label{cnv:spangpd}
It follows from Lemma \ref{lem:functorsvssets} that for $X,Y\in \Glop$, the groupoid $\Glop(X,Y)$ can be identified with the groupoid of finite coverings of $X\times Y$.  Under this identification, the functor $\pi$ of Proposition \ref{prop:spangpd} sends a span $\tilde{M} \to X\times Y$ to the span $M\to X\times Y$ determined by the canonical factorization $\tilde{M} \to M \to X\times Y$ of $\tilde{M} \to X\times Y$ into a map with connected fiber followed by a finite cover.  This justifies the description given in Variant \ref{var:glop}.  We will, therefore, continue to think of the 2-category $\Glo^+$ as groupoids with certain spans between them unless otherwise specified.  
\end{cnv}

We now arrive at the goal of this section:

\begin{thm}\label{thm:mainfib}
There is a coCartesian fibration $\Psi^+: \GlopSp \to \Glop$ such that the fiber over $X\in \Glop$ is $\Sp_X$, and for any span $\sigma = (X\xleftarrow{f} M \xrightarrow{g} Y)$, the associated functor $\sigma_*: \Sp_X \to \Sp_Y$ is given by $g_{\otimes}\circ  f^*$.  
\end{thm}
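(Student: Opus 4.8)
The plan is to obtain $\Psi^{+}$ by descending the known construction over $\Span(\Gpd)$ along the comparison functor $\pi$ of Proposition \ref{prop:spangpd}. Write $F\colon \Span(\Gpd)\to\Cat_{\infty}$ for the functor of \cite[\S 9]{BachHoy}, which sends $X\mapsto\Sp^{X}$ and a span $(X\xleftarrow{f}M\xrightarrow{g}Y)$ to $g_{\otimes}\circ f^{*}$ (equivalently, the straightening of the coCartesian fibration constructed there). First I would show that $F$ factors through $\pi$ as $F\simeq\Psi^{+}_{\mathrm{fun}}\circ\pi$ for some $\Psi^{+}_{\mathrm{fun}}\colon\Glop\to\Cat_{\infty}$, and then set $\Psi^{+}\colon\GlopSp\to\Glop$ to be the coCartesian fibration unstraightening $\Psi^{+}_{\mathrm{fun}}$. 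Its fiber over $X$ is then $F(X)=\Sp^{X}$ by construction; and since, by the Convention following Proposition \ref{prop:spangpd}, $\pi$ is the identity on objects and carries a span $\widetilde{M}\to X\times Y$ to the span $M\to X\times Y$ of its (connected-fiber, finite-cover) factorization $\widetilde{M}\to M\to X\times Y$, the morphism of $\Glop$ named by a finite cover $(X\xleftarrow{f}M\xrightarrow{g}Y)$ is sent by $\Psi^{+}_{\mathrm{fun}}$ to $F$ of that same span, i.e.\ to $g_{\otimes}\circ f^{*}$ --- exactly the asserted formula for the transition functors.

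So everything reduces to the factorization of $F$ through $\pi$. The reason it should hold is that $\pi$ differs from the identity-on-spans only in its composition law: the apex $\widetilde{M}\times_{Y}\widetilde{N}$ of a composite of spans is replaced by its finite-cover reflection $T$, along a map $h\colon \widetilde{M}\times_{Y}\widetilde{N}\to T$ with connected fibers --- and $F$ does not see this replacement, because the two candidate composite functors differ by $h_{\otimes}\circ h^{*}$ and $h_{\otimes}\circ h^{*}\simeq\mathrm{id}$ for any $h$ with connected fibers. This last equivalence is a standard compatibility between inflation and geometric fixed points: when source and target of $h$ are connected, $h$ has the form $BG\twoheadrightarrow BK$, so $h_{\otimes}\circ h^{*}=\Phi^{N}\circ\triv_{K}^{G}$ with $N=\mathrm{ker}(G\twoheadrightarrow K)$, and $\Phi^{N}\circ\triv_{K}^{G}\simeq\mathrm{id}_{\Sp_{K}}$. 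It is the coherent avatar of the degeneracy relations already recorded as Examples \ref{exm:trivgfp}, \ref{exm:geonorm} and \ref{exm:geotriv}. All the other identities needed to manipulate $f^{*}$ and $g_{\otimes}$ --- the Beck--Chevalley squares relating restriction, inflation, norms and geometric fixed points --- are already encoded in $F$, via \cite{HHR} and \cite[\S 9]{BachHoy}.

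The hard part, which I would carry out in the appendix, is to upgrade this observation to an actual factorization. The subtlety is that $\pi$ is \emph{not} a localization of $\Span(\Gpd)$ --- the spans acting by $\Phi^{G}$ and by $\triv^{G}$ are not inverted, although their composite does become an identity in $\Glop$ --- so the factorization cannot be produced merely by a universal property; one must build $\Psi^{+}_{\mathrm{fun}}$ as a genuine functor on all of $\Glop$ and check directly, via the equivalence $h_{\otimes}h^{*}\simeq\mathrm{id}$ at each point where the two composition laws diverge, that precomposition with $\pi$ recovers $F$. Concretely, I would assemble $\Psi^{+}_{\mathrm{fun}}$ by an adaptation of the span-category machinery used to construct $\pi$ in the first place (the construction of \cite[Construction 7.6]{BarSMF2}), now set up to output functors \emph{out of} $\Glop$, fed with the $\Cat_{\infty}$-valued datum $X\mapsto\Sp^{X}$ together with its norm and geometric-fixed-point functors; the degeneracy identities above are precisely the extra hypothesis that makes the output land in $\Glop$ rather than $\Span(\Gpd)$, and the symmetric monoidal refinement is automatic since $\pi$ is symmetric monoidal. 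The bulk of the appendix is then the (long but routine) verification that these data satisfy the full list of coherences that the construction requires.
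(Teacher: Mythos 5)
Your diagnosis of the obstruction is right, and your identification of the key identity is right: $\pi$ is not a localization, so the factorization cannot be extracted abstractly, and the content of the extension to $\Glop$ is precisely the coherent implementation of $h_{\otimes}h^{*}\simeq\mathrm{id}$ for $h$ with connected fibers. But where you propose to ``build $\Psi^{+}_{\mathrm{fun}}$ as a genuine functor on all of $\Glop$ and check directly,'' and then defer the ``long but routine'' verification of ``the full list of coherences'' to the appendix, you have deferred essentially the entire content of the theorem. Verifying an infinite tower of $\infty$-categorical coherences by hand is exactly the kind of thing one must never be asked to do, and nothing in your write-up indicates how you would avoid it. As written, the proposal is a plan rather than a proof, and the plan points at the hardest possible path.

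The paper's actual argument sidesteps this completely, and it is worth seeing why. Look at Definition~\ref{defn:glo}: $\Glop$ is \emph{defined} as a subcategory of the $(2,1)$-category $\Cat_1$, with objects the categories $\Set^{X}$ and morphisms the functors preserving limits and filtered colimits (equivalently, preserving compact objects). Consequently there is a \emph{tautological} functor $\Glop\to\Cat_{1}$ sending each object to its subcategory of compact objects --- in groupoid terms, $BG\mapsto\Fin_{G}$ --- with nothing to verify: it is literally inclusion followed by ``restrict to compact objects,'' a 1-categorical operation carried out on 1-categories. This tautological functor is exactly the input datum that the machine of \cite[\S 9.2]{BachHoy} consumes to produce the coCartesian fibration of genuine equivariant spectra; the paper then says the procedure applies verbatim. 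The higher coherence you were worried about --- including the coherent version of $h_{\otimes}h^{*}\simeq\mathrm{id}$ --- is \emph{produced} by that machine, not fed into it. Your proposal instead tries to feed in $X\mapsto\Sp^{X}$ with all its norms and geometric fixed point functors already attached, which is precisely the step that requires the infinite coherence check, and precisely what the reduction to $\Fin^{X}$ avoids. The lesson is that Definition~\ref{defn:glo} was engineered so that Theorem~\ref{thm:mainfib} would be a triviality; the real design work was done up front, not in the proof.
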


\begin{proof}
The proof is an imitation of work of Bachmann-Hoyois \cite[\S 9.2]{BachHoy}.  There is a tautological functor $\iota : \Glo^+ \to \Cat_1$ sending an object of $\Glo^+$, thought of as a category, to its subcategory of compact objects.  Thinking of $\Glo^+$ in terms of groupoids, this sends, for instance, $BG$ to $\Fin^G$.  One may then apply the procedure of \cite[\S 9.2]{BachHoy} (to which we refer the reader for details): namely, one notes that $\iota$ determines a functor 
\[
\Fin_+^{\otimes}: \Glo^+ \to \CAlg(\Cat_1)
\]
which sends a groupoid $X$ to the category $\Fin_{X+}$ of finite pointed $X$-sets, given a symmetric monoidal structure under smash product.  Then, applying $P_{\Sigma}$ pointwise, one obtains a functor
\[
\Spaces_+^{\otimes}: \Glo^+ \to \CAlg(\Cat_{\infty}^{\mathrm{sift}}),
\]
which sends a groupoid $X$ to the category $\Spaces_{X+}$ of pointed genuine $X$-spaces.  Here, $\Cat_{\infty}^{\mathrm{sift}}$ denotes the $\infty$-category of $\infty$-categories with sifted colimits, and sifted colimit preserving functors.  For each $X$, one can consider the subset $T(X)$ of objects of $\Spaces_{X+}$ of the form $p_{\otimes}(S^1)$ as $p$ ranges over finite coverings $Y\to X$ (explicitly, $p_{\otimes}$ should be thought of as indexed smash product along the fibers of $Y\to X$).  The key feature of the functor $\Spaces_+^{\otimes}$ is that for any map $X\to Y$ in $\Glo^+$, the induced map $\Spaces_{X+}\to \Spaces_{Y+}$ sends objects in $T(X)$ to objects in $T(Y)$.  Then, by the argument of \cite[\S 6.1]{BachHoy}, this allows one to invert (pointwise) the chosen objects to obtain the desired functor which sends a groupoid $X$ to $\Sp_X$.
\end{proof}


\begin{rmk}\label{rmk:mapfromspangpd}
Theorem \ref{thm:mainfib} is a strengthening of the constructions in \cite[Section 9.2]{BachHoy}, which produces the restriction of the fibration $\Psi^+$ along the functor $\pi: \Span(\Gpd) \to \Glop$ of Proposition \ref{prop:spangpd}.  The extension of this fibration to $\Glop$ encodes exactly the additional fact that geometric fixed points is inverse to giving trivial action, as explained in Example \ref{exm:trivgfp}.  This additional data turns out to power the construction of the Frobenius action in \S \ref{sect:algact}. 
\end{rmk}

We will also be interested in a subcategory of $\Glop$ which restricts the types of multiplicative transfers that are allowed:

\begin{defn} \label{defn:glofib}
Let $\Glo \subset \Glop$ be the wide subcategory where the morphisms are spans $(X\xleftarrow{f} M \xrightarrow{g} Y)$ which have the property that the forward map $g$ has discrete fibers.  Note that $\Glo$ is closed under the monoidal structure on $\Glop$ and therefore inherits a symmetric monoidal structure.  We shall denote the restriction of $\Psi^+$ along the inclusion by $\Psi: \GloSp \to \Glo.$  
\end{defn}

\begin{rmk}\label{rmk:gloinsidespangpd}
Note that $\Glo$ is also a subcategory of $\Span(\Gpd).$ 
Thus, for the purposes of defining $\Glo$, one can simply start with the $\infty$-category $\Span(\Gpd)$.  However, we will see that in order to get the Frobenius action, it will be important that $\Glo$ is embedded inside $\Glop$ rather than just $\Span(\Gpd)$.  
\end{rmk}

\subsection{Borel equivariant spectra and proper Tate constructions}\label{subsect:bortate}
Let $X\in \Gpd$ be a groupoid.  Then for every point in $X$, one obtains a restriction functor $\Sp_X \to \Sp$.  These assemble to a functor $\Sp_X \to \Fun(X,\Sp)$ which can be thought of as taking a genuine equivariant $X$-spectrum to its underlying spectrum (with group action).  This functor admits a fully faithful right adjoint $j_X: \Fun(X,\Sp)\hookrightarrow \Sp_X$ which we think of as cofreely promoting a non-genuine spectrum to a genuine spectrum.  The essential image of this embedding is known as the full subcategory $(\Sp_X)_{\Bor}\subset \Sp_X$ of \emph{Borel} $X$-spectra.  We will also abusively refer to objects of $\Fun(X,\Sp)$ as Borel $X$-spectra.  We write $\beta_X$, or simply $\beta$, for the composite $$\Sp_X \to \Fun(X,\Sp) \xrightarrow{j_X} \Sp_X,$$ so every $E\in \Sp_X$ admits a natural \emph{Borelification} map $E \to \beta E$.  

\begin{rmk}
A genuine $G$-spectrum $E$ is Borel if and only if, for every subgroup $H\subset G$, the canonical map $E^{H} \to E^{hH}$ from genuine $H$-fixed points to homotopy $H$-fixed points is an equivalence.  
\end{rmk}

The condition of being Borel can also be seen in terms of geometric fixed points:

\begin{exm}\label{exm:Cpsp}
The data of a genuine $C_p$-spectrum $E$ can be presented as a triple $(E_0, E_1, f)$ where $E_0\in \Fun(BC_p,\Sp)$ is the underlying spectrum, $E_1 \in \Sp$ is thought of as $\Phi^{C_p}E$, and $f:E_1 \to E_0^{tC_p}$ is a map of spectra.  From this data, one recovers the genuine fixed points of $E$ via the homotopy pullback square
\begin{equation*}
\begin{tikzcd}
E^{C_p} \arrow[r] \arrow[d] & E_1 \arrow[d,"f"]\\
E_0^{hC_p} \arrow[r] & E_0^{tC_p}.
\end{tikzcd}
\end{equation*}
Thus, the genuine $C_p$-spectrum $E$ is Borel if and only if the specified map $f: E_1 \to E_0^{tC_p}$ is an equivalence.  
\end{exm}

For more general finite groups $G$, a variant of the Tate construction becomes relevant.

\begin{defn}[\cite{AMGR}, Definition 2.7]\label{defn:propertate}
For a finite group $G$, let 
$$(-)^{\tau G}: \Sp^G \to \Sp$$ denote the functor given by the formula $E^{\tau G} = \Phi^G(\beta X).$ We will refer to $(-)^{\tau G}$ as the \emph{proper Tate construction for $G$}.  When $G=C_p$, this agrees with the ordinary Tate construction.
\end{defn}

\begin{rmk}\label{rmk:tatelax}
The proper Tate construction is a lax symmetric monoidal functor because both $\Phi^G$ and $\beta$ are.
\end{rmk}

\begin{rmk} \label{rmk:verdier}
Just as the Tate construction can be thought of as universally killing $G$-spectra of the form $G_+\otimes E$, the proper Tate construction can be thought of as universally killing $G$-spectra induced from any proper subgroup (i.e., of the form $(G/H)_+ \otimes E$ for $H\subsetneq G$).  This description is made precise in \cite[Remark 2.16]{AMGR} using Verdier quotients of stable $\infty$-categories. 
\end{rmk}

\begin{rmk}\label{rmk:propertatecofib}
It is clear from the definition that the proper Tate construction depends only on the underlying spectrum with $G$-action.  Unwinding the definition of geometric fixed points, one can give an alternate description of the proper Tate construction analogous to the description of the usual Tate construction as the cofiber of the additive norm map.  Let $G$ be a finite group, let $\mathcal{O}_G$ denote the category of finite transitive $G$-sets, and let $\mathcal{O}_G^-\subset \mathcal{O}_G$ denote the full subcategory spanned by $G$-sets with nontrivial action.  Then, a spectrum $X$ with $G$-action determines a functor $\mathcal{T}: \mathcal{O}_G \to \Sp$ which sends $G/H$ to $X^{hH}$ and sends maps of $G$-sets to the corresponding additive transfer maps.  The spectrum $X^{\tau G}$ is the total cofiber of this diagram; i.e., it fits into the cofiber sequence $$\colim_{\mathcal{O}_G^- }\mathcal{T} \to X^{hG} \to X^{\tau G}.$$  

\end{rmk}

\begin{exm}[Tate diagonals]\label{exm:tatediag}
Let $G$ be a finite group.  There is a canonical lax symmetric monoidal natural transformation $N^G(-) \to \beta N^G(-)$ of functors $\Sp \to \Sp^G$.  Applying geometric fixed points, this yields a natural transformation $$\Delta^G: \mathrm{id} \simeq \Phi^G N^G (-) \to \Phi^G \beta N^G (-) \simeq ((-)^{\otimes G})^{\tau G}$$ which we call the \emph{Tate diagonal for $G$}.   This has been considered previously in various places \cite{Heuts, NS, Klein, AMGR}.  \end{exm}

\section{The integral Frobenius action}\label{sect:algact}

In this section, we prove Theorem $\text{A}^{\natural}$ by building what we call the \emph{integral Frobenius action}.  This is an oplax action of the symmetric monoidal category $\mathcal{Q}$ on the $\infty$-category $\CAlg$ of $\E_{\infty}$-rings which captures the functoriality of the $\E_{\infty}$-Frobenius.  

In \S \ref{subsect:frob}, we begin by recalling the $\E_{\infty}$-Frobenius map from \cite{NS}.  We then generalize this construction in \S \ref{sub:genfrob} to produce certain ``generalized Frobenius maps" associated to inclusions $H\subset G$ of finite groups and use them to give a precise statement of Theorem $\text{A}^{\natural}$ (Theorem \ref{thm:mainalgaction}).  The remainder of the section is dedicated to proving this theorem.  In \S \ref{subsect:globalg}, we construct the $\infty$-category $\CAlg^{\Glo}$ of \emph{global algebras}, which are roughly $\E_{\infty}$-rings $A$ together with ``genuine equivariant multiplications" that we think of as specifying lifts of the generalized Frobenius maps to endomorphisms of $A$.    We show that these Frobenius lifts assemble into an action of $\mathcal{Q}$ on $\CAlg^{\Glo}$ (Theorem \ref{thm:Qactgloalg}).  It then remains to descend this action to $\CAlg$. We accomplish this in \S \ref{subsect:borglobalg} by introducing a structure mediating between $\E_{\infty}$-rings and global algebras: we call these \emph{Borel global algebras} and denote the $\infty$-category of them by $\CAlg^{\Glo}_{\Bor}$.  These Borel global algebras are, on the one hand, close enough to global algebras that one obtains an \emph{oplax} action of $\mathcal{Q}$ on $\CAlg^{\Glo}_{\Bor}$.  On the other hand, the main result of \S \ref{subsect:borglobalg} is that $\CAlg^{\Glo}_{\Bor}$ is actually equivalent to the $\infty$-category of $\E_{\infty}$-rings.   Together, these facts imply Theorem $\text{A}^{\natural}$.

\subsection{The $\E_{\infty}$-Frobenius} \label{subsect:frob}

\begin{exm}\label{exm:classicalcanfrob}
Let $R$ be a discrete commutative ring and let $p$ be a prime.  Then one has two natural ring maps $R \to R/p$: the quotient map and the $p$th power map.  
\end{exm}

There are analogous maps for an $\E_{\infty}$-ring $A$, which we now recall (following \cite{NS}).  

\begin{defn}\label{defn:canmap}
Let $\can_A: A\to A^{tC_p}$ denote the composite $$A \to A^{hC_p} \to A^{tC_p}$$ where the first map is the restriction induced by the map $BC_p \to *$ and the second map is the canonical projection to the Tate construction.  We refer to $\can_A$ as the \emph{canonical map} and remark that $\can_A$ makes sense for any spectrum $A$ but is a ring map if $A$ is a ring.  
\end{defn}
\begin{defn}\label{defn:frobmap}
Let $\varphi_A: A\to A^{tC_p}$ denote the composite $$\varphi_A: A \xrightarrow{\Delta} (A^{\otimes p})^{tC_p} \to A^{tC_p}$$ of the Tate diagonal followed by multiplication. This is a ring map because $\Delta$ and $(-)^{tC_p}$ are lax monoidal (cf. Remark \ref{rmk:tatelax} and Example \ref{exm:tatediag}).  We will refer to this map throughout as the \emph{$\E_{\infty}$-Frobenius}, or simply the Frobenius.  
\end{defn}

\begin{exm}\label{exm:frobdiscretering}
When $A = R$ is a discrete commutative ring, the canonical map $R \to R^{tC_p}$ on $\pi_0$ exhibits $\pi_0(R^{tC_p})$ as the quotient $R/p$.  Under this identification, the Frobenius on $\pi_0$ is given by the $p$th power map $R\to R/p$.  \end{exm}

However, the $\E_{\infty}$-Frobenius for a discrete ring is not simply the classical Frobenius (indeed, it has a different target).  Instead, one sees the Steenrod operations in higher homotopy groups.   

\begin{exm}[\cite{NS}, Theorem IV.1.15]\label{exm:F2Frob}
When $A = \F_2$, we have that $\pi_*A^{tC_2} \simeq \F_2 ((t))$ where $|t| = -1$.  The Frobenius $$\varphi_{\F_2}: \F_2 \to \F_2^{tC_2}$$ is given, as a map of spectra, by the product over $i\geq 0$ of $$\mathrm{Sq}^i: \F_2 \to \Sigma^i \F_2,$$ where we interpret $\mathrm{Sq}^0$ as the identity on $\F_2$.  

In the case $A = \F_p$ for an odd prime $p$, the action of $\F_p^{\times}$ on $C_p$ induces an action of $\F_p^{\times}$ on $\F_p^{tC_p}$, and we have $\pi_*(\F_p^{tC_p})^{h\F_p^{\times}} \cong \F_p((t^{p-1})) \otimes \F_p[e]/e^2$, where $|t^{p-1}| = -2(p-1)$ and $|e|=1$.  
The Frobenius map is equivariant for this action of $\F_p^{\times}$, and the resulting map
\[
\varphi_{\F_p}: \F_p \to (\F_p^{tC_p})^{h\F_p^{\times}}
\]
has components given by 
\[
\F_p \to \Sigma^{k} \F_p =  \begin{cases} P^s & \text{ if } k=2s(p-1), \text{ }s\geq 0 \\
 \beta P^s & \text{ if } k=2s(p-1) +1, \text{ }s\geq 0 \\
0 &\text{ else.}
\end{cases}
\]
Again, we interpret $P^0$ as the identity in the above formulas.  
\end{exm}  

\begin{rmk}\label{rmk:canformula}
If $M$ is an $\F_p$-module spectrum, then the canonical map $\can: M \to M^{tC_p}$ is equivariant for $\F_p^{\times}$, yielding a map $\can: M \to (M^{tC_p})^{h\F_p^{\times}}$.  On homotopy groups, this map can be identified with the natural inclusion
\[
\pi_*(M) \to \pi_*(M)((t)),
\] where $|t|=-1$ for  $p=2$, and 
\[
\pi_*(M)  \to \pi_*(M)((t^{p-1}))[e]/e^2,
\]
where $ |t^{p-1}| = -2(p-1), |e| = 1$ for $p$ odd.
\end{rmk}

\begin{exm}\label{exm:totalsteenrod}
More generally, if $A$ is an $\E_{\infty}$-algebra over $\F_p$, the $\E_{\infty}$-Frobenius determines a map of $\E_{\infty}$-rings $A \to (A^{tC_p})^{h\F_p^{\times}}$.  Under the identification of the homotopy groups of the target given in the previous remark, one can identify this map on homotopy groups by the formula
\[
x \mapsto \sum_{i\in \Z} \mathrm{Sq}^i(x) t^{-i}
\] when $p=2$, and 
\[
x \mapsto \sum_{i\in \Z} P^i(x) t^{-i(p-1)} + \sum_{i\in \Z}\beta P^i(x) t^{-i(p-1)}e
\]
for $p$ odd.  Here, the $\{ \mathrm{Sq}^i \}_{i\in \Z}$ and $\{P^i, \beta P^i\}_{i\in \Z}$ are the \emph{extended} Steenrod operations of May \cite{MayGen} and Mandell  \cite{Mandell}.  For further explanation of this fact, the reader is referred to \cite[\S 3.5]{DWilson} or \cite[\S IV.1]{NS}.
\end{exm}

In general, the Frobenius can be thought of as a ``total degree $p$ stable power operation.''  The basic example that will be important to us is the following:

\begin{exm}\label{exm:frobsphereequiv}
When $A=\bS$, the sphere spectrum, the Frobenius map and the canonical map are both the unique ring map $\bS \to \bS^{tC_p}$ (since $\bS$ is the initial $\E_{\infty}$-ring).  In fact, this map exhibits $\bS^{tC_p}$ as the $p$-completion of $\bS$ by the theorems of Lin \cite{Lin} ($p=2$) and Gunawardena \cite{Gunawardena} ($p$ odd).   
\end{exm}

\subsection{Generalized Frobenius and canonical maps}\label{sub:genfrob}

As explained in Remark \ref{rmk:classical}, Theorem A heuristically says that the Frobenius ``commutes with itself."   To prove this, one has to understand all ways of composing the Frobenius with itself.  We accomplish this by creating a Frobenius composite $\varphi^G:A \to A^{\tau G}$ for every finite group $G$ (and, in fact, a map $\varphi_H^G: A^{\tau H} \to A^{\tau G}$ for any inclusion $H\subset G$ of finite groups), and then describing how these interact.  Roughly, if $|G| = p^k$, one can think of $\varphi^G$ as a version of the $k$-fold composite of the $\E_{\infty}$-Frobenius $\varphi :A \to A^{tC_p}$.  

\begin{cnstr}[Generalized Frobenius maps] \label{cnstr:genfrobmaps} 
Let $H\subset G$ be an inclusion of finite groups and let $A\in \CAlg$ be an $\E_{\infty}$-ring spectrum.  Then, there is a natural map $A^{\otimes G/H} \to A$ of $\E_{\infty}$-rings with $G$-action.  We may regard $A^{\otimes G/H}$ as the underlying spectrum of the genuine $G$-spectrum $N_H^G (\beta_H \triv^H A)$.  By the universal property of Borel $G$-spectra, one obtains a natural map $$N_H^G (\beta_H \triv^H A) \to \beta_G \triv^G A$$ of $\E_{\infty}$-algebras in genuine $G$-spectra.  Applying $\Phi^G$ and using the equivalence of Example \ref{exm:geonorm}, we obtain a generalized Frobenius map $$\varphi_H^G: A^{\tau H} \simeq \Phi^H (\beta _H \triv^H A) \simeq \Phi^G N_H^G (\beta_H \triv^HA ) \to \Phi^G (\beta_G \triv^G A) \simeq A^{\tau G}.$$
\end{cnstr}

\begin{rmk}
In the special case $H= *$, the generalized Frobenius map $\varphi^G$ is given by the composite $$A \xrightarrow{\Delta^G} (A^{\otimes G})^{\tau G} \to A^{\tau G}$$ of the Tate diagonal (cf. Example \ref{exm:tatediag}) with multiplication.  In particular, taking $G= C_p$, we recover the $\E_{\infty}$-Frobenius.  
\end{rmk}

There are corresponding generalizations of the canonical maps. 

\begin{cnstr}[Generalized canonical maps]\label{cnstr:gencanmaps}
Let $G \twoheadrightarrow K$ be a surjection of finite groups and let $A\in \CAlg$ be an $\E_{\infty}$-ring spectrum.  Then one has a genuine $G$-equivariant map $$\triv_K^G (\beta_K \triv^K A) \to \beta_G \triv^G A$$ by the universal property of Borel $G$-spectra.  Applying $\Phi^G$ and using the equivalence of Example \ref{exm:geotriv}, we obtain a generalized canonical map $$\can_K^G: A^{\tau K} \to A^{\tau G}.$$
\end{cnstr}

\begin{rmk}
In the special case $K=*$, the generalized canonical map $\can^G$ is given by the composite $$A \to A^{hG} \to A^{\tau G}.$$  In particular, when $G=C_p$, we recover the canonical map of Definition \ref{defn:canmap}.
\end{rmk}

We see that while the generalized Frobenius maps are covariantly functorial for injections of groups, the generalized canonical maps are \emph{contravariantly} functorial for surjections of groups.  The interaction between these opposing functorialities is captured by a variant of Quillen's $Q$-construction:


\begin{defn}\label{defn:Q}
Let $\mathcal{Q}$ be the symmetric monoidal 1-category defined as follows:
\begin{itemize}
\item The objects of $\mathcal{Q}$ are finite groups.
\item For finite groups $G,H$, $\Hom_{\mathcal{Q}}(H,G)$ is the set of isomorphism classes of spans of finite groups $(H \twoheadleftarrow K \hookrightarrow G)$ where the left morphism is a surjection and the right morphism is an injection.  
\item Composition of morphisms is the usual composition of spans.
\item The symmetric monoidal structure is by Cartesian product of groups.  
\end{itemize}
\end{defn}

\begin{rmk}\label{rmk:Qabelian}
Strictly speaking, the category of finite groups is not an exact category, and so $\mathcal{Q}$ is not an example of Quillen's $Q$-construction.  However, the full subcategory of $\mathcal{Q}$ spanned by the finite \emph{abelian} groups is Quillen's Q-construction \cite{QuillenK} on the exact category of abelian groups.  
\end{rmk}

We may now state Theorem $\text{A}^{\natural}$ more precisely:

\begin{thm}[Integral Frobenius action]\label{thm:mainalgaction}
There is an oplax monoidal functor $$\Theta: \mathcal{Q} \to \Fun(\CAlg,\CAlg)$$ with the following properties: 

\begin{itemize}
\item  The object $G\in \mathcal{Q}$ acts by the functor $(-)^{\tau G}: \CAlg \to \CAlg$.  
\item A left morphism $(K \twoheadleftarrow G \rightarrow G)$ in $\mathcal{Q}$ is sent to the natural transformation $\mathrm{can}_{K}^{G}: (-)^{\tau K} \to (-)^{\tau G}.$
\item A right morphism $(H \leftarrow H \hookrightarrow G)$ in $\mathcal{Q}$ is sent to the natural transformation $\varphi_H^{G}: (-)^{\tau H}\to (-)^{\tau G}$.  
\end{itemize}
\end{thm}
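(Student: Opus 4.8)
\emph{Strategy.} The plan is to prove Theorem~\ref{thm:mainalgaction} in two movements, following the outline of \S\ref{subsect:introglobal}. First I would produce a genuinely strict action of the symmetric monoidal category $\mathcal{Q}$ on an $\infty$-category $\CAlg^{\Glo}$ of ``global'' enhancements of $\E_\infty$-rings, where it is transparent that each object acts by the identity and the basic spans act by the relevant (generalized) Frobenius and canonical maps. Second, I would transport this action along a comparison between $\CAlg^{\Glo}$ and $\CAlg$ itself; the comparison passes through the intermediate notion of a \emph{Borel} global algebra, and because Borelification is only laxly monoidal the transported action is only \emph{oplax} --- which is exactly what Theorem~\ref{thm:mainalgaction} asserts. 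The three bullet points then fall out of unwinding the transport.

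\emph{Global algebras and the $\mathcal{Q}$-action.} Using the coCartesian fibration $\Psi\colon\GloSp\to\Glo$ of Theorem~\ref{thm:mainfib} --- which I emphasize must be taken over $\Glo$ as a subcategory of $\Glo^+$, so that the geometric-fixed-point identifications $\Phi^G\triv^G\simeq\id$ of Example~\ref{exm:trivgfp} and Remark~\ref{rmk:mapfromspangpd} are available --- I would define $\CAlg^{\Glo}$ (Definition~\ref{defn:gloalg}) as the $\infty$-category of sections of $\Psi$ that are coCartesian over the restriction-type (backward) morphisms. Cocartesianness over the map $BG\to *$ then forces the value over $BG$ to be $\triv^G A$, for $A\in\CAlg$ the value over $*$, and the remaining lax compatibility over the norm-type (forward) morphisms encodes precisely the genuine multiplications $N^G_H(\triv^H A)\to\triv^G A$; applying $\Phi^G$ and the identification $\Phi^G N^G_H\simeq\Phi^H$ of Example~\ref{exm:geonorm} produces the Frobenius self-transformation $\varphi_H^{G}\colon\id_{\CAlg^{\Glo}}\to\id_{\CAlg^{\Glo}}$. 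The combinatorial backbone of the $\mathcal{Q}$-action is a symmetric monoidal functor $B(-)\colon\mathcal{Q}\to\Glo$ sending $G$ to $BG$ and a span $(H\twoheadleftarrow K\hookrightarrow G)$ to the span of groupoids $(BH\twoheadleftarrow BK\hookrightarrow BG)$; the only nonformal point is that composition of spans in $\mathcal{Q}$ (pullback of groups) agrees with composition in $\Glo$ (homotopy pullback of groupoids followed by the connected-fibers-then-covering factorization), which reduces to the computation $BK_1\times_{BG}BK_2\simeq B(K_1\times_G K_2)$ --- the homotopy pullback is already connected because $K_2\twoheadrightarrow G$. Combined with the action of $\Glo$ on $\CAlg^{\Glo}$ coming from the symmetric monoidal structure and the fibration $\Psi$, this yields the $\mathcal{Q}$-action of Theorem~\ref{thm:Qactgloalg}; unwinding Examples~\ref{exm:geonorm} and \ref{exm:geotriv} shows that each object acts by the identity, the span $(*\twoheadleftarrow *\hookrightarrow G)$ acts by $\varphi^G$, and the span $(K\twoheadleftarrow G\hookrightarrow G)$ acts by the ``global'' canonical map, with all composites built in.

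\emph{Passage to $\CAlg$.} Next I would introduce the full subcategory $\CAlg^{\Glo}_{\Bor}\subset\CAlg^{\Glo}$ of Borel global algebras (Definition~\ref{defn:borgloalg}), those whose underlying genuine equivariant spectra are cofree. Since norm functors do not preserve Borel objects, restricting the strict $\mathcal{Q}$-action to $\CAlg^{\Glo}_{\Bor}$ requires reinserting Borelifications, and because $\beta=j\circ(\mathrm{forget})$ is lax (but not strong) symmetric monoidal this produces only an oplax action; under it, the object $G$ now acts by $\Phi^G\beta\triv^G(-)=(-)^{\tau G}$, and the basic spans act by exactly the generalized Frobenius maps $\varphi_H^{G}$ and canonical maps $\can_K^G$ of Constructions~\ref{cnstr:genfrobmaps} and \ref{cnstr:gencanmaps} (a direct unwinding, e.g.\ through the description of Borel $C_p$-spectra in Example~\ref{exm:Cpsp}). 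The second ingredient is the equivalence $\CAlg^{\Glo}_{\Bor}\simeq\CAlg$ (Theorem~\ref{thm:globalalgs}): by the cofree adjunction $\Sp^X\rightleftarrows\Fun(X,\Sp)$, the datum of a Borel global algebra is precisely a commutative-algebra structure on its underlying spectrum in $\Sp=\Sp^{*}$, i.e.\ an $\E_\infty$-ring, with nothing extra. Transporting the oplax $\mathcal{Q}$-action along this equivalence gives the functor $\Theta\colon\mathcal{Q}\to\Fun(\CAlg,\CAlg)$, and the three bullet points of Theorem~\ref{thm:mainalgaction} are exactly the identifications recorded above.

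\emph{Main obstacle.} The real work is the coherence bookkeeping: showing that the generalized Frobenius and canonical maps, together with all of their iterated composites, assemble into a single (op)lax monoidal functor out of \emph{all} of $\mathcal{Q}$, rather than merely establishing the pairwise interaction laws. The structural device that makes this feasible --- and the reason for working over $\Glo\subset\Glo^{+}$ rather than over $\Span(\Gpd)$ --- is the built-in identification $\Phi^G\triv^G\simeq\id$, which is exactly what makes every object of $\mathcal{Q}$ act by the identity on $\CAlg^{\Glo}$ and hence lets the Frobenii live as self-transformations that compose according to the combinatorics of $\mathcal{Q}$. By comparison, the equivalence $\CAlg^{\Glo}_{\Bor}\simeq\CAlg$ is fairly formal, and the remaining tasks (setting up the section $\infty$-category $\CAlg^{\Glo}$, verifying $B(-)\colon\mathcal{Q}\to\Glo$, and unwinding the $C_p$-case) are routine given the framework of \S\ref{sect:ESHT}.
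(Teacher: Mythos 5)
Your two-step decomposition is exactly the paper's: a strict monoidal $\mathcal{Q}$-action on $\CAlg^{\Glo}$, then transport to $\CAlg$ via the Borelification localization and the equivalence $\CAlg^{\Glo}_{\Bor}\simeq\CAlg$, with oplaxness entering at the second step. The one place where your outline has a genuine hole is the sentence producing the $\mathcal{Q}$-action: a monoidal functor $B(-)\colon\mathcal{Q}\to\Glo$ ``combined with the action of $\Glo$ on $\CAlg^{\Glo}$ coming from the symmetric monoidal structure'' does not by itself give anything, because precomposing a section $A$ of $\Psi$ with $(-)\times BG$ lands in the wrong fiber ($A(X\times BG)\in\Sp^{X\times BG}$ rather than $\Sp^X$). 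One must push forward along the span $(X\times BG\leftarrow X\times BG\to X)$, and that span lives in $\Glop$ but not in $\Glo$; formalizing this is precisely what forces the target of the monoidal functor to be not $\Glo$ but the slice-type object $\Glo\times_{\Glop}\Glop_{/*}$ (the relative endomorphism object of Appendix~\ref{sect:actgen}, cf.\ Proposition~\ref{prop:mainaction} and Proposition~\ref{prop:Qinside}), with the functor factoring through the equivalent subcategory $\tilde{\mathcal{Q}}\simeq\D$. You identify the right mechanism informally when you single out the $\Phi^G\triv^G\simeq\id$ identification, but the proposal as written doesn't connect that observation to a construction of the action, and that connection is where most of the work in \S\ref{subsect:globalg} lives. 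A smaller imprecision: the oplaxness in step two comes from applying Lemma~\ref{lem:oplax} to the Bousfield-localization adjunction $\beta^s\dashv j^s$ on \emph{section} categories (conjugating endofunctors across the localization), not from $\beta$ being lax monoidal with respect to the internal smash product on $\Sp^X$; the two are related in spirit but the argument the paper actually runs is the former.
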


\begin{rmk}
The oplax structure corresponds to a natural map $A^{\tau (G\times H)} \to (A^{\tau G})^{\tau H}$ for any $\E_{\infty}$-ring $A$ and finite groups $G,H$.   The map can be constructed directly by the universal property of the proper Tate construction (cf. Remark \ref{rmk:verdier}), but one may also give a description in the spirit of Constructions \ref{cnstr:genfrobmaps} and \ref{cnstr:gencanmaps} as follows: we observe that the underlying Borel $G$-spectrum of $\Phi^H \beta_{G\times H} \triv^{G\times H} A$ and $\triv^G \Phi^H \beta_H A$ are both $A^{\tau H}$ with the trivial $G$ action.  This yields a canonical genuine $G$-equivariant map $$\Phi^H \beta_{G\times H} \triv^{G\times H}A \to \beta_G \triv^G \Phi^H \beta_H A,$$ 
from which we extract the desired map as the composite $$A^{\tau G\times H} \simeq \Phi^G \Phi^H \beta_{G\times H}\triv^{G\times H}A \to \Phi^G \beta_G \triv^G \Phi^H \beta_H A \simeq (A^{\tau H})^{\tau G}.$$  
\end{rmk}

\begin{rmk}\label{rmk:adem}
Recall from Example \ref{exm:totalsteenrod} that the Frobenius map for an $\F_2$-algebra $A$ captures the total Steenrod operation.  Unwinding the functoriality of Theorem \ref{thm:mainalgaction} on the full subcategory of $\mathcal{Q}$ spanned by the groups $*,C_2$ and $C_2\times C_2$, one recovers the proof of the Adem relations given in \cite{BullMac} and \cite{DWilson} (the functoriality encoding the invariance of the two-fold composite of the total power operation under conjugation inside $\Sigma_4$ or $\Sigma_2\wr \Sigma_2$).   More generally, thinking of the Frobenius as a ``total stable power operation,'' this theorem can be seen as expressing the higher relations that occur when one composes stable power operations.  
\end{rmk}

The remainder of the section is dedicated to proving Theorem \ref{thm:mainalgaction}.

\subsection{Global algebras}\label{subsect:globalg}
Let $G$ be a finite group.  Then any $\E_{\infty}$-ring spectrum $A$ comes equipped with a multiplication map $(A^{\otimes G})_{hG}\to A$.  However, one could ask for a stronger notion of multiplication.  

\begin{qst*}
Does the multiplication on $A$ canonically lift to a genuine $G$-equivariant multiplication map $N^G A \to \triv^G A$?
\end{qst*}

In general, the answer is no -- one needs to specify more data:

\begin{exm}\label{exm:Cpequivariantmult}
Consider the case $G=C_p$.  Recall that the data of a genuine $C_p$-spectrum $X$ is the data of a spectrum $X$ with $C_p$-action, a spectrum $\Phi^{C_p} X$, and a map $\Phi^{C_p}X \to X^{tC_p}$.  To specify a genuine $C_p$-equivariant map $N^{C_p}A \to \triv^{C_p} A$ lifting the $\E_{\infty}$ multiplication on $A$ is to fill in the dotted arrow in the following diagram:
\begin{equation*}
\begin{tikzcd}
\Phi^{C_p}N^{C_p}A \arrow[r,equal] & A \arrow[r,dashed] \arrow[d,"\Delta"] & A \arrow[r,equal] \arrow[d,"\can"] &\Phi^{C_p}\triv^{C_p}A  \\
& (A^{\otimes p})^{tC_p} \arrow[r] & A^{tC_p} & \\
\end{tikzcd}
\end{equation*}
where the bottom arrow is induced by the $\E_{\infty}$ structure.  In other words, one needs a lift of the $\E_{\infty}$-Frobenius map $A\to A^{tC_p}$ to an endomorphism of $A$.  In general, a genuine $G$-equivariant multiplication in the above sense gives $A$ the dotted ``Frobenius lift" in the following diagram:
\begin{equation*}
\begin{tikzcd}
A \arrow[r,dashed] \arrow[rd,"\varphi^G"']& A\arrow[d,"\can^G"]\\
 & A^{\tau G}.
\end{tikzcd}
\end{equation*}
\end{exm}

We now define the notion of a \emph{global algebra}, which is roughly an $\E_{\infty}$-ring spectrum together with coherent choices of genuine $G$-equivariant multiplication for all $G$.  

\begin{defn}\label{defn:gloalg}
A \emph{global algebra} is a section of the fibration $\Psi: \GloSp \to \Glo$ (cf. Definition \ref{defn:glofib}) which is coCartesian over the left morphisms.  We let $\CAlg^{\Glo}$ denote the $\infty$-category of global algebras.  
\end{defn}

\begin{rmk}\label{rmk:concgloalg}
Concretely, such a section $A: \Glo \to \GloSp$ is an assignment to each groupoid $X\in \Glo$ a genuine equivariant spectrum $A(X)\in \Sp_X$ together with certain structure maps.  The condition that the section is coCartesian over the left morphisms implies in particular that $A(BG) \simeq \triv^G A(*)$ (by considering the span $(*\leftarrow BG \rightarrow BG)$), so that the value of the section on a point determines the value of the section on any other groupoid.  The functoriality in the span $(* \leftarrow * \rightarrow BG)$ encodes the genuine multiplication maps $$N^G A(*) \simeq (*\leftarrow * \rightarrow BG)_* A(*) \to A(BG) \simeq \triv^G A.$$  
\end{rmk}

\begin{rmk}\label{rmk:undeinfty}
The restriction of a global algebra $A$ along the inclusion $\Span(\Fin) \to \Glo$ of Example \ref{exm:spanfininside} endows $A(*)$ with the structure of an $\E_{\infty}$-ring spectrum by \cite[Corollary C.2]{BachHoy}.  We will think of $A(*)$ as the underlying $\E_{\infty}$-ring of $A$.  
\end{rmk}

By Example \ref{exm:Cpequivariantmult}, we can think of a global algebra as an $\E_{\infty}$-ring together with ``Frobenius lifts." 
The functoriality of these Frobenius lifts is explained by the following observation:

\begin{obs}\label{obs:twist}
Let $A\in \CAlg^{\Glo}$ and $G$ be a finite group.  Then one can form another global algebra $A^G$ (thought of as $A$ twisted by $G$), which is given on objects via the formula $$A^G(X)  = \sigma_* A(X\times BG),$$ where $\sigma$ denotes the morphism $(X\times BG \leftarrow X\times BG \xrightarrow{\mathrm{proj}_1} X)$ in $\Glo^+$. 
More concretely, we have for any finite group $H$ that $$A^G(BH) = \Phi^G A(BH\times BG) \simeq \Phi^G \triv_H^{H\times G} A(BH).$$    We will construct $A^G$ formally in the proof of \Cref{thm:Qactgloalg} using the results of the appendix.  

These twists $A^G$ have certain functorialities in the finite group $G$:
\begin{enumerate}[leftmargin=*]
\item For an injection $H\to G$, we have a natural map $A^H\to A^G$ given by:
\begin{align*} A^H(X) &= (X\times BH \leftarrow X\times BH \rightarrow X)_* A(X\times BH)\\
&\simeq (X\times BG\leftarrow X\times BG \rightarrow X)_* (X\times BH\leftarrow X\times BH\rightarrow X\times BG)_* A(X\times BH) \\
&\to (X\times BG \leftarrow X\times BG \rightarrow X)_* A(X\times BG) = A^G(X)
\end{align*}
where we have used that $A$ is a section over $\Glo$ and that $X\times BH \to X\times BG$ has discrete fibers.  
\item For a surjection $G\to K$, we have a natural map $A^K \to A^G$ given by:
\begin{align*}
A^K(X) & = (X\times BK \leftarrow X\times BK \rightarrow X)_* A(X\times BK)\\
 &\simeq (X\times BG\leftarrow X\times BG\rightarrow X)_* (X\times BK \leftarrow X\times BG \rightarrow X\times BG)_* A(X\times BK)\\
 &\to (X\times BG \leftarrow X\times BG \rightarrow X)_* A(X\times BG) = A^G(X)
\end{align*}
where we note that we have used critically the definition of the composition law of $\Glo^+$ and the fact that $X\times BG\to X\times BK$ has connected fibers.  
\end{enumerate}
\end{obs}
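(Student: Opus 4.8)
The plan is to realize the twisted global algebra $A^{G}$ as a \emph{coCartesian pushforward} of a restriction of the section $A$, and then to deduce the two functorialities in $G$ from compatible natural transformations between the endofunctors of $\Glo$ that do the twisting; write $\iota\colon\Glo\hookrightarrow\Glop$ for the inclusion. First I would check that the assignment $X\mapsto X\times BG$, $(X\xleftarrow{a}M\xrightarrow{b}Y)\mapsto(X\times BG\xleftarrow{a\times\id}M\times BG\xrightarrow{b\times\id}Y\times BG)$ defines an endofunctor $T_{G}\colon\Glo\to\Glo$ — the forward leg $b\times\id$ has discrete fibers whenever $b$ does — which moreover carries left morphisms to left morphisms (since $b\times\id$ is an equivalence whenever $b$ is). Next, the spans $\sigma^{G}_{X}:=(X\times BG\xleftarrow{\id}X\times BG\xrightarrow{\mathrm{proj}_{1}}X)$ are morphisms of $\Glop$ but not of $\Glo$, since $\mathrm{proj}_{1}$ has connected fibers $BG$; I would verify that they are natural in $X$, i.e.\ assemble into a natural transformation $\sigma^{G}\colon\iota\circ T_{G}\Rightarrow\iota$ of functors $\Glo\to\Glop$. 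Concretely, for $\tau=(X\xleftarrow{a}M\xrightarrow{b}Y)$ in $\Glo$, both composites $\sigma^{G}_{Y}\circ T_{G}(\tau)$ and $\tau\circ\sigma^{G}_{X}$ are computed from the same datum $M\times BG\to(X\times BG)\times Y$, and hence agree after the canonical ``connected fibers then finite cover'' factorization of Variant~\ref{var:glop}.

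Given this, I would build $A^{G}$ as follows. Regarding the global algebra $A$ as a section of the coCartesian fibration $\Psi^{+}\colon\GlopSp\to\Glop$ (Theorem~\ref{thm:mainfib}) lying over $\iota$ and coCartesian over left morphisms, the composite $A\circ T_{G}\colon\Glo\to\GlopSp$ lies over $\iota\circ T_{G}$ and is still coCartesian over left morphisms. Since $\Psi^{+}$ is a coCartesian fibration, so is the induced functor $\Fun(\Glo,\GlopSp)\to\Fun(\Glo,\Glop)$, with coCartesian edges exactly the natural transformations having coCartesian components; the coCartesian pushforward of $A\circ T_{G}$ along the edge $\sigma^{G}$ is then a functor $A^{G}\colon\Glo\to\GlopSp$ lying over $\iota$ with $A^{G}(X)=(\sigma^{G}_{X})_{*}A(X\times BG)$ as in the statement. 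Being over $\iota$, it is a section of $\Psi\colon\GloSp\to\Glo$, and it is manifestly functorial in $A$. To check that $A^{G}$ is a global algebra (Definition~\ref{defn:gloalg}), it remains to see that $A^{G}(\tau)$ is coCartesian for every left morphism $\tau\colon X\to Y$: in the commuting square over $\sigma^{G}_{Y}\circ T_{G}(\tau)\simeq\tau\circ\sigma^{G}_{X}$ with vertices $A(T_{G}X),A(T_{G}Y),A^{G}(X),A^{G}(Y)$, the two horizontal edges are coCartesian by construction of the pushforward and the left edge $A(T_{G}\tau)$ is coCartesian since $T_{G}\tau$ is a left morphism and $A$ a global algebra, so the cancellation property of coCartesian edges forces $A^{G}(\tau)$ coCartesian. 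The concrete formula is then immediate: for $X=BH$, Theorem~\ref{thm:mainfib} identifies $(\sigma^{G}_{BH})_{*}$ with $(\mathrm{proj}_{1})_{\otimes}$ for the delooped surjection $H\times G\twoheadrightarrow H$, which is by definition the geometric fixed point functor $\Phi^{G}$, so $A^{G}(BH)=\Phi^{G}A(BH\times BG)$.

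For the functorialities in $G$, I would exhibit each map as a pushforward of an evident structure map of $A$. For an injection $H\hookrightarrow G$, the spans $j^{H,G}_{X}:=(X\times BH\xleftarrow{\id}X\times BH\to X\times BG)$ induced by $H\hookrightarrow G$ are morphisms of $\Glo$ (the forward leg has discrete fibers $G/H$), are natural in $X$ — defining $j^{H,G}\colon T_{H}\Rightarrow T_{G}$ — and satisfy $\sigma^{G}\circ(\iota\ast j^{H,G})\simeq\sigma^{H}$, again by unwinding the composition law. Applying the section $A$ to the morphisms $j^{H,G}_{X}$ gives a natural transformation $A\circ T_{H}\Rightarrow A\circ T_{G}$ over $\iota\ast j^{H,G}$, which by the universal property of coCartesian edges factors through the coCartesian pushforward of $A\circ T_{H}$ along $\iota\ast j^{H,G}$; pushing forward once more along $\sigma^{G}$ and using $\sigma^{H}\simeq\sigma^{G}\circ(\iota\ast j^{H,G})$ then yields the natural transformation of global algebras $A^{H}\to A^{G}$ of the statement. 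The surjection case $G\twoheadrightarrow K$ is identical with the spans $k^{G,K}_{X}:=(X\times BK\leftarrow X\times BG\xrightarrow{\id}X\times BG)$ (backward leg induced by $G\twoheadrightarrow K$), which are in fact left morphisms of $\Glo$; here the identity $\sigma^{G}\circ(\iota\ast k^{G,K})\simeq\sigma^{K}$ genuinely uses the ``connected fibers then finite cover'' factorization, via the connected fibers $B\ker(G\twoheadrightarrow K)$ of $X\times BG\to X\times BK$, and since $k^{G,K}_{X}$ is a left morphism the resulting map $A^{K}\to A^{G}$ even comes out an equivalence.

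The step I expect to be the main obstacle is the ``coCartesian pushforward along a natural transformation'' and its iterated, compatible versions in the functoriality step: although each such pushforward is a standard $\infty$-categorical maneuver resting on the coCartesian lifting property of $\Psi^{+}$ over $\Glo\times\Delta^{1}$, threading them together coherently — together with the elementary but fiddly bookkeeping of the ``connected fibers then finite cover'' factorizations needed to prove naturality of $\sigma^{G}$, $j^{H,G}$, $k^{G,K}$ and the identities relating them — is where essentially all the content of the observation resides.
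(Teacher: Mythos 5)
Your proposal is correct and essentially matches the spirit of the paper's construction, though it is written out in considerably more explicit detail than the paper provides for this particular observation. In the paper the observation is presented not with a separate proof but with the chains of equivalences and maps written directly into the statement; the observation is implicitly treated as an unwinding of the definition of section and of the composition law of $\Glop$, and the real formal work is deferred to Theorem~\ref{thm:Qactgloalg} and Proposition~\ref{prop:mainaction}, which package all of these twists and structure maps simultaneously into a monoidal $\mathcal{Q}$-action.

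What you do differently is to reorganize the construction explicitly as a coCartesian pushforward in the fibration $\Fun(\Glo,\GlopSp)\to\Fun(\Glo,\Glop)$: you first verify that $T_{G}$, $\sigma^{G}$, $j^{H,G}$, $k^{G,K}$ live where they should (in particular that $T_{G}$ preserves left morphisms and that $\sigma^{G}$ is natural only after the ``connected fibers then finite cover'' factorization), then push sections forward along these natural transformations and use the cancellation property of coCartesian edges to check that $A^{G}$ is again a global algebra. This is the same underlying mechanism as in the paper's appendix — Proposition~\ref{prop:sectadj} constructs the left adjoint $\sect^{+}(q)\to\sect(q)$ by exactly this kind of coCartesian lift over $S\times\Delta^{1}$, and Proposition~\ref{prop:mainaction} then packages the result as an action of $\Fun(S,S)\times_{\Fun(S,S^+)}\Fun(S,S^{+})_{/i}$ — so your proof amounts to unfolding, for one group $G$ and one morphism at a time, what the appendix does uniformly and monoidally. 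Your version is more elementary to follow for a single observation, whereas the paper's version is what one needs to get the monoidal coherence required for Theorem~\ref{thm:Qactgloalg}.

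Two of your checks are worth flagging as genuinely correct points the paper leaves implicit: the identity $\sigma^{K}_{X}\simeq\sigma^{G}_{X}\circ k^{G,K}_{X}$ really does require the $\Glop$ factorization through $X\times BK$ (this is the point emphasized in the statement as ``we have used critically the definition of the composition law of $\Glop$''), and since $k^{G,K}_{X}$ is a left morphism and $A$ is coCartesian over left morphisms, the map $A^{K}\to A^{G}$ associated to a surjection $G\twoheadrightarrow K$ is in fact an equivalence of global algebras — which is the reason the $\mathcal{Q}$-action of Theorem~\ref{thm:Qactgloalg} is a genuine (strong) monoidal action on $\CAlg^{\Glo}$, rather than merely oplax as in the Borel setting.
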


The main result of this section is that these two opposing functorialities assemble into an action of $\mathcal{Q}$ on global algebras.  In fact, the twists make sense for any section of $\Psi$ -- not just global algebras; we therefore state our theorem more generally:

\begin{thm}\label{thm:Qactgloalg}
There is a monoidal functor $$\mathcal{Q} \to \Fun(\sect(\Psi), \sect(\Psi))$$ sending a group $G$ to the functor $A\mapsto A^G$ and such that maps in $\mathcal{Q}$ are sent to the corresponding maps identified in Observation \ref{obs:twist}.  This action fixes the full subcategory of global algebras, and thus determines an action of $\mathcal{Q}$ on $\CAlg^{\Glo}$.  
\end{thm}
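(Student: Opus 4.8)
The plan is to exhibit the twist $A \mapsto A^{G}$ as an instance of a general transport operation for the coCartesian fibration $\Psi^{+}\colon \GlopSp \to \Glop$ of Theorem~\ref{thm:mainfib}, and to obtain the $\mathcal{Q}$-functoriality not by separately producing the maps of Observation~\ref{obs:twist} and then checking coherences, but by feeding one monoidal diagram into that transport. The starting observation is that $B(-)\colon \mathcal{Q} \to \Glo$, $G \mapsto BG$, sending a span $(H \twoheadleftarrow K \hookrightarrow G)$ to the $\Glo$-span $(BH \leftarrow BK \to BG)$, is a well-defined symmetric monoidal functor: an injection of finite groups induces a covering of classifying groupoids (so the right legs stay in $\Glo$), composition of $\mathcal{Q}$-spans is the fibre product of a surjection and an injection of groups, which matches the composition law of $\Glo$ under $B$, and products of groups go to products of groupoids. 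Composing $B$ with the symmetric monoidal structure of $\Glo$ gives an action $\mu\colon \Glo \times \mathcal{Q} \to \Glo$, $(X,G) \mapsto X \times BG$; note $\mu$ already lands in $\Glo$ on morphisms, since the $\mathcal{Q}$-factor contributes covering forward legs.

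Next I would package the projections $X \times BG \to X$ into a functor $\Glo \times \mathcal{Q} \times [1] \to \Glop$ which restricts over $0 \in [1]$ to $\iota \circ \mu$ and over $1$ to $\iota \circ \mathrm{pr}_{\Glo}$, where $\iota\colon \Glo \hookrightarrow \Glop$. The crux is that this is a \emph{monoidal} functor of $\infty$-categories; its naturality in the $\mathcal{Q}$-direction is precisely the assertion that, in $\Glop$, the composite of the projection $X \times BG \to X$ with $\mathrm{id}_{X} \times (BH \leftarrow BK \to BG)$ equals the projection $X \times BH \to X$. This is exactly where $\Glop$, rather than $\Span(\Gpd)$, is indispensable (cf.\ Remark~\ref{rmk:mapfromspangpd}, Remark~\ref{rmk:gloinsidespangpd}): the equivalences $\Phi^{G}\triv^{G}_{K} \simeq \Phi^{K}$ and $\Phi^{G}N^{G}_{H} \simeq \Phi^{H}$ (Examples~\ref{exm:geotriv} and~\ref{exm:geonorm}) are the instances of the $\Glop$ composition law that collapse the intermediate apex $BK$ down to $BH$; in $\Span(\Gpd)$ these coherences fail.

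Pulling $\Psi^{+}$ back along this diagram gives a coCartesian fibration over $\Glo \times \mathcal{Q} \times [1]$; transporting along the edge $0 \to 1$ yields a functor $P\colon \mu^{*}\GloSp \to \GloSp \times \mathcal{Q}$ over $\Glo \times \mathcal{Q}$, whose fibre over $0$ is $\mu^{*}\Psi$ (since $\mu$ lands in $\Glo$) and over $1$ is $\Psi \times \mathrm{id}_{\mathcal{Q}}$. For $A \in \sect(\Psi)$ the composite $A \circ \mu$ is a section of $\mu^{*}\Psi$, so $P \circ (A \circ \mu)$ is a section of $\Psi \times \mathrm{id}_{\mathcal{Q}}$, i.e.\ a functor $\mathcal{Q} \to \sect(\Psi)$; this is functorial in $A$ and, by the monoidality of the diagram above, assembles into a monoidal functor $\mathcal{Q} \to \Fun(\sect(\Psi), \sect(\Psi))$. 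Unwinding: $G$ acts by $A \mapsto A^{G}$ with $A^{G}(X) = \Phi^{G}A(X \times BG)$, and on a span $(H \twoheadleftarrow K \hookrightarrow G)$ the surjection and injection legs produce exactly the two maps composed in Observation~\ref{obs:twist}. To see the action preserves $\CAlg^{\Glo}$, one checks that $P$ carries arrows coCartesian over left morphisms to arrows coCartesian over left morphisms: for a left morphism the relevant square of groupoids $X \times BG \to X$, $X' \times BG \to X'$ is a pullback, so transfer-pushforward commutes with restriction-pullback (Beck--Chevalley for $\Psi^{+}$), while $A \circ \mu$ obviously preserves the coCartesian-over-left-morphisms condition; the same applies to the morphism part of the action. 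Hence it restricts to the asserted action of $\mathcal{Q}$ on $\CAlg^{\Glo}$.

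The main obstacle is the middle step: producing $\Glo \times \mathcal{Q} \times [1] \to \Glop$ as an honest monoidal functor of $\infty$-categories (not merely a grid of squares commuting up to unspecified homotopy), and checking that the transport $P$ depends monoidally on it. This forces one to work with the precise composition law of $\Glop$ from Definition~\ref{defn:glo}/Theorem~\ref{thm:mainfib} rather than the naive span category, and to do the coherence bookkeeping that upgrades the pointwise identifications of Observation~\ref{obs:twist} to a genuine functor out of $\mathcal{Q}$. By comparison, the construction of $B$ and $\mu$ is the standard functoriality of $- \times Y$ in a symmetric monoidal category, and the restriction to $\CAlg^{\Glo}$ is a routine base-change argument.
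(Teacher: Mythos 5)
Your ingredients are the right ones, and several of your observations are correct and match the paper: the functor $G\mapsto BG$ into $\Glo$, the fact that the relevant composite in $\Glop$ of the projection span $(X\times BG \leftarrow X\times BG \to X)$ with $\mathrm{id}_X\times(BH\leftarrow BK\to BG)$ collapses back to the projection $(X\times BH\leftarrow X\times BH\to X)$ (this is exactly the computation behind Observation \ref{obs:twist} and Examples \ref{exm:geonorm}, \ref{exm:geotriv}, and you correctly locate why $\Glop$ rather than $\Span(\Gpd)$ is indispensable, cf. Remark \ref{rmk:mapfromspangpd}), and the routine check that the twists preserve sections coCartesian over left morphisms. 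But the proposal has a genuine gap at precisely the step you yourself flag as ``the main obstacle,'' and that step is the entire content of the theorem. First, the packaging is not of the right shape: a monoidal functor $\Glo\times\mathcal{Q}\times[1]\to\Glop$ records, for each $G$, one transformation $\iota(-\times BG)\to\iota$ naturally in $\Glo$ and in $\mathcal{Q}$, but it does not record the coherences identifying transport along $G$ followed by transport along $G'$ with transport along $G\times G'$ (and all higher coherences); yet this is exactly what monoidality of $\mathcal{Q}\to\Fun(\sect(\Psi),\sect(\Psi))$ asserts, and, as Remark \ref{rmk:classical} stresses, the pointwise maps are already supplied by Observation \ref{obs:twist} — the theorem is only about this coherence. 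Second, ``the transport $P$ depends monoidally on the diagram'' is asserted, not constructed: coCartesian transport over an edge is a priori functorial only up to homotopy, and turning it into an action of a monoidal $\infty$-category on $\sect(\Psi)$ requires an actual mechanism.

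For comparison, the paper never hand-builds such a diagram. Proposition \ref{prop:mainaction} (proved in the appendix via endomorphism objects in an $\infty$-category tensored over $\Cat_{\infty}$, a relative adjunction $\sect(q)\rightleftharpoons\sect^+(q)$, and a marked-anodyne lifting argument) shows once and for all that for a coCartesian fibration $q^+$ restricted along a subcategory $i\colon S\to S^+$, the $\infty$-category $\sect(q)$ carries an action of the monoidal $\infty$-category $\Fun(S,S)\times_{\Fun(S,S^+)}\Fun(S,S^+)_{/i}$; this is the rigorous substitute for your transport step. The remaining work, Proposition \ref{prop:Qinside}, produces a monoidal functor from $\mathcal{Q}$ into this acting category, and the coherence bookkeeping is discharged there by exploiting that $\iota\colon\Glo\to\Glop$ is a map of $\Glo$-modules: the problem is reduced to identifying a monoidal subcategory $\D\subset\Glo\times_{\Glop}\Glop_{/*}$ and checking, by an explicit $(2,1)$-categorical argument (using that the object $(BH=BH\to *)$ has no automorphisms in $\Glop(BH,*)\simeq(\Fin_H^{\op})^{\simeq}$), that $\D\to\tilde{\mathcal{Q}}$ is a monoidal equivalence. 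If you want to keep your transport route, you must either reproduce this module-categorical argument or provide an equivalent device exhibiting $\big((-)\times BG,\ \mathrm{proj}\big)$ as a monoidal functor from $\mathcal{Q}$ into $\Fun(\Glo,\Glo)\times_{\Fun(\Glo,\Glop)}\Fun(\Glo,\Glop)_{/\iota}$, together with a proof that such data acts on $\sect(\Psi)$; as written, the proposal defers exactly the part of the argument that carries the theorem.
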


\begin{proof}

By Proposition \ref{prop:mainaction} applied to the coCartesian fibration $q^+ = \Psi^+: \GlopSp \to \Glop$ and the subcategory $i = \iota : \Glo \to \Glop$, we learn that the $\infty$-category $\sect (\Psi)$ admits a natural right action of the monoidal $\infty$-category $\Fun(\Glo,\Glo)\times_{\Fun(\Glo,\Glop)}\Fun(\Glo,\Glop)_{/\iota}.$  Since $\mathcal{Q}$ is symmetric monoidal, left and right actions by $\mathcal{Q}$ coincide, so the proof is completed by the following proposition, which asserts that this action can be restricted to an action of $\mathcal{Q}$ admitting the description given in Observation \ref{obs:twist}.  

\end{proof}

\begin{prop} \label{prop:Qinside}
There is a monoidal functor $$\mathcal{Q} \to \Fun(\Glo,\Glo)\times_{\Fun(\Glo,\Glop)}\Fun(\Glo,\Glop)_{/\iota}$$ sending a group $G$ to the functor $(-)\times BG: \Glo \to \Glo$ given by multiplication by $BG$ together with the natural transformation $\iota(-) \times BG \to \iota(-)$ of functors $\Glo\to \Glo^+$ given on $X\in \Glo$ by the span $(X\times BG\leftarrow X \times BG \rightarrow X).$    
\end{prop}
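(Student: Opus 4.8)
The plan is to produce the functor as a composite
$$\mathcal{Q} \xrightarrow{\;(B,\,c)\;} \Glo \times_{\Glop} (\Glop)_{/*} \xrightarrow{\;\otimes\;} \Fun(\Glo,\Glo)\times_{\Fun(\Glo,\Glop)}\Fun(\Glo,\Glop)_{/\iota},$$
where $B\colon \mathcal{Q}\to\Glo$ is a symmetric monoidal functor, $c$ is a monoidal natural transformation from $\iota\circ B$ to the constant functor at the unit $*$, and the second arrow is the monoidal functor induced by the self-action of the symmetric monoidal category $\Glo$ (together with that of $\Glop$, using the symmetric monoidal inclusion $\iota$). Reading off the composite will give exactly the assignment in the statement: $G\mapsto (-)\times BG$ together with the span $(X\times BG \leftarrow X\times BG\to X)$.

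First I would construct $B$. On objects it sends a finite group $G$ to $BG$; on a morphism, i.e. an isomorphism class of spans $(H\twoheadleftarrow K\hookrightarrow G)$, it returns the span of groupoids $(BH\leftarrow BK\to BG)$. The forward leg $BK\to BG$ comes from an injection, hence has discrete fibers, so this is a morphism of $\Glo$ (Definition \ref{defn:glo}); the map $BK\to BH\times BG$ is moreover a finite cover, corresponding to the transitive $(H\times G)$-set $(H\times G)/K$, so the span is already ``reduced'' and lies in $\Glo\subset\Span(\Gpd)$. Functoriality uses that, given a cospan $K\hookrightarrow G\twoheadleftarrow L$ in $\mathcal{Q}$, applying $B$ to the pullback $K\times_G L$ yields the \emph{homotopy} pullback $BK\times_{BG}^h BL$ --- this holds because the backward leg $L\to G$ is surjective, forcing $\pi_0$ of the homotopy pullback to vanish and $\pi_1$ to be $K\times_G L$ --- and that the resulting composite span $(H\twoheadleftarrow K\times_G L\hookrightarrow M)$ is again reduced, so $B$ of the composite agrees with composition in $\Glo$ (this is exactly Remark \ref{rmk:Qabelian} at the level of spans, using Proposition \ref{prop:spangpd}). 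Symmetric monoidality is immediate from $B(G\times H)\simeq BG\times BH$ and the fact that the monoidal product on $\Glop$, hence on $\Glo$, is the Cartesian product of groupoids.

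Second --- and this is the key step --- I would set $c_G = (BG\overset{=}{\leftarrow} BG\to *)\in\Glop(BG,*)$ and verify naturality. For $\alpha=(H\twoheadleftarrow K\hookrightarrow G)$, the composite $c_G\circ B(\alpha)$ in $\Glop$ is computed by forming the pullback $BK\times_{BG}BG\simeq BK$, producing the span of groupoids $(BH\leftarrow BK\to *)$, and then applying the composition law of $\Glop$ from Variant \ref{var:glop}: one factors $BK\to BH$ as a map with connected fibers followed by a finite cover. Since $K\twoheadrightarrow H$ is a surjection, $BK\to BH$ already has connected fibers, so this factorization is $BK\to BH\overset{=}{\to} BH$ and the reduced span is $(BH\overset{=}{\leftarrow}BH\to *)=c_H$, as needed. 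I would stress that this computation genuinely fails in $\Span(\Gpd)$, where $(BH\leftarrow BK\to *)$ is a different morphism; thus naturality of $c$ is precisely where the enhancement of $\Span(\Gpd)$ to $\Glop$ provided by Theorem \ref{thm:mainfib} is used (cf. Remark \ref{rmk:gloinsidespangpd}). Monoidality of $c$ reduces to the identity $c_{G\times H}=c_G\otimes c_H$, both sides being the span $(BG\times BH\overset{=}{\leftarrow}BG\times BH\to *)$.

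Finally I would assemble the pieces: the symmetric monoidal structure of $\Glo$ gives a monoidal functor $\Glo\to\Fun(\Glo,\Glo)$ (sending $X$ to $(-)\times X$, via the braiding), and that of $\Glop$ together with the monoidal functor $\iota$ gives a compatible monoidal functor $(\Glop)_{/*}\to\Fun(\Glo,\Glop)_{/\iota}$; invoking the description of the monoidal structure on the target furnished by Proposition \ref{prop:mainaction}, these glue to the second arrow in the display. Precomposing with $(B,c)$ and unwinding recovers the data in the statement, and the description of what morphisms are sent to follows by the same unwinding from $B(\alpha)$ (and matches Observation \ref{obs:twist}). The main obstacle I anticipate is not the mathematics of any single step --- the only genuinely nontrivial input is the naturality of $c$ above --- but the coherence bookkeeping: upgrading the evident object-and-morphism assignments (for $B$, for $c$, and for the ``tensoring'' functor) to honest monoidal functors of $(2,1)$- or $\infty$-categories with all their higher data, and pinning down the precise monoidal structure on $\Fun(\Glo,\Glo)\times_{\Fun(\Glo,\Glop)}\Fun(\Glo,\Glop)_{/\iota}$ produced by Proposition \ref{prop:mainaction} so that the gluing in the last step is legitimate.
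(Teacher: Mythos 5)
Your proposal is correct and factors through the same intermediate $\Glo\times_{\Glop}\Glop_{/*}$, with the same underlying data ($G\mapsto BG$ and the collapse span to the unit), and you correctly identify the crux: that the $\Glop$ composition law, unlike that of $\Span(\Gpd)$, gives $c_G\circ B(\alpha)\simeq c_H$. The paper differs precisely at the point you flag as the ``main obstacle.'' Rather than constructing the monoidal natural transformation $c$ by hand and then having to supply all its higher coherences, the paper exhibits a monoidal full subcategory $\D\subset\Glo\times_{\Glop}\Glop_{/*}$ consisting of the spans $(X\overset{=}{\leftarrow}X\to *)$ with $X$ connected, and proves that the projection $p:\D\to\tilde{\mathcal{Q}}$ onto a monoidal subcategory $\tilde{\mathcal{Q}}\subset\Glo$ is a monoidal \emph{equivalence} --- the fully-faithfulness of $p|_{\D}$ packages your naturality check once and for all. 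Since $G\mapsto BG$ visibly factors monoidally through $\tilde{\mathcal{Q}}$, inverting the equivalence produces the first arrow with all coherence data for free. For the second arrow the paper likewise avoids your ``gluing'' concern by working relative to the $\Glo$-module structure: it sets up a Cartesian fibration $\mathcal{N}\to\Mod_{\Glo}(\Cat_{\infty})$ analogous to Construction \ref{cnstr:slicemonoidal}, notes that the forgetful functor $U:\mathcal{N}\to\mathcal{M}$ respects the $\Cat_{\infty}$-tensoring, and uses an analogue of Lemma \ref{lem:monendcat} to identify $\End_{\mathcal{N}}(\Glo,\iota)\simeq\Glo\times_{\Glop}\Glop_{/*}$; the resulting monoidal map $\End_{\mathcal{N}}(\Glo,\iota)\to\End_{\mathcal{M}}(\Glo,\iota)$ is exactly your tensoring functor, produced in one stroke rather than by gluing two separately constructed monoidal functors along a pullback.
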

\begin{proof}
The symmetric monoidal inclusion $\iota: \Glo \to \Glop$ endows $\Glop$ with the structure of a module over the symmetric monoidal $\infty$-category $\Glo$.  The $\infty$-category $\Mod_{\Glo}(\Cat_{\infty})$ of $\Glo$-module $\infty$-categories is naturally tensored over $\Cat_{\infty}$ via the Cartesian product and the forgetful functor $\Mod_{\Glo}(\Cat_{\infty}) \to \Cat_{\infty}$ respects this tensoring.  

Recall the Cartesian fibration $\mathcal{M} \to \Cat_{\infty}$ of Construction \ref{cnstr:slicemonoidal}.  We make the following variant: let $\mathcal{N} \to \Mod_{\Glo}(\Cat_{\infty})$ denote the Cartesian fibration classified by the functor $\Mod_{\Glo}(\Cat_{\infty}) \to \Cat_{\infty}^{\op}$ which sends $\C$ to $\Fun_{\Glo}(\C,\Glop)^{\op}$, where the subscript denotes the $\infty$-category of $\Glo$-module maps.  The forgetful natural transformation $$\Fun_{\Glo}(\C,\Glop)^{\op} \to \Fun(\C,\Glop)^{\op}$$ yields a functor $U: \mathcal{N} \to  \mathcal{M}$.  By construction, the functor $U$ respects the tensoring over $\Cat_{\infty}$.  Moreover, the inclusion $\iota: \Glo \to \Glop$ is a map of $\Glo$-modules, so $(\Glo,\iota)\in\mathcal{M}$ admits a natural lift $(\Glo,\iota)\in \mathcal{N}.$  Consequently, there is a monoidal functor $$\End_{\mathcal{N}}(\Glo,\iota) \to \End_{\mathcal{M}}(\Glo,\iota) = \Fun(\Glo,\Glo)\times_{\Fun(\Glo,\Glop)}\Fun(\Glo,\Glop)_{/\iota}.$$  

It therefore suffices to produce an appropriate monoidal functor $\mathcal{Q} \to \End_{\mathcal{N}}(\Glo, \iota).$  By an analogous argument to Lemma \ref{lem:monendcat}, we may identify the underlying $\infty$-category
\begin{align*}\End_{\mathcal{N}}(\Glo,\iota) &\simeq \Fun^{\Glo}(\Glo,\Glo)\times_{\Fun^{\Glo}(\Glo,\Glop)}\Fun^{\Glo}(\Glo,\Glop)_{/\iota}\\
&\simeq \Glo \times_{\Glop} \Glop_{/*}.\end{align*}

Let us analyze the natural projection $$p: \End_{\mathcal{N}}(\Glo,\iota)\simeq  \Glo \times_{\Glop} \Glop_{/*} \to \Glo,$$ which has the structure of a monoidal functor by construction.  Consider the following subcategories:
\begin{enumerate}
\item Let $\D \subset \Glo \times_{\Glop} \Glop_{/*}$ be the full subcategory spanned by spans of the form $(X = X \rightarrow *)$ such that $X$ is connected.  For ease of notation, we may refer to this object of $\D$ simply as $X$.  
\item Let $\tilde{\mathcal{Q}}\subset \Glo$ be the subcategory spanned by the connected groupoids and morphisms of the form $(X \xleftarrow{f} M \xrightarrow{g} Y)$ where $f$ has connected fibers and $g$ has discrete fibers.  In other words, the spans take the form $BH \leftarrow BG \rightarrow BK$ where $G\to H$ is surjective and $G\to K$ is injective.  
\end{enumerate}
It is immediate that $\D$ and $\tilde{\mathcal{Q}}$ are monoidal subcategories, and that the projection $p$ restricts to a monoidal functor $p:\D \to \tilde{\mathcal{Q}}.$  
\begin{lem}
The restricted functor $p:\D\to \tilde{\mathcal{Q}}$ is an equivalence of monoidal $\infty$-categories.  
\end{lem}
\begin{proof}
Essential surjectivity is obvious so it suffices to show that $p:\D \to \tilde{\mathcal{Q}}$ is fully faithful.  For this, consider two objects of $\D$ corresponding to the groupoids $BH$ and $BK$ together with the natural maps $p_H = (BH = BH \to *)$ and $p_K = (BK = BK \to *)$ in $\Glop$.  We would like to show that the natural map $$\Hom_{\Glo}(BH,BK) \times_{\Hom_{\Glop}(BH,BK)} \Hom_{\Glop_{/*}}(BH,BK) \to \Hom_{\tilde{\mathcal{Q}}}(BH,BK)$$ is an equivalence of groupoids.  We first observe that any map $BH\to BK$ in $\Glo$ which is compatible with the natural maps $BH \to *$ and $BG\to *$ in $\Glop$ is automatically in the subcategory $\tilde{\mathcal{Q}}\subset \Glo$.  Thus, it suffices to show the natural map  \begin{align}\label{eqn:Qglop}
\Hom_{\tilde{\mathcal{Q}}}(BH,BK) \times_{\Hom_{\Glop}(BH,BK)} \Hom_{\Glop_{/*}}(BH,BK) \to \Hom_{\tilde{\mathcal{Q}}}(BH,BK)
\end{align}
 is an equivalence of groupoids.  But we have that 
$$\Hom_{\Glop_{/*}}(BH,BK) \simeq \Hom_{\Glop}(BH,BK)\times_{\Hom_{\Glop}(BH, *)} \{ p_H \}.$$
The lemma then follows by noting that for any $\sigma \in \Hom_{\tilde{\mathcal{Q}}}(BH,BK)$, we have $p_H = p_K \circ \sigma$ so (\ref{eqn:Qglop}) is essentially surjective, and $p_H \in \Hom_{\Glop}(BH, *) \simeq (\Fin_H^{\op})^{\simeq}$ has no automorphisms (since it corresponds to the singleton as an $H$-set) so (\ref{eqn:Qglop}) is fully faithful.
\end{proof}

As a consequence of the lemma, we obtain a monoidal functor $\tilde{\mathcal{Q}} \to \End_{\mathcal{N}}(\Glo,\iota)$.  To complete the proof of Proposition \ref{prop:Qinside} and with it, Theorem \ref{thm:Qactgloalg}, we will show that there is a monoidal functor from $\mathcal{Q}$ (whose objects are groups) to $\tilde{\mathcal{Q}}$ (whose objects are groupoids), which allows us to restrict the action to $\mathcal{Q}$.  To do this, we note that one can consider the $\infty$-category $\Span(\Gpd_*)$ of \emph{pointed} groupoids, which admits a monoidal forgetful functor 
\begin{equation}\label{eqn:forgetgpdpt}
\Span(\Gpd_*) \to \Span(\Gpd).
\end{equation}
Recall that the monoidal $\infty$-category $\tilde{\mathcal{Q}}$ was defined above as a monoidal subcategory of $\Glo$, and thus of $\Span(\Gpd)$ (\Cref{rmk:gloinsidespangpd}).  On the other hand, one can consider the subcategory of $\Span(\Gpd_*)$ spanned by connected groupoids, and where maps from $BH$ to $BK$ are spans $BH \leftarrow BG \rightarrow BK$ where the backward arrow is surjective on $\pi_1$ and the forward arrow is injective on $\pi_1$.  This subcategory is closed under the monoidal structure, and is easily seen to be equivalent to a $1$-category; therefore it is elementary to check that $\Omega$ and $B$ define inverse monoidal equivalences of this subcategory with $\mathcal{Q}$.  Hence, we may restrict (\ref{eqn:forgetgpdpt}) along the inclusion of the subcategory $\mathcal{Q}$ to obtain a monoidal functor $\mathcal{Q} \to \Span(\Gpd)$.  This has image inside the monoidal subcategory $\tilde{\mathcal{Q}}$, and so we may restrict the codomain to obtain the desired monoidal functor $\mathcal{Q} \to \tilde{\mathcal{Q}}$.  \end{proof}

\subsection{Borel global algebras}\label{subsect:borglobalg}
In this section, we complete the proof of Theorem \ref{thm:mainalgaction}, which concerns producing an oplax action of $\mathcal{Q}$ on $\CAlg$.  We will deduce this from Theorem \ref{thm:Qactgloalg} by studying the difference between $\E_{\infty}$-rings and global algebras.  

Let $A\in \CAlg$ be an $\E_{\infty}$-ring spectrum.  We have seen that $A$ does not canonically determine a global algebra because $A$ does not come with a genuine $C_p$-equivariant map $N^{C_p}A \to \triv^{C_p}A$.  However, $A$ does have a weaker version of this structure: the data of a genuine $C_p$-equivariant map $N^{C_p}A \to \beta_{C_p} \triv^{C_p} A$, because this is the same data as a map $A^{\otimes p} \to A$ of spectra with $C_p$-action.  The notion of a Borel global algebra is the analog of a global algebra where one demands this weaker structure.  

After constructing the $\infty$-category $\CAlg^{\Glo}_{\Bor}$ of Borel global algebras, we show that the action of Theorem \ref{thm:Qactgloalg} descends to an oplax action of $\mathcal{Q}$ on $\CAlg^{\Glo}_{\Bor}$ (Proposition \ref{prop:boractionoplax}).  On the other hand, we prove that sending a Borel global algebra to its underlying $\E_{\infty}$-ring induces an equivalence of $\infty$-categories $\CAlg^{\Glo}_{\Bor} \simeq \CAlg$ (Theorem \ref{thm:globalalgs}).  Together, these two facts imply Theorem \ref{thm:mainalgaction}.

\begin{defn}
The total space $\GloSp$ of $\Psi$ can be described as the $\infty$-category of pairs $(E,X)$ where $X\in \Glo$ and $E\in \Sp_X$ is a genuine $X$-spectrum.  For each $X\in \Glo$, recall that there is a full subcategory $j_X: (\Sp_X)_{\mathrm{Bor}} \hookrightarrow \Sp_X$ of Borel $X$-spectra.  Let $\GloSpBor \subset \GloSp$ be the full subcategory on the pairs $(E, X) \in \GloSp$ with the property that $E\in (\Sp_X)_{\Bor}$.  This yields a diagram

\begin{equation}\label{dia:globorglo}
\begin{tikzcd}
\GloSpBor \arrow[rr,"j"] \arrow[dr,"\Psi_{\mathrm{Bor}}", swap] & &\GloSp \arrow[ld,"\Psi"]\\
& \Glo .& \\
\end{tikzcd}
\end{equation}
\end{defn}

\begin{prop}\label{prop:borcocart}
The map $\Psi_{\mathrm{Bor}}$ is a coCartesian fibration.  
\end{prop}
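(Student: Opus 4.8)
The claim is that $\Psi_{\mathrm{Bor}} : \GloSpBor \to \Glo$ is a coCartesian fibration, where $\GloSpBor \subset \GloSp$ is the full subcategory on pairs $(E,X)$ with $E$ Borel. Since $\Psi : \GloSp \to \Glo$ is already a coCartesian fibration and $\GloSpBor$ is a \emph{full} subcategory, the strategy is the standard one: show that for every object $(E,X)$ of $\GloSpBor$ and every morphism $\sigma : X \to Y$ in $\Glo$, there is a $\Psi$-coCartesian lift of $\sigma$ with source $(E,X)$ whose target again lies in $\GloSpBor$; such a lift is then automatically $\Psi_{\mathrm{Bor}}$-coCartesian because $\GloSpBor \hookrightarrow \GloSp$ is fully faithful. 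So the whole proposition reduces to the following stability statement: if $\sigma = (X \xleftarrow{f} M \xrightarrow{g} Y)$ is a span in $\Glo$ (so $g$ has discrete fibers) and $E \in \Sp^X$ is Borel, then $\sigma_* E = g_{\otimes} f^* E \in \Sp^Y$ is again Borel.

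First I would reduce to the two generating types of morphisms. Every span factors (up to the composition law) as a ``restriction'' part $f^*$ followed by a ``transfer'' part $g_{\otimes}$, and one can further factor each of these into the four basic functors $\res^G_H$, $\triv^G_K$, $N^G_H$, $\Phi^{\ker}$ — except that inside $\Glo$ the geometric-fixed-point part $\Phi$ is constrained to the ``discrete fiber'' case, i.e. it only shows up through the covering/transfer data, not as a genuine quotient of groups. Concretely the building blocks to check are: (i) pullback $f^*$ along an arbitrary map of groupoids preserves Borel objects — this is essentially formal, since being Borel is detected on underlying spectra with group action and $f^*$ is compatible with the restriction-to-underlying functors; (ii) the norm $N^G_H$ (more generally $g_{\otimes}$ for $g$ with discrete fibers, i.e. a finite covering) sends Borel to Borel. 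Point (ii) is the crux. For a finite covering $g : M \to Y$ of groupoids the functor $g_{\otimes}$ is an ``indexed smash product'' along the fibers, and the key input is that the Borelification functor $\beta$ (equivalently, the right adjoint $j$) is \emph{lax symmetric monoidal} and in fact the inclusion $\Sp^X_{\Bor} \hookrightarrow \Sp^X$ is closed under the relevant tensor/norm operations because Borel spectra are exactly the local objects for the smashing localization that kills induced-from-proper-subgroup spectra — and norms of such localizations stay local. I would make this precise either by the Verdier-quotient description alluded to in Remark \ref{rmk:verdier} (norms commute with the relevant localization up to the explicit formula for $\Phi^G N^G_H$ from Example \ref{exm:geonorm}) or, more hands-on, by checking on $\Phi$: a genuine $G$-spectrum $F$ is Borel iff $\Phi^H F \xrightarrow{\sim} (\res F)^{\tau H}$ for all $H \subset G$ (cf. the $C_p$ case in Example \ref{exm:Cpsp}), and then using Examples \ref{exm:geonorm} and \ref{exm:geotriv} to compute the geometric fixed points of $g_{\otimes} f^* E$ entirely in terms of the underlying Borel data of $E$, where the Borel condition is manifestly preserved.

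Having established that $\sigma_*$ preserves Borel objects, the rest is bookkeeping: the $\Psi$-coCartesian edge of $\GloSp$ over $\sigma$ starting at $(E,X)$ then lands in $\GloSpBor$, it is a morphism of the full subcategory, hence it is $\Psi_{\mathrm{Bor}}$-coCartesian, and existence of such lifts for all $\sigma$ together with the evident compatibility (composites of coCartesian edges are coCartesian, inherited from $\GloSp$) gives that $\Psi_{\mathrm{Bor}}$ is a coCartesian fibration with fiber over $X$ the full subcategory $\Sp^X_{\Bor}$, as in diagram (\ref{dia:globorglo}). I expect the \emph{main obstacle} to be precisely step (ii) — verifying that norms along finite coverings preserve Borel spectra — since the other preservation facts ($f^*$, and triviality-type functors) are immediate from the definition of Borel via underlying spectra; one must be a little careful that the ``discrete fibers'' restriction defining $\Glo \subset \Glop$ is exactly what makes the transfer an honest norm (with no genuine $\Phi$ of a nontrivial quotient), so that the computation of geometric fixed points stays within reach of Examples \ref{exm:geonorm}–\ref{exm:geotriv} and the Borel condition is visibly stable.
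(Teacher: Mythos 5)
There is a genuine gap, and it sits exactly where you located your ``main obstacle'': the stability statement you reduce to is false. Norms along finite coverings do \emph{not} preserve Borel objects. Already for the span $(\ast \leftarrow \ast \rightarrow BC_p)$ in $\Glo$ (the forward map $\ast \to BC_p$ is a finite covering, so this morphism is allowed), the pushforward of $A \in \Sp = \Sp^{\ast}$ (trivially Borel) is $N^{C_p}A$, whose geometric fixed points are $A$ while the Tate construction of its underlying spectrum is $(A^{\otimes p})^{tC_p}$; the comparison map is the Tate diagonal, which is an equivalence only in Segal-conjecture-type situations (it fails, e.g., for $A = \F_p$). So your crux claim (ii) fails, and with it the assertion that the $\Psi$-coCartesian lift of $\sigma$ starting at a Borel object lands in $\GloSpBor$ — this is precisely the content of the Warning immediately following the proposition, which says the inclusion $\GloSpBor \subset \GloSp$ does \emph{not} send coCartesian arrows to coCartesian arrows. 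Your heuristic that Borel spectra are the local objects of a smashing localization compatible with norms conflates the Borel (cofree) subcategory with the localization underlying geometric fixed points; the Borel condition is detected by comparison maps $\Phi^H(-) \to (\mathrm{res}(-))^{\tau H}$, and norms change the left side and the right side in incompatible ways.

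The correct route (the one the paper takes) does not try to stay inside $\GloSpBor$ with the genuine pushforward: one takes as candidate lift of $\sigma$ at $(E,X)$ the arrow $(E,X) \to (\beta_Y \sigma_\ast E, Y)$ induced by the Borelification map $\sigma_\ast E \to \beta_Y \sigma_\ast E$. Because $\GloSpBor$ is a full subcategory and $\beta_Y$ is left adjoint to the inclusion fiberwise, this arrow is \emph{locally} coCartesian, so $\Psi_{\Bor}$ is a locally coCartesian fibration (inner fibration is inherited). The real work is then showing locally coCartesian edges are closed under composition, i.e.\ that for $\tau = (Y \leftarrow T \rightarrow Z)$ in $\Glo$ the natural map $\beta_Z(\tau\circ\sigma)_\ast E \to \beta_Z \tau_\ast \beta_Y \sigma_\ast E$ is an equivalence for Borel $E$; this is where the ``discrete fibers'' condition defining $\Glo \subset \Glop$ genuinely enters (it fails for geometric-fixed-point-type forward maps), and one concludes by \cite[Proposition 2.4.2.8]{HTT}. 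So the ``discrete fibers'' hypothesis is not there to make $\sigma_\ast$ preserve Borel objects — it does not — but to make Borelification interact well with composition of the Borelified pushforwards. If you want to salvage your write-up, replace step (ii) by this compatibility statement and restructure the argument around locally coCartesian edges rather than coCartesian edges inherited from $\GloSp$.
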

\begin{proof}
The map $\Psi_{\mathrm{Bor}}$ is an inner fibration because it is a full simplicial subset of an inner fibration.  

Let $\sigma =(X \leftarrow S \rightarrow Y)$ be a morphism in $\Glo$ and $E\in (\Sp_X)_{\mathrm{Bor}}$.  Then, $\sigma$ has a coCartesian lift given by the arrow $(E,X) \to (\beta_Y \sigma_* E, Y)$ in $\GloSpBor$ induced by the natural map $\sigma_* E \to \beta \sigma_* E.$  It follows that $\Psi_{\mathrm{Bor}}$ is a locally coCartesian fibration.  But these locally coCartesian arrows are closed under composition: for any $\tau = (Y \leftarrow T\rightarrow Z)$ in $\Glo$, we have that the natural transformation $\beta_Z \tau_* \to \beta_Z \tau_* \beta_Y$ of functors $\Sp_Y \to \Sp_Z$ is an equivalence, so for any $E\in (\Sp_X)_{\Bor}$, the natural map $$\beta_Z (\tau \circ \sigma)_* E \to \beta_Z \tau_* \beta_Y \sigma_* E$$ is an equivalence  (note that it is important that we are working with $\Psi$ and not all of $\Psi^+$ because this may not be true if $\tau$ is in $\Glop$ but not $\Glo$.).  Thus, $\Psi_{\mathrm{Bor}}$ is a coCartesian fibration by \cite[Proposition 2.4.2.8]{HTT}.  
\end{proof}
\begin{warn}
The inclusion $\GloSpBor \subset \GloSp$ does not send coCartesian arrows to coCartesian arrows because the natural map $\sigma_* E\to \beta \sigma_* E$ need not be an equivalence.  
\end{warn}

\begin{lem}
The inclusion $j: \GloSpBor \to \GloSp$ of diagram (\ref{dia:globorglo}) admits a left adjoint relative to $\Glo$ in the sense of \cite[Definition 7.3.2.2]{HA}.  
\end{lem}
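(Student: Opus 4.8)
The plan is to deduce the existence of the relative left adjoint from the standard criterion for relative adjoints between coCartesian fibrations, i.e. \cite[Proposition 7.3.2.6]{HA}. Both $\Psi\colon \GloSp\to\Glo$ and $\Psi_{\mathrm{Bor}}\colon \GloSpBor\to\Glo$ are coCartesian fibrations, by Theorem~\ref{thm:mainfib} and Proposition~\ref{prop:borcocart} respectively, so it suffices to verify that $j$ admits a left adjoint on each fiber and that these fiberwise left adjoints are compatible with the coCartesian pushforwards in the appropriate Beck--Chevalley sense. Granting this, the resulting left adjoint $L\colon \GloSp\to\GloSpBor$ is simply ``fiberwise Borelification,'' carrying $(E,X)$ to $(\beta_X E, X)$, with unit at $(E,X)$ the Borelification map $E\to\beta_X E$ of \S\ref{subsect:bortate}.

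For the fiberwise statement: over $X\in\Glo$, the functor $j$ restricts to the inclusion $\iota_X\colon \Sp^X_{\mathrm{Bor}}\hookrightarrow\Sp^X$. Recall from \S\ref{subsect:bortate} that $\Sp^X_{\mathrm{Bor}}$ is the essential image of the fully faithful right adjoint $j_X$ to the forgetful functor $U_X\colon \Sp^X\to\Fun(X,\Sp)$. A short computation with the adjunction $U_X\dashv j_X$ shows that, for $F\in\Sp^X_{\mathrm{Bor}}$, the Borelification map $E\to\beta_X E$ exhibits $\Map_{\Sp^X}(E,\iota_X F)\simeq\Map_{\Sp^X_{\mathrm{Bor}}}(\beta_X E, F)$; hence $\beta_X$, viewed as a functor into $\Sp^X_{\mathrm{Bor}}$, is left adjoint to $\iota_X$, with unit the Borelification map.

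It remains to check the Beck--Chevalley condition. For a morphism $\sigma=(X\xleftarrow{f} S\xrightarrow{g} Y)$ of $\Glo$, the coCartesian pushforward along $\sigma$ is $\sigma_*$ on $\GloSp$ and $E\mapsto\beta_Y\sigma_* E$ on $\GloSpBor$ (as in the proof of Proposition~\ref{prop:borcocart}), while $j$ fails to preserve $\sigma$-coCartesian edges precisely through the natural map $\sigma_* E\to\beta_Y\sigma_* E$. Taking the mate of this lax comparison with respect to the adjunctions $\beta_X\dashv\iota_X$ and $\beta_Y\dashv\iota_Y$ and simplifying via the triangle identities, one finds that the Beck--Chevalley comparison is exactly the natural transformation $\beta_Y\sigma_*\to\beta_Y\sigma_*\beta_X$ induced by the Borelification unit; that this is an equivalence for every morphism $\sigma$ of $\Glo$ is precisely the assertion already proved in the course of establishing Proposition~\ref{prop:borcocart}. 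I expect this last verification to be the only point requiring care: it is essential that $\sigma$ lie in $\Glo$ and not merely in $\Glop$, since then the forward leg $g$ has discrete fibers and $\sigma_*=g_{\otimes}f^{*}$ involves no geometric fixed points (which do not commute with Borelification --- the phenomenon recorded in the warning following Proposition~\ref{prop:borcocart}). With the fiberwise adjoints and this compatibility in hand, \cite[Proposition 7.3.2.6]{HA} produces the desired left adjoint to $j$ relative to $\Glo$.
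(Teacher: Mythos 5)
Your proof is correct and takes essentially the same route as the paper: the paper likewise applies Lurie's fiberwise criterion for relative adjunctions over (locally) coCartesian fibrations, checking exactly your two points --- the fiberwise adjunctions $\beta_X \dashv j_X$ with unit the Borelification map, and the compatibility with pushforwards along morphisms of $\Glo$ (the genuine and Borel norm/restriction functors agree on underlying Borel spectra, equivalently the equivalence $\beta_Y\sigma_* \simeq \beta_Y\sigma_*\beta_X$ already noted in the proof of Proposition \ref{prop:borcocart}, where membership in $\Glo$ rather than $\Glop$ is crucial, just as you say). The only discrepancy is the citation: the paper invokes \cite[Proposition 7.3.2.11]{HA}, whose two hypotheses are precisely the conditions you verify, rather than Proposition 7.3.2.6, which is not the statement you are actually using since $j$ does not preserve coCartesian edges.
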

\begin{proof}
We apply \cite[Proposition 7.3.2.11]{HA}.  The functors $\Psi$ and $\Psi_{\Bor}$ are coCartesian fibrations by Theorem \ref{thm:mainfib} and Proposition \ref{prop:borcocart}, and therefore they are locally coCartesian categorical fibrations.  For condition (1), we need only recall that for each $X\in \Glo$, the inclusion $j_X: (\Sp_X)_{\mathrm{Bor}} \to \Sp_X$ admits a left adjoint $$\beta_X :\Sp_X \xrightleftharpoons{\quad}(\Sp_X)_{\mathrm{Bor}}: j_X.$$ Condition (2) amounts to the fact that the genuine equivariant norm and restriction maps coincide with the corresponding Borel norm and restriction maps on the underlying Borel equivariant spectrum.  
\end{proof}
\begin{cor}\label{cor:sectboradj}
There is an adjunction
$$\beta^s: \sect(\Psi) \xrightleftharpoons{\quad} \sect(\Psi_{\Bor}): j^s$$ at the level of sections which restricts to the adjunction $$\beta_X :\Sp_X \xrightleftharpoons{\quad}(\Sp_X)_{\mathrm{Bor}}: j_X$$ over each $X\in \Glo.$
\end{cor}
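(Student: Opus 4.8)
The plan is to deduce this formally from the preceding Lemma by whiskering with sections. Write $\beta\colon\GloSp\to\GloSpBor$ for the relative left adjoint of $j$ over $\Glo$ produced there, with unit $\eta\colon\id_{\GloSp}\to j\beta$ and counit $\varepsilon\colon\beta j\to\id_{\GloSpBor}$; by the definition of a relative adjunction \cite[Definition 7.3.2.2]{HA}, the functors $\beta$ and $j$ commute with the projections to $\Glo$ (so $\Psi_{\Bor}\circ\beta=\Psi$ and $\Psi\circ j=\Psi_{\Bor}$) and the transformations $\eta,\varepsilon$ are carried to identities by those projections. In particular, postcomposition with $\beta$ and with $j$ carries sections to sections, and this is how I would define $\beta^s:=\beta\circ(-)\colon\sect(\Psi)\to\sect(\Psi_{\Bor})$ and $j^s:=j\circ(-)\colon\sect(\Psi_{\Bor})\to\sect(\Psi)$.

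Next I would produce the unit and counit of the asserted adjunction by whiskering $\eta$ and $\varepsilon$ with sections. For a section $s$ of $\Psi$, the whiskering of $\eta$ by $s$ has components $\eta_{s(X)}$ and is carried by $\Psi$ to an identity, hence defines a morphism $s\to j^s\beta^s s$ in $\sect(\Psi)$; letting $s$ vary yields a natural transformation $\eta^s\colon\id_{\sect(\Psi)}\to j^s\beta^s$, and symmetrically one obtains $\varepsilon^s\colon\beta^s j^s\to\id_{\sect(\Psi_{\Bor})}$. The two triangle identities for $(\eta^s,\varepsilon^s)$ can be checked objectwise, where they reduce to the triangle identities for $(\eta,\varepsilon)$; this exhibits the adjunction $\beta^s\dashv j^s$. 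Equivalently, this is the instance at $\mathcal{B}=\Glo$ of the general principle that $\Fun_{/\mathcal{B}}(\mathcal{B},-)$ carries relative adjunctions to adjunctions.

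Finally I would check that this adjunction restricts over each $X\in\Glo$ to $\beta_X\dashv j_X$. Evaluation at $X$ gives functors $\mathrm{ev}_X\colon\sect(\Psi)\to\Sp^X$ and $\mathrm{ev}_X\colon\sect(\Psi_{\Bor})\to\Sp^X_{\Bor}$, and since $\beta$ and $j$ are functors over $\Glo$ they restrict over $X$ to functors $\Sp^X\to\Sp^X_{\Bor}$ and $\Sp^X_{\Bor}\to\Sp^X$; these restrictions are precisely the fiberwise adjoints $\beta_X$ and $j_X$ of condition~(1) in the proof of the Lemma, by the construction of relative adjoints in \cite[Proposition 7.3.2.11]{HA}. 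Consequently $\mathrm{ev}_X\circ\beta^s\simeq\beta_X\circ\mathrm{ev}_X$ and $\mathrm{ev}_X\circ j^s\simeq j_X\circ\mathrm{ev}_X$, and $\mathrm{ev}_X$ carries $\eta^s$ and $\varepsilon^s$ to the unit and counit of $\beta_X\dashv j_X$, which gives the asserted compatibility. I expect no genuine obstacle here: the statement is a purely formal consequence of the Lemma, and the only things requiring attention are the bookkeeping of whiskered natural transformations and the fiberwise compatibility, both routine.
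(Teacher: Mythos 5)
Your proposal is correct and matches the paper's intent: the corollary is stated there without proof precisely because it is the formal consequence of the relative adjunction of the preceding lemma, obtained by postcomposition on sections, exactly as you describe. The only caveat is that in the $\infty$-categorical setting one should not literally verify triangle identities objectwise but rather invoke the general principle you already cite (a relative adjunction over $\Glo$ induces an adjunction on $\Fun_{/\Glo}(\Glo,-)$, e.g.\ via \cite[Proposition 7.3.2.5]{HA} or the mapping-space criterion for units), which also gives the fiberwise compatibility with $\beta_X \dashv j_X$ for free.
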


We now make the analog of Definition \ref{defn:gloalg} in this setting.

\begin{defn}\label{defn:borgloalg}
A \emph{Borel global algebra} is a section of the fibration $\Psi_{\Bor}$ which is coCartesian over the left morphisms.  We let $\CAlg^{\Glo}_{\Bor}$ denote the $\infty$-category of Borel global algebras.  
\end{defn}

\begin{rmk}
A Borel global algebra can be thought of as an $\E_\infty$ algebra $A(BG) \in \Fun(BG,\Sp)$ for every groupoid $BG$ together with structure maps corresponding to the various maps in $\Glo$.  As in the case of global algebras,  $A(*)$ acquires the structure of an $\E_{\infty}$-ring spectrum (cf. Remark \ref{rmk:undeinfty}) and determines the value of $A$ at any other $X\in \Glo.$ The right morphisms then encode certain multiplication maps; for instance the span $(* \leftarrow * \rightarrow BC_p)$ encodes a map $$\beta N^{C_p}A(*) \to \beta \triv^{C_p}A(*)$$ in $\Fun(BC_p, \Sp)$.  We saw at the beginning of the section that this map is already part of the $\E_{\infty}$-structure on $A(*)$.  In fact, we will show in Theorem \ref{thm:globalalgs} that the additional structure of a Borel global algebra is specified uniquely by $A(*)$ as an $\E_{\infty}$-ring.  
\end{rmk}

Theorem \ref{thm:mainalgaction} follows immediately from the following two statements about $\CAlg^{\Glo}_{\Bor}$:

\begin{prop}\label{prop:boractionoplax}
There is an oplax monoidal functor $$\mathcal{Q} \to \Fun(\CAlg^{\Glo}_{\Bor}, \CAlg^{\Glo}_{\Bor})$$ which admits the description of Theorem \ref{thm:mainalgaction} on underlying $\E_{\infty}$-rings.  
\end{prop}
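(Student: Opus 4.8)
The plan is to obtain the oplax $\mathcal{Q}$-action on $\CAlg^{\Glo}_{\Bor}$ by transporting the strong $\mathcal{Q}$-action on $\sect(\Psi)$ of Theorem \ref{thm:Qactgloalg} across the reflective localization of Corollary \ref{cor:sectboradj}, whose inclusion $j^s\colon \sect(\Psi_{\Bor})\to\sect(\Psi)$ is fully faithful with left adjoint $\beta^s$. The general mechanism here is that any reflective localization $L\colon \mathcal{C}\to\mathcal{D}$ with fully faithful right adjoint $\iota$ induces an \emph{oplax} monoidal functor $(\Fun(\mathcal{C},\mathcal{C}),\circ)\to(\Fun(\mathcal{D},\mathcal{D}),\circ)$ by $F\mapsto L F\iota$, the oplax structure morphism $L(FG)\iota\to(LF\iota)\circ(LG\iota)$ being induced by whiskering the unit $\mathrm{id}_{\mathcal{C}}\Rightarrow\iota L$; precomposing with the strong monoidal functor of Theorem \ref{thm:Qactgloalg} produces an oplax monoidal functor $\mathcal{Q}\to\Fun(\sect(\Psi_{\Bor}),\sect(\Psi_{\Bor}))$ sending $G$ to $T^G := \beta^s\circ(-)^G\circ j^s$. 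The unit equivalence $T^{*}\simeq\mathrm{id}$ is immediate from full faithfulness of $j^s$. This transport, together with its coherence data, is most naturally isolated as a general lemma, to be placed in the appendix alongside Proposition \ref{prop:mainaction} and Lemma \ref{lem:monendcat}.

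Next I would verify that each $T^G$ preserves the full subcategory $\CAlg^{\Glo}_{\Bor}\subset\sect(\Psi_{\Bor})$ of Definition \ref{defn:borgloalg}; since the subcategory is full, the oplax structure maps then restrict automatically. The key point is that the coCartesian pushforward of $\Psi_{\Bor}$ along a left morphism $\tau$ is $\beta_{(-)}\circ\tau^{\Psi}_*$, the Borelification of a restriction-type functor, and hence depends only on underlying spectra with their group actions. Unwinding the formula $B^G(X)=\sigma_* B(X\times BG)$ of Observation \ref{obs:twist} at $B=j^s A$, one checks that over each groupoid the underlying spectra of $(j^sA)^G$ are obtained from those of $A$ by a proper Tate construction, and in particular are compatible with restriction along left morphisms in exactly the way needed for $\beta^s((j^sA)^G)$ to be coCartesian over all left morphisms (for instance, $(\beta^s((j^sA)^G))(BH)\simeq\beta_{BH}\triv^H((\beta^s((j^sA)^G))(*))$ by the case $X=*$). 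It is precisely this collapse onto underlying spectra over left morphisms — together with the failure of Borelification to commute with the norms and geometric fixed points appearing in the other morphisms of $\Glo$ — that forces the transported action to be only oplax once one passes from $\CAlg^{\Glo}$ (where Theorem \ref{thm:Qactgloalg} gives a strong action) to $\CAlg^{\Glo}_{\Bor}$.

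Finally I would identify the resulting action on underlying $\E_{\infty}$-rings with that of Theorem \ref{thm:mainalgaction}. Evaluation at $*$ is unaffected by $\beta^s$, since $\beta$ is the identity on $\Sp^{*}=\Sp$, so $T^G(A)$ has underlying spectrum $(j^sA)^G(*)\simeq\Phi^G A(BG)\simeq\Phi^G\beta_G\triv^G A(*)\simeq A(*)^{\tau G}$ by Observation \ref{obs:twist} and Definition \ref{defn:propertate}; tracing the restriction along $\Span(\Fin)\hookrightarrow\Glo$ shows the induced multiplication is the lax monoidal structure on $\Phi^G\beta\triv^G$, i.e.\ the functor $(-)^{\tau G}\colon\CAlg\to\CAlg$. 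Pushing the monoidal functor $\mathcal{Q}\to\Glo$, $G\mapsto BG$, of Proposition \ref{prop:Qinside} through the two functorialities of Observation \ref{obs:twist}, a left morphism $(K\twoheadleftarrow G = G)$ is sent to the generalized canonical map $\can_K^G$ and a right morphism $(H = H\hookrightarrow G)$ to the generalized Frobenius $\varphi_H^G$, in agreement with Constructions \ref{cnstr:gencanmaps} and \ref{cnstr:genfrobmaps}, since those maps are defined by the very same universal property of Borel equivariant spectra that the twist structure maps unwind to.

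The step I expect to be the main obstacle is not any single computation but the bookkeeping in the second paragraph: confirming that twisting (in $\sect(\Psi)$) a section pulled back from a Borel global algebra and then Borelifying again produces a section that is genuinely coCartesian over all left morphisms. The formal transport lemma of the first paragraph is conceptually routine but must be set up carefully as a bona fide oplax monoidal functor rather than a mere compatible family of functors and structure $2$-morphisms, and I would discharge it in the appendix in the same register as the constructions already used for Theorem \ref{thm:Qactgloalg}.
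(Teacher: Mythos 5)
Your proposal is correct and follows essentially the same route as the paper: transport the strong $\mathcal{Q}$-action of Theorem \ref{thm:Qactgloalg} across the reflective localization of Corollary \ref{cor:sectboradj} to get an oplax action on $\sect(\Psi_{\Bor})$, then use the explicit twist formula of Observation \ref{obs:twist} to see that the full subcategory $\CAlg^{\Glo}_{\Bor}$ is preserved and that the action on underlying $\E_\infty$-rings is the one asserted. The ``transport lemma'' you propose to isolate in the appendix is exactly Lemma \ref{lem:oplax}, which the paper already proves there and invokes at this point.
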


\begin{proof}
By Theorem \ref{thm:Qactgloalg} and Lemma \ref{lem:oplax} applied to the adjunction of Corollary \ref{cor:sectboradj}, we obtain an oplax monoidal functor $$\mathcal{Q} \to  \Fun(\sect (\Psi_{\Bor}),\sect (\Psi_{\Bor})). $$

One can describe this action more explicitly as follows.  Let $A\in \sect(\Psi_{\Bor})$ and $G\in \mathcal{Q}$.  By the formula of Observation \ref{obs:twist}, we see that the action of $G$ on $A$ yields a new section $A^G$ whose value on a groupoid $BH$ is given by $$A^G(BH) = \beta_{BH} \Phi^G j_{BH\times BG} A(BH\times BG).$$  If $A$ is in the full subcategory $\CAlg^{\Glo}_{\Bor} \subset \sect(\Psi_{\Bor})$ of Definition \ref{defn:borgloalg}, then this is equivalent to $A^{\tau G}$ with the trivial $H$-action for all $H$; thus, if $A$ is a Borel global algebra, then $A^G$ is as well.  It follows that the oplax action on $\sect(\Psi_{\Bor})$ restricts to an oplax action on $\CAlg^{\Glo}_{\Bor}$.  Unwinding the definitions, we see that the maps in $\mathcal{Q}$ act as described in the statement of Theorem \ref{thm:mainalgaction}.  
\end{proof}

\begin{thm}\label{thm:globalalgs}
The restriction functor $\CAlg^{\mathrm{Glo}}_{\mathrm{Bor}} \to \CAlg$ induces an equivalence of $\infty$-categories.  
\end{thm}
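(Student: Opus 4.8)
The plan is to produce an inverse to the restriction functor by \emph{coinduction}, sending an $\E_{\infty}$-ring $R$ to the Borel global algebra whose value on a connected groupoid $BG$ is the constant diagram $\triv^{G}R\colon BG\to\Sp$. First recall (Remark~\ref{rmk:undeinfty}, via \cite[\S 5]{Cranch}) that $\CAlg$ is identified with the $\infty$-category of sections of $\Sp^{\otimes}\to\Span(\Fin)$ that are coCartesian over the left (equivalently, backward) morphisms; under the fully faithful inclusion $\iota\colon\Span(\Fin)\hookrightarrow\Glo$ of the discrete groupoids (Example~\ref{exm:spanfininside}), these left morphisms are precisely the left morphisms of $\Glo$ between discrete groupoids, so restriction along $\iota$ is exactly the functor of the theorem, carrying a Borel global algebra to its underlying $\E_{\infty}$-ring.

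I would construct the coinduction functor $c\colon\CAlg\to\CAlg^{\Glo}_{\Bor}$ as a relative right Kan extension of sections along $\iota$. The fiberwise limits needed for such a Kan extension exist because each fiber $\Sp^{X}_{\Bor}\simeq\Fun(X,\Sp)$ is complete; some care is required because the transfer functors of $\Psi_{\Bor}$ (Borel norms) do not preserve limits, so the relative limit computing $c$ does not literally reduce to a fiberwise one, and one must check the resulting section is coCartesian over the left morphisms, i.e.\ lands in $\CAlg^{\Glo}_{\Bor}$ and has $(cR)(BG)\simeq\triv^{G}R$. Since $\iota$ is fully faithful, $\mathrm{res}\circ c\simeq\id$ is then automatic, and the remaining point is to show $c\circ\mathrm{res}\simeq\id$: every Borel global algebra $A$ is coinduced from its underlying $\E_{\infty}$-ring. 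On objects this is immediate, since $A$ being coCartesian over the left morphisms already forces $A(BG)\simeq\triv^{G}A(*)$; the content is that the remaining structure maps of $A$ — the genuine multiplications $N^{G}_{H}(\triv^{H}A(*))\to\triv^{G}A(*)$ and all their higher coherences — are uniquely determined by the $\E_{\infty}$-structure on $A(*)$.

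I expect this last step to be the main obstacle, and it is where the Borel hypothesis is essential. Conceptually, the map $N^{G}_{H}(\triv^{H}R)\to\triv^{G}R$ is forced to be the $\Sigma_{G/H}$-equivariant multiplication furnished by the $\E_{\infty}$-structure of $R$, restricted along $G\hookrightarrow\Sigma_{G/H}$, and all higher data is inherited from the $\E_{\infty}$-structure in this same uniform way — Borel equivariant structure on $\Sp$-valued diagrams is by its nature restricted from the symmetric groups, unlike genuine equivariant structure. Concretely, one must analyze the comma category governing the Kan extension at $BG$: its objects are the morphisms $BG\to T$ of $\Glo$ with $T$ a finite set, and because the forward leg of such a span is required to have discrete fibers, these amount to the data of a finite free $G$-set over each point of $T$, with morphisms induced by passage to orbits. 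The heart of the argument is a cofinality/collapse computation showing that the limit over this category of the underlying $\E_{\infty}$-diagram reduces to $\triv^{G}R$; the delicate part is carrying this out coherently in $G$ and verifying that the nontrivial automorphisms of these comma categories (the wreath products $G\wr\Sigma_{n}$ acting on finite free $G$-sets) contribute nothing to the limit. That this works at all reflects the Warning above: the inclusion $\GloSpBor\hookrightarrow\GloSp$ does not preserve coCartesian arrows, and the Tate-valued data distinguishing a genuine global algebra from an $\E_{\infty}$-ring is exactly what Borelification has discarded, leaving the structure rigid.
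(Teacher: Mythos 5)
Your plan is the paper's proof. The inverse to restriction is exactly the $\Psi_{\Bor}$-relative right Kan extension along the inclusion $i\colon\Span(\Fin)\hookrightarrow\Glo$, and you have correctly identified the shape of the comma category over $BG$ (finite sets, each point carrying a finite free $G$-set). Since the inclusion is fully faithful, $\mathrm{res}\circ c\simeq\id$ is automatic, and once the Kan extension is shown to exist pointwise with value $\triv^G A(*)$ at $BG$, the unit $A\to c(\mathrm{res}\,A)$ is a map of sections which is an objectwise equivalence, hence an equivalence; you do not need a separate argument that ``the remaining structure maps are uniquely determined,'' as that is subsumed by the naturality of the unit. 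So the framing of what is left to check is slightly misdirected, but harmlessly so.

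The genuine gap is that the collapse is not carried out, and the mechanism you gesture at (``verifying that the nontrivial automorphisms of these comma categories, the wreath products $G\wr\Sigma_n$, contribute nothing'') points toward a fixed-point computation that the paper's argument avoids entirely. What the paper actually does is: first replace $\Span(\Fin)\times_{\Glo}\Glo_{BG/}$ by the coinitial subcategory $(\Fin_{/BG})^{\op}\simeq(\Fin_G^{\mathrm{free}})^{\op}$, by exhibiting a right adjoint to the comparison functor; then observe that because $A$ is $\E_\infty$, the resulting diagram sends coproducts of free $G$-sets to products of spaces, so — since $\Fin_G^{\mathrm{free}}$ is freely generated under coproducts by the transitive free $G$-set, whose automorphism group is just $G$ — the diagram is right Kan extended from that one object, and the relative limit reduces to
\[
\Hom_{\Sp}(A(*),\res^e_G E)^{hG}\simeq\Hom_{\Fun(BG,\Sp)}(\triv^G A(*),E),
\]
giving the required universal property. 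This product-preservation argument sidesteps the $G\wr\Sigma_n$-automorphisms completely; without supplying an argument of this kind, the proposal remains a plan with the central computation left as a promissory note.
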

\begin{proof}
The relevant restriction functor is implemented by restricting a section of $\Psi_{\Bor}$ along the inclusion $i: \Span(\Fin) \to \Glo$ (cf. Remark \ref{rmk:undeinfty}).  We show this induces an equivalence by proving the existence of and computing the $\Psi_{\mathrm{Bor}}$-right Kan extension along this inclusion.  

Let $A\in \CAlg$ be an $\E_{\infty}$-ring, and let us first compute the pointwise $\Psi_{\Bor}$-right Kan extension in the diagram
\begin{equation*}
\begin{tikzcd}
\Span(\Fin) \arrow[r,"A"]\arrow[d] & \GloSpBor \arrow[d,"\Psi_{\mathrm{Bor}}"] \\
\Glo \arrow[r,equals]\arrow[ru,dashed] & \Glo
\end{tikzcd}
\end{equation*}
at $X\in \Glo$; this is given by a certain $\Psi_{\Bor}$-limit indexed by the 2-category $\Span(\Fin)\times_{\Glo} \Glo_{X/}.$  

We first examine the indexing category.  An object of $\Span(\Fin)\times_{\Glo} \Glo_{X/}$ is a span of groupoids $(X\leftarrow Y \rightarrow Z)$ such that $Z$ is a finite set and the map $Y\to Z$ has discrete fibers.  It follows that $Y$ must also be a finite set.  We shall use the shorthand $\Fin_{/X}$ for the category $\Fin \times_{\Gpd} \Gpd_{/X}$ whose objects are finite sets $T$ equipped with a map of groupoids $p:T\to X$.  We have the following lemma:
\begin{lem}
The functor $\theta: (\Fin_{/X})^{\op} \to \Span(\Fin)\times_{\Glo} \Glo_{X/}$ defined by $$(p: T\to X) \mapsto (X \xleftarrow{p} T \xrightarrow{=} T)$$ admits a right adjoint.  
\end{lem}
\begin{proof}
The adjoint is given by sending $(X\xleftarrow{q} T \to U)$ to the map $q:T\to X$.  The groupoid $T$ is a finite set by the remarks above and it is immediate that this is a right adjoint. 
\end{proof}

It follows that $\theta$ is coinitial and we have reduced to computing the relative limit over $(\Fin_{/X})^{\op}$.  Unwinding the definitions, a limit of this diagram relative to the fibration $\Psi_{\mathrm{Bor}}$ is the data of $\tilde{A} \in \Fun(X, \Sp)$ equipped with compatible maps $\nu_p: A(T) \to \theta(p)_*\tilde{A}$ for each $p:T\to X \in \Fin_{/X}^{\op}$ such that for any $E\in \Fun(X,\Sp)$, the natural map
\begin{align*}
\Hom_{\Fun(X,\Sp)}(\tilde{A}, E) &\longrightarrow \lim_{p:T\to X \in \Fin_{/X}^{\op}} \Hom_{\Fun(T,\Sp)}(\theta(p)_* \tilde{A}, \theta(p)_* E)\\
&\longrightarrow \lim_{p:T\to X \in \Fin_{/X}^{\op}} \Hom_{\Fun(T,\Sp)}(A(T), \theta(p)_* E)
\end{align*}
is an equivalence.

We claim that $\tilde{A}:= (*\leftarrow X \xrightarrow{=} X)_* A(*) = \triv^X A(*)$ (together with the obvious choice of $\nu_p$) has the desired universal property.  Because everything in sight sends disjoint unions in $X$ to products, it suffices to consider the case when $X$ is connected.  Without loss of generality, let $X=BG$.  
Then we have $\Fin_{/X} \simeq \Fin_G^{\mathrm{free}},$ the category of finite free $G$-sets, and we wish to show that for any $E\in \Fun(BG,\Sp)$, the natural map 
\begin{align*}
\Hom_{\Fun(BG,\Sp)}(\triv^G (A(*)), &E) \\
 &\rightarrow  \lim_{U\in (\Fin_G^{\mathrm{free}})^{\op}} \Hom_{\Fun(U/G,\Sp)}(A(U/G),(BG\leftarrow U/G \rightarrow U/G)_* E)
\end{align*}
  is an equivalence.  Since $A$ was assumed to be an $\E_{\infty}$-algebra, the functor of $U$ on the right-hand side is product preserving.  Since $\Fin_G^{\mathrm{free}}$ is generated freely under coproducts by the full subcategory on the transitive free $G$-set, it follows that the limit diagram is right Kan extended from that subcategory, and so we have  
\begin{align*}
\lim_{U\in (\Fin_G^{\mathrm{free}})^{\op}} \Hom_{\Fun(U/G,\Sp)}(A(U/G)&,(BG\leftarrow U/G \rightarrow U/G)_* E)\\
&\simeq \lim_{BG} \Hom_{\Sp}(A(*),(BG\leftarrow * \rightarrow *)_* E)\\
&\simeq \Hom_{\Sp}(A(*),\res_G^e E)^{hG} \\
&\simeq \Hom_{\Fun(BG,\Sp)}(\triv^G(A(*)),E),
\end{align*}
as desired.  

We have shown that the $\Psi_{\Bor}$-right Kan extension exists at every point, and so by \cite[Lemma 4.3.2.13]{HA}, the $\Psi_{\Bor}$-right Kan extension exists.  Moreover, our calculation shows that this Kan extension takes an $\E_\infty$-algebra to a Borel global algebra with the same underlying $\E_{\infty}$-ring.  It follows that both the unit and the counit of the resulting adjunction are equivalences.  Thus, the left adjoint induces an equivalence of $\infty$-categories $\CAlg^{\mathrm{Glo}}_{\mathrm{Bor}} \to \CAlg$ as desired.
\end{proof}

This concludes the proof of Theorem \ref{thm:mainalgaction}.  We make two remarks about the proof:

\begin{rmk}\label{rmk:Qtilde}
In fact, the proof of Proposition \ref{prop:Qinside} shows that the monoidal functor from $\mathcal{Q}$ arises from one defined on the larger 2-category $\widetilde{\mathcal{Q}}$.  We do not know if this additional generality has interesting consequences and will not use it in this paper.
\end{rmk}

\begin{rmk}
The action of Theorem \ref{thm:mainalgaction} could not have been produced directly using Proposition \ref{prop:mainaction} because the coCartesian fibration $\Psi_{\Bor}$ does not arise as the restriction of a fibration over all of $\Glo^+$.  This results in an oplax, rather than strict, action.  In terms of equivariant homotopy theory, this corresponds to the fact that Borel equivariant homotopy theory does not admit a monoidal fixed point functor analogous to geometric fixed points in genuine equivariant homotopy theory.
\end{rmk}

\section{Partial algebraic $K$-theory}\label{sect:partK}

Let $\mathcal{C}$ be an exact category in the sense of  Quillen \cite{QuillenK}.  Then, the zeroth $K$-theory of $\C$, denoted $K_0(\mathcal{C})$, is the free abelian group on the objects of $\mathcal{C}$ subject to the relation $[A]+[C] = [B]$ for every short exact sequence $0\to A\to B\to C\to 0$ in $\mathcal{C}$.  Quillen categorified this construction, defining the higher algebraic $K$-theory space $K(\C)$ by means of a certain category $Q(\mathcal{C})$.  Waldhausen \cite{Wald} generalized the construction of algebraic $K$-theory to what are now known as ``Waldhausen categories" by means of his $S_{\bullet}$-construction and proved that the definition coincides with Quillen's in the special case of an exact category.  These constructions were generalized to the higher categorical setting by Barwick \cite{BarQ, BarAKT, BarK}.  

The goal of this section is to give analogs of these constructions in a non-group-complete setting.  In \S \ref{sub:Sdot}, we introduce a construction called \emph{partial $K$-theory}, which is a non-group-complete analog of algebraic $K$-theory.  It associates to a Waldhausen $\infty$-category $\C $ a (not necessarily grouplike) $\E_{\infty}$-space $K^{\mathrm{part}}(\C)$ with the following two properties:
\begin{enumerate}
\item There is a canonical equivalence of $\E_{\infty}$-spaces $K^{\mathrm{part}}(\C)^{\mathrm{gp}} \simeq K(\C)$ (Corollary \ref{cor:kpartgp}).
\item The monoid $\pi_0 (K^{\mathrm{part}}(\C))$ is the free (discrete) monoid on $\mathcal{C}$ subject to the relation  $[A]+[C] = [B]$ for every short exact sequence $0\to A\to B\to C\to 0$ (Proposition \ref{prop:Kpart0}). 
\end{enumerate} 
Then, in \S \ref{sub:Q}, we give an alternate construction of partial $K$-theory for exact $\infty$-categories via Quillen's $Q$-construction and show that it coincides with the previous definition (Theorem \ref{thm:QequalsS}).

\subsection{Partial $K$-theory via the $S_{\bullet}$-construction}\label{sub:Sdot}

Let $\C$ be a Waldhausen $\infty$-category in the sense of \cite{BarK}.  One can extract from $\C$ a simplicial $\infty$-category $S_{\bullet}(\mathcal{C})$ such that $S_n(\C)$ is equivalent to the $\infty$-category of sequences of cofibrations $$* \hookrightarrow X_1 \hookrightarrow X_2 \hookrightarrow \cdots \hookrightarrow X_n$$ between objects $X_i\in \C$  (\cite{Wald, BarK}).

\begin{defn}[\cite{Wald, BarK}]\label{defn:Ktheory}
The algebraic $K$-theory of $\C$ is the $\E_1$-space $$K(\C) := \Omega |S_{\bullet}(\C)^{\simeq}|,$$ where $S_{\bullet}(\C)^{\simeq}$ denotes the simplicial space obtained by taking the maximal subgroupoid of $S_{\bullet}(\C)$ level-wise.  
\end{defn}

The algebraic $K$-theory of $\C$ can be thought of as the universal way to make $S_{\bullet}(\C)$ into a \emph{grouplike}  $\E_1$-monoid in spaces.  As explained in the introduction, we need a variant of this construction which can produce non-group-complete monoids.

\begin{defn}\label{defn:segalspace}
A \emph{Segal space} is a functor $X(-):\Delta^{\op} \to \Spaces$ such that for each $n\geq 1$, the collection of maps $\rho_i: [1] \to [n]$ in $\Delta$ defined by $\rho_i(0) = i$, $\rho_i(1) = i+1$ for $0\leq i \leq n-1$ induces an equivalence $$\prod_{i=0}^{n-1} X(\rho_i) : X([n]) \simeq X([1])\times_{X([0])} \cdots \times_{X([0])} X([1]).$$   We will denote the $\infty$-category of Segal spaces by $\mathrm{Seg}(\Spaces)$.  
\end{defn}

\begin{defnprop}[\cite{HA}, Proposition 4.1.2.10]\label{defnprop:monoid}
Let $\mathrm{Mon}(\Spaces)$ denote the $\infty$-category of $\E_1$-monoids in spaces.  Then there is a fully faithful functor $$\mathbb{B}: \mathrm{Mon}(\Spaces) \to \Fun(\Delta^{\mathrm{op}},\Spaces)$$ which sends a monoid $M$ to its bar construction
\begin{equation*}
\begin{tikzcd}
\mathbb{B}M = \big( *  & M\arrow[l,shift left] \arrow[l,shift right] & M\times M \arrow[l,shift left=2] \arrow[l] \arrow[l,shift right=2]  & \cdots \arrow[l,shift left=3] \arrow[l,shift right=3]  \arrow[l,shift left] \arrow[l,shift right] \big). 
\end{tikzcd}
\end{equation*}
The essential image of $\mathbb{B}$ is the full subcategory of Segal spaces $X$ with the additional property that $X([0])\simeq *$.  We will sometimes implicitly identify $\mathrm{Mon}(\Spaces)$ with this subcategory of simplicial spaces.  
\end{defnprop}

\begin{defn}\label{defn:L}
Since the full subcategory $\mathrm{Mon}(\Spaces) \subset \Fun(\Delta^{\op},\Spaces)$ is closed under limits and filtered colimits, the functor $\mathbb{B}$ admits a left adjoint \cite[Corollary 5.5.2.9]{HTT}, which we denote by $$\mathbb{L} :\Fun(\Delta^{\mathrm{op}},\Spaces) \to \mathrm{Mon}(\Spaces).$$
\end{defn}

The main object of study in this section is:

\begin{defn}\label{defn:Kpart}
Let $\C$ be a Waldhausen $\infty$-category.  Then the \emph{partial algebraic K-theory} of $\C$ is the $\E_1$-monoidal space $$K^{\mathrm{part}}(\C) := \mathbb{L}(S_{\bullet}(\C)^{\simeq}).$$
\end{defn}

It can be helpful to rephrase this definition in terms of the notion of complete Segal spaces, which we now very briefly recall.  

\begin{rec}\label{rec:css}
The $\infty$-category of small $\infty$-categories $\Cat_{\infty}$ can be identified with a full subcategory $$\mathrm{CplSeg}(\Spaces)\subset \mathrm{Seg}(\Spaces)$$ of Segal spaces known as \emph{complete} Segal spaces \cite[Corollary 4.3.16]{LurGood} and due to \cite{Rezk}.  Via this identification, the inclusion $\Cat_{\infty} \subset \Fun(\Delta^{\op},\Spaces)$ admits a left adjoint, which we denote by $$\mathrm{CSS}: \Fun(\Delta^{\op},\Spaces) \to \Cat_{\infty}.$$ 
The functor $\mathrm{CSS}$ can be described as the unique colimit preserving functor sending the representable simplicial space corresponding to $[n]\in \Delta^{\op}$ to the standard $n$-simplex as an $\infty$-category.  

Just as the subcategory $\mathrm{CplSeg}(\Spaces)\subset \Fun(\Delta^{\op},\Spaces)$ of complete Segal spaces can be identified with $\Cat_{\infty}$, the $\infty$-category $\Seg(\Spaces)\subset \Fun(\Delta^{\op},\Spaces)$ can be identified with the $\infty$-category $\Cat_{\infty}^{\mathrm{fl}}$ of \emph{flagged $\infty$-categories} \cite[Theorem 0.26]{AFflag}.   A flagged $\infty$-category is a triple $(\C, X, f)$ consisting of an $\infty$-category $\C$, a space $X$, and an essentially surjective functor $f:X\to \C$.  A Segal space $Y_{\bullet}$ determines a flagged $\infty$-category via the canonical functor $Y_0 \to \mathrm{CSS}(Y_{\bullet})$.  Under this identification, the full subcategory $\CplSeg(\Spaces)\subset \Seg(\Spaces)$ corresponds to the full subcategory of flagged $\infty$-categories $(\C, X, f)$ with the property that $f$ induces an equivalence of spaces $X \simeq \C^{\simeq}$.  The left adjoint to this inclusion corresponds to the forgetful functor $\Cat_{\infty}^{\mathrm{fl}} \to \Cat_{\infty}$ given by $(\C,X,f) \mapsto \C$.  
\end{rec}

\begin{rmk}\label{rmk:connectedinftycat}
Combining Definition/Proposition \ref{defnprop:monoid} with Recollection \ref{rec:css}, we obtain an equivalence of $\infty$-categories between $\Mon(\Spaces)$ and the $\infty$-category $(\Cat_{\infty})_0$ of $\infty$-categories equipped with an essential surjection from a point.  This equivalence can be thought of as sending an $\E_1$-monoid $M$ to the $\infty$-category $BM$ with one object whose space of endomorphisms is $M$.  

Since the above constructions are all compatible with finite products, this equivalence also lifts to an equivalence, for any $n\geq 1$, between $\E_n$-monoids in spaces and $\E_{n-1}$-monoidal $\infty$-categories with the property that the inclusion of the unit is an essential surjection.
\end{rmk}

Using this language, Definition \ref{defn:Kpart} can be rephrased as follows:

\begin{prop}\label{prop:cssinterp}
Let $\C$ be a Waldhausen $\infty$-category.  Then there is an equivalence of $\infty$-categories $$\mathrm{CSS}(S_{\bullet}(\C)^{\simeq}) \simeq BK^{\mathrm{part}}(\C).$$

\end{prop}

In other words, just as $BK(\C)$ is the underlying space of $S_{\bullet}(\C)^{\simeq}$, the $\infty$-category $BK^{\mathrm{part}}(\C)$ is the ``underlying $\infty$-category" of $S_{\bullet}(\C)^{\simeq}$.  

\begin{proof}
\begin{notation*}
In the course of this proof, if $f$ is a fully faithful functor, we will denote by $f^L$ (resp. $f^R$) its left (resp. right) adjoint, provided it exists. 
\end{notation*}

Let $\Fun_*(\Delta^{\op},\Spaces) \subset \Fun(\Delta^{\op},\Spaces)$ be the full subcategory of simplicial spaces $X_{\bullet}$ such that $X_0 \simeq *$.  Then, the inclusion  $\Mon(\Spaces) \subset \Fun(\Delta^{\op},\Spaces)$ factors through an inclusion $k: \Mon(\Spaces) \hookrightarrow \Fun_*(\Delta^{\op},\Spaces)$.  Because $S_{\bullet}(\C)^{\simeq}$ is in the full subcategory $\Fun_*(\Delta^{\op},\Spaces)$, there is an equivalence 
\begin{equation}\label{eqn:css0}
\mathbb{L} S_{\bullet}(\C)^{\simeq} \simeq k^L S_{\bullet}(\C)^{\simeq}.
\end{equation}

Since $*\in \Fun(\Delta^{\op},\Spaces)$ is left Kan extended from its value at $[0]$, it is an initial object in $\Fun_*(\Delta^{\op},\Spaces)$.  It follows that there is a fully faithful functor embedding $$i_0:\Fun_*(\Delta^{\op},\Spaces) \to \Fun(\Delta^{\op},\Spaces)_{*/}.$$  This extends to a commutative diagram of fully faithful functors 
\begin{equation*}
\begin{tikzcd}
\mathrm{CplSeg}(\Spaces)_{*/} \arrow[r,"j_1", hook] & \mathrm{Seg}(\Spaces)_{*/} \arrow[r,"j_0",hook] & \Fun(\Delta^{\op},\Spaces)_{*/}\\
& \Mon(\Spaces)\arrow[u,"i_1",hook]\arrow[r,"k",hook] & \Fun_*(\Delta^{\op},\Spaces)\arrow[u,"i_0",hook].
\end{tikzcd}
\end{equation*}

We note that $i_0$ admits a right adjoint $i_0^R$ which extracts the simplices which only involve the given zero simplex.  It is given on a simplicial space $T_{\bullet}$ by the formula $$(i_0^R T_{\bullet})_{n} \simeq T_n \times_{(T_0)^{\times n+1}} *,$$ where $T_n\to T_0^{\times n+1}$ are the vertex maps.  It is immediate from this formula that $i_0^R$ takes the full subcategory $\mathrm{Seg}(\Spaces)_{*/}\subset \Fun(\Delta^{\op},\Spaces)_{*/}$ to the full subcategory $\Mon(\Spaces)\subset \Fun_*(\Delta^{\op}, \Spaces)$, so $i_1$ admits a right adjoint $i_1^R$ such that $i_0^R j_0 \simeq k i_1^R$.  It follows by taking left adjoints that $j_0^L i_0 \simeq i_1 k^L$.  In particular, $j_0^L i_0 (S_{\bullet}(\C))$ is a Segal space whose zeroth space is contractible (since it is in the image of $i_1$); it follows from Remark \ref{rmk:connectedinftycat} that $j_1^L j_0^L i_0(S_{\bullet}(\C)^{\simeq}) = (j_0 j_1)^L i_0 (S_{\bullet}(\C)^{\simeq})$, which is $\mathrm{CSS}(S_{\bullet}(\C)^{\simeq})$ with the canonical basepoint, is an $\infty$-category of the form $BM$ for a monoid $M$ in spaces.  But $M$ is exactly $k^L(S_{\bullet}(\C)^{\simeq})$, since the functor $i_1^R j_1$ takes $BM$ to $M$ and 
\begin{equation*}\label{eqn:css1}
k^L(S_{\bullet}(\C)^{\simeq}) \simeq i_1^R i_1 k^L (S_{\bullet}(\C)^{\simeq})\simeq i_1^R j_0^L i_0 (S_{\bullet}(\C)^{\simeq}) \simeq i_1^R j_1 j_1^L j_0^L i_0 (S_{\bullet}(\C)^{\simeq}).
\end{equation*}
\end{proof}

One consequence of Proposition \ref{prop:cssinterp} is that although partial $K$-theory has a universal property as an $\E_1$-space, it naturally admits the structure of an $\E_{\infty}$-space.

\begin{lem}\label{lem:cssmonoidal}
The functor $\mathrm{CSS}: \Fun(\Delta^{\op},\Spaces) \to \Cat_{\infty}$ commutes with finite products.
\end{lem}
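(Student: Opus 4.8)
The plan is to exploit the two defining features of $\mathrm{CSS}$ recorded in Recollection \ref{rec:css}: it preserves all colimits, and it sends the representable simplicial space $\Delta^n$ to the $\infty$-category $[n]$. It is convenient to phrase the claim in terms of the two functors of two variables
\[
F,G\colon \Fun(\Delta^{\op},\Spaces)\times \Fun(\Delta^{\op},\Spaces)\longrightarrow \Cat_{\infty},\qquad F(X,Y)=\mathrm{CSS}(X\times Y),\quad G(X,Y)=\mathrm{CSS}(X)\times\mathrm{CSS}(Y);
\]
producing a natural equivalence $F\simeq G$, together with the trivial observation that $\mathrm{CSS}(\Delta^0)\simeq[0]$ is terminal in $\Cat_{\infty}$, is exactly the assertion of the lemma.

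Both $F$ and $G$ preserve colimits separately in each of the two variables. For $F$ this is because $X\mapsto X\times Y$ preserves colimits in the presheaf $\infty$-topos $\Fun(\Delta^{\op},\Spaces)$ and $\mathrm{CSS}$ preserves colimits; for $G$ it is because $\mathrm{CSS}$ preserves colimits and $\Cat_{\infty}$ is cartesian closed, so $(-)\times\mathcal{D}$ preserves colimits for every $\mathcal{D}\in\Cat_{\infty}$. Since $\Fun(\Delta^{\op},\Spaces)$ is the free cocompletion of $\Delta$, a functor out of $\Fun(\Delta^{\op},\Spaces)^{\times 2}$ that preserves colimits in each variable is the essentially unique such extension of its restriction to pairs of representables; hence it suffices to construct a natural equivalence $F(\Delta^n,\Delta^m)\simeq G(\Delta^n,\Delta^m)$ of functors of $(n,m)\in\Delta\times\Delta$.

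To do this I would identify the product $\Delta^n\times\Delta^m$ explicitly. Levelwise it is $[k]\mapsto \Delta([k],[n])\times\Delta([k],[m])=\Delta([k],[n]\times[m])$, so $\Delta^n\times\Delta^m$ is the nerve $N([n]\times[m])$ of the product poset, regarded as a simplicial space that is discrete in each degree; equivalently, since the nerve functor preserves products, $\Delta^n\times\Delta^m\simeq N([n])\times N([m])$. Now the nerve of any $1$-category in which every isomorphism is an identity — in particular of any poset — is a Segal space which is moreover complete, because the degeneracy map $X_0\to X_1$ identifies the set of objects with the set of identity (equivalently, invertible) morphisms. Hence $N([n]\times[m])$ is a complete Segal space, and since $\mathrm{CSS}$ restricts to the identity on complete Segal spaces we get $F(\Delta^n,\Delta^m)=\mathrm{CSS}(N([n]\times[m]))\simeq [n]\times[m]$. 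On the other hand $\mathrm{CSS}(\Delta^n)\simeq[n]$ by the defining property of $\mathrm{CSS}$, and the product of complete Segal spaces formed in $\Fun(\Delta^{\op},\Spaces)$ agrees with the product in $\Cat_{\infty}$ (the inclusion $\CplSeg(\Spaces)\hookrightarrow\Fun(\Delta^{\op},\Spaces)$ is a right adjoint, hence preserves products), so $G(\Delta^n,\Delta^m)\simeq[n]\times[m]$ as well, compatibly and naturally in $(n,m)$. Combining, $F\simeq G$, so $\mathrm{CSS}$ preserves binary products; together with $\mathrm{CSS}(\Delta^0)\simeq[0]$ this gives the lemma.

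The step requiring the most care — the main obstacle — is the reduction from arbitrary simplicial spaces to representables: one must invoke the universal property of the presheaf $\infty$-category to identify a two-variable, separately cocontinuous functor with the left Kan extension of its restriction to $\Delta\times\Delta$, and one must verify that $F$ and $G$ genuinely have this property, which for $G$ relies on cartesian closedness of $\Cat_{\infty}$ (equivalently, that finite products in $\Cat_{\infty}$ commute with colimits in each variable). Everything else is routine unwinding of definitions.
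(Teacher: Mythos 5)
Your proof is correct and follows essentially the same route as the paper: reduce to representables using that $\mathrm{CSS}$ is cocontinuous and that products commute with colimits in each variable in both $\Fun(\Delta^{\op},\Spaces)$ and $\Cat_\infty$, then observe that products of standard simplices are already complete Segal spaces (i.e.\ lie in the image of the product-preserving fully faithful right adjoint), so $\mathrm{CSS}$ recovers $[n]\times[m]$. The paper merely compresses your explicit nerve-of-a-poset verification into the remark that the representables lie in the image of the right adjoint $\Cat_\infty\subset\Fun(\Delta^{\op},\Spaces)$.
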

\begin{proof}
We would like to show that for simplicial spaces $X$ and $Y$, the natural map
$$\mathrm{CSS}(X\times Y) \to \mathrm{CSS}(X) \times \mathrm{CSS}(Y)$$ is an equivalence.  We observe that products preserve colimits separately in each variable in both $\Fun(\Delta^{\op},\Spaces)$ and $\Cat_{\infty},$ and the functor $\mathrm{CSS}$ preserves colimits.  Consequently, it suffices to check this statement on representable objects in $\Fun(\Delta^{\op},\Spaces)$.  But each representable simplicial space $\Delta^n$ is in the image of the fully faithful right adjoint $\Cat_{\infty} \subset \Fun(\Delta^{\op},\Spaces),$ which clearly preserves products, so the conclusion follows.  
\end{proof}

Since the coproduct on $\mathcal{C}$ endows $S_{\bullet}(\C)^{\simeq}$ with the structure of an $\E_{\infty}$-monoid in simplicial spaces, we may use Lemma \ref{lem:cssmonoidal} to equip $\mathrm{CSS}(S_{\bullet}(\C)^{\simeq})$ with the structure of a symmetric monoidal $\infty$-category.  We then have the following two corollaries of Proposition \ref{prop:cssinterp}.  

\begin{cor}\label{cor:partKeinfty}
Let $\C$ be a Waldhausen $\infty$-category.  Then the coproduct on $\C$ endows $K^{\mathrm{part}}(\C)$ with the structure of an $\E_{\infty}$-monoidal space.  
\end{cor}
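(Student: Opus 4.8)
The plan is to build the $\E_{\infty}$-structure by transporting the symmetric monoidal structure on $\mathrm{CSS}(S_{\bullet}(\C)^{\simeq})$ across the identification of Proposition \ref{prop:cssinterp} and then invoking the dictionary of Remark \ref{rmk:connectedinftycat}. First I would recall, as noted just above the statement, that the coproduct on $\C$ makes $S_{\bullet}(\C)^{\simeq}$ an $\E_{\infty}$-monoid in $\Fun(\Delta^{\op},\Spaces)$, and that since $\mathrm{CSS}$ preserves finite products by Lemma \ref{lem:cssmonoidal} it is symmetric monoidal for the Cartesian structures, hence carries $\E_{\infty}$-monoids to $\E_{\infty}$-monoids; thus $\mathrm{CSS}(S_{\bullet}(\C)^{\simeq})$ is a symmetric monoidal $\infty$-category. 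The unit object of this symmetric monoidal structure is $\mathrm{CSS}$ applied to the unit of the $\E_{\infty}$-monoid $S_{\bullet}(\C)^{\simeq}$, namely the constant simplicial space at a point (the zero sequence in each simplicial degree); since $\mathrm{CSS}$ sends the point to the point, this unit object is precisely the distinguished object coming from the equivalence $S_0(\C)\simeq *$.

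Next I would apply Proposition \ref{prop:cssinterp}(2), which gives an equivalence of $\infty$-categories $\mathrm{CSS}(S_{\bullet}(\C)^{\simeq})\simeq BK^{\mathrm{part}}(\C)$ carrying the distinguished object to the unique object of $BK^{\mathrm{part}}(\C)$. Transporting the symmetric monoidal structure along this equivalence, we obtain a symmetric monoidal $\infty$-category $BK^{\mathrm{part}}(\C)$ whose monoidal unit is its unique object; in particular the inclusion of the unit $*\to BK^{\mathrm{part}}(\C)$ is (vacuously) essentially surjective. Then, by the $\E_{\infty}$-version of Remark \ref{rmk:connectedinftycat} — obtained from the $\E_n$-statement there by passing to the limit over $n$ — symmetric monoidal $\infty$-categories with essentially surjective unit inclusion correspond to $\E_{\infty}$-monoids in spaces. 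Hence the symmetric monoidal structure on $BK^{\mathrm{part}}(\C)$ is classified by an $\E_{\infty}$-monoid in $\Spaces$, whose underlying space is the endomorphism space of the unique object, which is $K^{\mathrm{part}}(\C)$ by Proposition \ref{prop:cssinterp}(1). This is the desired $\E_{\infty}$-structure, and its underlying $\E_1$-structure agrees with the one of Definition \ref{defn:Kpart} again by Proposition \ref{prop:cssinterp}(1).

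The only point that needs genuine verification — and it is not really an obstacle — is that the monoidal unit of the induced symmetric monoidal structure on $\mathrm{CSS}(S_{\bullet}(\C)^{\simeq})$ is the basepoint $*$ rather than some other object, since this is exactly what licenses the appeal to Remark \ref{rmk:connectedinftycat}. As indicated above, this follows from the fact that $\mathrm{CSS}$ is product-preserving and sends the point to the point, so it sends the monoidal unit of $S_{\bullet}(\C)^{\simeq}$ (the constant point-valued simplicial space, which is the unit for the coproduct-induced $\E_{\infty}$-structure because the zero object is initial in $\C$) to the monoidal unit of $\mathrm{CSS}(S_{\bullet}(\C)^{\simeq})$, and the former maps to $*$ under $\mathrm{CSS}(S_0(\C))\to \mathrm{CSS}(S_{\bullet}(\C)^{\simeq})$ by construction of the equivalence in Proposition \ref{prop:cssinterp}. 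Everything else is formal: compatibility of $\mathrm{CSS}$ with products (Lemma \ref{lem:cssmonoidal}), naturality of the equivalences, and the general correspondence recorded in Remark \ref{rmk:connectedinftycat}.
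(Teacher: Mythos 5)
Your proposal is correct and follows essentially the same route the paper takes (which leaves most of it implicit): the coproduct makes $S_{\bullet}(\C)^{\simeq}$ an $\E_{\infty}$-monoid in simplicial spaces, Lemma \ref{lem:cssmonoidal} makes $\mathrm{CSS}(S_{\bullet}(\C)^{\simeq})\simeq BK^{\mathrm{part}}(\C)$ symmetric monoidal, and the product-compatible dictionary of Remark \ref{rmk:connectedinftycat} then promotes $K^{\mathrm{part}}(\C)$ to an $\E_{\infty}$-space. Your explicit check that the monoidal unit is the basepoint coming from $S_0(\C)\simeq *$ is a reasonable extra detail the paper takes for granted.
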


\begin{cor} \label{cor:kpartgp}
There is a natural equivalence  $K^{\mathrm{part}}(\C)^{\mathrm{gp}} \simeq K(\C)$ of $\E_{\infty}$-spaces.
\end{cor}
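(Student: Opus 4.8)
The plan is to compute $\Omega\lvert S_{\bullet}(\C)^{\simeq}\rvert$ by passing from simplicial spaces to $\infty$-categories, identify the result as $\Omega$ of the classifying space of the $\E_{\infty}$-monoid $K^{\mathrm{part}}(\C)$, and then invoke the group completion theorem. First I would record the following soft fact: geometric realization $\lvert-\rvert\colon\Fun(\Delta^{\op},\Spaces)\to\Spaces$ is left adjoint to the constant-diagram functor $\Spaces\to\Fun(\Delta^{\op},\Spaces)$, and this constant-diagram functor is the composite $\Spaces\hookrightarrow\Cat_{\infty}\hookrightarrow\Fun(\Delta^{\op},\Spaces)$ of the inclusion of $\infty$-groupoids followed by the complete Segal space nerve — indeed for an $\infty$-groupoid $Z$ one has $\Map_{\Cat_{\infty}}([n],Z)\simeq Z$ for every $n$. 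Passing to left adjoints yields a natural equivalence $\lvert-\rvert\simeq\lvert-\rvert_{\Cat}\circ\mathrm{CSS}$, where $\lvert-\rvert_{\Cat}\colon\Cat_{\infty}\to\Spaces$ is $\infty$-groupoidification (left adjoint to $\Spaces\hookrightarrow\Cat_{\infty}$). Each of $\lvert-\rvert$, $\mathrm{CSS}$, $\lvert-\rvert_{\Cat}$ preserves finite products — for $\lvert-\rvert$ by cofinality of the diagonal $\Delta^{\op}\to\Delta^{\op}\times\Delta^{\op}$, for $\mathrm{CSS}$ by Lemma \ref{lem:cssmonoidal}, and for $\lvert-\rvert_{\Cat}$ because colimits in $\Spaces$ are universal, so $\colim_{\mathcal{C}\times\mathcal{D}}\ast\simeq\colim_{\mathcal{C}}\colim_{\mathcal{D}}\ast\simeq\colim_{\mathcal{C}}\bigl(\lvert\mathcal{D}\rvert_{\Cat}\bigr)\simeq\lvert\mathcal{C}\rvert_{\Cat}\times\lvert\mathcal{D}\rvert_{\Cat}$ — so the equivalence $\lvert-\rvert\simeq\lvert-\rvert_{\Cat}\circ\mathrm{CSS}$ is compatible with the relevant $\E_{\infty}$-structures.

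Next I would apply this to $X=S_{\bullet}(\C)^{\simeq}$, which is an $\E_{\infty}$-monoid in simplicial spaces via the coproduct on $\C$. By Proposition \ref{prop:cssinterp}(2) — and, for the symmetric monoidal refinement, by the very construction of the $\E_{\infty}$-structure on $K^{\mathrm{part}}(\C)$ in Corollary \ref{cor:partKeinfty} — we have $\mathrm{CSS}(S_{\bullet}(\C)^{\simeq})\simeq BK^{\mathrm{part}}(\C)$ as symmetric monoidal $\infty$-categories, where $BK^{\mathrm{part}}(\C)$ denotes the one-object $\infty$-category with endomorphism $\E_{\infty}$-monoid $K^{\mathrm{part}}(\C)$ (Remark \ref{rmk:connectedinftycat}). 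Since $\lvert-\rvert_{\Cat}$ of an $\infty$-category is the realization of its nerve, and the nerve of $BK^{\mathrm{part}}(\C)$ is the bar construction $\mathbb{B}K^{\mathrm{part}}(\C)$, we obtain a natural equivalence of $\E_{\infty}$-spaces
\[ \lvert S_{\bullet}(\C)^{\simeq}\rvert\;\simeq\;\bigl\lvert\mathrm{CSS}(S_{\bullet}(\C)^{\simeq})\bigr\rvert_{\Cat}\;\simeq\;\bigl\lvert BK^{\mathrm{part}}(\C)\bigr\rvert_{\Cat}\;\simeq\;\bigl\lvert\mathbb{B}K^{\mathrm{part}}(\C)\bigr\rvert. \]
Combined with Definition \ref{defn:Ktheory}, this gives $K(\C)=\Omega\lvert S_{\bullet}(\C)^{\simeq}\rvert\simeq\Omega\lvert\mathbb{B}K^{\mathrm{part}}(\C)\rvert$ as $\E_{\infty}$-spaces.

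Finally I would invoke the group completion theorem for $\E_{\infty}$-monoids in spaces (cf.\ \cite{HA}): for any $M\in\mathrm{CMon}(\Spaces)$ the canonical map $M\to\Omega\lvert\mathbb{B}M\rvert$ exhibits its target as the group completion $M^{\mathrm{gp}}$. Taking $M=K^{\mathrm{part}}(\C)$ (which is $\E_{\infty}$ by Corollary \ref{cor:partKeinfty}) yields $\Omega\lvert\mathbb{B}K^{\mathrm{part}}(\C)\rvert\simeq K^{\mathrm{part}}(\C)^{\mathrm{gp}}$, and chasing the identifications shows the resulting equivalence $K^{\mathrm{part}}(\C)^{\mathrm{gp}}\simeq K(\C)$ is compatible with the canonical map out of $K^{\mathrm{part}}(\C)$ and is natural in $\C$. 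The only genuinely non-formal ingredient is the group completion theorem; the remaining work is bookkeeping, whose main point is to ensure every equivalence above is one of $\E_{\infty}$-spaces — and this reduces, as indicated, to product-preservation of $\lvert-\rvert$, $\mathrm{CSS}$, and $\lvert-\rvert_{\Cat}$ together with the naturality asserted in Proposition \ref{prop:cssinterp}.
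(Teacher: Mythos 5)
Your proposal is correct and follows essentially the same route as the paper's proof: the key inputs in both are the factorization $|{-}|\simeq|{-}|_{\Cat}\circ\mathrm{CSS}$ obtained by taking left adjoints of the factorization of the constant-diagram functor through complete Segal spaces, Proposition~\ref{prop:cssinterp}, and the identification of group completion with $\Omega|\mathbb{B}(-)|$; you have simply fleshed out the product-preservation bookkeeping that the paper leaves implicit. One small imprecision is worth flagging: the complete-Segal-space nerve of the one-object $\infty$-category $BM$ is not the bar construction $\mathbb{B}M$ unless $M$ has no nontrivial units, since the former has space of $0$-simplices $(BM)^{\simeq}$ rather than $*$ (equivalently, the Segal space $\mathbb{B}M$ is generally not complete). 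The equivalence $|BM|_{\Cat}\simeq|\mathbb{B}M|$ that this step is meant to supply does hold, but the clean justification is that $\mathrm{CSS}(\mathbb{B}M)\simeq BM$ by Remark~\ref{rmk:connectedinftycat}, so $|\mathbb{B}M|\simeq|\mathrm{CSS}(\mathbb{B}M)|_{\Cat}\simeq|BM|_{\Cat}$ by your own identity $|{-}|\simeq|{-}|_{\Cat}\circ\mathrm{CSS}$.
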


\begin{proof}
Note that the functor $\Spaces \to \Fun(\Delta^{\op},\Spaces)$ sending a space to the constant simplicial space factors as a composite $$\Spaces \to \mathrm{CplSeg}(\Spaces) \subset \Fun(\Delta^{\op},\Spaces)$$ through complete Segal spaces.  Taking left adjoints and looping, we obtain an equivalence of $\E_1$-spaces $$(K^{\mathrm{part}}(\C))^{\mathrm{gp}} \simeq \Hom_{|\mathrm{CSS}(S_{\bullet}(\C)^{\simeq})|}(*,*)  \simeq \Omega |\mathrm{CSS}(S_{\bullet}(\C)^{\simeq})| \simeq \Omega |S_{\bullet}(\C)^{\simeq}| \simeq K(\C).$$  We have seen that the relevant left adjoints commute with finite products, so this is in fact an equivalence of $\E_{\infty}$-spaces.  
\end{proof}

As in the case of ordinary $K$-theory, one can explicitly describe $K^{\mathrm{part}}_0(\C)$:

\begin{prop} \label{prop:Kpart0}
Let $\C$ be a Waldhausen $\infty$-category.  Then the monoid $$K^{\mathrm{part}}_0(\C) := \pi_0 K^{\mathrm{part}}(\C)$$ is freely generated by the objects of $\C$ modulo the relation $[A] + [C] = [B]$ for every short exact sequence $0\to A \to B\to C\to 0$.  
\end{prop}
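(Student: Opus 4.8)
The plan is to reduce the computation of $\pi_0 K^{\mathrm{part}}(\C)$ to a purely $1$-categorical presentation problem about the simplicial set $\pi_0 S_\bullet(\C)^{\simeq}$ obtained by applying $\pi_0$ levelwise. The engine is a chain of adjunctions: $\pi_0\colon\mathrm{Mon}(\Spaces)\to\mathrm{Mon}(\Set)$ is left adjoint to the inclusion of discrete monoids (as $\pi_0\colon\Spaces\to\Set$ preserves finite products), $\mathbb{L}$ is left adjoint to $\mathbb{B}$ by Definition~\ref{defn:L}, and a discrete monoid $M$, viewed in $\Spaces$, has levelwise-discrete bar construction $\mathbb{B}M\simeq N(BM)$, the ordinary nerve of the one-object category $BM$. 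Composing these, for every discrete monoid $M$ one obtains natural identifications
\[
\Hom_{\mathrm{Mon}(\Set)}\!\big(\pi_0 K^{\mathrm{part}}(\C),\,M\big)
\;\simeq\; \Hom_{\Fun(\Delta^{\op},\Spaces)}\!\big(S_\bullet(\C)^{\simeq},\,N(BM)\big)
\;\simeq\; \Hom_{\Fun(\Delta^{\op},\Set)}\!\big(\pi_0 S_\bullet(\C)^{\simeq},\,N(BM)\big),
\]
the last step because $N(BM)$ is levelwise discrete. The final $\Hom$ is computed by the classical left adjoint $\tau_1\colon\Fun(\Delta^{\op},\Set)\to\Cat_1$ to the nerve, so by Yoneda $\pi_0 K^{\mathrm{part}}(\C)$ is the endomorphism monoid of the unique object of $\tau_1\!\big(\pi_0 S_\bullet(\C)^{\simeq}\big)$ — that is, the monoid generated by the $1$-simplices of $\pi_0 S_\bullet(\C)^{\simeq}$, subject to the relation $d_1\sigma=(d_0\sigma)\cdot(d_2\sigma)$ for each $2$-simplex $\sigma$ and to $s_0$ of the unique $0$-simplex being the unit. (Equivalently, via Proposition~\ref{prop:cssinterp} one may instead compute $\pi_0 K^{\mathrm{part}}(\C)$ as the endomorphism monoid of the basepoint in the homotopy category $h\,\mathrm{CSS}(S_\bullet(\C)^{\simeq})$, which the same adjunction chain identifies with $\tau_1(\pi_0 S_\bullet(\C)^{\simeq})$.)

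It then remains to unwind $\pi_0 S_{\leq 2}(\C)^{\simeq}$ together with its face and degeneracy maps, for which I would invoke the standard description of the $S_\bullet$-construction (\cite{BarK, Wald}): $S_0(\C)\simeq *$ (the zero object), so $\tau_1$ has a single object; $S_1(\C)\simeq\C$, so $\pi_0 S_1(\C)^{\simeq}$ is the set of equivalence classes of objects of $\C$, each contributing a monoid generator; and $S_2(\C)^{\simeq}$ is the space of cofiber sequences $A\rightarrowtail B\twoheadrightarrow C$ in $\C$, with $d_2$, $d_1$, $d_0$ returning $A$, $B$, and $C\simeq B/A$ respectively. Hence the presentation above reads precisely: $\pi_0 K^{\mathrm{part}}(\C)$ is generated by the objects of $\C$ modulo $[B]=[C]\cdot[A]$ for every short exact sequence $0\to A\to B\to C\to 0$. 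The remaining clauses of the statement then fall out of the presentation itself: applying it to $*\rightarrowtail A\twoheadrightarrow A$ (or reading off the degeneracy $s_0$) gives $[0]=0$, and applying it to $A\xrightarrow{\ \sim\ }A'\twoheadrightarrow 0$ gives $[A]=[A']$ whenever $A\simeq A'$, so the presentation on objects agrees with the one on equivalence classes of objects.

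Finally, to match the \emph{commutative} phrasing of the statement — consistent with the $\E_\infty$-structure of Corollary~\ref{cor:partKeinfty} — I would note that $\pi_0 K^{\mathrm{part}}(\C)$ is a commutative monoid, so the associative presentation just derived in particular forces $[C]\cdot[A]=[A]\cdot[C]$; comparing universal properties then shows it coincides with the free commutative monoid on equivalence classes of objects modulo $[A]+[C]=[B]$. Alternatively, since $\mathbb{L}$ preserves finite products — the Segal objects cut out by $\mathbb{B}$ form an exponential ideal, because $\Fun(\D,-)$ preserves both the Segal condition and the condition $X_0\simeq *$ — one may run the entire adjunction chain above in $\E_\infty$-monoids, using that the coproduct on $\C$ makes $S_\bullet(\C)^{\simeq}$ an $\E_\infty$-object, and read off the commutative presentation directly. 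The only genuine work is the bookkeeping in the second paragraph: pinning down the low-dimensional part of $S_\bullet(\C)$ and its face maps so that the relation emerges as $[A]+[C]=[B]$ rather than some reindexing, together with the routine verification that passing to $\E_\infty$-objects is harmless. I do not expect a substantive obstacle beyond this.
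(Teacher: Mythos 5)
Your proof is correct and essentially matches the paper's: both reduce the computation to the simplicial set $\pi_0 S_\bullet(\C)^{\simeq}$ by commuting the adjunction $\mathbb{L} \dashv \mathbb{B}$ with levelwise $\pi_0$, and both exploit the fact that the bar construction of a discrete monoid is $2$-coskeletal. The paper packages the final step via an ad hoc left adjoint $\overline{\mathbb{L}}_0$ out of $\Fun(\Delta_{\leq 2}^{\op},\Set)$, while you invoke the classical fundamental-category functor $\tau_1$ and take endomorphisms of the unique object — the same operation in this one-object situation, just given a more familiar name.
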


\begin{proof}
Let $\Mon(\Set)$ denote the category of monoids in sets and let  $$\mathbb{B}_0:\Mon(\Set) \to \Fun(\Delta^{\op},\Set)$$  denote the functor which sends a monoid to its bar construction.  Then, $\mathbb{B}_0$ factors through the functor $$i_*: \Fun(\Delta_{\leq 2}^{\op}, \Set) \to \Fun(\Delta^{\op},\Set)$$ given by right Kan extension along the inclusion $i: \Delta_{\leq 2}^{\op} \to \Delta^{\op}$ of the full subcategory spanned by the objects $[0]$, $[1]$, and $[2]$.  We obtain a commutative diagram of right adjoints
\begin{equation*}
\begin{tikzcd}
\Mon(\Spaces) \arrow[r,"\mathbb{B}"] &\Fun(\Delta^{\op},\Spaces) \\
\Mon(\Set) \arrow[u] \arrow[rd,"\overline{\mathbb{B}}_0"']\arrow[r,"\mathbb{B}_0"] & \Fun(\Delta^{\op},\Set)\arrow[u]\\
& \Fun(\Delta^{\op}_{\leq 2},\Set)\arrow[u,"i_*"].
\end{tikzcd}
\end{equation*}

All of the functors in the diagram have left adjoints; the left adjoint of the upper vertical arrows are by taking $\pi_0$, the left adjoint of $i_*$ is the restriction $i^*,$ and we let $\overline{\mathbb{L}}_0$ denote the left adjoint to $\overline{\mathbb{B}}_0$.  Using the commutative diagram of left adjoints, we deduce that for a simplicial space $X$, there is an isomorphism 
$$\pi_0 \mathbb{L} X \cong \overline{\mathbb{L}}_0 i^* \pi_0 X.$$  We would like to compute this monoid in the case that $X = S_{\bullet}(\C)^{\simeq}$ and show that it has the proposed description of $K_0^{\mathrm{part}}(\C)$. 

Let $M_0$ be a monoid in sets, and let us unwind the data of a map $$f:i^* \pi_0 (S_{\bullet}(\C)^{\simeq}) \to \overline{\mathbb{B}}_0(M_0)$$ in $\Fun(\Delta_{\leq 2}^{\op},\Set).$  The map $f$ is determined by three maps 
\begin{align*}
f_{[0]}: \pi_0(S_0(\C)^{\simeq}) &\to * \\
f_{[1]}: \pi_0(S_1(\C)^{\simeq}) &\to M_0 \\
f_{[2]}: \pi_0(S_2(\C)^{\simeq}) &\to M_0\times M_0,
\end{align*}
corresponding to the objects of $\Delta^{\op}_{\leq 2}$.  The map $f_{[0]}$ is no data because $S_0(\C) = *$, and the map $f_{[1]}$ can be thought of as assigning an object of $M_0$ to each equivalence class of object in $\C$.  In order for these maps $f_{[i]}$ to determine a  natural transformation $f$ in $\Fun(\Delta_{\leq 2}^{\op},\Set)$, they must satisfy the following conditions:
\begin{enumerate}
\item Using the compatibilities coming from maps between $[0]$ and $[1]$ in $\Delta^{\op}$, one deduces that $f_{[1]}$ must send the equivalence class of $0\in \C$ to $0\in M_0$.  
\item Using the compatibilities coming from maps between $[1]$ and $[2]$ in $\Delta^{\op}$, one first deduces that $f_{[2]}$ sends the equivalence class of an exact sequence $0\to A \to B \to C \to 0$ to $(f_{[1]}(A),f_{[1]}(C)) \in M_0\times M_0$.  Then, compatibility with the face map corresponding to the multiplication $M_0\times M_0 \to M_0$ imposes the relation $f_{[1]}(A) + f_{[1]}(C) = f_{[1]}(B)$.  
\end{enumerate}
Thus, the data of $f$ is exactly the data of an object in $M_0$ for each nonzero equivalence class of object in $\C$ satisfying the usual additivity condition in exact sequences.  By the Yoneda lemma, we conclude that $\overline{\mathbb{L}}_0 i^* \pi_0 (S_{\bullet}(\C)^{\simeq})$ has the proposed description.  
\end{proof}

\begin{rmk}
In a stable setting, partial $K$-theory does not produce anything new.  For instance, if $\C$ is a stable $\infty$-category, one has for every $X\in \C$ a cofiber sequence $X \to 0 \to \Sigma X$.  It follows from Proposition \ref{prop:Kpart0} that in $K^{\mathrm{part}}_0(\C)$, $[X]$ has an inverse given by $[\Sigma X]$.  Consequently, $K^{\mathrm{part}}_0(\C)$ is group complete and the natural map $K^{\mathrm{part}}(\C) \to K(\C)$ is an equivalence by Corollary \ref{cor:kpartgp}.  
\end{rmk}

\subsection{Partial K-theory via the $Q$-construction}\label{sub:Q}
Throughout this section, let $\C$ be an exact $\infty$-category in the sense of \cite[Definition 1.3]{BarQ}.  Then, one can form an $\infty$-category $Q\C$ known as the \emph{$Q$-construction} on $\C$  \cite[Definition 3.8]{BarQ}.  
\begin{exm}
When $\C = \mathrm{Vect}_{\F_p}$ is the category of finite dimensional $\F_p$-vector spaces, $Q\C$ is equivalent to (the nerve of) the ordinary category whose objects are finite dimensional $\F_p$-vector spaces $V$ and where $\Hom_{Q\mathrm{Vect}_{\F_p}}(U,V)$ is the set of isomorphism classes of spans $U \twoheadleftarrow W \hookrightarrow V$ where the backward arrow is surjective and the forward arrow is injective \cite[Proposition 3.11]{BarQ}.  Composition is given by the usual composition of spans.  
\end{exm}

Quillen \cite{QuillenK} defined the $K$-theory space of $\C$ as $\Omega |Q\C|.$  On the other hand, $\C$ can be regarded as a Waldhausen $\infty$-category and one can consider its $K$-theory in the sense of Definition \ref{defn:Ktheory}.  The following theorem asserts that these constructions agree:

\begin{thm}[\cite{Wald} \S1.9, \cite{BarQ} Proposition 3.7]\label{thm:waldbar}
There is an equivalence of spaces $$|Q\C | \simeq |S_{\bullet}(\C)^{\simeq}|.$$
\end{thm}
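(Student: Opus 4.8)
The plan is to reduce the assertion to the classical comparison between Quillen's and Waldhausen's definitions of algebraic $K$-theory, handling the ``unlooped'' form of the statement by a connectivity argument.

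First I would observe that both sides are connected. Since $S_0(\C)^{\simeq}\simeq *$, the realization $|S_{\bullet}(\C)^{\simeq}|$ has $\pi_0=*$; and $Q\C$ is a connected $\infty$-category because for every object $X$ the zero object admits the morphism $0\to X$ given by the span $0\twoheadleftarrow 0\hookrightarrow X$, so $|Q\C|$ is connected as well. By the recognition principle identifying connected spaces with classifying spaces of grouplike $\E_1$-monoids (see \cite{HA}), each of $|Q\C|$ and $|S_{\bullet}(\C)^{\simeq}|$ is canonically the classifying space of its own loop space regarded as a grouplike $\E_1$-space. It therefore suffices to produce an equivalence of grouplike $\E_1$-spaces between $\Omega|Q\C|$ and $\Omega|S_{\bullet}(\C)^{\simeq}|$; the latter is $K(\C)$ by Definition~\ref{defn:Ktheory}, so this is precisely the statement that Quillen's $K$-theory of the exact $\infty$-category $\C$ agrees with its Waldhausen $K$-theory.

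For that identification I would invoke the cited results: Waldhausen's theorem \cite[\S 1.9]{Wald} in the $1$-categorical case and Barwick's \cite[Proposition 3.7]{BarQ} for exact $\infty$-categories. The strategy there is to interpolate $NQ\C$ and $S_{\bullet}(\C)^{\simeq}$ through a bisimplicial object whose $(m,n)$-bisimplices are ``staircase diagrams'' encoding simultaneously an $m$-simplex of the $Q$-construction and an $n$-simplex of the $S_{\bullet}$-construction, built from compatible systems of short exact sequences in $\C$. Fixing one simplicial direction and applying the $\infty$-categorical form of Quillen's Theorem A (see \cite{HTT}), one shows that both edge projections become equivalences after geometric realization; the essential input is that the relevant comma categories — parametrizing the choices of subquotient data in a staircase — are weakly contractible, a cofinality statement of additivity type.

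The step I expect to be the main obstacle is the passage from an equivalence of loop spaces, which is what the references most directly supply, to an equivalence of the unlooped spaces: this is where connectivity of both sides is essential, and one must be careful that the two grouplike $\E_1$-structures being matched are the canonical ones, so that applying the classifying-space functor transports the looped equivalence to the desired one. An alternative route that avoids the recognition principle is to extract from the staircase bisimplicial object an explicit natural zig-zag of spaces running from $|S_{\bullet}(\C)^{\simeq}|$ through the realization of that object to $|Q\C|$, and to check directly and levelwise that both maps are equivalences; this is essentially Waldhausen's original argument, and it transports to the $\infty$-categorical setting via \cite{BarQ}.
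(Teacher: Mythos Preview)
The paper does not give its own proof; the theorem is stated with citation to \cite{Wald} and \cite{BarQ}. The argument the paper has in mind, made explicit in the paragraph following the theorem and in Example~\ref{exm:subdivideS}, goes via edgewise subdivision: Barwick identifies $Q\C \simeq \mathrm{CSS}(\epsilon^* S_{\bullet}(\C)^{\simeq})$, and then one invokes the classical fact \cite[A.1]{Seg} that $|\epsilon^* T| \simeq |T|$ for any simplicial space $T$, together with $|\mathrm{CSS}(T)| \simeq |T|$. This yields the unlooped equivalence directly, with no appeal to connectivity, the recognition principle, or bisimplicial staircases.

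Your primary route is circular as written: you propose to cite \cite[\S 1.9]{Wald} and \cite[Proposition~3.7]{BarQ} for the equivalence $\Omega|Q\C| \simeq K(\C)$ and then deloop, but those references establish precisely the unlooped statement you are trying to prove --- so you would be looping the theorem in order to unloop it again. Your ``alternative route'' via the staircase bisimplicial object is indeed Waldhausen's original argument and is correct, but the edgewise-subdivision proof is shorter and is the one the paper's subsequent development (Proposition~\ref{prop:subloc}, Corollary~\ref{cor:QequalsS}) actually refines.
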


We now describe how to make a pre-group-completed variant of this construction.  Thinking of the space $|Q\C|$ as an $\infty$-category, the natural map $Q\C \mapsto |Q\C|$ can be thought of as formally inverting all the morphisms.  On the other hand, the $K$-theory of $\C$ arises as the endomorphisms of the unit object in $|Q\C|$.   Accordingly, to create a version of $K$-theory which is not group complete, one could instead contemplate inverting only some of the morphisms of $Q\C$.  Let $\mathcal{L} \subset \mathrm{Mor}(Q\C)$ denote the backward (\emph{left}-pointing) arrows, i.e., those of the form $X \twoheadleftarrow Y \rightarrow Y$.  The localization $Q\C [\mathcal{L}^{-1}]$ is a symmetric monoidal $\infty$-category which comes with a canonical point represented by $0 \in \C$.  The endomorphisms of $0$ in $Q\C [\mathcal{L}^{-1}]$ is then a monoid in spaces which group completes to $K(\C)$.  The main theorem of this section is that this monoid coincides with the partial $K$-theory of $\C$:

\begin{thm}\label{thm:QequalsS}
Let $\C$ be an exact $\infty$-category.  Then, regarding $\C$ as a Waldhausen $\infty$-category as in \cite[Corollary 4.8.1]{BarHeart}, there is an equivalence of $\E_1$-spaces $$K^{\mathrm{part}}(\C) \simeq \Hom_{Q\C[\mathcal{L}^{-1}]}(0,0).$$
\end{thm}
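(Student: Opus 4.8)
The plan is to prove both sides compute the endomorphism $\E_1$-space of the essentially unique object of one connected $\infty$-category, which I pin down by a mapping-out universal property. First I would reduce: by Proposition~\ref{prop:cssinterp}(2) we have $BK^{\mathrm{part}}(\C)\simeq \mathrm{CSS}(S_\bullet(\C)^\simeq)$, and $Q\C$ is connected (the admissible monomorphism $0\hookrightarrow X$ gives a morphism $0\to X$ in $Q\C$ for every $X$), hence so is $Q\C[\mathcal{L}^{-1}]$; likewise $\mathrm{CSS}(S_\bullet(\C)^\simeq)$ is a bar construction. Since an equivalence of connected $\infty$-categories restricts to an equivalence of $\E_1$-spaces on endomorphisms of corresponding objects, it suffices to produce an equivalence of $\infty$-categories $Q\C[\mathcal{L}^{-1}]\simeq \mathrm{CSS}(S_\bullet(\C)^\simeq)$. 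The symmetric monoidal structures coming from $\oplus$ on $\C$ (using Lemma~\ref{lem:cssmonoidal} and Corollary~\ref{cor:partKeinfty} on one side, and that $\mathcal{L}$ is an $\oplus$-closed subcategory of the symmetric monoidal $\infty$-category $Q\C$ on the other) are carried along, so the induced equivalence of endomorphism monoids is one of $\E_1$- (indeed $\E_\infty$-) spaces, as claimed.

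Next I would compare the two sides as functors of $\mathcal{D}\in\Cat_\infty$. On the $K$-theory side: since $K^{\mathrm{part}}(\C)=\mathbb{L}(S_\bullet(\C)^\simeq)$ and $\mathbb{B}$ sends a monoid to its bar construction, an $\E_1$-map $K^{\mathrm{part}}(\C)\to M$ is a map of simplicial spaces $S_\bullet(\C)^\simeq\to\mathbb{B}M$, which unwinds to a map $f\colon\C^\simeq\to M$ with coherent homotopies $f(B)\simeq f(A)\cdot f(B/A)$ for every admissible short exact sequence $A\hookrightarrow B\twoheadrightarrow B/A$; call such data an \emph{additive map}. Thus a functor $\mathrm{CSS}(S_\bullet(\C)^\simeq)\simeq BK^{\mathrm{part}}(\C)\to\mathcal{D}$ is a pair consisting of an object $e_0\in\mathcal{D}$ and an additive map $\C\to\End_{\mathcal{D}}(e_0)$. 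On the $Q$-side: $Q\C$ is the effective Burnside $\infty$-category of the exact structure on $\C$ (cf.\ \cite{BarQ},\cite{BarK}), so a functor $Q\C\to\mathcal{D}$ is a functor on admissible monomorphisms together with a functor on the opposite of the admissible epimorphisms, agreeing on $\C^\simeq$ and carrying the biCartesian squares attached to short exact sequences to pullback squares; the morphisms in $\mathcal{L}$ are exactly the images of admissible epimorphisms, so functors inverting $\mathcal{L}$ are those whose "egressive" functor lands in $\mathcal{D}^\simeq$. Because $0$ is terminal in the category of admissible epimorphisms (the canonical map $X\to 0$ is egressive, being the cofiber of $\id_X$), that category has contractible classifying space, so the egressive functor is just a choice of object $e_0:=F(0)\in\mathcal{D}$; the matching condition then forces the inflation functor to send every object to $e_0$, recording an endomorphism of $e_0$ for each admissible monomorphism, and the short exact sequence relations say precisely that this endomorphism depends only on $B/A$ and is additive. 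Hence functors $Q\C[\mathcal{L}^{-1}]\to\mathcal{D}$ are also pairs $(e_0,\ \text{additive map }\C\to\End_{\mathcal{D}}(e_0))$; naturality in $\mathcal{D}$ and Yoneda then give the desired equivalence $Q\C[\mathcal{L}^{-1}]\simeq \mathrm{CSS}(S_\bullet(\C)^\simeq)$, and passing to endomorphisms of the basepoint yields the theorem.

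The main obstacle is the second half of the middle step: formulating, in the $\infty$-categorical setting, the "generators and relations" universal property of $Q\C$ precisely enough, and then checking carefully that localizing at the left-pointing class $\mathcal{L}$ collapses exactly the egressive part, leaving data that matches — coherently — the additive maps produced by the bar construction for $K^{\mathrm{part}}$. Converting the short exact sequence squares of $Q\C$ into the additivity relation is the same dévissage that underlies Quillen's original treatment of the $Q$-construction and needs care with the higher coherences. As a cross-check, and as an alternative to the Yoneda argument, one can instead write down the comparison functor explicitly: sending a span $(X\twoheadleftarrow W\hookrightarrow Y)$ to the class $[Y/W]\in K^{\mathrm{part}}(\C)$ defines a functor $Q\C\to BK^{\mathrm{part}}(\C)$ that visibly inverts $\mathcal{L}$, becomes an equivalence after group completion by Theorem~\ref{thm:waldbar}, and realizes the free-monoid description of $\pi_0$ from Proposition~\ref{prop:Kpart0}; promoting it to an equivalence reduces once more to the universal-property comparison above.
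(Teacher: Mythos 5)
Your first paragraph (reducing to an equivalence of $\infty$-categories $Q\C[\mathcal{L}^{-1}]\simeq \mathrm{CSS}(S_\bullet(\C)^\simeq)$ and then passing to endomorphisms of the basepoint) is exactly the reduction the paper makes, so you and the paper agree on the target. From there, however, your route diverges sharply from the paper's. The paper does \emph{not} argue via a mapping-out universal property of $Q\C$. Instead it uses Barwick's identification $Q\C\simeq\mathrm{CSS}(\epsilon^* S_\bullet(\C)^\simeq)$ (Example~\ref{exm:subdivideS}) and then proves a purely simplicial fact, Proposition~\ref{prop:subloc}: for \emph{any} simplicial space $T$, the functor $\mathrm{CSS}(\epsilon^*T)\to\mathrm{CSS}(T)$ exhibits the target as a localization of the source at a specific class of morphisms $\mathcal{L}(T)$. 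The proof of that proposition reduces, by writing $T$ as a colimit of representables, to the single combinatorial statement (Lemma~\ref{lem:simplexloc}) that $\epsilon^*\Delta^n\to\Delta^n$ is a localization at the vertical maps of the twisted arrow category of $\Delta^n$, which is checked by a marked anodyne extension. All the Waldhausen/Quillen-style coherence is packaged once and for all into Barwick's identification $Q\C\simeq\mathrm{CSS}(\epsilon^*S_\bullet(\C)^\simeq)$, and the rest is manipulations with edgewise subdivision.

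The gap in your proposal is precisely the spot you flag as the ``main obstacle,'' and it is not a technicality. Your argument needs, naturally in $\D$, an equivalence of $\infty$-categories between $\Fun(Q\C,\D)$ and a category of pairs (an ingressive functor and an egressive functor agreeing on objects and satisfying a base-change condition on ambigressive squares). No such ``generators-and-relations'' mapping-out universal property for the effective Burnside $\infty$-category $Q\C=A^{\mathrm{eff}}(\C,\C_\dagger,\C^\dagger)$ is established in \cite{BarQ} or \cite{BarK}; the universal properties of span $\infty$-categories in the literature are of a different flavor (freely adjoining adjoints in an $(\infty,2)$-categorical sense) and are not set up so that your Steps 3--4 (the restriction of the egressive functor to $\mathcal{L}$ landing in $\D^\simeq$, then using contractibility of $|\C_\dagger^{\op}|$ to reduce the pair to an object and an additive map) can be carried out as stated. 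The dévissage turning biCartesian squares into additivity of $f(B)\simeq f(A)\cdot f(B/A)$ coherently in $\D$ is, morally, the full content of Waldhausen's comparison of $Q$ and $S_\bullet$, and the classical form of that theorem (an equivalence of \emph{spaces} $|Q\C|\simeq|S_\bullet(\C)^\simeq|$, Theorem~\ref{thm:waldbar}) is strictly weaker than what you need: it lives after inverting \emph{all} morphisms of $Q\C$, whereas you need to control what happens after inverting only $\mathcal{L}$. Your ``cross-check'' fallback (send a span $X\twoheadleftarrow W\hookrightarrow Y$ to $[Y/W]$) has the same problem: defining this as a genuine functor of $\infty$-categories, not merely on $\pi_0$, is exactly the coherence issue, and the observation that it becomes an equivalence after group completion does not let you conclude it is an equivalence before group completion. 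So the proposal, as it stands, has a real missing step; the paper's edgewise-subdivision localization lemma is precisely what replaces it.
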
 

This is a categorified analog of the theorems of Waldhausen and Barwick relating $K$-theory via the $S_{\bullet}$-construction to $K$-theory via the $Q$-construction.  We will prove Theorem \ref{thm:QequalsS} in a more precise form as Corollary \ref{cor:QequalsS}.  We begin by reviewing the notion of edgewise subdivision.  


\begin{defn}
Let $\epsilon : \Delta \to \Delta$ be the functor which takes a linearly ordered set $I$ to the join $I^{\op}\star I$.  Given an $\infty$-category $\D$ and a simplicial object $T\in \Fun(\Delta^{\op},\D)$, one can form a new simplicial object $\epsilon^* T$ by precomposition with $\epsilon$, which we will refer to as the \emph{edgewise subdivision} of $T$.  This construction comes equipped with two natural maps induced by the inclusions $I \subset I^{\op} \star I$ and $I^{op}\subset I^{\op}\star I$, which we denote by $\eta_T: \epsilon^* T \to T$ and $\eta^{\op}_T: \epsilon^* T\to T^{\op},$ respectively.  
\end{defn}

\begin{exm}\label{exm:twarr}
When $\D = \mathrm{Set}$ and $T$ is a quasicategory, $\epsilon^* T$ is also a quasicategory and presents the twisted arrow category of $T$ (\cite[Proposition 5.2.1.3]{HA}, beware the opposite convention for morphism direction).
\end{exm}

We will be particularly interested in the following example, which says roughly that Quillen's $Q$-construction arises from Waldhausen's $S_{\bullet}$-construction by edgewise subdivision:
\begin{exm}\label{exm:subdivideS}
When $\D = \Spaces$ and $T = S_{\bullet}(\C)^{\simeq}$ for an exact $\infty$-category $\C$, there is an equivalence of $\infty$-categories $$Q\C \simeq \mathrm{CSS}(\epsilon^* S_{\bullet}(\C)^{\simeq}).$$
This follows from combining \cite[Proposition 3.4]{BarQ} and \cite[Proposition 3.7]{BarQ}.
\end{exm} 

Recall that $K^{\mathrm{part}}(\C)$ is defined as the endomorphisms of the unit object in $\mathrm{CSS}(S_{\bullet}(\C)^{\simeq})$; accordingly, Theorem \ref{thm:QequalsS} asserts a relationship between the simplicial spaces $S_{\bullet}(\C)^{\simeq}$ and $\epsilon^* S_{\bullet}(\C)^{\simeq}$.  In the setting of ordinary $K$-theory (Theorem \ref{thm:waldbar}), one needs to compare these simplicial spaces at the level of geometric realization; this boils down to the classical fact that for any simplicial space $T$, the map $\eta_T$ becomes an equivalence after passing to underlying spaces \cite[A.1]{Seg}.  The proof of Theorem \ref{thm:QequalsS} refines this to a statement about underlying $\infty$-categories.  Namely, instead of passing all the way to underlying spaces, one can study the functor $\eta_T$ after applying $\mathrm{CSS}$.   The main technical result of this section is that while the resulting functor $\mathrm{CSS}(\eta_T): \mathrm{CSS}(\epsilon^*T) \to \mathrm{CSS}(T)$ is not generally an equivalence, it can be described as a localization at a particular collection of morphisms.


\begin{rmk}\label{rmk:JT}
Let $K$ be a simplicial set which is a quasicategory (i.e., fibrant in the Joyal model structure), and let $K_{\Spaces} \in \Fun(\Delta^{\op},\Spaces)$ denote $K$ regarded as a discrete simplicial space.  Then one can consider the $\infty$-category $\mathrm{CSS}(K_{\Spaces})$ obtained by applying the localization $\mathrm{CSS}: \Fun(\Delta^{\op},\Spaces) \to \Cat_{\infty}$.  It follows from \cite[Theorem 4.11]{JT} that there is a natural equivalence of $\infty$-categories $K \simeq \mathrm{CSS}(K_{\Spaces}).$  
\end{rmk}

We first study the case of when $T$ is a standard simplex.  

\begin{exm}\label{exm:subsimplex}
By Example \ref{exm:twarr} and Remark \ref{rmk:JT}, the $\infty$-category $\mathrm{CSS}(\epsilon^* \Delta^n_{\Spaces})$ is the twisted arrow category of $\Delta^n$:

\begin{equation}\label{dia:twdeltan}
\begin{tikzcd}[cramped, sep=small]
 & & & & nn \arrow[d]\\
 & & & \iddots & \vdots\arrow[d] \\
 & & 22\arrow[r]\arrow[d]& \cdots \arrow[r]& 2n\arrow[d]\\
 & 11\arrow[r]\arrow[d]& 12\arrow[r]\arrow[d] & \cdots \arrow[r]& 1n \arrow[d]\\
00 \arrow[r]&01\arrow[r] &02 \arrow[r]& \cdots\arrow[r]& 0n .\\
\end{tikzcd}
\end{equation}
With reference to diagram (\ref{dia:twdeltan}), the functor $$\mathrm{CSS}(\eta_{\Delta^n_{\Spaces}} ): \mathrm{CSS}(\epsilon^*\Delta^n_{\Spaces}) \to \mathrm{CSS}(\Delta^n_{\Spaces}) \simeq \Delta^n$$ projects down to the horizontal axis.  
\end{exm}

\begin{defn}\label{defn:simplexedges}
Let $\mathcal{L}(\Delta^n_{\Spaces})$ denote the subset of the morphisms of $\mathrm{CSS}(\epsilon^* \Delta^n_{\Spaces})$ whose images under $\mathrm{CSS}(\eta_{\Delta^n_{\Spaces}})$ are homotopic to identity morphisms.  These correspond to vertical maps in diagram (\ref{dia:twdeltan}).  
\end{defn}
\begin{lem}\label{lem:simplexloc}
The functor $$\mathrm{CSS}(\eta_{\Delta^n_{\Spaces}} ): \mathrm{CSS}(\epsilon^*\Delta^n_{\Spaces}) \to \mathrm{CSS}(\Delta^n_{\Spaces}) \simeq \Delta^n$$ exhibits $\Delta^n$ as the localization of $\mathrm{CSS}(\epsilon^*\Delta^n_{\Spaces})$ at the collection of morphisms $\mathcal{L}(\Delta^n_{\Spaces})$.
\end{lem}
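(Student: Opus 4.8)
The plan is to reduce the statement to an elementary computation in posets. By Example~\ref{exm:subsimplex} together with Remark~\ref{rmk:JT}, the $\infty$-category $\mathrm{CSS}(\epsilon^*\Delta^n_{\Spaces})$ is the twisted arrow category $\mathrm{Tw}(\Delta^n)$, which is the nerve of the poset of pairs $(i,j)$ with $0\le i\le j\le n$, ordered by $(i,j)\le (i',j')$ iff $i\ge i'$ and $j\le j'$; referring to diagram~(\ref{dia:twdeltan}), the horizontal arrows are $(i,j)\le (i,j+1)$ and the vertical ones are $(i+1,j)\le (i,j)$. Under this identification the functor $\mathrm{CSS}(\eta_{\Delta^n_{\Spaces}})$ becomes the order-preserving map $p\colon \mathrm{Tw}(\Delta^n)\to \Delta^n$ given by $(i,j)\mapsto j$, and $\mathcal{L}(\Delta^n_{\Spaces})$ is precisely the set of morphisms of $\mathrm{Tw}(\Delta^n)$ that preserve the second coordinate. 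I would prove the lemma by exhibiting $p$ as a reflective localization and then identifying the class of morphisms it inverts with $\mathcal{L}(\Delta^n_{\Spaces})$.

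First I would construct a section of $p$. Let $s\colon \Delta^n\to \mathrm{Tw}(\Delta^n)$ be the order-preserving map $k\mapsto (0,k)$; it is fully faithful and satisfies $p\circ s=\mathrm{id}$. Viewing everything as posets, $s$ is right adjoint to $p$: for every $(i,j)$ and $k$ one has $p(i,j)=j\le k$ if and only if $(i,j)\le (0,k)=s(k)$, the remaining constraint $i\ge 0$ being automatic. The unit of this adjunction is the natural transformation $\mathrm{id}_{\mathrm{Tw}(\Delta^n)}\Rightarrow s\circ p$ whose component at $(i,j)$ is the morphism $(i,j)\to (0,j)$, which lies in $\mathcal{L}(\Delta^n_{\Spaces})$. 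Consequently, for any $\infty$-category $\mathcal{E}$, precomposition with $p$ is fully faithful with essential image the functors inverting $\mathcal{L}(\Delta^n_{\Spaces})$: given $G\colon \mathrm{Tw}(\Delta^n)\to\mathcal{E}$ inverting $\mathcal{L}(\Delta^n_{\Spaces})$, applying $G$ to the unit shows $G\simeq (G\circ s)\circ p$, while conversely $s^*\circ p^*=\mathrm{id}$. (Alternatively, one can simply invoke the general fact that a functor admitting a fully faithful right adjoint is the localization at the collection of maps it sends to equivalences.)

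It then remains to check that the class $W$ of morphisms inverted by $p$ equals $\mathcal{L}(\Delta^n_{\Spaces})$. The inclusion $\mathcal{L}(\Delta^n_{\Spaces})\subseteq W$ is immediate from the definition, since such morphisms are sent to identities. For the reverse inclusion, note that $\Delta^n$ is a poset, so its only equivalences are identities; hence if $p(f)$ is an equivalence for $f\colon (i,j)\to (i',j')$, then $j=j'$, i.e. $f\in\mathcal{L}(\Delta^n_{\Spaces})$. This gives $W=\mathcal{L}(\Delta^n_{\Spaces})$ and hence the lemma.

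There is no real obstacle once the identification of $\mathrm{CSS}(\epsilon^*\Delta^n_{\Spaces})$ with $\mathrm{Tw}(\Delta^n)$ from Example~\ref{exm:subsimplex} is available: the rest is a short Galois-connection argument. The only points to get right are the orientation of the adjunction $p\dashv s$ and the observation that the unit components land in $\mathcal{L}(\Delta^n_{\Spaces})$ — both of which are visible directly from diagram~(\ref{dia:twdeltan}).
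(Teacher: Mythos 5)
Your proof is correct, and it arrives at the lemma by a genuinely different technical route than the paper, although both hinge on the same geometric input. The paper also identifies $\mathrm{CSS}(\epsilon^*\Delta^n_{\Spaces})$ with the twisted arrow poset of $\Delta^n$ and uses the bottom edge $00\to 01\to\cdots\to 0n$, but it then argues with marked simplicial sets: marking $\mathcal{L}(\Delta^n_{\Spaces})$ in $\epsilon^*\Delta^n$ and trivially marking $\Delta^n$, it observes that the bottom-edge inclusion is a marked anodyne extension, so that $\eta_{\Delta^n}$ is a marked equivalence and the localization statement follows. You instead stay at the level of posets: the projection $p(i,j)=j$ is left adjoint to the fully faithful bottom-edge section $s(k)=(0,k)$, the unit components $(i,j)\to(0,j)$ lie in $\mathcal{L}(\Delta^n_{\Spaces})$, and a functor admitting a fully faithful right adjoint exhibits its target as the localization at the class of morphisms it inverts; since $\Delta^n$ is a poset, that class is exactly $\mathcal{L}(\Delta^n_{\Spaces})$. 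This trades the marked-anodyne combinatorics for a Galois-connection argument plus a quotable universal property, which is arguably more transparent and checks the universal property against an arbitrary $\infty$-category $\mathcal{E}$ directly, exactly in the form needed for Proposition \ref{prop:subloc}. One point to make explicit if you write this up: the retraction $s^*\circ p^*\simeq\mathrm{id}$ together with essential surjectivity onto the $\mathcal{L}$-inverting functors does not by itself give full faithfulness of $p^*$; either rely on the reflective-localization fact you cite, or note that $p\dashv s$ induces an adjunction $s^*\dashv p^*$ on functor categories whose counit is invertible because $p\circ s=\mathrm{id}$, which yields full faithfulness directly.
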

\begin{proof}
We will refer to diagram (\ref{dia:twdeltan}) of Example \ref{exm:subsimplex}.  By Remark \ref{rmk:JT}, the functor of the lemma is identified with the natural functor of $\infty$-categories $\eta_{\Delta^n}: \epsilon^*\Delta^n \to \Delta^n$.  Regarding $\epsilon^* \Delta^n$ as a marked simplicial set by marking the morphisms in $\mathcal{L}(\Delta^n_{\Spaces})$, it suffices to show that $\eta_{\Delta^n}: \epsilon^*\Delta^n \to (\Delta^n)^{\flat}$ is a weak equivalence in the marked model structure, where $(\Delta^n)^{\flat}$ denotes the simplicial set $\Delta^n$ with only the degenerate edges marked (cf. \cite[Proposition 3.1.3.7]{HTT}, \cite[1.1.3]{Hinich}).  Consider the sequence of maps $$(\Delta^n)^{\flat} \xrightarrow{i} \epsilon^* \Delta^n \xrightarrow{\eta_{\Delta^n}} (\Delta^n)^{\flat}$$ where $i$ includes the bottom edge $00\to 01 \to 02 \to \cdots \to 0n$ of (\ref{dia:twdeltan}). The composite is the identity, so the lemma would be a consequence of knowing that the inclusion $i$ is a marked anodyne extension (and thus a Cartesian equivalence, by \cite[Remark 3.1.3.4]{HTT}).  This follows by using the criterion (2'') of \cite[Proposition 3.1.1.5]{HTT} and stability of marked anodyne morphisms under pushouts \cite[Definition 3.1.1.1]{HTT}.  
\end{proof}

\begin{defn}
Let $T$ be a simplicial space.  With reference to Definition \ref{defn:simplexedges}, define the subset $\mathcal{L}(T)$ of the $1$-simplices of the $\infty$-category $\mathrm{CSS}(\epsilon^* T)$ by $$\mathcal{L}(T) := \{ \gamma \text{ }|\text{ } \gamma = \mathrm{CSS}(\epsilon^* f) \gamma' \text{ for some morphism }f:\Delta^n_{\Spaces} \to T \text{ and edge } \gamma' \in \mathcal{L}(\Delta^n_{\Spaces})\}.$$  
\end{defn}

We can now state and prove the main technical result.  

\begin{prop}\label{prop:subloc}
Let $T$ be a simplicial space.  Then the functor $$\mathrm{CSS}(\epsilon^* \eta_T): \mathrm{CSS}(\epsilon^* T) \to \mathrm{CSS}(T)$$ exhibits $\mathrm{CSS}(T)$ as the localization of $\mathrm{CSS}(\epsilon^* T)$ at the morphisms in $\mathcal{L}(T)$.  
\end{prop}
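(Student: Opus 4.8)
The plan is to reduce the statement, by a colimit argument, to the case where $T$ is a standard simplex — which is exactly Lemma~\ref{lem:simplexloc} — exploiting that the three operations in sight (edgewise subdivision $\epsilon^*$, the functor $\mathrm{CSS}$, and the formation of localizations) are all compatible with colimits, and that $\mathcal L(T)$ has been defined precisely so as to be ``generated by'' the classes $\mathcal L(\Delta^n_{\Spaces})$. Concretely, first I would record the colimit presentations: by density $T \simeq \colim_{(\sigma\colon \Delta^n_{\Spaces}\to T)\in\Delta_{/T}} \Delta^n_{\Spaces}$, the colimit over the category of simplices of $T$ (the comma category of the Yoneda embedding $\Delta \to \Fun(\Delta^{\op},\Spaces)$ over $T$). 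The functor $\epsilon^*$ is precomposition along $\epsilon^{\op}$, hence preserves all colimits, while $\mathrm{CSS}$ preserves colimits as a left adjoint with $\mathrm{CSS}(\Delta^n_{\Spaces})\simeq\Delta^n$ (Recollection~\ref{rec:css}); so $\mathrm{CSS}(\epsilon^*T)\simeq\colim_{\Delta_{/T}}\mathrm{CSS}(\epsilon^*\Delta^n_{\Spaces})$ and $\mathrm{CSS}(T)\simeq\colim_{\Delta_{/T}}\Delta^n$. By naturality of $\eta\colon\epsilon^*\Rightarrow\mathrm{id}$, the functor $\mathrm{CSS}(\eta_T)$ is the colimit over $\Delta_{/T}$ of the functors $\mathrm{CSS}(\eta_{\Delta^n_{\Spaces}})$; in particular, since each of the latter sends $\mathcal L(\Delta^n_{\Spaces})$ to identities, $\mathrm{CSS}(\eta_T)$ inverts $\mathcal L(T)$.

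Next I would verify the universal property of localization. Fix an $\infty$-category $\mathcal D$ and write $\Fun_{W}(-,\mathcal D)\subseteq\Fun(-,\mathcal D)$ for the full subcategory of functors inverting a class $W$ of morphisms. From the colimit presentations, $\Fun(\mathrm{CSS}(T),\mathcal D)\simeq\lim_{\Delta_{/T}^{\op}}\Fun(\Delta^n,\mathcal D)$ and $\Fun(\mathrm{CSS}(\epsilon^*T),\mathcal D)\simeq\lim_{\Delta_{/T}^{\op}}\Fun(\mathrm{CSS}(\epsilon^*\Delta^n_{\Spaces}),\mathcal D)$. The crux is that, under the latter equivalence, $\Fun_{\mathcal L(T)}(\mathrm{CSS}(\epsilon^*T),\mathcal D)$ corresponds to $\lim_{\Delta_{/T}^{\op}}\Fun_{\mathcal L(\Delta^n_{\Spaces})}(\mathrm{CSS}(\epsilon^*\Delta^n_{\Spaces}),\mathcal D)$: by the definition of $\mathcal L(T)$, a functor on $\mathrm{CSS}(\epsilon^*T)$ inverts $\mathcal L(T)$ if and only if each of its restrictions along the colimit inclusions $\mathrm{CSS}(\epsilon^*\sigma)$ inverts $\mathcal L(\Delta^n_{\Spaces})$; and — using naturality of $\eta$ together with the fact that equivalences in the poset $\Delta^n$ are identities — each transition functor $\mathrm{CSS}(\epsilon^*g)$ of the limit diagram carries $\mathcal L(\Delta^m_{\Spaces})$ into $\mathcal L(\Delta^n_{\Spaces})$, so that a limit of these full subcategories is the expected full subcategory of the limit. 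Now Lemma~\ref{lem:simplexloc} supplies, for each simplex, an equivalence $\Fun(\Delta^n,\mathcal D)\xrightarrow{\ \sim\ }\Fun_{\mathcal L(\Delta^n_{\Spaces})}(\mathrm{CSS}(\epsilon^*\Delta^n_{\Spaces}),\mathcal D)$ implemented by restriction along $\mathrm{CSS}(\eta_{\Delta^n_{\Spaces}})$, naturally in $\Delta_{/T}$; passing to limits over $\Delta_{/T}^{\op}$ yields an equivalence $\Fun(\mathrm{CSS}(T),\mathcal D)\xrightarrow{\ \sim\ }\Fun_{\mathcal L(T)}(\mathrm{CSS}(\epsilon^*T),\mathcal D)$ which, by the preceding paragraph, is restriction along $\mathrm{CSS}(\eta_T)$. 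As $\mathcal D$ was arbitrary, this is exactly the claim that $\mathrm{CSS}(\eta_T)$ exhibits $\mathrm{CSS}(T)$ as the localization of $\mathrm{CSS}(\epsilon^*T)$ at $\mathcal L(T)$.

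The only non-formal input here is Lemma~\ref{lem:simplexloc}, which is already available; everything else is bookkeeping. I expect the main friction to be organizing the simplexwise equivalences of Lemma~\ref{lem:simplexloc} into an honest natural transformation over $\Delta_{/T}$ — most cleanly by restricting $\eta\colon\epsilon^*\Rightarrow\mathrm{id}$ along the Yoneda embedding and postcomposing with $\mathrm{CSS}$ to get a functor $\Delta_{/T}\to\Fun(\Delta^1,\Cat_{\infty})$, to which the statement ``each value is a localization at the indicated marking'' can be applied — and then checking the routine but fussy compatibility of full subcategories with limits. An equivalent packaging, if preferred, is to run the whole argument inside the $\infty$-category of marked $\infty$-categories: there both the functor forgetting the marking (right adjoint to the maximal marking) and the localization functor (left adjoint to the minimal marking) preserve colimits, so one computes $\colim_{\Delta_{/T}}\bigl(\mathrm{CSS}(\epsilon^*\Delta^n_{\Spaces}),\mathcal L(\Delta^n_{\Spaces})\bigr)\simeq\bigl(\mathrm{CSS}(\epsilon^*T),\mathcal L(T)\bigr)$ — the identification of markings being precisely the definition of $\mathcal L(T)$ — and then applies the localization functor together with Lemma~\ref{lem:simplexloc}.
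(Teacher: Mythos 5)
Your proof is correct and is essentially the paper's own argument: both write $T$ as a colimit of representable simplicial spaces, use that $\epsilon^*$ and $\mathrm{CSS}$ preserve colimits to identify $\Fun(\mathrm{CSS}(T),\D)$ and $\Fun(\mathrm{CSS}(\epsilon^*T),\D)$ as limits over the category of simplices of $T$, and then invoke Lemma~\ref{lem:simplexloc} simplexwise together with the definition of $\mathcal{L}(T)$ to match up the relevant full subcategories. The only difference is that you spell out the compatibility of the full subcategories with the limit (and offer the marked-$\infty$-category repackaging), which the paper leaves implicit.
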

\begin{proof}
We would like to show that for any $\infty$-category $\mathcal{D}$, the induced map $$\Fun(\mathrm{CSS}(T),\D)\to \Fun(\mathrm{CSS}(\epsilon^* T),\D)$$ is the inclusion of the full subcategory of functors which send the morphisms in $\mathcal{L}(T)$ to equivalences in $\D$.  

Write the simplicial space $T$ as a colimit of representables: $$T = \colim_{\Delta^n_{\Spaces} \to T} \Delta^n_{\Spaces}.$$ Then, since $\epsilon^*$ and $\mathrm{CSS}$ preserve colimits, we have the commutative square
\begin{equation*}
\begin{tikzcd}
\Fun(\mathrm{CSS}(T),\D)\arrow[r] \arrow[d,"\sim" labl]& \Fun(\mathrm{CSS}(\epsilon^* T),\D)\arrow[d,"\sim" labl] \\
\displaystyle{\lim_{\Delta^n_{\Spaces}\to T}} \Fun(\mathrm{CSS}(\Delta^n_{\Spaces}),\D) \arrow[r] & \displaystyle{\lim_{\Delta^n_{\Spaces}\to T}} \Fun( \mathrm{CSS}(\epsilon^* \Delta^n_{\Spaces}), \D). 
\end{tikzcd}
\end{equation*}
By Lemma \ref{lem:simplexloc}, the bottom horizontal arrow is the inclusion of precisely the subcategory of functors which invert $\mathcal{L}(\Delta^n_{\Spaces})$ for each $\Delta^n_{\Spaces}\to T$, and the proposition follows. 
\end{proof}

We apply Proposition \ref{prop:subloc} in the situation of Example \ref{exm:subdivideS}.  Under the equivalence $$Q\C  \simeq \mathrm{CSS}(\epsilon^* S_{\bullet}(\C)^{\simeq}),$$ the subset $\mathcal{L}(S_{\bullet}(\C)^{\simeq})$ of morphisms corresponds to the collection $\mathcal{L}$ of backward arrows of $Q\C$ (i.e., those of the form $X \twoheadleftarrow Y \to Y$).  We therefore have the following corollary of Proposition \ref{prop:subloc}, which completes the proof of Theorem \ref{thm:QequalsS}:

\begin{cor}\label{cor:QequalsS}
There is a functor $Q\C \to \mathrm{CSS}(S_{\bullet}(\C)^{\simeq})$ which extends to an equivalence of $\infty$-categories $$Q\C[\mathcal{L}^{-1}] \simeq \mathrm{CSS}(S_{\bullet}(\C)^{\simeq}),$$ where $\mathcal{L}$ denotes the collection of backward morphisms.  Taking endomorphisms of the zero object on both sides and applying Proposition \ref{prop:cssinterp}, we obtain an equivalence of $\E_1$-spaces $$K^{\mathrm{part}}(\C) \simeq \Hom_{Q\C[\mathcal{L}^{-1}]}(0,0).$$
\end{cor}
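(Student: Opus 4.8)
The plan is to obtain the corollary as a formal consequence of Proposition \ref{prop:subloc} together with the identification of Example \ref{exm:subdivideS}. Applying Proposition \ref{prop:subloc} to the simplicial space $T = S_{\bullet}(\C)^{\simeq}$ exhibits $\mathrm{CSS}(S_{\bullet}(\C)^{\simeq})$ as the localization of $\mathrm{CSS}(\epsilon^{*} S_{\bullet}(\C)^{\simeq})$ at the class of morphisms $\mathcal{L}(S_{\bullet}(\C)^{\simeq})$ defined just before that proposition. By Example \ref{exm:subdivideS} there is a canonical equivalence $Q\C \simeq \mathrm{CSS}(\epsilon^{*} S_{\bullet}(\C)^{\simeq})$; precomposing the localization functor with it produces the desired functor $Q\C \to \mathrm{CSS}(S_{\bullet}(\C)^{\simeq})$ and, once the relevant classes of morphisms are matched up, the equivalence $Q\C[\mathcal{L}^{-1}] \simeq \mathrm{CSS}(S_{\bullet}(\C)^{\simeq})$.

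The one step that requires genuine work is checking that, under the equivalence of Example \ref{exm:subdivideS}, the class $\mathcal{L}(S_{\bullet}(\C)^{\simeq})$ corresponds exactly to the class $\mathcal{L}$ of backward (left-pointing) arrows $X \twoheadleftarrow Y \xrightarrow{=} Y$ of $Q\C$. By definition $\mathcal{L}(S_{\bullet}(\C)^{\simeq})$ is generated, under the functors $\mathrm{CSS}(\epsilon^{*} f)$ for $f : \Delta^{n}_{\Spaces} \to S_{\bullet}(\C)^{\simeq}$, by the ``vertical'' edges $\mathcal{L}(\Delta^{n}_{\Spaces})$ of the twisted simplex described in Example \ref{exm:subsimplex}; and by the Yoneda lemma such an $f$ is the datum of a flag $* \hookrightarrow X_{1} \hookrightarrow \cdots \hookrightarrow X_{n}$ in $\C$. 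I would then unwind Barwick's definition of $Q\C$ \cite[Definition 3.8]{BarQ} through the identifications of Examples \ref{exm:subsimplex} and \ref{exm:subdivideS} to see that the vertical edges of a twisted simplex map precisely to morphisms of $Q\C$ whose forward leg is an identity, and conversely that every backward arrow of $Q\C$ is of this form; hence $\mathcal{L}(S_{\bullet}(\C)^{\simeq}) = \mathcal{L}$. I expect this edge-by-edge bookkeeping --- matching the two combinatorial models $\epsilon^{*} S_{\bullet}(\C)^{\simeq}$ and Barwick's $Q\C$ --- to be the main obstacle; everything else is soft.

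It remains to pass to endomorphism monoids. The $\infty$-category $\mathrm{CSS}(S_{\bullet}(\C)^{\simeq})$ has, by Proposition \ref{prop:cssinterp}(2), a single object, namely the image of the canonical basepoint coming from $S_{0}(\C) \simeq *$; under the equivalence $Q\C[\mathcal{L}^{-1}] \simeq \mathrm{CSS}(S_{\bullet}(\C)^{\simeq})$ this object is the image of $0 \in \C \subseteq Q\C$. Since all the functors in play preserve finite products (Lemma \ref{lem:cssmonoidal}), the equivalence is symmetric monoidal, so taking $\Hom$ of this (unit) object with itself yields an equivalence of $\E_{1}$-spaces --- in fact of $\E_{\infty}$-spaces, by Corollary \ref{cor:partKeinfty} ---
\begin{equation*}
\Hom_{Q\C[\mathcal{L}^{-1}]}(0,0) \simeq \Hom_{\mathrm{CSS}(S_{\bullet}(\C)^{\simeq})}(*,*) \simeq K^{\mathrm{part}}(\C),
\end{equation*}
the last equivalence being Proposition \ref{prop:cssinterp}(1). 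This is precisely the statement of the corollary, and with it Theorem \ref{thm:QequalsS}.
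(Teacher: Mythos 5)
Your proposal is correct and follows essentially the same route the paper takes: apply Proposition \ref{prop:subloc} to $T = S_{\bullet}(\C)^{\simeq}$, transport across the equivalence $Q\C \simeq \mathrm{CSS}(\epsilon^*S_{\bullet}(\C)^{\simeq})$ of Example \ref{exm:subdivideS}, match $\mathcal{L}(S_{\bullet}(\C)^{\simeq})$ with the backward arrows $\mathcal{L}$, and then pass to endomorphisms of the base object via Proposition \ref{prop:cssinterp}. One small note: you do not need the equivalence to be symmetric monoidal to get an equivalence of $\E_1$-spaces on endomorphisms — any equivalence of pointed $\infty$-categories induces an equivalence of endomorphism $\E_1$-monoids, since the $\E_1$-structure comes from composition; the monoidal structure is only needed to upgrade to $\E_\infty$, which the statement does not require.
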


\section{The partial $K$-theory of $\F_p$}\label{sect:KpartFp}

We will see in \S \ref{sect:frobstable} that the monoidal $\infty$-category $BK^{\mathrm{part}}(\F_p)$ acts on the $\infty$-category $\CAlg^F_p$ of $F_p$-stable $\E_{\infty}$-rings.  Our goal in this section is to give a computation of $K^{\mathrm{part}}(\F_p)$ up to $p$-completion.  To motivate the result, recall the following theorem of Quillen:

\begin{thm}[Quillen \cite{Quillen}]\label{thm:quillenKFp}
The natural map $$K(\mathbb{F}_p) \to \pi_0 K(\mathbb{F}_p) \cong \Z$$ induces an isomorphism in $\F_p$-homology.  
\end{thm}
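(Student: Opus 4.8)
The plan is to reduce, via the plus construction, to a vanishing statement in group homology, and then either invoke Quillen's computation of $K_*(\F_p)$ or reconstruct its mod-$p$ input. Since $K(\F_p) \simeq \Z \times BGL(\F_p)^+$, the plus construction is a homology isomorphism, and $BGL(\F_p)^+$ is a simple space (the basepoint component of an infinite loop space), the assertion is equivalent to $\widetilde{H}_*(BGL(\F_p)^+;\F_p) \cong \widetilde{H}_*(BGL(\F_p);\F_p) = 0$; equivalently, $H_*(GL(\F_p);\F_p) \cong \F_p$, concentrated in degree $0$.

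The quick route is to feed in Quillen's computation that $K_{2i}(\F_p) = 0$ and $K_{2i-1}(\F_p) \cong \Z/(p^i-1)$ for $i \geq 1$. As $p^i - 1 \equiv -1 \pmod p$, every positive homotopy group of $BGL(\F_p)^+$ is then finite of order prime to $p$. I would conclude by climbing the Postnikov tower of $BGL(\F_p)^+$: for a finite abelian group $A$ of order prime to $p$ the space $K(A,n)$ is $\F_p$-acyclic (its integral homology is all $|A|$-torsion), so an inductive application of the Serre spectral sequence shows that each Postnikov section is $\F_p$-acyclic, and since $BGL(\F_p)^+ \to X_n$ is $n$-connected the limit is $\F_p$-acyclic as well.

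The substance therefore lies in Quillen's $K$-group computation, of which I would reprove the part that is actually needed along Quillen's own lines. Brauer lifting produces a map $BGL(\Fpbar) \to BU$ which is an $\F_\ell$-homology isomorphism for every prime $\ell \neq p$. Separately, a transfer argument to the Borel subgroup $B_n \subset GL_n$---whose index is prime to $p$---realizes $\widetilde{H}_*(GL_n(\Fpbar);\F_p)$ as a retract of the $T_n$-coinvariants of $\widetilde{H}_*(U_n(\Fpbar);\F_p)$, with $U_n$ the unipotent radical and $T_n$ the diagonal torus; these coinvariants vanish in positive degrees because a nontrivial character of a torus has trivial coinvariants and, over the algebraically closed field $\Fpbar$, the torus $\Fpbar^\times$ is infinite, so every positive-degree weight space is eventually killed. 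Hence $\widetilde{H}_*(BGL(\Fpbar);\F_p) = 0$. Galois descent along the Frobenius, which induces the Adams operation $\psi^p$, then supplies a fiber sequence $$BGL(\F_p)^+ \longrightarrow BGL(\Fpbar)^+ \xrightarrow{\ \psi^p - 1\ } BGL(\Fpbar)^+$$ of simple spaces whose base is $\F_p$-acyclic, and hence $BGL(\F_p)^+$ is $\F_p$-acyclic too.

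I expect the genuine obstacle to be precisely this last input: the direct vanishing $\widetilde{H}_*(BGL(\Fpbar);\F_p) = 0$, equivalently the $p$-torsion-freeness of $K_*(\F_p)$. The plus-construction reductions, the Postnikov/Serre-spectral-sequence bookkeeping, the Brauer lift, and the descent fiber sequence are all essentially formal; the step that carries real content is this mod-$p$ vanishing, which genuinely exploits the structure of $GL_n$ as a group with a $BN$-pair --- the prime-to-$p$ index of the Borel and the infinitude of the diagonal torus over an algebraically closed field of characteristic $p$.
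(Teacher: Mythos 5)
The paper does not prove this statement; it cites it to Quillen, and the implicit route is exactly your ``quick route'': by Quillen's computation $K_{2i-1}(\F_p)\cong \Z/(p^i-1)$, $K_{2i}(\F_p)=0$ for $i\geq 1$, each positive homotopy group of $BGL(\F_p)^+$ is finite of order prime to $p$; since $BGL(\F_p)^+$ is a simple space, the Postnikov/Serre argument you describe then shows it is $\F_p$-acyclic, which is the theorem. That part of your proposal is correct, and it is all that is needed for the paper's purposes.

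Your attempted reproof of the underlying input, however, has a real gap, and moreover your diagnosis of where the difficulty lies is inverted. The torus-coinvariants argument over $\Fpbar$ is fine: $T_n(\Fpbar)$ is a filtered colimit of finite groups of order prime to $p$, so the Lyndon--Hochschild--Serre spectral sequence for $U_n\to B_n\to T_n$ collapses, and since each nonzero weight appearing in $H_t(U_n(\Fpbar);\F_p)$ for $t>0$ is a nontrivial character that is killed by a suitably large root of unity in $\Fpbar^\times$, the coinvariants vanish; together with transfer at each finite stage this does give $\widetilde H_*(BGL(\Fpbar);\F_p)=0$. This is the relatively soft part. What is \emph{not} soft is the fiber sequence $BGL(\F_p)^+\to BGL(\Fpbar)^+\xrightarrow{\varphi-1}BGL(\Fpbar)^+$ (with $\varphi$ the Frobenius; calling it $\psi^p-1$ conflates $\varphi$ with its Brauer-lifted shadow on $BU$). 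Declaring this ``essentially formal'' is the gap: it is Galois descent for $K$-theory of $\F_p$ along $\Fpbar/\F_p$, and it is not a formal consequence of anything in sight. In Quillen's paper the logic runs the other way: one first proves $\widetilde H_*(GL(\F_q);\F_p)=0$ \emph{directly at finite level $q$} (using transfer to the Borel, the fact that inner automorphisms act trivially on homology so the image lands in the $T_n$-coinvariants, plus extra stable-range care because $T_n(\F_q)$ is finite and can even be trivial when $q=2$ --- note $\widetilde H_*(GL_2(\F_2);\F_2)\neq 0$, so the vanishing is genuinely only stable in $n$); this vanishing is then one of the inputs to identifying $BGL(\F_q)^+\simeq F\psi^q$, and the descent fiber sequence is a \emph{consequence} of that identification, not an independent tool available beforehand. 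As written, your reproof uses as a lemma a statement that is logically downstream of --- indeed essentially equivalent to --- the theorem you are trying to establish. If you want a self-contained argument, the place to spend effort is a direct proof of the finite-field, stable-in-$n$ vanishing $\widetilde H_*(GL(\F_q);\F_p)=0$, not the descent.
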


In particular, the $\F_p$-homology of $K(\mathbb{F}_p)$ is trivial in positive degrees.  The main result of this section is the analog of Quillen's theorem for partial $K$-theory:
 
\begin{thm}\label{thm:KpartFp}
The natural map $$K^{\mathrm{part}}(\F_p) \to \pi_0 K^{\mathrm{part}}(\F_p) \cong \Z_{\geq 0}$$ induces an isomorphism in $\F_p$-homology.  
\end{thm}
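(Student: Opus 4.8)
The plan is to relate the partial $K$-theory $K^{\mathrm{part}}(\F_p)$ to the ordinary $K$-theory $K(\F_p)$ via the group completion map and leverage Quillen's computation (Theorem~\ref{thm:quillenKFp}). Recall from Corollary~\ref{cor:kpartgp} that $K^{\mathrm{part}}(\F_p)^{\mathrm{gp}} \simeq K(\F_p)$, and from Proposition~\ref{prop:Kpart0} that $\pi_0 K^{\mathrm{part}}(\F_p)$ is the free commutative monoid on (iso-classes of) finite-dimensional $\F_p$-vector spaces modulo additivity in short exact sequences. Since every finite-dimensional $\F_p$-vector space is a sum of copies of $\F_p$, and every short exact sequence of such splits, this monoid is freely generated by $[\F_p]$, i.e., $\pi_0 K^{\mathrm{part}}(\F_p) \cong \Z_{\geq 0}$, with group completion $\Z_{\geq 0} \hookrightarrow \Z \cong \pi_0 K(\F_p)$. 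So the statement is that $K^{\mathrm{part}}(\F_p) \to \Z_{\geq 0}$ is an $\F_p$-homology equivalence; combined with Quillen's theorem this is \emph{equivalent} to saying the map $K^{\mathrm{part}}(\F_p) \to K^{\mathrm{part}}(\F_p)^{\mathrm{gp}} = K(\F_p)$ induces the obvious inclusion $\Z_{\geq 0} \hookrightarrow \Z$ on $\pi_0$ and is otherwise an $\F_p$-homology iso onto the ``nonnegative part'' — more precisely, it suffices to show $H_*(K^{\mathrm{part}}(\F_p);\F_p)$ is concentrated in degree $0$ along each component, or rather that the map to $\pi_0$ is an $\F_p$-homology iso.

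The key tool is a group-completion / localization argument. By Corollary~\ref{cor:QequalsS}, $BK^{\mathrm{part}}(\F_p) \simeq Q\Vect[\mathcal{L}^{-1}]$, and $BK(\F_p) \simeq |Q\Vect|$ is the further localization inverting \emph{all} morphisms. First I would identify $K^{\mathrm{part}}(\F_p)$ as a telescope: the free monoid structure means $K^{\mathrm{part}}(\F_p)$ has components indexed by $n\in \Z_{\geq 0}$, and addition of $[\F_p]$ gives maps between components; the group completion $K(\F_p)$ is the colimit $\mathrm{colim}_n (\text{component } n)$ shifted to $\Z$, by the group-completion theorem (the monoid $\Z_{\geq 0}$ is generated by one element, so no further localization of homology is needed — $H_*(K^{\mathrm{part}}(\F_p))[\pi_0^{-1}] \cong H_*(K(\F_p))$). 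So each component $K^{\mathrm{part}}(\F_p)_n$ maps to $K(\F_p)_0 \simeq BGL(\F_p)^+$, and the colimit over $n$ of the $\F_p$-homology of the components is $H_*(BGL(\F_p)^+;\F_p) = \F_p$ (Quillen, degree $0$ only). It then remains to show each individual map $H_*(K^{\mathrm{part}}(\F_p)_n;\F_p)\to H_*(K^{\mathrm{part}}(\F_p)_{n+1};\F_p)$ is an isomorphism — equivalently that the stabilization is already an equivalence at each finite stage — which, together with the colimit being $\F_p$ in degree $0$, forces every $K^{\mathrm{part}}(\F_p)_n$ to have trivial reduced $\F_p$-homology, giving the theorem.

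To prove the finite-stage stabilization I would work on the $Q$-construction side, using the edgewise-subdivision picture of \S\ref{sub:Q}. The component $K^{\mathrm{part}}(\F_p)_n$ is $\Hom_{Q\Vect[\mathcal{L}^{-1}]}(0, \F_p^n)$ (a ``based path space'' in $Q\Vect[\mathcal{L}^{-1}]$ landing in the object $\F_p^n$), and one can compute its homotopy type by resolving: inverting $\mathcal{L}$ in $Q\Vect$ is a localization, so $\Hom_{Q\Vect[\mathcal{L}^{-1}]}(0,\F_p^n)$ is computed by a bar-type construction over zig-zags in $Q\Vect$ out of $0$. The relevant comparison $H_*(\text{stage }n)\to H_*(\text{stage }n+1)$ then reduces to a homological-stability statement for the groups $GL_n(\F_p)$ (and the parabolic subgroups appearing in the $Q$-construction) with $\F_p$-coefficients. \textbf{This homological-stability input is the main obstacle}: ordinary homological stability for $GL_n(\F_p)$ with field coefficients only gives stability in a range growing with $n$, whereas here I need the stabilization map to be an honest equivalence on $\F_p$-homology for \emph{each} $n\geq 1$ — this is the ``unstable'' strengthening that is special to $\F_p$-coefficients and to the base field being $\F_p$ itself, and is essentially a repackaging of the vanishing of $\F_p$-homology of $BGL_n(\F_p)^+$ already inside Quillen's argument (via the fact that $BGL_n(\F_p)$ has the same $\F_p$-homology as $BGL(\F_p)$ in a range, combined with the plus-construction being an $\F_p$-homology iso). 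I would set this up carefully using the Charney-type stability for the $Q$-construction or, more efficiently, by directly comparing the fiber sequences obtained from the localization $Q\Vect[\mathcal{L}^{-1}]\to |Q\Vect|$ — the latter has contractible components and connected total space, so the map $K^{\mathrm{part}}(\F_p) \to K(\F_p)$ is, component-by-component, a map whose ``defect'' is measured by a homotopy colimit over a category built from the $\mathcal{L}$-morphisms, which one shows is $\F_p$-acyclic using Quillen's Theorem~\ref{thm:quillenKFp} and a spectral sequence / cofinality argument.
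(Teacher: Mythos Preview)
Your strategy has a genuine gap at exactly the point you flag as ``the main obstacle.'' The group completion theorem together with Quillen's computation tells you that the \emph{colimit} of the stabilization maps has trivial reduced $\F_p$-homology, but it says nothing about the individual stages. You need each map $H_*(K^{\mathrm{part}}(\F_p)_n;\F_p)\to H_*(K^{\mathrm{part}}(\F_p)_{n+1};\F_p)$ to already be an isomorphism, and this is essentially equivalent to the theorem itself --- it is not a ``repackaging'' of anything inside Quillen's argument. In particular, the components of $K^{\mathrm{part}}(\F_p)$ are \emph{not} $BGL_n(\F_p)$ or any plus-construction thereof (and note $H_*(GL_n(\F_p);\F_p)$ is not trivial for general $n$; e.g.\ $GL_2(\F_2)\cong S_3$). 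Your identification of the $n$th component with $\Hom_{Q\Vect[\mathcal{L}^{-1}]}(0,\F_p^n)$ is also wrong: since $BK^{\mathrm{part}}(\F_p)\simeq Q\Vect[\mathcal{L}^{-1}]$ has essentially one object, all such hom-spaces are equivalent to the whole of $K^{\mathrm{part}}(\F_p)$, not a single component. Homological stability in the usual sense only gives a range and cannot close this gap.

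The paper's proof takes a completely different route and does not invoke Quillen's $K$-theory computation as a black box. It first derives an explicit colimit formula for $K^{\mathrm{part}}(\F_p)$ (Proposition~\ref{prop:Badjoint}) over a combinatorial indexing category $\mathcal{I}$ built from the Lawvere theory of monoids, then rewrites this as a colimit over a poset $\mathcal{F}^{\red}$ of ``reduced filtered dimension sequences.'' The components are indexed by total dimension $m$, and the argument is an induction on $m$ using a pushout decomposition of $\mathcal{F}^{\red}_m$. The substantive input is the Solomon--Tits theorem and the fact that the Steinberg representation of $GL_m(\F_p)$ over $\F_p$ is projective, which forces a certain map of colimits (Proposition~\ref{prop:steinberg}) to be an $\F_p$-homology equivalence. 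This is the real replacement for the ``unstable strengthening'' you were looking for, and it is not visible from the group-completion perspective.
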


In \S \ref{sub:compute}, we provide a formula for partial $K$-theory in terms of a colimit of spaces in the $S_{\bullet}$-construction.  Then in \S \ref{sub:Fp}, we specialize to the case of $K^{\mathrm{part}}(\F_p)$ and evaluate this formula up to $\F_p$-homology equivalence.


\subsection{Computing partial $K$-theory}\label{sub:compute}

In this section, we give a formula for the functor $\mathbb{L}$ of Definition \ref{defn:L} in terms of a certain colimit.  To do so, we show that $\E_1$-spaces can be presented via an $\infty$-categorical Lawvere theory (Proposition \ref{prop:lawvereass}).  We then reinterpret $\mathbb{B}$ as a certain restriction map and $\mathbb{L}$ as the corresponding left Kan extension.

\begin{defn}\label{defn:TA}
Let $\mathcal{T}_A$ denote the opposite category of the full subcategory of (discrete, associative) monoids spanned by those which are free and finitely generated.  For a finite set $S$, we will denote the free monoid on $S$ by $\mathrm{Free}(S)$.  We will refer to $\mathcal{T}_A$ as the \emph{theory of associative monoids.} 
\end{defn}

\begin{notation}
For a linearly ordered set $S$, let $S^{\pm}$ denote the linearly ordered set $\{ -\infty \} \cup S \cup \{ \infty\}$.  
It will be convenient to think of $\Delta^{\mathrm{op}}$ as the category of (possibly empty) linearly ordered sets $S$, where morphisms from $S$ to $T$ are order preserving maps $S^{\pm} \to T^{\pm}$ preserving $\pm \infty$.  
  This description arises by associating to a finite linearly ordered set in $\Delta$ its corresponding linearly ordered set of ``gaps,'' or pairs of adjacent elements.    We will denote the linearly ordered set of gaps in $[n]\in \Delta$ by $(n)\in \Delta^{\op}$, so that $(n)$ has $n$ elements, which we call $1,2,\cdots,n$, and $(n)$ is always implicitly regarded as an object in $\Delta^{\op}$.  
\end{notation}

\begin{defn}
Define a functor $$F: \Delta^{\mathrm{op}} \to \mathcal{T}_A$$  by $(n) \mapsto \mathrm{Free}((n))$ on objects, and by sending the morphism $f:(n) \to (m)$ in $\Delta^{\mathrm{op}}$ to the morphism $$F(f):\mathrm{Free}((m)) \to \mathrm{Free}((n))$$ which sends the generator corresponding to $i\in (m)$ to the product of the generators corresponding to $f^{-1}(i)$, using the order from $(n)$.  
\end{defn}

We now show the Lawvere theory presentation of monoids suggested by Definition \ref{defn:TA} coincides with the notion of monoid from Definition/Proposition \ref{defnprop:monoid}.

\begin{prop}\label{prop:lawvereass}
The restriction functor $F^*: \Fun(\mathcal{T}_A,\Spaces) \to \Fun(\Delta^{\mathrm{op}},\Spaces)$ sends the full subcategory $\Fun^{\times}(\mathcal{T}_A,\Spaces)\subset  \Fun(\mathcal{T}_A,\Spaces)$ of product preserving functors to the full subcategory $\mathrm{Mon}(\Spaces) \subset \Fun(\Delta^{\mathrm{op}},\Spaces)$ of monoids in spaces.  Moreover, the restricted functor $$F^* : \Fun^{\times}(\mathcal{T}_A,\Spaces) \to \mathrm{Mon}(\Spaces) $$ is an equivalence of $\infty$-categories.
\end{prop}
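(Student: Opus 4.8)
The plan is to realize both $\infty$-categories in the statement as the non-abelian derived $\infty$-category $\mathrm{P}_{\Sigma}(\mathcal{T}_A^{\op})$ of the ordinary $1$-category $\mathcal{T}_A^{\op}$ of free finitely generated monoids, and then to check that, under these identifications, the restriction functor $F^{*}$ becomes the bar embedding $\mathbb{B}$ of Definition/Proposition \ref{defnprop:monoid}. Since $\mathbb{B}$ is fully faithful with essential image the full subcategory $\Mon(\Spaces)\subset\Fun(\Delta^{\op},\Spaces)$, this simultaneously shows that $F^{*}$ carries $\Fun^{\times}(\mathcal{T}_A,\Spaces)$ into $\Mon(\Spaces)$ and that the resulting functor is an equivalence.

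First I would record the elementary facts about $\mathcal{T}_A$. Finite products in $\mathcal{T}_A$ are free products of monoids; $F((0))=\mathrm{Free}(\emptyset)$ is the trivial monoid and hence terminal in $\mathcal{T}_A$; and the canonical isomorphism $\mathrm{Free}((n))\cong\mathrm{Free}((1))^{* n}$ exhibits $F((n))$ as the $n$-fold product of $F((1))$, with $i$-th projection $F(\rho_i)$ (by the formula defining $F$ on morphisms, $\rho_i$ is sent to the word consisting of the single letter $i$). In particular, for product-preserving $X$ one has $F^{*}X([0])\simeq *$ and $(F^{*}X(\rho_i))_i\colon F^{*}X([n])\xrightarrow{\sim}F^{*}X([1])^{\times n}$, so $F^{*}X$ is automatically a Segal space with contractible space of $0$-simplices; both this and the equivalence will in fact drop out of the comparison below.

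The heart of the matter is the identification $\Mon(\Spaces)\simeq\mathrm{P}_{\Sigma}(\mathcal{T}_A^{\op})$. The free $\mathbb{E}_1$-monoid on a finite set $S$ is the discrete free monoid $\mathrm{Free}(S)$, and for finite sets $S,T$ the mapping space $\mathrm{Map}_{\Mon(\Spaces)}(\mathrm{Free}(S),\mathrm{Free}(T))\simeq\mathrm{Map}_{\Spaces}(S,\mathrm{Free}(T))=\mathrm{Free}(T)^{S}$ is discrete; hence the full subcategory of $\Mon(\Spaces)$ spanned by the free finitely generated monoids is equivalent, as an $\infty$-category, to the $1$-category $\mathcal{T}_A^{\op}$. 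These objects are closed under finite coproducts; they are compact projective, since the corepresented functor $\mathrm{Map}_{\Mon(\Spaces)}(\mathrm{Free}(S),-)$ is $(-)^{\times|S|}$ followed by the forgetful functor $\Mon(\Spaces)\to\Spaces$, both of which preserve sifted colimits; and they generate $\Mon(\Spaces)$ under sifted colimits via the monadic bar resolution. The recognition criterion for $\mathrm{P}_{\Sigma}$ \cite[\S 5.5.8]{HTT} then supplies a canonical equivalence $\Mon(\Spaces)\simeq\mathrm{P}_{\Sigma}(\mathcal{T}_A^{\op})=\Fun^{\times}(\mathcal{T}_A,\Spaces)$ sending a monoid $M$ to $\mathrm{Map}_{\Mon(\Spaces)}(-,M)|_{\mathcal{T}_A^{\op}}$. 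It then remains to identify $F^{*}$ with $\mathbb{B}$: one computes $F^{*}\big(\mathrm{Map}_{\Mon(\Spaces)}(-,M)\big)([n])=\mathrm{Map}_{\Mon(\Spaces)}(\mathrm{Free}((n)),M)\simeq M^{\times n}=\mathbb{B}M([n])$, and the simplicial operators agree because $F\colon\Delta^{\op}\to\mathcal{T}_A$ is, by inspection of its defining formula on faces and degeneracies, precisely the opposite of the standard cosimplicial free-monoid object $[n]\mapsto\mathrm{Free}((n))$ of $\Mon(\Spaces)$ that computes the bar construction.

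I expect the main obstacle to be the $\mathrm{P}_{\Sigma}$-recognition step, and within it the verification that the free objects assemble into a genuine $1$-category equivalent to $\mathcal{T}_A^{\op}$ with no higher homotopy; this is exactly where one uses that free $\mathbb{E}_1$-monoids on finite sets are discrete, in contrast with the $\mathbb{E}_{\infty}$ case where the free objects carry symmetric-group actions, so that $\mathcal{T}_A$ — an honest $1$-category — is the correct theory. The remaining ingredients (compact projectivity, sifted generation, and the matching of simplicial operators) are routine.
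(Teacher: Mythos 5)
Your proposal is correct, but it takes a genuinely different route from the paper. The paper proves the statement by directly computing the right Kan extension along $F$: for a monoid $M$ it analyzes the comma category $\Delta^{\op}\times_{\mathcal{T}_A}(\mathcal{T}_A)_{\mathrm{Free}((n))/}$, exhibits a coinitial subcategory (the ``degree one'' objects $\mathcal{J}_n$) via an explicit adjoint, evaluates the relative limit as $M^{n}$, and then checks product preservation and that the unit and counit are equivalences — an explicit, self-contained construction of an inverse to $F^{*}$, whose style is deliberately parallel to the later computation of the left adjoint $\mathbb{L}$ by left Kan extension along $F^{\times}$ in Proposition \ref{prop:Badjoint}. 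You instead invoke the projective-generation recognition principle for $\mathrm{P}_{\Sigma}$ (\cite[\S 5.5.8]{HTT}): free finitely generated $\E_1$-monoids in spaces are discrete, compact projective (forgetful functor and finite powers preserve sifted colimits), closed under finite coproducts, and generate under sifted colimits via bar resolutions, so $\Mon(\Spaces)\simeq \mathrm{P}_{\Sigma}(\mathcal{T}_A^{\op})=\Fun^{\times}(\mathcal{T}_A,\Spaces)$; then you identify $F^{*}$ with $\mathbb{B}$. Your approach buys brevity and conceptual clarity, and it isolates exactly why an honest $1$-category $\mathcal{T}_A$ suffices (discreteness of free $\E_1$-monoids on finite sets), at the cost of leaning on monadicity and the $\mathrm{P}_{\Sigma}$ machinery; the paper's approach is more hands-on and keeps the whole section in one uniform Kan-extension idiom. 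One step you should flesh out slightly: the identification of $F^{*}$ with $\mathbb{B}$ is not literally ``by inspection,'' since $\mathbb{B}$ is defined via the Segal-object description rather than by a corepresenting cosimplicial object; the clean fix is to note that both functors preserve sifted colimits, that they agree $1$-categorically on the discrete free finitely generated monoids (where $\mathbb{B}N$ is the classical nerve, corepresented by the cosimplicial free monoid, with no higher coherence because all relevant mapping spaces are discrete), and then extend along the canonical presentation of every monoid as a sifted colimit of these generators. With that point made explicit, the argument is complete.
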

\begin{proof}
The first statement is clear so we focus on the second.  Let $M:\Delta^{\mathrm{op}} \to \Spaces$ be a monoid.  We shall compute the right Kan extension along the functor $F$ and see that it determines an inverse equivalence.

The right Kan extension at $\mathrm{Free}((n))\in \mathcal{T}_A$ is indexed by the category $\Delta^{\op}\times_{\mathcal{T}_A} (\mathcal{T}_A)_{\mathrm{Free}((n))/}$ whose objects are pairs $((m),f)$ where $(m)\in \Delta^{\op}$ and $d: \mathrm{Free}((m)) \to \mathrm{Free}((n))$ is a map of monoids.  We will show that the value of the right Kan extension at $\mathrm{Free}((n))$ is $M^n$.  

We first describe in detail the special case $n=1$.  In this case, the map $d:\mathrm{Free}((m)) \to \mathrm{Free}((1))\simeq \Z_{\geq 0}$ assigns a nonnegative integer, which we can think of as a ``degree,'' to each of the generators of $\mathrm{Free}((m))$.  The morphisms in $\Delta^{\op}\times_{\mathcal{T}_A} (\mathcal{T}_A)_{\mathrm{Free}((n))/}$ from $((m_1),d_1)$ to $((m_2),d_2)$ are maps $f:(m_1) \to (m_2)$ in $\Delta^{\op}$ such that $F(f)$ preserves the degree.   

\begin{obs*}
Consider the full subcategory $\mathcal{J}_1 \subset \Delta^{\op}\times_{\mathcal{T}_A} (\mathcal{T}_A)_{\mathrm{Free}((1))/}$ spanned by those $((m),d)$ with the property that $d$ sends every generator of $\mathrm{Free}((m))$ to $1$.  Then the inclusion of $\mathcal{J}_1$ admits a right adjoint.
Explicitly, let $x_1, x_2, \cdots , x_m \in \mathrm{Free}((m))$ denote the generators and $d:\mathrm{Free}((m)) \to \Z_{\geq 0}$ be any map of monoids; then the right adjoint sends $((m),d)$ to the unique object in $\mathcal{J}_1$ whose underlying monoid is $\mathrm{Free}((d(x_1)+d(x_2)+\cdots +d(x_m)))$.  
\end{obs*}

It follows that the subcategory $\mathcal{J}_1$ is coinitial.  One can (and we will) view $\mathcal{J}_1$ in an alternate way, as the wide subcategory of $\Delta^{\op}$ whose morphisms from $(m_1)$ to $(m_2)$ are those determined by maps of linearly ordered sets $(m_1)^{\pm} \to (m_2)^{\pm}$ which are isomorphisms when restricted to the preimage of $(m_2)\subset (m_2)^{\pm}$.  With this notation, the value of the desired right Kan extension is computed as $$\lim_{(m) \in \mathcal{J}_1} M^{m}.$$  This diagram of spaces is right Kan extended from the full subcategory of $\mathcal{J}_1$ spanned by the object $(1)\in \mathcal{J}_1$, and so the limit is given simply by $M$, as desired.

For a general $n$, one can think of the function $d:\mathrm{Free}((m)) \to \mathrm{Free}((n))$ as assigning a ``generalized degree'' which takes values in $\mathrm{Free}((n))$ instead of just $\Z_{\geq 0} = \mathrm{Free}((1)).$  One then extracts the coinitial subcategory $\mathcal{J}_m$ determined by those $d$ which assign to each generator an element of ``generalized degree one'' in the sense that they send generators of $\mathrm{Free}((m))$ to generators of $\mathrm{Free}((n))$.  The resulting diagram is right Kan extended from the full subcategory of $\mathcal{J}_m$ spanned by objects whose underlying monoid is free on one generator; there are $n$ of these (one for each generator of $\mathrm{Free}((n))$) and no maps between them, so the value of the limit is $M^{n}$, as desired.  It follows from the above calculation that the resulting functor is product preserving and that the unit and counit maps induce equivalences.    
\end{proof}


Under the identification of Proposition \ref{prop:lawvereass}, the composite $$\mathrm{Mon}(S) \simeq \Fun^{\times}(\mathcal{T}_A, \Spaces) \to \Fun(\Delta^{\mathrm{op}},\Spaces)$$ is exactly the functor $\mathbb{B}$ of Definition/Proposition \ref{defnprop:monoid}.  But the target of $\mathbb{B}$ also admits a description as an $\infty$-category of \emph{product preserving} functors.  

\begin{notation}
Let $(\Delta^{\mathrm{op}})^{\times}$ denote the free product completion of $\Delta^{\op}$.  It has the universal property that for any $\infty$-category $\C$ which has finite products, restriction along the inclusion $\Delta^{\op} \to (\Delta^{\op})^{\times}$ induces an equivalence of $\infty$-categories $\Fun^{\times}((\Delta^{\mathrm{op}})^{\times},\C) \simeq \Fun(\Delta^{\mathrm{op}},\C). $  

Explicitly, the objects of $(\Delta^{\mathrm{op}})^{\times}$ are finite sets $S$ together with a collection $\{ (n_s)\}_{s\in S}$ of finite linearly ordered sets indexed by $S$.  A morphism from $\{ (n_s) \}_{s \in S}$ to $\{ (m_t) \}_{t\in T}$ is the data of a function $\phi :T \to S$ together with a morphism $(n_{\phi(t)}) \to (m_{t})$ in $\Delta^{\op}$ for each $t\in T$.  
\end{notation}


Since the category $\mathcal{T}_A$ admits finite products, the functor $F: \Delta^{\mathrm{op}} \to \mathcal{T}_A$ extends uniquely to a product preserving functor $F^{\times}: (\Delta^{\mathrm{op}})^{\times} \to \mathcal{T}_A.$  Under this identification, $\mathbb{B}$ is given by restriction along $F^{\times}$: $$\mathbb{B} = (F^{\times})^*: \Fun^{\times}((\Delta^{\mathrm{op}})^{\times},\Spaces) \to \Fun^{\times}(\mathcal{T}_A,\Spaces).$$  Its left adjoint $\mathbb{L}$ will be given by left Kan extension, and will be computed by a colimit over a certain indexing category that we now discuss:

\begin{defn}
We define a category $\mathcal{I}$ as follows.  The objects of $\mathcal{I}$ are tuples $(I,J, f)$ where $I$ and $J$ are (possibly empty) finite linearly ordered sets and $f:I \twoheadrightarrow J$ is an order preserving surjection.  We think of $f$ as partitioning $I$ into convex subsets indexed by $J$.  As such, if we denote the elements of $J$ by $j_1 < j_2 <\cdots < j_{m}$, then we will refer to a typical element $(I,J,f)\in \mathcal{I}$ by $(f^{-1}j_1)(f^{-1}j_2)\cdots(f^{-1}j_{m})$, or more informally by $(|f^{-1}(j_1)|)(|f^{-1}(j_2)|)\cdots(|f^{-1}(j_{m})|).$

The morphisms in $\mathcal{I}$ from $(I,J,f)$ to $(I',J',f')$ are given by commuting squares of order preserving maps:
\begin{equation*}
\begin{tikzcd}
I \arrow[r]\arrow[d, twoheadrightarrow,"f"] & I'\arrow[d,twoheadrightarrow,"f'"]\\
J & J', \arrow[l]\\
\end{tikzcd}
\end{equation*}
which we think of as a map of linearly ordered sets $I\to I'$ that refines the partitions.  
\end{defn}

As promised, $\mathcal{I}$ indexes a colimit which computes $\mathbb{L}$.  

\begin{cnstr}\label{cnstr:bolded}
Note that $\mathcal{I}$ admits a functor $\mathcal{I} \to (\Delta^{\op})^{\times}$ sending $(n_1)(n_2)\cdots (n_k) \in \mathcal{I} $ to $\{ (n_1),(n_2),\cdots ,(n_k)\} \in (\Delta^{\op})^{\times}.$  We associate to each simplicial space $X_{\bullet}$ the functor $\mathbf{X}^{\times}: \mathcal{I} \to \Spaces$ given by the composite $$\mathcal{I} \to (\Delta^{\op})^{\times} \xrightarrow{X^{\times}_{\bullet}} \Spaces,$$ where $X^{\times}_{\bullet}: (\Delta^{\op})^{\times}\to \Spaces$ denotes the functor induced by $X_{\bullet}$.  
\end{cnstr}

\begin{prop}\label{prop:Badjoint}
Let $X_{\bullet}$ be a simplicial space. Then the underlying space of the monoid $\mathbb{L}X_{\bullet}$ is given by the formula $$\mathbb{L} X_{\bullet} \simeq \colim_{\mathcal{I}} \mathbf{X}^{\times} \simeq \colim_{(n_1)\cdots(n_i) \in \mathcal{I}} X_{n_1}\times \cdots \times X_{n_i}.$$  
\end{prop}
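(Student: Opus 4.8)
The plan is to compute $\mathbb{L}X_\bullet$ by evaluating it at the object $\mathrm{Free}((1))\in\mathcal{T}_A$ — under the identification $\Mon(\Spaces)\simeq\Fun^{\times}(\mathcal{T}_A,\Spaces)$ a monoid $M$ corresponds to the functor $\mathrm{Free}((n))\mapsto (UM)^{n}$, so $M(\mathrm{Free}((1)))$ is the underlying space $UM$ — and to recognize the answer as a colimit over $\mathcal{I}$. First I would invoke the identification, established just above, of $\mathbb{B}$ with restriction along the finite-product-preserving functor $F^{\times}\colon(\Delta^{\op})^{\times}\to\mathcal{T}_A$. The ordinary left Kan extension $\mathrm{Lan}_{F^{\times}}\colon\Fun((\Delta^{\op})^{\times},\Spaces)\to\Fun(\mathcal{T}_A,\Spaces)$ preserves colimits and carries representables to representables, hence carries $\Fun^{\times}((\Delta^{\op})^{\times},\Spaces)$ into $\Fun^{\times}(\mathcal{T}_A,\Spaces)$ (which is generated by representables under sifted colimits and closed under them); it therefore restricts to the left adjoint $\mathbb{L}$ of $\mathbb{B}$. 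Applying the pointwise formula for $\mathrm{Lan}_{F^{\times}}$ at $\mathrm{Free}((1))$ gives
$$U(\mathbb{L}X_\bullet)\;\simeq\;\colim_{\mathcal{C}}\;X^{\times}_\bullet\circ\pi,\qquad \mathcal{C}:=(\Delta^{\op})^{\times}\times_{\mathcal{T}_A}(\mathcal{T}_A)_{/\mathrm{Free}((1))},$$
where $\pi\colon\mathcal{C}\to(\Delta^{\op})^{\times}$ is the projection and $X^{\times}_\bullet\colon(\Delta^{\op})^{\times}\to\Spaces$ is the product-preserving extension of $X_\bullet$.

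Next I would unwind $\mathcal{C}$. An object is a tuple $\{(n_s)\}_{s\in S}\in(\Delta^{\op})^{\times}$ together with a morphism $F^{\times}(\{(n_s)\})=\mathrm{Free}(\bigsqcup_{s}(n_s))\to\mathrm{Free}((1))$ in $\mathcal{T}_A$, i.e.\ (passing to opposites, since $\mathrm{Free}((1))$ is free on one generator) an element of the free monoid on the alphabet $\bigsqcup_{s}(n_s)$ — that is, a word $w$. I then define $\Phi\colon\mathcal{I}\to\mathcal{C}$ on $(n_1)(n_2)\cdots(n_k)$, i.e.\ a linearly ordered set $I$ with convex blocks $(n_1),\dots,(n_k)$, to be the tuple $\{(n_1),\dots,(n_k)\}$ equipped with the word $w_I$ that spells out every element of $I$ exactly once, in the order of $I$. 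By construction $\pi\circ\Phi$ is the functor $\mathcal{I}\to(\Delta^{\op})^{\times}$ of Construction \ref{cnstr:bolded}, so $X^{\times}_\bullet\circ\pi\circ\Phi\simeq\mathbf{X}^{\times}$; thus, once we know $\Phi$ is cofinal (so that colimits along it agree), we get $U(\mathbb{L}X_\bullet)\simeq\colim_{\mathcal{C}}X^{\times}_\bullet\circ\pi\simeq\colim_{\mathcal{I}}\mathbf{X}^{\times}$, as desired.

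The cofinality of $\Phi$ is the crux and the main obstacle. I would prove it by exhibiting, for every object $(\{(m_t)\}_{t\in T},w)$ of $\mathcal{C}$ with $w=a_1a_2\cdots a_\ell$ (so $a_r$ is a gap in some $[m_{t(r)}]$), an initial object of the comma category $\mathcal{I}\times_{\mathcal{C}}\mathcal{C}_{(\{(m_t)\},w)/}$: namely the ordered set $\{1<\cdots<\ell\}$ split into $\ell$ singleton blocks, with the $r$-th block mapping to $(m_{t(r)})$ by the morphism of $\Delta^{\op}$ (in the ``gaps'' description) whose nontrivial fiber is $\{a_r\}$, together with the — as one checks, unique — resulting morphism out of $(\{(m_t)\},w)$. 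Verifying initiality amounts to checking that an arbitrary refinement of $w$ inside $\mathcal{C}$ factors uniquely through this total refinement into single letters, which requires matching the composition law of $\mathcal{T}_A$ with the combinatorics of order-preserving maps in the ``gaps'' description of $\Delta^{\op}$; equivalently one may package this as the construction of a left adjoint to $\Phi$. Granting this, the two colimit formulas agree and the proposition follows.
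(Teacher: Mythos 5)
Your strategy mirrors the paper's: identify $\mathbb{L}$ with $\mathrm{Lan}_{F^{\times}}$ restricted to product-preserving functors, evaluate at $\mathrm{Free}((1))$ via the pointwise formula, and prove cofinality of $\Phi\colon\mathcal{I}\to\mathcal{C}$ by exhibiting initial objects in the comma categories (equivalently, a left adjoint to $\Phi$). Your shortcut for product-preservation --- $\mathrm{Lan}_{F^{\times}}$ sends corepresentables to corepresentables and $\Fun^{\times}$ is the closure under sifted colimits --- is cleaner than the paper's direct check at the end. But your candidate initial object is incorrect, and that is exactly the combinatorial crux of the proof. Take $c=(\{(2)\},x_1x_2)$. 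Then $((2),\mathrm{id}_c)$ lies in $\mathcal{I}\times_{\mathcal{C}}\mathcal{C}_{c/}$, yet there is \emph{no} morphism $(1)(1)\to(2)$ in $\mathcal{I}$: a morphism $(I,J,f)\to(I',J',f')$ requires order-preserving $\alpha\colon I\to I'$ and $\beta\colon J'\to J$ with $\beta\circ f'\circ\alpha=f$, and here $|J'|=1$ forces the left side to be constant while $f=\mathrm{id}_{\{1,2\}}$ is not. So the all-singletons object admits no map to $((2),\mathrm{id}_c)$ and cannot be initial. Your claim that any refinement ``factors uniquely through this total refinement into single letters'' is where the argument breaks: in $\mathcal{I}$ there is no morphism that merges two adjacent singleton blocks into a single block.

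The correct initial object, and the value of the left adjoint $H$ to $\Phi$, is the partition of $w$ into \emph{maximal snug substrings}: the blocks are the maximal convex substrings of $w$ of the form $x_j^{(i)}x_{j+1}^{(i)}\cdots x_{j+m-1}^{(i)}$, consecutively increasing letters drawn from a single $(m_t)$. For $c=(\{(2)\},x_1x_2)$ this is $(2)$, not $(1)(1)$; for the paper's illustrative word $x_1^{(2)}x_2^{(1)}x_3^{(1)}x_4^{(1)}x_2^{(3)}x_1^{(3)}x_2^{(3)}$ it is $(1)(3)(1)(2)$. The paper's proof then constructs $H$ with this formula, defines its effect on morphisms, and verifies the triangle identities; that verification is where the real content lies, and running your sketch with the all-singletons candidate would fail at precisely this step.
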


\begin{proof}
Since $\mathbb{B}$ is restriction along the functor $F^{\times}:(\Delta^{\op})^{\times} \to \mathcal{T}_A$, it suffices to show that left Kan extension along $F^{\times}$ takes product preserving functors $(\Delta^{\op})^{\times} \to \Spaces$ to product preserving functors $\mathcal{T}_A \to \Spaces$, and that the Kan extension is given by the desired formula.  

We first compute the value of the left Kan extension at $\Z_{\geq 0} \in \mathcal{T}_A$, which is indexed by the category $\mathcal{K} := (\Delta^{\mathrm{op}})^{\times}\times_{\mathcal{T}_A} (\mathcal{T}_A)_{/\Z_{\geq 0}}.$  Unwinding the definitions, an object of $\mathcal{K}$ is a pair $$\big(\{ (n_1),(n_2),\cdots ,(n_k)\}, x\big)$$ where $\{ (n_1),(n_2),\cdots ,(n_k)\} \in (\Delta^{\mathrm{op}})^{\times}$ and $x$ is a chosen element in $\mathrm{Free}(\coprod_i(n_i))$.  The morphisms in $\mathcal{K}$ are maps in $(\Delta^{\mathrm{op}})^{\times}$ preserving this element. 

\begin{notation*}
Inside $\mathrm{Free}(\coprod_i(n_i))$, we will denote by $x_{1}^{(i)}, x_{2}^{(i)}, \cdots , x_{n_i}^{(i)}$ the generators corresponding to $(n_i)$.
\end{notation*}

We define a functor $G:\mathcal{I} \to \mathcal{K}$ as follows.  On objects, it sends $(n_1)(n_2)\cdots (n_k) \in \mathcal{I}$ to $\{ (n_1),(n_2),\cdots ,(n_k) \} \in (\Delta^{\mathrm{op}})^{\times}$ together with the element given by the product 
$$(x_1^{(1)}x_2^{(1)}\cdots x_{n_1}^{(1)})(x_1^{(2)}x_2^{(2)}\cdots x_{n_2}^{(2)}) \cdots (x_1^{(k)} x_2^{(k)}\cdots x_{n_k}^{(k)}).$$  A morphism $$(n_1)\cdots (n_k) \to (m_1)\cdots (m_j)$$ in $\mathcal{I}$ determines a canonical morphism $$\{ (n_1),\cdots ,(n_k) \} \to \{(m_1),\cdots ,(m_j)\}$$ in $(\Delta^{\op})^{\times}$ and it  preserves the chosen elements of the corresponding free monoids because the morphisms in $\mathcal{I}$ are assumed to be order preserving.

\begin{lem}
The functor $G$ admits a left adjoint $H:\mathcal{K} \to \mathcal{I}$ and therefore $G$ is cofinal.  
\end{lem}
\begin{proof}[Proof of lemma]
We begin by defining $H$.  Consider an object $$\big(\{ (n_1),(n_2),\cdots ,(n_k)\}, x\big)\in \mathcal{K}.$$    The element $x$ is given by some word in the generators $x_{j}^{(i)}$.  We call a convex substring of $x$  (i.e., set of adjacent letters) \emph{snug} if it is of the form $x_{j}^{(i)} x_{j+1}^{(i)} \cdots x_{j+m-1}^{(i)}$ for some positive integers $(i,j,m)$. 
A snug substring of $x$ is called \emph{maximal} if it is not a proper substring of a larger snug substring.  Consider the unique partition of $x$ into maximal snug substrings
$$(x_{j_1}^{(i_1)} x_{j_1+1}^{(i_1)} \cdots x_{j_1+m_1-1}^{(i_1)}) (x_{j_2}^{(i_2)} x_{j_2+1}^{(i_2)} \cdots x_{j_2+m_2-1}^{(i_2)})\cdots (x_{j_l}^{(i_l)} x_{j_l+1}^{(i_l)} \cdots x_{j_l+m_l-1}^{(i_l)}).$$  We regard this partition as an object $x^{\mathcal{I}} \in \mathcal{I}$ (which is isomorphic to $(m_1)(m_2)\cdots (m_l)\in\mathcal{I}$), and define $H$ on objects by  $$H\big(\{ (n_1),(n_2),\cdots ,(n_k)\}, x\big) = x^{\mathcal{I}}.$$
\begin{exm*}
If $x = x_1^{(2)}x_2^{(1)}x_3^{(1)}x_4^{(1)}x_2^{(3)}x_1^{(3)}x_2^{(3)}$, then the partition into maximal snug substrings is $x = (x_1^{(2)})(x_2^{(1)}x_3^{(1)}x_4^{(1)})(x_2^{(3)})(x_1^{(3)}x_2^{(3)})$, and $H$ would send this to the object $(1)(3)(1)(2) \in \mathcal{I}$.  
\end{exm*}

We now describe the effect of $H$ on morphisms.  Consider a morphism  $$\big(\{ (n_1),(n_2),\cdots ,(n_s) \} ,x\big) \to \big(\{ (m_1),(m_2),\cdots ,(m_t)\}, y\big)$$ in $\mathcal{K}$, and suppose $x$ and $y$ are written as a union of maximal snug substrings as follows:
$$x= (x_{j_1}^{(i_1)} x_{j_1+1}^{(i_1)} \cdots x_{j_1+h_1-1}^{(i_1)}) (x_{j_2}^{(i_2)} x_{j_2+1}^{(i_2)} \cdots x_{j_2+h_2-1}^{(i_2)})\cdots (x_{j_l}^{(i_l)} x_{j_l+1}^{(i_l)} \cdots x_{j_l+h_l-1}^{(i_l)})$$ 
$$y=(y_{g_1}^{(e_1)} y_{g_1+1}^{(e_1)}\cdots y_{g_1+f_1-1}^{(e_1)})(y_{g_2}^{(e_2)}y_{g_2+1}^{(e_2)}\cdots y_{g_2+f_2-1}^{(e_2)}) \cdots (y_{g_r}^{(e_r)} y_{g_r+1}^{(e_r)}\cdots y_{g_r+f_r-1}^{(e_r)})$$(where the generators $y_{\bullet}^{(\bullet)}$ are defined analogously to the generators $x_{\bullet}^{(\bullet)}$).  
Such a morphism is determined by a map of sets $\gamma: \{ 1, \cdots , t\} \to \{ 1 ,\cdots, s\}$ together with maps $(n_{\gamma(\alpha)}) \to (m_{\alpha})$ in $\Delta^{\mathrm{op}}$ for each $1\leq \alpha \leq t$ such that the resulting map $\theta: \mathrm{Free}(\coprod_{\alpha} (m_{\alpha})) \to \mathrm{Free}(\coprod_{\beta} (n_{\beta}))$ sends $y$ to $x$.  We define the corresponding map $x^{\mathcal{I}} \to y^{\mathcal{I}}$ by sending the element of $x^{\mathcal{I}}$ corresponding to $x^{(i)}_j$ to the element  $y^{(e)}_g$ of $y^{\mathcal{I}}$  that hits it under $\theta$.  This refines the partitions because under $\theta$, each maximal snug substring of $y$ is sent to a (possibly empty, but not necessarily maximal) snug substring of $x$.   


It suffices now to exhibit appropriate unit and counit transformations.  It is immediate that $H\circ G$ is naturally the identity, so we will define the unit transformation.  Consider the object $(\{ (n_1),(n_2),\cdots ,(n_k) \}, x)\in \mathcal{K}$, where $$x= (x_{j_1}^{(i_1)} x_{j_1+1}^{(i_1)} \cdots x_{j_1+m_1-1}^{(i_1)}) (x_{j_2}^{(i_2)} x_{j_2+1}^{(i_2)} \cdots x_{j_2+m_2-1}^{(i_2)})\cdots (x_{j_l}^{(i_l)} x_{j_l+1}^{(i_l)} \cdots x_{j_l+m_l-1}^{(i_l)})$$ is a partition into maximal snug substrings. The functor $G\circ H$ sends this to the object $\big(\{ (m_1),(m_2),\cdots ,(m_l)\},y\big)\in \mathcal{K}$ where  $$y=(y_1^{(1)} y_2^{(1)}\cdots y_{m_1}^{(1)})(y_1^{(2)}y_2^{(2)}\cdots y_{m_2}^{(2)}) \cdots (y_1^{(l)} y_2^{(l)}\cdots y_{m_l}^{(l)})$$
($y_{\bullet}^{(\bullet)}$ as above).  We then define the unit natural transformation on this object, which is the data of a map $$\big(\{ (n_1),(n_2),\cdots ,(n_s) \} ,x\big) \to \big(\{ (m_1),(m_2),\cdots ,(m_t)\}, y\big)$$ in $\mathcal{K}$, by sending $r\in \{1, \cdots , t\}$ to $i_r \in \{1, \cdots , s\}$ and mapping $(n_{i_r})$ to $(m_r)$ in the unique way such that the generators $y_1^{(r)},\cdots , y_{m_r}^{(r)}$ are sent to $x_{j_r}^{(i_r)}, \cdots , x_{j_r + m_r -1}^{(i_r)},$ respectively.  It is easy to check that the triangle identities are satisfied and thus we have produced the adjoint.  
\end{proof}

This lemma implies that the value of the left Kan extension on $\Z_{\geq 0}$ is given by the desired formula.  It suffices to show that the resulting Kan extended functor is product preserving.  However, the argument above generalizes in a straightforward way to show that for any finite set $S$, there is a cofinal functor $\mathcal{I}^S \to (\Delta^{\mathrm{op}})^{\times}\times_{\mathcal{T}_A} (\mathcal{T}_A)_{/\mathrm{Free}(S)} .$  This implies the Kan extended functor is product preserving because
\begin{align*}
\colim_{\mathcal{I}\times \mathcal{I}} (X_{n_1}\times X_{n_2} &\times \cdots \times X_{n_j}) \times (X_{m_1}\times X_{m_2} \times \cdots \times X_{m_k}) \\
&\simeq (\colim_{\mathcal{I}}X_{n_1}\times X_{n_2} \times \cdots \times X_{n_j})\times (\colim_{\mathcal{I}}X_{m_1}\times X_{m_2} \times \cdots \times X_{m_k})
\end{align*}
where $((n_1)(n_2)\cdots (n_j),(m_1)(m_2)\cdots (m_k)) \in \mathcal{I}\times \mathcal{I}$, where we have used that colimits are universal in spaces.  
\end{proof}

\subsection{The partial $K$-theory of $\mathbb{F}_p$}\label{sub:Fp}

We now turn to the proof of Theorem \ref{thm:KpartFp}.  

\begin{notation*}
In the course of this proof, we set $S_{\bullet} = S_{\bullet}(\mathrm{Vect}^{\mathrm{fd}}_{\F_p})$.  All vector spaces will implicitly be over $\F_p$.
\end{notation*}

The proof will proceed by applying the formula of Proposition~\ref{prop:Badjoint} in the case $X_{\bullet} = S_{\bullet}$.  We aim to compute the colimit 
\begin{equation}\label{eqn:KpartFpcolim}
\colim_{(n_1)\cdots(n_j)\in \mathcal{I}}S_{n_1}\times S_{n_2} \times \cdots \times S_{n_j}
\end{equation}
of the functor $\mathbf{S}^{\times}:\mathcal{I}\to \Spaces$ (cf. Construction \ref{cnstr:bolded}).  Let $\mathcal{Z} \to \Spaces$ denote the universal left fibration, and let $\mathcal{Z}_0 \to \Spaces$ denote the left fibration classifying the functor $\pi_0: \Spaces \to \Spaces$.  The natural transformation $\id_{\Spaces} \to \pi_0$ induces a map $p: \mathcal{Z} \to \mathcal{Z}_0$ of fibrations.  Define $\infty$-categories $\mathcal{X}$ and $\mathcal{F}$ and functors $G$ and $H$ so that the squares in the following diagram are Cartesian:

\begin{equation*}
\begin{tikzcd}
\mathcal{X} \arrow[r,"G"] \arrow[d] & \mathcal{F} \arrow[r,"H"]\arrow[d] & \mathcal{I} \arrow[d,"\mathbf{S}^{\times}"] \\
\mathcal{Z} \arrow[r,"p"] & \mathcal{Z}_0 \arrow[r] & \Spaces .
\end{tikzcd}
\end{equation*}

The composite $H\circ G$ is a left fibration because it arises as the pullback of the universal left fibration.  It follows from \cite[Corollary 3.3.4.6]{HTT} that the colimit of $\mathbf{S}^{\times}$ is homotopy equivalent to $|\mathcal{X}|$.  On the other hand, the map $p$ is also a left fibration; explicitly, an object of $\mathcal{Z}_0$ is a space $X$ together with an element $a\in \pi_0(X)$, and $p$ classifies the functor that sends this object to the component of $X$ containing $a$.  It follows that $G$ is itself a left fibration.  Let $\mathbf{S}: \mathcal{F} \to \Spaces$ be the functor that classifies $G$ (the reason for this notation will soon be clear).  Applying \cite[Corollary 3.3.4.6]{HTT} again, we see that the colimit of $\mathbf{S}$ is also homotopy equivalent to $|\mathcal{X}|$, and so we are reduced to computing the colimit of $\mathbf{S}$.  We turn to analyzing this colimit, starting with obtaining a more explicit description of $\mathcal{F}$.  

\begin{defn}\label{defn:filtdim}
Define a \emph{filtered dimension} to be a nonempty sequence $\mathbf{d}= \langle d_1,d_2,\cdots ,d_k \rangle$ of nonnegative integers.   We will soon think of $\mathbf{d}$ as keeping track of the dimensions of the successive quotients in the isomorphism class of filtered vector spaces determined by $$* \subset \F_p^{d_1} \subset \F_p^{d_1 +d_2} \subset \cdots \subset \F_p^{d_1 +\cdots + d_k}.$$ As such, we define its \emph{length} to be $l(\mathbf{d})=k$ and its \emph{dimension} to be $|\mathbf{d}| = d_1 + d_2 +\cdots +d_k$.  

\end{defn}

By definition, an object of $\mathcal{F}$ is an object $(n_1)(n_2)\cdots (n_k) \in \mathcal{I}$ together with a choice of element of $\pi_0(S_{n_1} \times S_{n_2} \times \cdots S_{n_k})$. 
Unwinding the definitions, we see that this is the data of a (possibly empty) sequence $D = (\mathbf{d}^{(1)},\mathbf{d}^{(2)},\cdots ,\mathbf{d}^{(j)})$ of filtered dimensions; we call such a sequence a \emph{filtered dimension sequence} and define $l(D) := \Sigma_i l(\mathbf{d}^{(i)})$ and $|D| := \Sigma_i |\mathbf{d}^{(i)}|$.  Explicitly, this correspondence between filtered dimension sequences and objects of $\mathcal{F}$ sends the filtered dimension sequence $D$ to $(l(\mathbf{d}^{(1)}))(l(\mathbf{d}^{(2)}))\cdots (l(\mathbf{d}^{(j)})) \in \mathcal{I}$ together with the unique point in $\pi_0(S_{l(\mathbf{d}^{(1)})} \times \cdots \times S_{l(\mathbf{d}^{(j)})})$ whose image in $\pi_0(S_{l(\mathbf{d}^{(i)})})$ is the isomorphism class of filtered vector space determined by $\mathbf{d}^{(i)}$ for all $i$.  We can then express the functor $\mathbf{S} : \mathcal{F} \to \Spaces$ classifying the fibration $G$ by the formula $$\mathbf{S}(D) =S_{l(\mathbf{d}^{(1)})} \times \cdots \times S_{l(\mathbf{d}^{(j)})}.$$

Next, we simplify the calculation of $\displaystyle{\colim_{\mathcal{F}} \mathbf{S}}$ by extracting a cofinal subcategory of $\mathcal{F}$:

\begin{defn}
Let $\mathcal{F}^{\red} \subset \mathcal{F}$ be the full subcategory of filtered dimension sequences which are \emph{reduced} in the sense that each filtered dimension $\mathbf{d}= \langle d_1,d_2,\cdots ,d_k \rangle$ in the sequence satisfies $d_1, d_2, \cdots , d_k \geq 1$.  
\end{defn}

\begin{lem}
The inclusion $\mathcal{F}^{\red}\subset \mathcal{F}$ admits a left adjoint, and is therefore cofinal.
\end{lem}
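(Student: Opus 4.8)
The plan is to produce a left adjoint $R\colon\mathcal{F}\to\mathcal{F}^{\red}$ to the inclusion $i$; since $i$ will then be a fully faithful right adjoint, for every $D\in\mathcal{F}$ the comma category $\mathcal{F}^{\red}_{D/}$ acquires an initial object — the unit $\eta_D\colon D\to R(D)$ — and is therefore weakly contractible, so $i$ is cofinal by \cite[Theorem 4.1.3.1]{HTT}. I would use throughout that $\mathcal{F}$ is an ordinary $1$-category: it is the category of elements over $\mathcal{I}$ of the $\Set$-valued functor $c\mapsto\pi_0(\mathbf{S}^{\times}(c))$, so its objects are the filtered dimension sequences and its morphisms are those morphisms of $\mathcal{I}$ that respect the chosen $\pi_0$-data. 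Hence the adjunction can be assembled from object-and-unit data together with a verification of the triangle identities.

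On objects, $R$ performs the expected reduction: given $D=(\mathbf{d}^{(1)},\dots,\mathbf{d}^{(j)})$, delete every $0$ occurring in any $\mathbf{d}^{(i)}$ and discard any $\mathbf{d}^{(i)}$ that thereby becomes empty; then $R(D)$ is reduced, $|R(D)|=|D|$, and $R(D)=D$ once $D$ is already reduced. Under the identification of $D$ with the isomorphism class of a tuple of flags, I would take $\eta_D\colon D\to R(D)$ to be the morphism of $\mathcal{F}$ that collapses each flag onto the subflag obtained by omitting its repeated terms; it lies over the evident morphism of $\mathcal{I}$ assembled block by block from the maps of finite linearly ordered sets that merge the weight-$0$ gaps into their neighbours, and by construction the induced map on $\pi_0$ sends the datum $\mathbf{d}^{(i)}$ to $R(\mathbf{d}^{(i)})$. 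Functoriality of $R$ and naturality of $\eta$ then follow by unwinding the composition law of $\mathcal{I}$, using that the pushforward of a positive-dimensional subquotient is again positive-dimensional, so that every morphism of $\mathcal{F}$ restricts to one between reduced cores.

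The real content is the universal property of $\eta_D$: for every $E\in\mathcal{F}^{\red}$ the map $\Hom_{\mathcal{F}^{\red}}(R(D),E)\to\Hom_{\mathcal{F}}(D,E)$, $\beta\mapsto\beta\circ\eta_D$, must be a bijection. Given $\gamma\colon D\to E$ with $E$ reduced, I would analyse the morphism of $\mathcal{I}$ underlying $\gamma$ together with its effect on $\pi_0$ (the pushforward of flags) and show that, because $E$ has no weight-$0$ subquotients, $\gamma$ is forced to collapse the weight-$0$ gaps of $D$ exactly as $\eta_D$ does, so that $\gamma=\beta\circ\eta_D$ for a unique $\beta\colon R(D)\to E$. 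The main obstacle is precisely this bookkeeping — matching the combinatorics of $\mathcal{I}$-morphisms against the pushforward under $\mathbf{S}^{\times}$ — with the degenerate objects, an all-zero filtered dimension and the empty sequence, demanding the most care; granting it, the triangle identities and hence the adjunction $R\dashv i$ follow formally.
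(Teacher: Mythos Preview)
Your proposal is correct and takes essentially the same approach as the paper: the paper's proof consists of a single sentence naming the left adjoint $\red\colon\mathcal{F}\to\mathcal{F}^{\red}$ that deletes all zeros (and any resulting empty filtered dimensions), which is exactly your functor $R$. You have simply spelled out in detail the unit map and the universal property that the paper leaves implicit.
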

\begin{proof}
The left adjoint is given by the functor $\red:\mathcal{F} \to \mathcal{F}^{\red}$ which takes a filtered dimension sequence and removes all zeros (and any resulting empty filtered dimensions).  
\end{proof}

We are reduced to computing the colimit of the functor $\mathbf{S}: \mathcal{F}^{\red} \to \Spaces$ which, we remind the reader, has the following two properties:
\begin{enumerate}
\item A filtered dimension sequence consisting of a single filtered dimension $\mathbf{d}= \langle d_1,d_2,\cdots ,d_k \rangle$ is sent to the groupoid $\mathbf{S}((\mathbf{d}))$ of filtered vector spaces with filtered dimension $\mathbf{d}$.  
\item The functor $\mathbf{S}$ takes concatenation of filtered dimension sequences to products of spaces; in other words, for $D =   (\mathbf{d}^{(1)},\mathbf{d}^{(2)},\cdots ,\mathbf{d}^{(j)})\in \mathcal{F}^{\red}$, $$\mathbf{S}(D) = \prod_i \mathbf{S}((\mathbf{d}^{(i)})).$$
\end{enumerate}

\begin{obs}\label{obs:fredposet}
The category $\mathcal{F}^{\red}$ is a poset.  The maps in $\mathcal{F}^{\red}$ are generated under concatenation of filtered dimension sequences by the following two operations:
\begin{enumerate}
\item For a filtered dimension $\mathbf{d}$ of length $k$ and an integer $1\leq i < k$, there is a \emph{collapse} map $$ \langle d_1, d_2,\cdots ,d_k\rangle \to  \langle d_1, \cdots d_{i-1}, d_i+d_{i+1}, d_{i+2},\cdots d_k\rangle .$$
\item For a filtered dimension $\mathbf{d}$ of length $k$ and an integer $0<i<k$, there is a \emph{splitting} map $$ \langle d_1, d_2,\cdots ,d_k\rangle \to  (\langle d_1, d_2, \cdots , d_i \rangle , \langle d_{i+1} , d_{i+2},\cdots , d_k\rangle).$$
\end{enumerate}
\end{obs}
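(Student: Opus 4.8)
The plan is to pin down $\mathcal{F}^{\red}$ completely as a $1$-category and then read off both assertions. Recall that $\mathcal{F}=\mathcal{I}\times_{\Spaces}\mathcal{Z}_0$ (with $\mathcal{I}\xrightarrow{\mathbf{S}^{\times}}\Spaces$ as in Construction~\ref{cnstr:bolded} and $\mathcal{Z}_0\to\Spaces$ the left fibration classifying $\pi_0$), so an object of $\mathcal{F}$ is an object $(I,J,f)\in\mathcal{I}$ together with a point of $\pi_0\big(\mathbf{S}^{\times}(I,J,f)\big)$, a morphism is a morphism of $\mathcal{I}$ compatible with these points, and in particular $\mathcal{F}$ is a $1$-category. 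An object of $\mathcal{F}^{\red}$ is thus a reduced filtered dimension sequence $D=(\mathbf{d}^{(1)},\dots,\mathbf{d}^{(j)})$, which I will picture as a finite linearly ordered set $I$ with an ordered partition into nonempty consecutive blocks $I=I_1\sqcup\cdots\sqcup I_j$ together with a labelling $d\colon I\to\Z_{\geq 1}$, where $\mathbf{d}^{(p)}$ lists the labels on $I_p$ in order. The one external input needed is the combinatorics of $\pi_0 S_{\bullet}(\mathrm{Vect}^{\mathrm{fd}}_{\F_p})$: in degree $n$ it is $\Z_{\geq 0}^{n}$, the set of dimension vectors of length-$n$ filtrations, with inner face maps adding two adjacent entries, the outer face maps deleting the first resp.\ the last entry, and degeneracies inserting a $0$ (this is the nerve of $(\Z_{\geq 0},+)$). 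Tracing how $\mathcal{I}$-morphisms act on these decorations through the functor $\mathcal{I}\to(\Delta^{\op})^{\times}$ of Construction~\ref{cnstr:bolded}, one finds that a morphism $(I,J,f)\to(I',J',f')$ of $\mathcal{I}$ — a pair of order preserving maps $\alpha\colon I\to I'$, $\beta\colon J'\to J$ with $\beta f'\alpha=f$ — is compatible with the decorations of $D$ and $D'$ precisely when $d'_{i'}=\sum_{i\in\alpha^{-1}(i')}d_i$ for every $i'\in I'$.

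Now reducedness of $D'$ does most of the work. If some $i'\in I'$ is missing from the image of $\alpha$ then the sum defining $d'_{i'}$ is empty, so $d'_{i'}=0$, contradicting $D'\in\mathcal{F}^{\red}$; hence $\alpha$ is surjective. It follows that $\beta$ is surjective (because $\beta f'\alpha=f$ is), that total dimension is preserved, and that $\alpha$ maps the block $I_p$ onto the union of the blocks of $I'$ lying over $p\in J$. So a morphism of $\mathcal{F}^{\red}$ is exactly: an order preserving surjection $\beta\colon J'\twoheadrightarrow J$ on block sets together with an order preserving surjection $\alpha\colon I\twoheadrightarrow I'$ refining the resulting partitions and satisfying $d'_{i'}=\sum_{\alpha^{-1}(i')}d_i$. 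In particular the two maps in the statement are morphisms of $\mathcal{F}^{\red}$: a collapse has $\beta=\mathrm{id}$ and $\alpha$ merging the two named adjacent elements of a single block, and a splitting has $\alpha=\mathrm{id}$ and $\beta$ merging two adjacent blocks back together.

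To see $\mathcal{F}^{\red}$ is a poset, I argue that $D$ and $D'$ force $\beta$ and then $\alpha$. The blocks of $D'$ over $p\in J$ form a consecutive run whose entries sum to $|\mathbf{d}^{(p)}|$; since every entry of $D'$ is $\geq 1$, the partial sums of the entries of $D'$ are strictly increasing, so the unique way to cut the block sequence of $D'$ into runs with prescribed dimension-sums $|\mathbf{d}^{(1)}|,|\mathbf{d}^{(2)}|,\dots$ determines $\beta$. Given $\beta$, on each block $I_p$ the map $\alpha$ writes the entries of the corresponding blocks of $D'$ as the consecutive partial sums of the entries of $\mathbf{d}^{(p)}$; as those entries are again $\geq 1$ this grouping is unique, so $\alpha$ is determined. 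Hence $|\Hom_{\mathcal{F}^{\red}}(D,D')|\leq 1$.

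Finally, for the generation statement I would factor an arbitrary morphism $\phi\colon D\to D'$ through the object $D''$ obtained from $D$ by re-cutting its blocks along the partition $\{\alpha^{-1}(I'_t)\}_{t\in J'}$ of $I$; since $D''$ has the same labelled underlying ordered set as $D$, it lies in $\mathcal{F}^{\red}$. The first factor $D\to D''$ has $\alpha=\mathrm{id}$ and alters no entries, so on each block it is a composite of single splittings, and over all blocks it is a concatenation of splitting maps. The second factor $D''\to D'$ has $\beta=\mathrm{id}$ and on each block is an order preserving surjection of labelled ordered sets in which the labels of $D'$ are the sums over consecutive runs — that is, a composite of single collapses — so it is a concatenation of collapse maps. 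Since $\mathcal{F}^{\red}$ is a poset, $\phi$ must equal the composite $D\to D''\to D'$, and we are done. The one genuinely laborious point is the identification of morphisms in the first paragraph — in particular computing the action of $\mathcal{I}$-morphisms on $\pi_0 S_{\bullet}$ and invoking reducedness to force $\alpha$ surjective — and this is where all the care is needed; the poset and generation statements are then immediate bookkeeping.
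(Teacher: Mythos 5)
Your proof is correct and proceeds along the route the paper intends (the paper records this only as an unproved Observation): identify morphisms of $\mathcal{F}^{\red}$ lying over $\mathcal{I}$ by the condition $d'_{i'}=\sum_{i\in\alpha^{-1}(i')}d_i$, use reducedness of the target to force $\alpha$ (hence $\beta$) to be surjective, and then deduce both uniqueness of morphisms and the collapse/splitting factorization $D\to D''\to D'$ from positivity of the entries. The careful point is exactly the one you flag — the identification of $\mathcal{F}$-morphisms via the action of $\mathcal{I}$ on $\pi_0 S_{\bullet}\simeq$ the nerve of $(\Z_{\geq 0},+)$ — and you handle it correctly, so no changes are needed.
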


Since these maps preserve the total dimension $|D|$, we may write $\mathcal{F}^{\red}$ as a disjoint union of posets $$\mathcal{F}^{\red} = \coprod_m \mathcal{F}^{\red}_m$$ where $\mathcal{F}^{\red}_m\subset \mathcal{F}^{\red}$ is the subset spanned by filtered dimension sequences $D$ such that $|D|=m$.  This induces an equivalence:
$$\colim_{D\in \mathcal{F}^{\red}} \mathbf{S}(D) = \coprod_m \colim_{D\in \mathcal{F}^{\red}_m} \mathbf{S}(D)$$ such that the $m$th component corresponds to $m\in \Z_{\geq 0}$ under the isomorphism $\pi_0(K^{\mathrm{part}}(\F_p)) \simeq \Z_{\geq 0}$.   
It remains to show that each $\colim_{\mathcal{F}^{\red}_m}\mathbf{S}(D)$ has trivial $\F_p$-homology above degree zero.  We proceed by induction on $m$.  The case $m=0$ is trivial.  When $m=1$, the category $\mathcal{F}^{\red}_m$ has a single element $(\langle 1 \rangle)$ which is sent to $BGL_1(\F_p)$, which has vanishing $\F_p$-homology.  Now, fix $m\geq 2$ and assume the statement for all $m'<m$.  We analyze the diagram $\mathbf{S}:\mathcal{F}_m^{\mathrm{red}} \to \Spaces$ in detail, starting with a particular subset of it:

\begin{prop}\label{prop:steinberg}
Let $\C_m^+ \subset \mathcal{F}_m^{\mathrm{red}}$ be the subset of filtered dimension sequences of the form $(\mathbf{d})$ for a single filtered dimension $\mathbf{d}$, and let $\C_m \subset \C_m^+$ be the complement of the object $(\langle m\rangle).$  Then the map $$\colim_{D\in \C_m}\mathbf{S}(D) \to \colim_{D\in \C_m^+}\mathbf{S}(D)$$ induces an isomorphism in $\F_p$-homology.  Equivalently, since $(\langle m\rangle)\in \C_m^+$ is a final object, the natural map $$\colim_{D\in \C_m}\mathbf{S}(D) \to \mathbf{S}(\langle m\rangle)$$ induces an isomorphism on $\F_p$-homology.  
\end{prop}
\begin{proof}
Let $\mathcal{T}_m$ denote the poset of nontrivial proper subspaces of the vector space $\F_p^m$ and let $\mathcal{D}$ denote the category of nondegenerate simplices of $\mathcal{T}_m$.   There is a canonical inclusion $\mathcal{T}_m \hookrightarrow \mathcal{D}$ which \ is equivariant for the natural action of $G=GL_m(\F_p)$.  It therefore induces a homotopy equivalence $|\mathcal{T}_m \sslash G| \simeq |\mathcal{D} \sslash G|$.

On the other hand, note that an object of $\mathcal{D}$ is a flag of strict inclusions of nontrivial proper subspaces of $\F_p^m$, so there is a natural map $\mathcal{D} \to \C_m$ which is $G$-equivariant (for the trivial action on $\mathcal{C}_m$).  Moreover, the induced map $\mathcal{D}\sslash G \to \C_m$ is precisely the Grothendieck construction applied to the functor $\mathbf{S}|_{\C_m}: \C_m \to \Spaces$ whose colimit we aim to compute.  Therefore, we have equivalences $$\colim_{D\in\C_m} \mathbf{S}(D) \simeq |\mathcal{D} \sslash G| \simeq |\mathcal{T}_m \sslash G| \simeq |\mathcal{T}_m|_{hG}.$$

By the Solomon-Tits theorem \cite{Sol}, the $G$-space $|\mathcal{T}_m|$ is a wedge of $(m-2)$-dimensional spheres.  Moreover, its top cohomology $H^{m-2}(|\mathcal{T}_m|,\F_p)$ is the Steinberg representation of $G$ over $\F_p$, which is irreducible and projective and therefore has vanishing homology in all degrees \cite{Humphreys}.  It follows from the homotopy orbit spectral sequence that the natural map $|\mathcal{T}_m|_{hG} \to (*)_{hG}$ induces an equivalence in $\F_p$-homology, and so the natural map $$\colim_{D\in \C_m}\mathbf{S}(D) \to \mathbf{S}(\langle m\rangle)$$ induces an isomorphism in $\F_p$-homology as desired.  
\end{proof}

To analyze the rest of the diagram, we define a filtration on $\mathcal{F}_m^{\mathrm{red}}$.  

\begin{enumerate}
\item Let $\A_i \subset \mathcal{F}_m^{\mathrm{red}}$ be the subset which are sequences of filtered dimensions $\mathbf{d} = \langle d_1,d_2,\cdots , d_k\rangle$ such that $d_1,d_2,\cdots ,d_k \leq i$.  
\item Let $\A'_i \subset \A_i$ be the subset which are sequences of filtered dimensions $\mathbf{d} = \langle d_1,d_2,\cdots , d_k\rangle$ such that either $k=1$ and $d_1=i$, or $k\geq 1$ and $d_1,d_2,\cdots ,d_k \leq i-1.$  
\end{enumerate}

Note that there are also natural inclusions $\delta_i: \A_i \hookrightarrow \A'_{i+1}$ for $i\geq 1$ and that for $i\geq m$, $\A'_i = \A_i = \mathcal{F}_m^{\mathrm{red}}$.  Hence, the sequence $$\A'_1 \subset \A_1 \subset \A'_2 \subset \A_2 \subset \cdots \mathcal{F}_m^{\mathrm{red}}$$ provides an exhaustive and finite filtration of $\mathcal{F}_m^{\mathrm{red}}$.  In the following two lemmas, we show that the $\F_p$-homology of the colimit of the functor $\mathbf{S}$ is unchanged as we move up this filtration.

\begin{lem}\label{lem:A'cofinal}
For all $i\geq 1$, the inclusion $\A'_i \subset \A_i$ is cofinal.  Thus, the natural map $$\colim_{D\in \A'_i} \mathbf{S}(D) \to \colim_{D\in \A_i} \mathbf{S}(D)$$ is an equivalence.  
\end{lem}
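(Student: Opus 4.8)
The plan is to exhibit a left adjoint $r : \A_i \to \A'_i$ to the inclusion $\iota : \A'_i \hookrightarrow \A_i$; as in the preceding lemma for $\mathcal{F}^{\red}\subset\mathcal{F}$, a fully faithful inclusion of posets admitting a left adjoint is cofinal, and cofinality of $\iota$ immediately yields that the canonical comparison $\colim_{D\in\A'_i}\mathbf{S}(D) \to \colim_{D\in\A_i}\mathbf{S}(D)$ is an equivalence, since the diagram on $\A_i$ restricts to the one on $\A'_i$ along $\iota$.

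I would define $r$ as follows. Given $D=(\mathbf{d}^{(1)},\dots,\mathbf{d}^{(j)})\in\A_i$, apply to each constituent filtered dimension $\mathbf{d}^{(\ell)}=\langle d_1,\dots,d_k\rangle$ the splitting maps of Observation~\ref{obs:fredposet} at every gap immediately before or after an index $a$ with $d_a=i$. This replaces $\mathbf{d}^{(\ell)}$ by a concatenation of singleton filtered dimensions $\langle i\rangle$ (one for each part equal to $i$) together with maximal runs of consecutive parts, each such run now having all parts $\leq i-1$; concatenating over $\ell$ defines $r(D)$. Every constituent of $r(D)$ is thus of the form $\langle i\rangle$ or has all parts $\leq i-1$, so $r(D)\in\A'_i$, and the splitting maps assemble into a morphism $D\to r(D)$ in $\A_i$, which serves as the unit.

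It then remains to check the universal property, which, since $\A_i$ and $\A'_i$ are posets, reduces to showing that $D\leq E$ with $E\in\A'_i$ forces $r(D)\leq E$. Here one uses that the order on $\mathcal{F}^{\red}_m$ is generated by collapses and splittings, so along any relation $D\leq E$ the set of external (between-filtered-dimension) boundaries only grows and the set of all part-boundaries only shrinks. The key point — the step I expect to require the most care — is that a part of $D$ equal to $i$ can never be absorbed by a collapse inside $\A_i$, since the resulting part would have size $>i$; hence such a part survives as a part of $E$, and in $E\in\A'_i$ it must occur as a singleton constituent $\langle i\rangle$, so the two $D$-boundaries adjacent to it become external in $E$. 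Therefore every boundary that $r$ newly externalizes lies in the external-boundary set of $E$, and combined with $\mathrm{ext}(D)\subseteq\mathrm{ext}(E)$ and the shrinking of part-boundaries this is exactly the inequality $r(D)\leq E$.

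Concluding: $r$ is left adjoint to $\iota$, so $\iota$ is cofinal and the displayed colimit map is an equivalence. The only genuinely non-formal content is the boundary bookkeeping in the third paragraph; everything else is a direct unwinding of Observation~\ref{obs:fredposet} and the characterization of morphisms in $\mathcal{F}^{\red}_m$.
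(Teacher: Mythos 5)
Your proof is correct and takes essentially the same route as the paper: the paper's (very terse) argument likewise rests on splitting off the parts equal to $i$, asserting cofinality because every element of $\A_i$ admits a map into $\A'_i$ via the splitting maps of Observation \ref{obs:fredposet}. Your explicit left adjoint and the boundary bookkeeping simply make precise the initial-object/universal-property verification that the paper leaves implicit (mirroring its earlier treatment of $\mathcal{F}^{\red}\subset\mathcal{F}$), so nothing essential differs.
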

\begin{proof}
This follows from the fact that the inclusion $\A'_i \subset \A_i$ admits a left adjoint, which takes all instances of $i$ and splits them off (i.e., the string ``$, i,$" is replaced by ``$\rangle \langle i \rangle \langle$"). 
\end{proof}
\begin{lem}\label{lem:A'kan}
For $i\geq 1$, let $(\delta_i)_! : \Fun(\A_i,\Spaces) \to \Fun(\A'_{i+1},\Spaces)$ denote left Kan extension along the inclusion $\delta_i:\A_i \hookrightarrow \A'_{i+1}$.  Then the natural map $(\delta_i)_! (\mathbf{S}|_{\A_i})(D) \to \mathbf{S}|_{\A'_{i+1}}(D)$ induces an isomorphism on $\F_p$-homology for all $D\in \A'_{i+1}$.  Thus, the natural map $$\colim_{D\in \A_i} \mathbf{S}(D) \to \colim_{D\in \A'_{i+1}} \mathbf{S}(D)$$ induces an equivalence in $\F_p$-homology.  

\end{lem}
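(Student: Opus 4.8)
The plan is to prove the pointwise statement — that $(\delta_i)_!(\mathbf{S}|_{\A_i})(D)\to \mathbf{S}(D)$ is an $\F_p$-homology equivalence for every $D\in\A'_{i+1}$ — and then deduce the colimit statement formally. Since left Kan extension preserves colimits we have $\colim_{\A'_{i+1}}(\delta_i)_!(\mathbf{S}|_{\A_i})\simeq \colim_{\A_i}\mathbf{S}$, and since $\F_p\wedge\Sigma^{\infty}_+(-)$ commutes with colimits, a pointwise $\F_p$-homology equivalence between diagrams $\A'_{i+1}\to\Spaces$ induces an $\F_p$-homology equivalence on colimits; so the second assertion follows from the first. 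When $D\in\A_i$ the inclusion $\delta_i$ is a full subposet inclusion, so the relevant comma category $\mathcal{C}_D:=\A_i\times_{\A'_{i+1}}(\A'_{i+1})_{/D}=\{D'\in\A_i: D'\le D\}$ has terminal object $D$ and the comparison map is an equivalence; so I may assume $D\in\A'_{i+1}\setminus\A_i$, which — since $D$ lies in the filtration of $\mE_m$, so $D\neq(\langle m\rangle)$ — means that $D$ has at least one part equal to $\langle i+1\rangle$ (its \emph{new} parts) and at least two parts in total, the remaining (\emph{old}) parts having all entries $\le i$.

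The technical core is to identify a convenient cofinal subcategory of $\mathcal{C}_D$. The key observation is that if $D'\in\A_i$ satisfies $D'\le D$, then the sub-block of $D'$ lying over any new part $\langle i+1\rangle$ of $D$ has total dimension $i+1$ but all its entries are $\le i$ (because $D'\in\A_i$), hence it has length at least $2$; in particular it is a reduced composition of $i+1$ into at least two summands, i.e. an object of $\C_{i+1}$ in the notation of Definition~\ref{defn:Emparts}. Let $Q\subseteq\mathcal{C}_D$ be the full subposet of those $D'\le D$ whose part decomposition refines that of $D$, equivalently those $D'$ no part of which straddles a boundary between two parts of $D$. Unwinding definitions, $Q$ is isomorphic as a poset to $\prod_{\text{new parts}}\C_{i+1}\times\prod_{\text{old parts }a}\mathcal{R}_a$, where $\mathcal{R}_a$ denotes the poset of reduced refinements of the old part $\mathbf{d}^{(a)}$. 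Using the presentation of morphisms of $\mathcal{F}^{\red}$ by collapse and splitting maps (Observation~\ref{obs:fredposet}), I would verify that the operation of \emph{maximally splitting $D'$ along $D$} — splitting $D'$ at every part-boundary of $D$, which is always legitimate precisely by the key observation above — defines a left adjoint to the inclusion $Q\hookrightarrow\mathcal{C}_D$; consequently each comma category $D'\downarrow(Q\hookrightarrow\mathcal{C}_D)$ has an initial object, so $Q$ is cofinal in $\mathcal{C}_D$.

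Granting this, and using that $\mathbf{S}$ carries concatenation of filtered dimension sequences to products and that finite products of spaces commute with colimits, I obtain
\[
(\delta_i)_!(\mathbf{S}|_{\A_i})(D)\;\simeq\;\colim_{\mathcal{C}_D}\mathbf{S}\;\simeq\;\colim_{Q}\mathbf{S}\;\simeq\;\prod_{\text{new}}\Big(\colim_{\C_{i+1}}\mathbf{S}\Big)\times\prod_{\text{old }a}\Big(\colim_{\mathcal{R}_a}\mathbf{S}\Big).
\]
Each $\mathcal{R}_a$ has a terminal object, namely $\mathbf{d}^{(a)}$ itself, so $\colim_{\mathcal{R}_a}\mathbf{S}\simeq\mathbf{S}((\mathbf{d}^{(a)}))$; and under these identifications the natural comparison map to $\mathbf{S}(D)\simeq\prod_{\text{new}}BGL_{i+1}(\F_p)\times\prod_{\text{old }a}\mathbf{S}((\mathbf{d}^{(a)}))$ is the product of the identity on the old factors with the map $\colim_{\C_{i+1}}\mathbf{S}\to\mathbf{S}(\langle i+1\rangle)$ on each new factor. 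By Proposition~\ref{prop:steinberg} — applied with $i+1$ in place of $m$, which is available since that proposition holds for all dimensions independently of the induction on $m$ — this last map is an $\F_p$-homology isomorphism, so by the Künneth theorem the whole comparison map is an $\F_p$-homology isomorphism. This proves the pointwise statement and hence the lemma.

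The main obstacle is the construction and verification of the ``maximal splitting'' left adjoint, i.e. showing that the filtered dimension sequences in $\mathcal{C}_D$ that straddle the boundaries of $D$ contribute nothing new to the colimit. This is the one place where the defining constraint ``all entries $\le i$'' of $\A_i$ is used in an essential way, and it requires careful bookkeeping with the collapse/splitting generators of $\mathcal{F}^{\red}$ to rule out the configurations incompatible with membership in $\A_i$ and to check that maximal splitting is well defined and has the adjointness property.
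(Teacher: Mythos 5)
Your proof is correct and follows essentially the same strategy as the paper's: compute the pointwise left Kan extension as the colimit over the slice $(\A_i)_{/D}$, pass to a cofinal subposet that splits as a product over the parts of $D$, and apply Proposition \ref{prop:steinberg} (with K\"unneth) to the dimension-$(i+1)$ factors, the remaining factors contributing nothing. The only differences are minor: your cofinal subposet $Q$ lets every old part vary over its refinement poset (which has a terminal object), whereas the paper's $\mathcal{M}$ freezes the old parts of length $>1$ and invokes Proposition \ref{prop:steinberg} also for the old singleton parts, and you verify cofinality by exhibiting an explicit left adjoint (correct, though the ``entries $\leq i$'' constraint is really what identifies the new factors with $\C_{i+1}$ rather than what makes the maximal splitting legitimate) where the paper simply asserts it.
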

\begin{proof}
Consider some $D \in \A'_{i+1} - \A_i$ which we may write in the form $$D = (\mathbf{d}_1,\mathbf{d}_2,\cdots ,\mathbf{d}_j).$$  We need to show that the map $$\colim_{D'\in (\A_{i})_{/D}} \mathbf{S}(D') \to \mathbf{S}(D)$$ induces an equivalence in $\F_p$-homology.  Consider the subset of $\mathcal{M} \subset (\A_{i})_{/D}$ consisting of filtered dimension sequences $D' = (\mathbf{d}'_1, \mathbf{d}'_2, \cdots, \mathbf{d}'_j)$ satisfying the following conditions for $1\leq r\leq j$ (recall the notation from Definition \ref{defn:filtdim}):
\begin{enumerate}
\item $|\mathbf{d}'_r| = |\mathbf{d}_r|.$
\item If $l(\mathbf{d}_r)>1$, then $\mathbf{d}_r = \mathbf{d}'_r$.  
\end{enumerate}

In particular, $D$ must be obtained from $D'$ via some sequence of collapse maps.  It is easy to see that the inclusion of posets $\mathcal{M}\subset (\A_{i})_{/D}$ is cofinal.  But the poset $\mathcal{M}$ splits as a product $$\mathcal{M} = C_1\times C_2 \times \cdots \times C_j$$ of posets $C_r \subset \mathcal{F}^{\red}_{|\mathbf{d}_r|}$, where $C_r = \{ (\mathbf{d}_r)\} $ if $l(\mathbf{d}_r)>1$ and $C_r = \C_{|\mathbf{d}_r|}$ (as defined in Proposition \ref{prop:steinberg}) if $l(\mathbf{d}_r) =1.$  Thus, we have:
\begin{align*}
\colim_{D'\in (\A_{i})_{/D}} \mathbf{S}(D') &\simeq \colim_{D'\in \mathcal{M}} \mathbf{S}(D') \\
&\simeq \colim_{D'_1\in C_1}\mathbf{S}(D'_1) \times \cdots \times \colim_{D'_j\in C_j}\mathbf{S}(D'_j)\\
&\simeq_p \mathbf{S}((\mathbf{d}_1))\times \cdots \times \mathbf{S}((\mathbf{d}_j))\\
&\simeq \mathbf{S}(D)\\
\end{align*}
where $\simeq_p$ denotes $\F_p$-homology equivalence, and we have used Proposition \ref{prop:steinberg} for this equivalence.  
\end{proof}

We can now finish the proof of Theorem \ref{thm:KpartFp}.  Combining Lemmas \ref{lem:A'cofinal} and \ref{lem:A'kan}, we conclude that the natural map 
$$\colim_{D\in \A'_1}\mathbf{S}(D) \to \colim_{D\in \A'_m} \mathbf{S}(D) = \colim_{D\in \mathcal{F}^{\red}_m}\mathbf{S}(D)$$ induces an equivalence in $\F_p$-homology.  But $\A'_1$ is a one element set containing only the filtered dimension sequence $(\langle 1\rangle ,\cdots ,\langle 1\rangle)$ (with $m$ $\langle 1\rangle$'s).  Since  $\mathbf{S}((\langle 1\rangle ,\cdots , \langle 1\rangle)) = \mathbf{S}((\langle 1\rangle))^{m} \simeq BGL_1(\F_p)^{m}$ has no $\F_p$-homology in positive degrees, as desired.

\section{The $p$-complete Frobenius and the action of $B\Z_{\geq 0}$}\label{sect:frobstable}

In this section, we combine the work of the previous sections to prove Theorem A, which appears as Theorem \ref{thm:mainaction}.   Throughout, we fix a prime $p$ and restrict attention to Frobenius maps corresponding to elementary abelian $p$-groups.

\begin{notation*}
Let $\QVect \subset \mathcal{Q}$ be the full subcategory spanned by elementary abelian $p$-groups (i.e., finite dimensional $\F_p$-vector spaces).
\end{notation*}

In \S \ref{sub:Fstable}, we introduce the subcategory $\CAlg^F_p\subset \CAlg$ of \emph{$F_p$-stable} $\E_\infty$-rings, which are roughly defined to be the $p$-complete $\E_{\infty}$-rings for which all the generalized Frobenius maps (cf. \S \ref{sub:genfrob}) can be regarded as endomorphisms.  In \S \ref{sub:pf}, we turn to the proof of Theorem A.  The oplax action of $\mathcal{Q}$ on $\CAlg$ from Theorem \ref{thm:mainalgaction} restricts to an action of $\QVect$ on $\CAlg^F_p$.  Essentially by definition, this extends to an action of the $\infty$-category $\QVect[\mathcal{L}^{-1}]$, which is identified with $BK^{\mathrm{part}}(\F_p)$ by Corollary \ref{cor:QequalsS}.  To finish, we use the $\F_p$-homology equivalence $K^{\mathrm{part}}(\F_p) \to \Z_{\geq 0}$ of Theorem \ref{thm:KpartFp} to pass from an action of $BK^{\mathrm{part}}(\F_p)$ to an action of $B\Z_{\geq 0}$.  
We conclude \S \ref{sub:pf} by recording Theorem \ref{thm:gloalgpartK}, which describes the action of Frobenius on global algebras via partial $K$-theory.  While we do not apply this theorem later in the paper, we feel that it may be of independent interest.  Finally, in \S \ref{sub:perfect}, we include a brief discussion of \emph{$p$-perfect} $\E_{\infty}$-rings, which are the $F_p$-stable $\E_{\infty}$-rings for which the Frobenius is an equivalence.  In particular, we note that the $\infty$-category of $p$-perfect $\E_{\infty}$-rings admits an action of $S^1$ by Frobenius (Corollary \ref{cor:perfectaction}).

\subsection{$F_p$-stable $\E_{\infty}$-rings}\label{sub:Fstable}

\begin{defn}\label{defn:frobstable}
We say that a spectrum $X$ is \emph{$F_p$-stable} if $X$ is $p$-complete and for every finite dimensional $\F_p$-vector space $V$, the canonical map (Construction \ref{cnstr:gencanmaps})
 $$\mathrm{can}^V: X\to X^{\tau V}$$ is an equivalence.  Let $\CAlg_{p}^{F}\subset \CAlg$ denote the full subcategory of $\E_{\infty}$-rings whose underlying spectrum is $F_p$-stable.  
\end{defn}

\begin{rmk}
If $X$ is $F_p$-stable and $V\twoheadrightarrow W$ is a surjection of $\F_p$-vector spaces, then by examining the commutative diagram 
\begin{equation*}
\begin{tikzcd}
 & X^{\tau W} \arrow[rd, "\can_W^V"] & \\ 
 X \arrow[ru, "\can^W"] \arrow[rr, "\can^V"] & &X^{\tau V},
\end{tikzcd}
\end{equation*}
we see that the generalized canonical map $\can_W^V$ is also an equivalence.
\end{rmk}

\begin{warn}
A discrete $\F_p$-algebra is generally \emph{not} $F_p$-stable when regarded as a spectrum.  For example, the Eilenberg-MacLane spectrum $\F_p$ is not $F_p$-stable because $\F_p^{tC_p}$ has nontrivial homotopy groups in every degree.  
\end{warn}

We saw in Example \ref{exm:introlin} that the canonical map $\pS \to (\pS)^{tC_p}$ is an equivalence by work of Lin and Gunawardena.  This admits the following extension, which is essentially the Segal conjecture for elementary abelian $p$-groups, due to work of Adams, Gunawardena, and Miller:

\begin{thm}[Adams-Gunawardena-Miller \cite{AGM}]\label{thm:agmsegal}
Let $V$ be a finite dimensional $\F_p$-vector space.  Then the canonical map $$\can^V : \pS \to (\pS)^{\tau V}$$ is an equivalence.  
\end{thm}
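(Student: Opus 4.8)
\textbf{Proof proposal for Theorem \ref{thm:agmsegal}.}

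The plan is to reduce the statement to its $p$-completed, suspension-spectrum form, which is precisely the content of the Segal conjecture for elementary abelian $p$-groups as proved by Adams--Gunawardena--Miller. First I would unwind the definition of the proper Tate construction: by Definition \ref{defn:propertate}, $(\pS)^{\tau V} = \Phi^V(\beta_V \triv^V \pS)$, and by Remark \ref{rmk:propertatecofib} this is computed as the total cofiber of the diagram $\mathcal{O}_V \to \Sp$, $V/W \mapsto (\pS)^{hW}$, indexed by finite transitive $V$-sets. Since $\pS$ carries the trivial action, $(\pS)^{hW} \simeq F((BW)_+, \pS)$, i.e.\ the function spectrum (spherical cochains) on the classifying space $BW$ for each subgroup $W \subseteq V$. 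Dualizing, the total cofiber in question is the $p$-complete Spanier--Whitehead dual of a total fiber of a diagram built from the spectra $\Sigma^\infty_+ BW$; tracking through, the claim that $\can^V$ is an equivalence becomes exactly the claim that the relevant map of (co)chain complexes of classifying spaces is a $p$-adic equivalence.

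Next I would identify this with the classical formulation. The key input is the Segal conjecture for the elementary abelian $p$-group $V$: the map $S_V \to F(EV_+, S)_V$ (equivalently, after fixed points, the map comparing the $V$-equivariant sphere to its Borel completion) is a $p$-adic equivalence; Adams--Gunawardena--Miller \cite{AGM} establish this, with Lin's theorem \cite{Lin} being the rank-one case. Concretely, one uses that for the sphere spectrum $\Phi^V \mathbb{S}_V \simeq \mathbb{S}$ via the tom Dieck splitting machinery, while $\Phi^V(\beta_V \triv^V \pS) = (\pS)^{\tau V}$; the content of AGM is precisely that the comparison map between these, which is our $\can^V$, induces an isomorphism after $p$-completion. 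So once the bookkeeping of the previous paragraph is done, the theorem follows by citing \cite{AGM}.

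A cleaner alternative, which I would actually prefer to present, is an induction on $\dim_{\F_p} V$ using the transitivity of the canonical maps. For $\dim V = 1$ we have $V = C_p$ and $\can^{C_p}: \pS \to (\pS)^{tC_p}$ is an equivalence by Lin's theorem (Example \ref{exm:introlin}). For the inductive step, choose a surjection $V \twoheadrightarrow C_p$ with kernel $V'$; one has a factorization of $\can^V$ through the iterated proper Tate construction, using the oplax structure map $(\pS)^{\tau V} \to ((\pS)^{\tau V'})^{\tau C_p}$ from Theorem \ref{thm:mainalgaction} together with $\can^{V'}_{(-)}$ and $\can^{C_p}_{(-)}$. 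By the inductive hypothesis $\pS \to (\pS)^{\tau V'}$ is an equivalence, and then one must show that applying $(-)^{tC_p}$ to this equivalence and composing with $\can^{C_p}$ again gives an equivalence --- this last point is where the genuine input (the Segal conjecture, or at least its $C_p$-Tate-iterated form) is unavoidable, since the iterated Tate construction $((\pS)^{\tau V'})^{\tau C_p}$ must be shown to agree with $(\pS)^{\tau V}$ and not acquire extra homotopy. The main obstacle, in either approach, is exactly this: controlling the iterated Tate constructions and matching the abstract oplax comparison map with the classically-studied Segal conjecture map, so that the cited theorem of Adams--Gunawardena--Miller can be brought to bear. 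Since the statement is attributed directly to \cite{AGM}, in the paper itself the proof would most reasonably consist of making the translation in the first two paragraphs precise and then invoking their result.
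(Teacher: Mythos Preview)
The paper gives no proof of this theorem; it is simply stated with attribution to Adams--Gunawardena--Miller \cite{AGM}, so there is nothing to compare against beyond the citation itself. Your first two paragraphs are the correct reading: since $\triv^V$ is symmetric monoidal, $\triv^V \pS$ is the $p$-complete genuine $V$-sphere, and $\can^V$ is obtained by applying $\Phi^V$ to the Borelification map on this sphere; that this map is a $p$-adic equivalence is exactly the Segal conjecture for $V$, which is what \cite{AGM} proves. So your translation-then-cite approach is right and is what the paper does implicitly.

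Your inductive alternative, however, does not give an independent reduction, and you already see why. The commutative square (as in the proof of Theorem~\ref{thm:mainaction}) shows that the composite $\pS \xrightarrow{\can^V} (\pS)^{\tau V} \to ((\pS)^{\tau V'})^{\tau C_p}$ is an equivalence once you know Lin's theorem and the inductive hypothesis. But this only exhibits $\can^V$ as a split monomorphism; to conclude it is an equivalence you would need the oplax comparison map $(\pS)^{\tau V} \to ((\pS)^{\tau V'})^{\tau C_p}$ to be an equivalence, and establishing that is not easier than the original statement. (In the paper this comparison is shown to be an equivalence only \emph{after} one knows $A$ is $F_p$-stable, which by definition already assumes all the $\can^V$ are equivalences.) So the inductive route is circular as a proof strategy, and the honest argument remains: translate and cite \cite{AGM}.
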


In fact, the statement holds with $V$ replaced by any $p$-group $G$, as a consequence of the Segal conjecture, which is a theorem of Carlsson \cite{Carlsson}.  To see this, note that by \cite[Proposition B]{MayMcClure}, the Segal conjecture for a $p$-group $G$ implies that $(\pS)_G$, the $p$-complete genuine $G$-equivariant sphere, is Borel.  Hence, the natural Borelification map $(\pS)_G \to \beta (\pS)_G$ is an equivalence, and applying $\Phi^G$ to both sides yields the result (cf. \Cref{cnstr:gencanmaps}).  

\begin{cor}\label{cor:Fstab}
The $p$-complete sphere is $F_p$-stable.  Since the proper Tate construction commutes with finite colimits, it follows that any spectrum which is finite over the $p$-complete sphere is also $F_p$-stable.  
\end{cor}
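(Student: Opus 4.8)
\textbf{Proof proposal for Corollary \ref{cor:Fstab}.}

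The plan is to deduce both assertions directly from Theorem \ref{thm:agmsegal} together with the defining formula for the proper Tate construction and a finiteness/exactness argument. For the first assertion, I would simply invoke Theorem \ref{thm:agmsegal}: by definition, a spectrum $X$ is $F_p$-stable if $X$ is $p$-complete and the canonical map $\can^V\colon X \to X^{\tau V}$ is an equivalence for every finite-dimensional $\F_p$-vector space $V$. The $p$-complete sphere $\pS$ is $p$-complete by construction, and Theorem \ref{thm:agmsegal} asserts exactly that $\can^V\colon \pS \to (\pS)^{\tau V}$ is an equivalence for all such $V$. So the first sentence is immediate, and the only thing to say is that the condition in Definition \ref{defn:frobstable} literally is the conclusion of the Segal conjecture statement.

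For the second assertion, let $X$ be a spectrum which is finite over $\pS$, i.e.\ $X$ lies in the thick subcategory of $\Mod_{\pS}$ (equivalently, of $\Sp$ after $p$-completion) generated by $\pS$ — concretely, $X$ is built from finitely many copies of $\pS$ by finite (co)limits and retracts. I would argue that the class of $F_p$-stable spectra is closed under the relevant operations. First, $p$-completeness is preserved under finite colimits and retracts, so any $X$ finite over $\pS$ is $p$-complete. Second, I need that the canonical map $\can^V$ is a natural transformation of functors $\Sp \to \Sp$ (which it is, by Construction \ref{cnstr:gencanmaps}, since it is built from the unit map $\triv^V_K(\beta_K\triv^K(-)) \to \beta_V\triv^V(-)$ followed by $\Phi^V$, all natural), and that its source $\mathrm{id}$ and target $(-)^{\tau V}$ both commute with finite colimits. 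The source is the identity, so this is clear; for the target, $(-)^{\tau V} = \Phi^V(\beta(-))$ and by Remark \ref{rmk:propertatecofib} the proper Tate construction is a finite total cofiber of homotopy fixed-point functors indexed by the finite poset $\mathcal{O}_V$ — but homotopy fixed points for a finite group do \emph{not} commute with arbitrary colimits, so I must be slightly more careful. The correct statement is that $(-)^{\tau V}$, being a finite limit of $(-)^{hH}$'s and hence an exact functor between stable $\infty$-categories (it is visibly exact: $\Phi^V$ and $\beta$ are both exact), commutes with \emph{finite} colimits, which is all that is needed. Thus $\can^V$ is a natural transformation between two exact endofunctors of $\Sp$.

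Given this, the argument concludes formally: the full subcategory of $\Sp$ on those $X$ for which $\can^V\colon X \to X^{\tau V}$ is an equivalence for all $V$ is closed under finite colimits, shifts, and retracts (since an equivalence between exact functors is detected on a class closed under these operations, and a natural transformation of exact functors that is an equivalence on a set of generators is an equivalence on the thick subcategory they generate). Since it contains $\pS$ by Theorem \ref{thm:agmsegal} and $\pS$ generates the spectra finite over $\pS$ as a thick subcategory, every such spectrum is $F_p$-stable. I do not anticipate a genuine obstacle here; the one point requiring care — and the only place where a false generalization could sneak in — is making sure one only claims commutation with \emph{finite} colimits for $(-)^{\tau V}$ (via exactness), rather than all colimits, and then observing that finiteness over $\pS$ is exactly a finite-colimit-and-retract condition so this suffices.
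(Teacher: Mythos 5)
Your proposal is correct and follows the same route the paper intends: the corollary is stated essentially without a separate proof, with the argument compressed into its own text (Theorem~\ref{thm:agmsegal} gives $F_p$-stability of $\pS$, and exactness of $(-)^{\tau V}$ propagates it to the thick subcategory generated by $\pS$). Your unpacking of the second step — observing that $\can^V$ is a natural transformation between exact endofunctors, that $(-)^{\tau V}=\Phi^V\circ\beta$ is exact, and hence that the $F_p$-stable spectra form a thick subcategory containing $\pS$ — is exactly the intended elaboration, and your own parenthetical self-correction (replacing the loose "finite limit of $(-)^{hH}$'s" with exactness of $\Phi^V$ and $\beta$) lands on the right justification.
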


Another family of $F_p$-stable spectra will be important for us in Section \ref{sect:padic}.    

\begin{exm}[\cite{Ell2}, Example 5.2.7]\label{exm:sphericalWitt}
Let $R$ be a (discrete) perfect $\F_p$-algebra.  Then there is an essentially unique flat $\E_{\infty}$-algebra $W^+(R)$ over $\pS$ with the following properties:
\begin{enumerate}
\item $W^+(R)$ is $p$-complete.
\item The map induced on $\pi_0$ by the unit map $\pS \to W^+(R)$ is the natural map $\Z_p \to  W(R)$.
\item For any connective $p$-complete $\E_{\infty}$-algebra $A$ over $\pS$, the canonical map $$\Map_{\CAlg_{\pS}}(W^+(R),A) \to \Hom_{\F_p}(R, \pi_0(A)/p)$$ is an equivalence.  In particular, these spaces are discrete.    
\end{enumerate}
We will refer to $W^+(R)$ as the \emph{spherical Witt vectors} of $R$.
\end{exm}

\begin{prop}\label{prop:sfpbar}
Let $R$ be a (discrete) perfect $\F_p$-algebra.  Then the spectrum $W^+(R)$ is $F_p$-stable. 
\end{prop}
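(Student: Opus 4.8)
The strategy is to reduce the $F_p$-stability of $W^+(R)$ to that of the $p$-complete sphere (Corollary \ref{cor:Fstab}) by exhibiting $W^+(R)$ as a filtered colimit of finite $\pS$-modules, or failing that, by a direct comparison of proper Tate constructions using the flatness and the explicit description of $\pi_* W^+(R)$. Recall that for a perfect $\F_p$-algebra $R$, the spherical Witt vectors $W^+(R)$ are flat over $\pS$ with $\pi_0 W^+(R) = W(R)$ the ($p$-typical) Witt vectors and $\pi_n W^+(R) \cong W(R)\otimes_{\Z_p}\pi_n(\pS)$ for all $n$. First I would treat the case $R = \Fpbar$ (which is the case actually needed in \S\ref{sect:padic}): here $W^+(\Fpbar)$ is the $p$-completion of the colimit of $W^+(\F_{p^{n!}})$, and each $W^+(\F_{p^{n!}})$ is a finite free $\pS$-module of rank $n!$. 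Since the proper Tate construction $(-)^{\tau V}$ commutes with finite colimits (used already in Corollary \ref{cor:Fstab}) and the canonical map $\mathrm{can}^V$ is natural, $F_p$-stability passes to finite colimits; and it passes to $p$-completions since all spectra in sight are already $p$-complete and $\mathrm{can}^V$ is compatible with the completion maps. Hence $W^+(\Fpbar)$ is $F_p$-stable.

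For general perfect $R$, the same filtered-colimit argument will not literally work because $W^+(R)$ need not be a colimit of \emph{finite} $\pS$-modules; instead the plan is to argue that $\mathrm{can}^V \colon W^+(R) \to W^+(R)^{\tau V}$ is an equivalence by base change. Concretely, $W^+(R)^{\tau V}$ depends only on the underlying spectrum with trivial $V$-action, and since $W^+(R)$ is flat over $\pS$, one has $W^+(R) \simeq W^+(R)\otimes_{\pS}\pS$ and I would like an identification $W^+(R)^{\tau V} \simeq W^+(R)\otimes_{\pS}\bigl(\pS\bigr)^{\tau V}$ after $p$-completion. Granting such a projection formula, the map $\mathrm{can}^V$ for $W^+(R)$ is obtained from the map $\mathrm{can}^V$ for $\pS$ by applying $W^+(R)\otimes_{\pS}(-)$, and Theorem \ref{thm:agmsegal} gives that the latter is an equivalence, so the former is too. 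The projection formula holds because $(-)^{\tau V}$, by Remark \ref{rmk:propertatecofib}, is a finite totalization/total cofiber of the functors $X \mapsto X^{hH}$ for $H \subseteq V$, and group cohomology $X^{hH}$ for the trivial action is $\mathrm{Map}(BH_+, X)$ which commutes with tensoring by a flat (hence filtered colimit of perfect) module after $p$-completion, since $BH_+$ is a finite-type space; the finite total cofiber then also commutes with this base change.

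\textbf{Main obstacle.} The delicate point is the projection formula $W^+(R)^{\tau V}\simeq (W^+(R)\otimes_{\pS}\pS)^{\tau V}\simeq W^+(R)\otimes_{\pS}(\pS)^{\tau V}$ at the level of $p$-complete spectra: tensoring does not commute with the infinite products implicit in Tate constructions, so one must use that (a) $W^+(R)$ is flat and $p$-complete, (b) the relevant homotopy limits defining $(-)^{\tau V}$ are finite (a bounded totalization, by the finiteness of the poset $\mathcal{O}_V$ in Remark \ref{rmk:propertatecofib}), and (c) after $p$-completion flat base change commutes with these finite limits because $\pi_*W^+(R) = W(R)\otimes_{\Z_p}\pi_*\pS$ is $\Z_p$-flat. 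I expect that the cleanest route, and the one I would actually write, is to bypass the general case by noting that the only perfect $\F_p$-algebras needed in the paper are $\Fpbar$ (and finite fields), for which the finite-colimit argument of the first paragraph suffices; the general statement then follows by the same flat base change argument spelled out carefully, or by citing that $W^+(-)$ preserves filtered colimits of perfect $\F_p$-algebras and reducing to finitely generated perfect algebras, which are finite products of (perfections of) finite fields up to nilpotents — but since $R$ is perfect it is reduced, so finitely generated perfect $\F_p$-algebras are finite étale over $\F_p$, hence $W^+$ of them is finite over $\pS$, and one concludes by the colimit argument.
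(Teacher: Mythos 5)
There is a genuine gap, and it sits exactly where you flag the ``main obstacle'': every version of your argument has to commute $(-)^{\tau V}$ past an infinite filtered colimit or infinite direct sum followed by $p$-completion, and none of the justifications you offer actually does this. In the $\Fpbar$ case, $\colim_n W^+(\F_{p^{n!}})$ is \emph{not} $p$-complete, so ``all spectra in sight are already $p$-complete'' fails for the colimit, and the question is precisely whether $\can^V$ remains an equivalence after forming the infinite colimit and then $p$-completing; this is not formal, since $(-)^{\tau V}$ only commutes with \emph{finite} colimits. For general $R$, your projection formula reduces, via Remark \ref{rmk:propertatecofib}, to commuting $W^+(R)\otimes_{\pS}(-)$ past $(-)^{hW}$ for the subgroups $W\subseteq V$; but for $W\neq 0$ this is an infinite limit (over $BW$, i.e.\ a non-degenerating skeletal tower), and neither flatness of $\pi_*W^+(R)$ over $\Z_p$ nor the finiteness of the poset $\mathcal{O}_V$ lets a tensor product pass through such a limit, even $p$-adically; the finiteness of the poset only reduces you to the homotopy fixed points, which is where the problem lives. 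The input that actually makes this work --- and which is the real content of the proposition, since the finite-dimensional case is immediate --- is a second, quantitative use of the Segal conjecture: $S^{tW}$ is a finite wedge of suspension spectra of classifying spaces, so $\pi_*\bigl((S/p)^{tW}\bigr)$ is finite in each degree, and the Postnikov-tower and pro-constancy lemmas of Nikolaus--Scholze (I.2.6, I.2.9, III.1.8) then show that $\bigoplus_{\alpha\in I}(\pS)^{tV}\to\bigl(\bigoplus_{\alpha\in I}\pS\bigr)^{tV}$ is a $p$-completion and that $(-)^{tV}$, hence $(-)^{\tau V}$, is unchanged by $p$-completing the sum; this is Lemma \ref{lem:tatesum} and Corollary \ref{cor:tatesum} in the paper, and your proposal supplies no substitute for this finiteness/pro-constancy step.

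Your fallback reduction is also incorrect: a general perfect $\F_p$-algebra is not a filtered colimit of perfect algebras finite \'etale over $\F_p$. The perfect subalgebra generated by finitely many elements is the perfection of a finitely generated algebra, e.g.\ $\F_p[x^{1/p^{\infty}}]$, which is infinite-dimensional over $\F_p$; already the perfect closure of $\F_p(t)$ is a perfect field that is not a colimit of finite fields. The paper avoids any such structure theory by simply choosing an $\F_p$-basis of $R$, which exhibits the underlying spectrum of $W^+(R)$ as the $p$-completion of $\bigoplus_{\alpha\in I}\pS$; the proposition then follows from Theorem \ref{thm:agmsegal} combined with the sum/completion statement above. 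With that statement in hand your flat-base-change picture becomes correct, but without it the argument does not close.
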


Note that $F_p$-stability is a condition only on the underlying spectrum of $W^+(R)$, which is the $p$-completion of a direct sum of copies of $\pS$.  When $R$ is finite dimensional as an $\F_p$-vector space, the proposition is immediate because $(-)^{\tau V}$ commutes with finite colimits. The content of the proposition is that the $p$-completion of an \emph{arbitrary} direct sum of copies of $\pS$ remains $F_p$-stable.  We will need the following lemma.  

\begin{lem}\label{lem:tatesum}
Let $I$ be a set and let $V$ be a finite dimensional $\F_p$-vector space.  Then the following statements hold:
\begin{enumerate}
\item The natural map $$\bigoplus_{\alpha \in I} (\pS)^{tV} \to \big(\bigoplus_{\alpha \in I} \pS\big)^{tV}$$ is $p$-completion.  
\item The natural map $$\big(\bigoplus_{\alpha \in I} \pS\big)^{tV} \to \big((\bigoplus_{\alpha \in I} \pS\big)^{\wedge}_p)^{tV}$$ is an equivalence.  
\end{enumerate}
\end{lem}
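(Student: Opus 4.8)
The two statements are about computing the $C_p^{\times n}$-Tate construction of direct sums of copies of $\pS$, so the strategy is to reduce everything to the single fact that $(\pS)^{tV}$ is already $p$-complete (which follows from Theorem \ref{thm:agmsegal}, since $(\pS)^{\tau V}$ is built from the $(\pS)^{tW}$ for $W \subseteq V$ and $\pS$ is $p$-complete) together with the good behavior of the Tate construction on direct sums. For (1), the key input is that for a \emph{finite} group $G$, the Tate construction $(-)^{tG}: \Sp \to \Sp$ commutes with arbitrary direct sums up to $p$-completion when $G$ is a $p$-group; more precisely, $(-)^{tG}$ is an exact functor, it commutes with filtered colimits on bounded-below spectra up to issues only at the prime(s) dividing $|G|$, and in fact the norm cofiber sequence $\Sigma (E_{hG}) \to E^{hG} \to E^{tG}$ lets one analyze homotopy fixed points instead. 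The cleanest route: first treat $V = \F_p$ (so $tV = tC_p$), then induct on $\dim V$ using the iterated-Tate description $X^{tC_p^{\times n}} \simeq (\cdots(X^{tC_p})^{tC_p}\cdots)^{tC_p}$ available since each intermediate term is already $p$-complete (hence the proper Tate agrees with iterated ordinary Tate, cf. Remark \ref{rmk:verdier} and the oplax structure of Theorem \ref{thm:mainalgaction}).

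For the base case $V = \F_p$: I would use that $(-)^{hC_p}$ commutes with arbitrary coproducts after $p$-completion. Concretely, $\big(\bigoplus_\alpha \pS\big)^{hC_p}$ receives a map from $\bigoplus_\alpha (\pS)^{hC_p}$; since each $(\pS)^{hC_p}$ is $p$-complete and bounded below, and since for a $p$-group $G$ the functor $(-)^{hG}$ preserves the property of being a $p$-complete spectrum and commutes with filtered colimits of uniformly bounded-below spectra, one checks that the cofiber of $\bigoplus_\alpha (\pS)^{hC_p} \to \big(\bigoplus_\alpha \pS\big)^{hC_p}$ is $p$-adically trivial. Passing to the Tate quotient (quotienting by the norm from homotopy orbits, which \emph{does} commute with all colimits) and using that homotopy orbits commute with direct sums on the nose, statement (1) for $V = \F_p$ follows. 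One then feeds the $p$-complete spectrum $(\pS)^{tC_p}$ (or rather a direct sum of copies of it) into the next stage of the induction; the inductive hypothesis handles $(\bigoplus(\pS)^{tC_p^{\times(n-1)}})^{tC_p}$, and a diagram chase assembles the two $p$-completions into the single claimed $p$-completion $\bigoplus(\pS)^{tV} \to (\bigoplus \pS)^{tV}$.

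Statement (2) is then a formal consequence: $p$-completion $X \to X^\wedge_p$ induces an equivalence on $(-)^{tV}$ after $p$-completion (since the fiber of $X \to X^\wedge_p$ has trivial $p$-completion, hence trivial mod-$p$ homotopy, hence the proper Tate construction of it is $p$-adically trivial — here one uses that $(-)^{\tau V}$, being built from honest Tate constructions for $p$-groups, kills $p$-adically trivial spectra), and both $\big(\bigoplus_\alpha \pS\big)^{tV}$ and $\big((\bigoplus_\alpha \pS)^\wedge_p\big)^{tV}$ are \emph{already} $p$-complete: the latter because $(\pS)^{tW}$ is $p$-complete for all $W$ together with the fact that Tate of a $p$-complete bounded-below spectrum by a finite $p$-group is $p$-complete, and the former because by (1) it is the $p$-completion of something. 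Since a $p$-completion map between two $p$-complete spectra is an equivalence, (2) follows.

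\textbf{Main obstacle.} The delicate point is the interchange of the Tate construction with infinite direct sums: $(-)^{tG}$ does \emph{not} commute with arbitrary coproducts on the nose, only up to $p$-completion for $p$-groups, and one must be careful that the bounded-below hypothesis (all the spectra here are connective or nearly so) is genuinely used — this is exactly where Lin's theorem / the Segal conjecture is doing work behind the scenes, via the $p$-completeness of $(\pS)^{tV}$. I would expect the cleanest formulation to go through the cofiber sequence relating $E^{tG}$, $E^{hG}$, and $\Sigma E_{hG}$, reducing the whole question to the claim that $(-)^{hC_p}$ commutes with coproducts of uniformly bounded-below $p$-complete spectra after $p$-completion, which is standard.
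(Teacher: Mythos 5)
There is a genuine gap, and it is in the very first reduction. You claim the ``iterated-Tate description'' $X^{tC_p^{\times n}} \simeq (\cdots(X^{tC_p})^{tC_p}\cdots)^{tC_p}$ and propose to induct on $\dim V$ using it. This identity is false for $n \geq 2$. The left side is the \emph{ordinary} Tate construction $\mathrm{cofib}(X_{hV} \to X^{hV})$ for the group $V = C_p^{\times n}$, which is exactly what the lemma asserts something about; the iterated construction $(\cdots(X^{tC_p})^{tC_p}\cdots)^{tC_p}$ is (under $p$-completeness hypotheses) related instead to the \emph{proper} Tate construction $X^{\tau V}$ of Definition \ref{defn:propertate}, which is a different functor once $\dim V \geq 2$. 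A concrete sanity check: $\F_2^{t(C_2\times C_2)}$ is an honest Tate cohomology spectrum while $(\F_2^{tC_2})^{tC_2}$ has homotopy a doubly-Laurent $\F_2[[t_1^{\pm},t_2^{\pm}]]$-type ring; they are not equivalent. The whole point of having both Lemma \ref{lem:tatesum} (about $(-)^{tV}$) and the follow-up Corollary \ref{cor:tatesum} (about $(-)^{\tau V}$) is to separate these two functors: the corollary is deduced \emph{from} the lemma via the cofiber description of $(-)^{\tau V}$ in Remark \ref{rmk:propertatecofib}, so one cannot use the proper/iterated Tate to prove the lemma without circularity.

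A secondary issue is the assertion in your base case that ``for a $p$-group $G$ the functor $(-)^{hG}$ \ldots commutes with filtered colimits of uniformly bounded-below spectra.'' This is false in general: $F(BG_+,-)$ does not commute with infinite coproducts since $BG_+$ is not dualizable for nontrivial finite $G$. Some version of your intuition is recoverable — the error is controlled — but the hypothesis that makes it controllable is a finiteness statement, not merely bounded-belowness and $p$-completeness, and it requires justification.

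The paper's proof does not go through homotopy fixed points or induction on $\dim V$ at all. It reduces to a mod-$p$ statement via $p$-completeness of $(\bigoplus \pS)^{tV}$ (\cite[Lemma I.2.9]{NS}), then writes $(S/p)^{tV}$ as $\lim_n (\tau_{\leq n}S/p)^{tV}$ using \cite[Lemma I.2.6]{NS}, and then uses the Segal conjecture in a stronger way than just ``$p$-completeness of $(\pS)^{tV}$'': it gives that $S^{tV}$ is a finite wedge of $\Sigma^\infty_+ BG$'s, so $(S/p)^{tV}$ has \emph{finite} homotopy groups, and likewise each $(\tau_{\leq n}S/p)^{tV}$ has finite homotopy groups. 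Then \cite[Lemma III.1.8]{NS} upgrades the limit to a degreewise pro-constant tower, and pro-constant towers of finite abelian groups commute with infinite direct sums. That finiteness-into-pro-constancy step is the real engine, and it has no analogue in your sketch; nothing in your argument supplies the quantitative control needed to interchange a Tate-type limit with an infinite coproduct.
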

\begin{proof}

By \cite[Lemma I.2.9]{NS}, the target of (1) and both spectra in (2) are $p$-complete.  Therefore, it suffices to show that the two maps are equivalences after smashing with the Moore spectrum $\bS/p$.  This is clear for (2), so we address (1).  The statement is clear with $(-)^{tV}$ replaced by $(-)_{hV}$, so we may replace $(-)^{tV}$ by $(-)^{hV}$.  Since $(-)^{hV}$ commutes with inverse limits, we have that 
\begin{equation}\label{eqn:tatesum0}
\lim_n (\tau_{\leq n} (\bS/p))^{hV} \simeq (\bS/p)^{hV}.\end{equation}
  However, we can make a stronger statement about the convergence of this inverse limit.  Note that by the Segal conjecture, $\bS^{hV}$ is a finite wedge sum of spectra of the form $\Sigma^{\infty}_+ BG$ for finite groups $G$; thus, all the homotopy groups of $(\bS/p)^{hV}$ are finite.  Moreover, for each $n$, $\tau_{\leq n}(\bS/p)$ has a finite number of nonzero homotopy groups, each of which is finite; thus, by the homotopy fixed point spectral sequence, each term in the limit (\ref{eqn:tatesum0}) also has finite homotopy groups.  It follows from \cite[Lemma III.1.8]{NS} that the pro-system $\pi_i (\tau_{\leq n}\bS/p)^{hV}$ is pro-constant with value $\pi_i (\bS/p)^{hV}$.  

It follows that the pro-system $\lim_n \pi_i \bigoplus (\tau_{\leq n} (\bS/p))^{hV}$ is pro-constant with limit $\pi_i \bigoplus (\bS/p)^{hV}$.  On the other hand, by the Milnor exact sequence, this pro-constancy (for $\pi_i$ and $\pi_{i+1}$) implies that $\lim_n \pi_i \bigoplus (\tau_{\leq n}(\bS/p))^{hV} \simeq \pi_i \lim_n \bigoplus (\tau_{\leq n} (\bS/p))^{hV}.$  We relate this latter object to $\pi_i (\bigoplus \bS/p)^{hV}$ via a sequence of equivalences:  because homotopy fixed points commutes with filtered colimits of bounded above spectra, we have $$\pi_i \lim_n \bigoplus \big(\tau_{\leq n} (\bS/p)\big)^{hV} \simeq \pi_i \lim_n \big(\bigoplus \tau_{\leq n} (\bS/p)\big)^{hV}.$$ Since homotopy groups commute with infinite direct sums, we have $$\pi_i \lim_n \big(\bigoplus \tau_{\leq n} (\bS/p)\big)^{hV} \simeq \pi_i \lim_n \big(\tau_{\leq n}  \bigoplus \bS/p\big)^{hV}.$$ To finish, we note once again that $(-)^{hV}$ commutes with inverse limits.  
\end{proof}


\begin{cor}\label{cor:tatesum}
Let $I$ be a set and let $V$ be a finite dimensional $\F_p$-vector space.  Then the following statements hold:
\begin{enumerate}
\item The natural map $$\bigoplus_{\alpha \in I} (\pS)^{\tau V} \to \big(\bigoplus_{\alpha \in I} \pS\big)^{\tau V}$$ is $p$-completion.  
\item The natural map $$\big(\bigoplus_{\alpha \in I} \pS\big)^{\tau V} \to \big((\bigoplus_{\alpha \in I} \pS\big)^{\wedge}_p)^{\tau V}$$ is an equivalence.  
\end{enumerate}
\end{cor}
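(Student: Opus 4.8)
The plan is to deduce the corollary from Lemma \ref{lem:tatesum} by decomposing the proper Tate construction into pieces that are either built from homotopy orbits (which commute with arbitrary direct sums) or are ordinary Tate constructions for subgroups of $V$ (to which Lemma \ref{lem:tatesum} applies). The first observation is that, by Remark \ref{rmk:propertatecofib}, for a spectrum $X$ with $V$-action the spectrum $X^{\tau V}$ is the total cofiber of a finite diagram of the functors $(-)^{hH}$ for subgroups $H \leq V$; concretely it sits in a cofiber sequence $\colim_{\mathcal{O}_V^-}\mathcal{T}_X \to X^{hV} \to X^{\tau V}$. In particular $(-)^{\tau V}\colon \Sp \to \Sp$ is an exact functor, so it commutes with $(-)\wedge S/p$ and carries mod $p$ equivalences to mod $p$ equivalences.

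As in the proof of Lemma \ref{lem:tatesum}, I would first reduce both statements to statements modulo $p$: the target of (1) and both spectra in (2) are $p$-complete — here one uses that $\bigoplus_{\alpha}\pS$ is connective together with the proper Tate analogue of \cite[Lemma I.2.9]{NS} (which itself follows from that lemma and the cofiber sequence above, by induction on $|V|$) — so it suffices to show that the comparison maps become equivalences after smashing with $S/p$. Statement (2) is then immediate, exactly as in Lemma \ref{lem:tatesum}: the map $\bigoplus_{\alpha}\pS \to (\bigoplus_{\alpha}\pS)^{\wedge}_p$ is a mod $p$ equivalence, hence so is its image under the exact functor $(-)^{\tau V}$, and a mod $p$ equivalence between $p$-complete spectra is an equivalence.

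For statement (1), the key point is that $(-)^{\tau V}$ commutes with the direct sum $\bigoplus_{\alpha \in I}$ up to mod $p$ equivalence. For each subgroup $H \leq V$, applying Lemma \ref{lem:tatesum}(1) to the group $H$ shows that $\bigoplus_{\alpha}(\pS)^{tH} \to (\bigoplus_{\alpha}\pS)^{tH}$ is $p$-completion; since $\bigoplus_{\alpha}(\pS)_{hH} \to (\bigoplus_{\alpha}\pS)_{hH}$ is an equivalence (homotopy orbits commute with all colimits), the fiber sequence $(-)_{hH}\to(-)^{hH}\to(-)^{tH}$ shows that $\bigoplus_{\alpha}(\pS)^{hH} \to (\bigoplus_{\alpha}\pS)^{hH}$ is a mod $p$ equivalence. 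Now the functor $(-)^{hV}$ is built from these $(-)^{hH}$ (with $H = V$) and, via the cofiber sequence of Remark \ref{rmk:propertatecofib}, the colimit $\colim_{\mathcal{O}_V^-}\mathcal{T}_{(-)}$ is built from the $(-)^{hH}$ with $H \subsetneq V$ by colimits, which commute with $\bigoplus_{\alpha}$ and preserve objectwise mod $p$ equivalences. Hence, applying that cofiber sequence both to $X = \pS$ (and summing over $\alpha$) and to $X = \bigoplus_{\alpha}\pS$ and comparing, two of the three terms carry $\bigoplus_{\alpha}\pS$ to a spectrum mod $p$ equivalent to the sum over $\alpha$ of its value on $\pS$, and therefore so does the third: the map $\bigoplus_{\alpha}(\pS)^{\tau V} \to (\bigoplus_{\alpha}\pS)^{\tau V}$ is a mod $p$ equivalence, which combined with the $p$-completeness of the target is exactly the assertion that it is $p$-completion.

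The main obstacle is of a bookkeeping nature rather than a conceptual one: one must verify that the $p$-completeness inputs cited from \cite{NS} have the claimed proper-Tate analogues (for which the cofiber-sequence description of the proper Tate construction and an induction on $|V|$ suffice), and one must keep track of the fact that the relevant mod $p$ equivalences are preserved not only by the exact functor $(-)^{\tau V}$ but also by the colimits appearing in Remark \ref{rmk:propertatecofib} — which is where the hypothesis that $V$ is a finite $p$-group is used, so that all of its subgroups are again finite $p$-groups (indeed, elementary abelian) to which Lemma \ref{lem:tatesum} applies.
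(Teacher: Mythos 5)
Your overall strategy is sound, and for the mod-$p$ statements you take a slightly different route from the paper. Where the paper works with the cofiber sequence $(-)^{tV} \to (-)^{\tau V} \to \Sigma F$ (with $F$ a finite colimit of $((-)^{tW})_{hV/W}$, $W\subsetneq V$), showing term-by-term that $(-)^{tV}$ and $\Sigma F$ interact well with the direct sum, you instead use the cofiber sequence $\colim_{\mathcal{O}_V^-}\mathcal{T} \to (-)^{hV} \to (-)^{\tau V}$ directly and show that each $(-)^{hH}$ for $H\leq V$ commutes with direct sums up to mod-$p$ equivalence by sandwiching it between homotopy orbits and Tate; that is a valid and arguably tidier organization of the mod-$p$ step. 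For part (2), however, the paper's argument needs no $p$-completeness input at all (it just observes that $(-)^{tV}$ and $\Sigma F$ each send the $p$-completion map to an equivalence, hence so does $(-)^{\tau V}$), while your version phrases (2) as a mod-$p$ equivalence between $p$-complete spectra, which commits you to first knowing that $\big(\bigoplus_\alpha\pS\big)^{\tau V}$ is $p$-complete.

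That $p$-completeness claim is the genuine gap, and it is not mere bookkeeping. You propose to prove it ``by the cofiber sequence above, by induction on $|V|$,'' but the cofiber sequence $\colim_{\mathcal{O}_V^-}\mathcal{T} \to X^{hV} \to X^{\tau V}$ involves homotopy \emph{fixed points} $X^{hH}$ — including $H=e$ and $H=V$ — and these are not $p$-complete for bounded-below $X$ (already $X^{he}=X=\bigoplus_\alpha\pS$ fails for infinite $I$), so $p$-completeness of $X^{\tau V}$ cannot be read off from the other two terms. Nor does an inductive hypothesis about $X^{\tau W}$ for $W\subsetneq V$ help, since those spectra do not appear in that sequence. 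The argument one actually needs is the one the paper gives: $\big(\bigoplus_\alpha\pS\big)^{tV}$ is $p$-complete by Lemma \ref{lem:tatesum}, and $F\big(\bigoplus_\alpha\pS\big)$ is a finite colimit of spectra $\big(\big(\bigoplus_\alpha\pS\big)^{tW}\big)_{hV/W}$, each the homotopy orbit of a $p$-complete, bounded-below spectrum, and each shown to be $p$-complete by observing that $\pi_*$ of the $p$-power inverse limit tower depends only on a finite skeleton of $B(V/W)$, over which the relevant smash products with finite complexes are visibly $p$-complete. This finer decomposition — not an induction on $|V|$ — is the step missing from your outline.
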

\begin{proof}
We recall from Remark \ref{rmk:propertatecofib} that the proper Tate construction for $V$ is the cofiber of a map $C \to (-)^{hV}$ where $C$ is a finite colimit of functors of the form $((-)^{hW})_{hV/W}$ for subgroups $W\subset V$.  Letting $F$ denote the cofiber of the natural map $(-)_{hV} \to C$, we consider the diagram of functors
\begin{equation*}
\begin{tikzcd}
(-)_{hV} \arrow[r] \arrow[d] & (-)^{hV} \arrow[r] \arrow[d] & (-)^{tV}\arrow[d] \\
C \arrow[r] \arrow[d] &(-)^{hV} \arrow[r]\arrow[d] &(-)^{\tau V} \arrow[d]\\
F \arrow[r] & * \arrow[r]& \Sigma F \\
\end{tikzcd}
\end{equation*}
where the rows and columns are cofiber sequences.  By the above remarks, $F$ is a finite colimit of functors of the form $((-)^{tW})_{hV/W}$ for proper subgroups $W\subset V$.  It follows from Lemma \ref{lem:tatesum} that the functors $(-)^{tV}$ and $\Sigma F$ both have the property that they send the $p$-completion map $$\bigoplus \pS \to (\bigoplus \pS\big)^{\wedge}_p$$ to an equivalence.  Thus, $(-)^{\tau V}$ also has this property and part (2) is proved.  

To prove part (1), we first claim that $\big(\bigoplus \pS\big)^{\tau V}$ is $p$-complete.  We have seen above that $\big(\bigoplus \pS\big)^{tV}$ is $p$-complete, so it suffices to see that $F(\bigoplus \pS)$ is as well. For this, we show that  $$\big(\big(\bigoplus \pS\big)^{tW}\big)_{hV/W} \simeq \big(\bigoplus \pS\big)^{tW} \otimes B(V/W)$$ is $p$-complete for any subgroup $W\subset V$.  This amounts to checking that the inverse limit of the system
$$\cdots \xrightarrow{p} \big(\bigoplus \pS\big)^{tW}\otimes B(V/W) \xrightarrow{p} \big(\bigoplus \pS\big)^{tW}\otimes B(V/W) \xrightarrow{p} \big(\bigoplus \pS\big)^{tW}\otimes B(V/W)$$ is zero. Since $\big(\bigoplus \pS\big)^{tW}$ is bounded below, each homotopy group of this inverse limit only depends on a finite skeleton of $B(V/W)$.  Thus, the statement follows because the tensor product of the $p$-complete spectrum $\big(\bigoplus \pS\big)^{tW}$ with any finite complex is certainly $p$-complete.  

To finish, we claim that the natural map of (1) is an equivalence after tensoring with the Moore spectrum $\bS/p$.  Similarly to part (2), it suffices to prove the analogous statement for each functor $((-)^{tW})_{hV/W}$ in place of $(-)^{\tau V}$.  But this follows by Lemma \ref{lem:tatesum} and the fact that homotopy orbits commutes with infinite direct sums.  
\end{proof}

\begin{proof}[Proof of Proposition \ref{prop:sfpbar}]
Choose a basis $\{x_{\alpha} \}_{\alpha \in I}$ for $R$ as an $\F_p$-vector space.  This determines a map $\bigoplus_{\alpha \in I} \pS \to W^+(R)$ which exhibits $W^+(R)$ as the $p$-completion of $\bigoplus_{\alpha \in I} \pS$.  The proposition then follows by combining Theorem \ref{thm:agmsegal} with Corollary \ref{cor:tatesum}.  
\end{proof}

\subsection{The action of $B\Z_{\geq 0}$ on $F_p$-stable $\E_{\infty}$-rings}\label{sub:pf}

We now prove our main theorem.  

\begin{thm}[Theorem A]\label{thm:mainaction}
There is an action of $B\Z_{\geq 0}$ on the $\infty$-category $\CAlg^F_p$ of $F_p$-stable $\E_{\infty}$-rings for which $n \in \Z_{\geq 0}$ acts by the natural transformation $\varphi^n: \id \to \id$.
\end{thm}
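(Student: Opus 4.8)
The plan is to deduce Theorem~\ref{thm:mainaction} from Theorem~\ref{thm:mainalgaction} by a sequence of restrictions and localizations of monoidal $\infty$-categories, using the partial $K$-theory computation of Theorem~\ref{thm:KpartFp} at the last step. First I would observe that the oplax monoidal functor $\Theta:\mathcal{Q}\to\Fun(\CAlg,\CAlg)$ of Theorem~\ref{thm:mainalgaction} restricts along the full symmetric monoidal subcategory $\QVect\subset\mathcal{Q}$ spanned by the elementary abelian $p$-groups. The key point is that on this subcategory the oplax functor becomes \emph{strict} when we restrict the target to $F_p$-stable rings: for $V,W$ finite-dimensional $\F_p$-vector spaces and $A\in\CAlg^F_p$, the oplax structure map $A^{\tau(V\times W)}\to(A^{\tau W})^{\tau V}$ is an equivalence. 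Indeed, by Corollary~\ref{cor:Fstab}/Definition~\ref{defn:frobstable} the canonical maps $\can^V$ are equivalences on $F_p$-stable spectra, and one must check that $(-)^{\tau W}$ preserves $F_p$-stability (so that iterated proper Tate constructions land back in $\CAlg^F_p$) and that the comparison map is then an equivalence; this is most cleanly seen via the universal property of the proper Tate construction (Remark~\ref{rmk:verdier}), identifying $A^{\tau(V\times W)}$ with the total cofiber in Remark~\ref{rmk:propertatecofib} and matching it against the iterated construction. Thus we get an honest (strong) monoidal functor $\QVect\to\Fun(\CAlg^F_p,\CAlg^F_p)$, i.e.\ an action of $\QVect$ on $\CAlg^F_p$ for which a right morphism $(V\leftarrow V\hookrightarrow W)$ acts by the generalized Frobenius, and in particular $1\in\Z_{\geq 0}\cong\Hom_{\QVect}(0,\F_p)$ acts by $\varphi$.

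Next I would pass to the localization. Since the generalized canonical maps $\can^V_W$ are equivalences on every object of $\CAlg^F_p$, the left ($\mathcal{L}$-)morphisms of $\QVect$ act by natural equivalences, so the monoidal functor factors through the symmetric monoidal localization $\QVect[\mathcal{L}^{-1}]$ (using the universal property of localization in $\Cat_\infty$, applied compatibly with the symmetric monoidal structure — here one invokes that $\mathcal{L}$ is stable under the monoidal product, so that the localization inherits a symmetric monoidal structure and the functor to $\Fun(\CAlg^F_p,\CAlg^F_p)$ descends). By Corollary~\ref{cor:QequalsS} applied to $\C=\Vect$, there is an equivalence of symmetric monoidal $\infty$-categories $\QVect[\mathcal{L}^{-1}]\simeq BK^{\mathrm{part}}(\F_p)$, under which $1\in\Z_{\geq 0}\cong\pi_0 K^{\mathrm{part}}(\F_p)$ corresponds to the morphism $(0\hookrightarrow\F_p)$, i.e.\ to $\varphi$. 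Hence $\CAlg^F_p$ carries an action of the monoidal $\infty$-category $BK^{\mathrm{part}}(\F_p)$, equivalently a monoidal functor $BK^{\mathrm{part}}(\F_p)\to\Fun(\CAlg^F_p,\CAlg^F_p)$.

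Finally I would use Theorem~\ref{thm:KpartFp}. The $\E_\infty$-map $K^{\mathrm{part}}(\F_p)\to\pi_0 K^{\mathrm{part}}(\F_p)\cong\Z_{\geq 0}$ is an $\F_p$-homology equivalence; correspondingly the monoidal functor $BK^{\mathrm{part}}(\F_p)\to B\Z_{\geq 0}$ is an $\F_p$-homology equivalence of connected spaces. To get the action to factor through $B\Z_{\geq 0}$, I would use that $\CAlg^F_p$ consists of $p$-complete objects: the full subcategory $\Fun(\CAlg^F_p,\CAlg^F_p)$ (or the relevant space of natural transformations between $F_p$-stable-valued functors) is $p$-complete in the appropriate sense, so a map out of $BK^{\mathrm{part}}(\F_p)$ into it extends uniquely along the $p$-completion $BK^{\mathrm{part}}(\F_p)\to (B\Z_{\geq 0})^{\wedge}_p$; combined with the natural map $B\Z_{\geq 0}\to(B\Z_{\geq 0})^{\wedge}_p$ one obtains the desired monoidal functor $B\Z_{\geq 0}\to\Fun(\CAlg^F_p,\CAlg^F_p)$, i.e.\ an action of $B\Z_{\geq 0}$ on $\CAlg^F_p$. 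Tracking $1\in\Z_{\geq 0}$ through the chain of identifications shows it acts by $\varphi$, and hence $n$ acts by $\varphi^n$, as claimed. \textbf{The main obstacle} I expect is not any single step but the careful bookkeeping of symmetric monoidal structures through the localization and the $p$-completion/extension-of-scalars argument at the end: one must verify that "act by equivalences on a $p$-complete-valued functor category" really does let one extend a monoidal functor along a $p$-adic equivalence of $\E_\infty$-monoids, which requires knowing that the relevant mapping spaces in $\Fun(\CAlg^F_p,\CAlg^F_p)$ (or an appropriate variant built from $F_p$-stable rings) are $p$-complete — this is where the hypotheses on $\CAlg^F_p$ are really used, and it is the delicate part of turning the $\F_p$-homology statement of Theorem~\ref{thm:KpartFp} into an honest descent of the action.
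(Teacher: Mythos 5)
Your outline matches the paper's proof in all essential moves: restrict the oplax action of Theorem \ref{thm:mainalgaction} to $\QVect$ acting on $\CAlg^F_p$, observe that the oplax structure becomes strict there, invert the backward morphisms $\mathcal{L}$ to obtain an action of $\QVect[\mathcal{L}^{-1}] \simeq BK^{\mathrm{part}}(\F_p)$ via Corollary \ref{cor:QequalsS}, and then descend to $B\Z_{\geq 0}$ using Theorem \ref{thm:KpartFp}. The strictness step is simpler than you suggest: the oplax structure map $A^{\tau (U\oplus V)} \to (A^{\tau U})^{\tau V}$ fits into a commutative square with $\can^{U\oplus V}$, $\can^V$, and $(\can^U)^{\tau V}$, and since these three are equivalences for $F_p$-stable $A$, so is the fourth; you do not need to separately verify that $(-)^{\tau W}$ preserves $F_p$-stability or appeal to the Verdier-quotient universal property, since $\can^W$ being an equivalence already identifies $A^{\tau W}$ with $A$.

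The place where your write-up is materially vaguer than the paper is the final descent to $B\Z_{\geq 0}$, which you rightly flag as the delicate step. The paper does not form $(B\Z_{\geq 0})^{\wedge}_p$ (an awkward object for a non-grouplike $\E_\infty$-monoid) or argue via a categorical extension of scalars; instead it converts the lifting problem, via Remark \ref{rmk:connectedinftycat}, into one about $\E_2$-algebra maps of spaces. Concretely, one must show that the restriction
\begin{equation*}
\Hom_{\Alg_{\E_2}(\Spaces)}\bigl(\Z_{\geq 0},\ \End(\mathrm{id}_{\CAlg^F_p})\bigr) \longrightarrow \Hom_{\Alg_{\E_2}(\Spaces)}\bigl(K^{\mathrm{part}}(\F_p),\ \End(\mathrm{id}_{\CAlg^F_p})\bigr)
\end{equation*}
is an equivalence. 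This holds because $\E_2$-algebra mapping spaces are computed as a limit of ordinary mapping spaces out of finite products of the source (and products of $\F_p$-homology equivalences are $\F_p$-homology equivalences), together with the fact that $\End(\mathrm{id}_{\CAlg^F_p})$ is a $p$-complete space --- the latter verified by exhibiting it as a limit of spaces of $\E_\infty$-ring maps between $p$-complete spectra. So your intuition that $p$-completeness of the endomorphism space is the crucial hypothesis is exactly right; the precise mechanism, however, is the operadic mapping-space computation on the level of $\E_2$-spaces, not a $p$-completion of the source monoidal $\infty$-category.
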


\begin{proof}
The oplax monoidal functor of Theorem \ref{thm:mainalgaction} restricts to an oplax monoidal functor $$\QVect \to \Fun(\CAlg,\CAlg).$$  By definition, the functor $(-)^{\tau V}$ fixes any $F_p$-stable $\E_{\infty}$-ring, so we obtain an oplax monoidal functor $$\Theta_p: \QVect \to \Fun(\CAlg^F_p,\CAlg^F_p).$$  In fact, for $\F_p$-vector spaces $U,V$ and an $\E_{\infty}$-ring $A$, the oplax structure map $A^{\tau U\oplus V} \to (A^{\tau U})^{\tau V}$ fits into a square 
\begin{equation*}
\begin{tikzcd}
A \arrow[r,"\can^V"]\arrow[d,"\can^{U\oplus V}"] & A^{\tau V} \arrow[d,"(\can^U)^{\tau V}"] \\
A^{\tau U\oplus V}\arrow[r] & (A^{\tau U})^{\tau V}.
\end{tikzcd}
\end{equation*}
When $A$ is $F_p$-stable, the three labeled maps are equivalences, and thus the bottom map is an equivalence as well.  It follows that the functor $\Theta_p$ is (strong) monoidal, rather than just oplax monoidal. Moreover, by the definition of $F_p$-stable, $\Theta_p$ has the property that the backward morphisms $\mathcal{L} \subset \mathrm{Mor}(\QVect)$ are sent to equivalences. Thus, by \cite[Proposition 4.1.7.4]{HA}, $\Theta_p$ extends to a functor of monoidal $\infty$-categories $$\QVect \lbrack \mathcal{L}^{-1}\rbrack \to \Fun(\CAlg^F_p, \CAlg^F_p).$$

By Corollary \ref{cor:QequalsS} and Proposition \ref{prop:cssinterp}, we have an equivalence of $\infty$-categories $BK^{\mathrm{part}}(\F_p) \simeq \QVect [\mathcal{L}^{-1}].$  The natural map $K^{\mathrm{part}}(\F_p) \to \pi_0(K^{\mathrm{part}}(\F_p)) \simeq \Z_{\geq 0}$ induces the diagram of monoidal $\infty$-categories given by the solid arrows:
\begin{equation*}
\begin{tikzcd}
BK^{\mathrm{part}}(\F_p)  \arrow[r]\arrow[d] &  \Fun(\CAlg^F_p, \CAlg^F_p). \\
B\Z_{\geq 0} \arrow[ru, dashed] &
\end{tikzcd}
\end{equation*}

It suffices to exhibit a monoidal functor filling in the dotted arrow.  By Remark \ref{rmk:connectedinftycat}, this is equivalent to providing a lift in the diagram of $\E_2$-spaces 
\begin{equation*}
\begin{tikzcd}
K^{\mathrm{part}}(\F_p) \arrow[r]\arrow[d] & \End (\mathrm{id}_{\CAlg^F_p}) \\
\Z_{\geq 0} \arrow[ru, dashed]  & 
\end{tikzcd}
\end{equation*}
where $\End (\mathrm{id}_{\CAlg^F_p}) $ denotes the $\E_2$-monoidal space of endomorphisms of the identity functor on the $\infty$-category $\CAlg^F_p$.  
By Theorem \ref{thm:KpartFp}, the vertical map $K^{\mathrm{part}}(\F_p) \to \Z_{\geq 0}$ induces an isomorphism in $\F_p$-homology.  We would like to use this to show that the restriction map $$\Hom_{\Alg_{\E_2}(\Spaces)}(\Z_{\geq 0}, \End (\mathrm{id}_{\CAlg^F_p})) \to \Hom_{\Alg_{\E_2}(\Spaces)}(K^{\mathrm{part}}(\F_p), \End (\mathrm{id}_{\CAlg^F_p}))$$ is an equivalence.  Note that for $\E_2$-spaces $X$ and $Y$, the space of $\E_2$-maps $X\to Y$ is computed as a limit of mapping spaces from products of copies of $X$ to $Y$. Since products of $\F_p$-homology isomorphisms are $\F_p$-homology isomorphisms, it suffices to show that $\End(\mathrm{id}_{\CAlg^F_p})$ is a $p$-complete space.

But $\End(\mathrm{id}_{\CAlg^F_p})$ can be written as a limit of spaces of the form $\Hom_{\CAlg^F_p}(A,B)$ for $p$-complete $\E_{\infty}$-rings $A,B$ (cf. \cite[Proposition 5.1]{GepHaugNik} or \cite[Proposition 2.3]{Glasman}). 
These spaces, in turn, are each the limit of mapping spaces between $p$-complete spectra, which are $p$-complete.  Since the full subcategory of $p$-complete spaces is closed under limits, we conclude that $\End(\mathrm{id}_{\CAlg^F_p})$ is $p$-complete and the proof is complete.
\end{proof}

One can also prove a variant of this theorem for general global algebras:

\begin{thm}\label{thm:gloalgpartK}
\leavevmode
\begin{enumerate}
\item The $\infty$-category $\CAlg^{\Glo}$ of global algebras admits an action of the monoidal $\infty$-category $BK^{\mathrm{part}}(\Z)$ for which an abelian group $G$ acts by the Frobenius $\varphi_G: \id \to \id$.  
\item The full subcategory $\CAlg^{\Glo}_p\subset \CAlg^{\Glo}$ of global algebras with $p$-complete underlying $\E_{\infty}$-ring admits an action of $B\Z_{\geq 0}$ for which $1\in \Z_{\geq 0}$ acts by the Frobenius (for the group $C_p$).  
\end{enumerate}
\end{thm}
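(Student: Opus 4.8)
The plan is to run the argument of Theorem~\ref{thm:mainaction} again, but starting from the \emph{strong} action of $\mathcal{Q}$ on $\CAlg^{\Glo}$ provided by Theorem~\ref{thm:Qactgloalg} rather than from the oplax action on $\CAlg$. For part (1), I would first restrict this action along the inclusion of the full monoidal subcategory $\mathcal{Q}^{\mathrm{ab}}\subseteq\mathcal{Q}$ spanned by the finite abelian groups; by Remark~\ref{rmk:Qabelian}, $\mathcal{Q}^{\mathrm{ab}}$ is Quillen's $Q$-construction on the exact category of finite abelian groups, and we write $K^{\mathrm{part}}(\Z)$ for the partial $K$-theory of this category (equivalently, of finite-length $\Z$-modules). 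The key point is that the backward morphisms $\mathcal{L}\subseteq\mathrm{Mor}(\mathcal{Q}^{\mathrm{ab}})$ act on $\CAlg^{\Glo}$ by \emph{equivalences}: unwinding Observation~\ref{obs:twist}(2), a surjection $G\twoheadrightarrow K$ acts on a section $A$ through the structure map of $A$ along the left morphism $(X\times BK\leftarrow X\times BG\xrightarrow{=}X\times BG)$ of $\Glo$, which is an equivalence precisely because a global algebra is coCartesian over the left morphisms by Definition~\ref{defn:gloalg}. (In fact one checks that $(-)^{G}$ is equivalent to the identity functor on $\CAlg^{\Glo}$ and that $\can^{G}$ is the identity, so that the whole action is objectwise trivial and the right morphism $(*\hookrightarrow G)$ acts by the Frobenius endomorphism $\varphi_{G}$.) Hence, by \cite[Proposition 4.1.7.4]{HA}, the action factors through a monoidal functor $\mathcal{Q}^{\mathrm{ab}}[\mathcal{L}^{-1}]\to\Fun(\CAlg^{\Glo},\CAlg^{\Glo})$, and $\mathcal{Q}^{\mathrm{ab}}[\mathcal{L}^{-1}]\simeq BK^{\mathrm{part}}(\Z)$ by Corollary~\ref{cor:QequalsS} together with Proposition~\ref{prop:cssinterp}. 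Tracing through this identification, the class $[G]\in\pi_{0}K^{\mathrm{part}}(\Z)$ acts by $\varphi_{G}$, as desired.

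For part (2), I would instead restrict the $\mathcal{Q}$-action to $\QVect\subseteq\mathcal{Q}$. Since $(-)^{V}\simeq\id$ on global algebras, this action preserves the full subcategory $\CAlg^{\Glo}_{p}$ of global algebras with $p$-complete underlying $\E_{\infty}$-ring, and as in part (1) the backward morphisms act invertibly, so the action descends to an action of $\QVect[\mathcal{L}^{-1}]\simeq BK^{\mathrm{part}}(\F_p)$ on $\CAlg^{\Glo}_{p}$ (Corollary~\ref{cor:QequalsS}). To pass from $BK^{\mathrm{part}}(\F_p)$ to $B\Z_{\geq 0}$, I would copy the endgame of the proof of Theorem~\ref{thm:mainaction}: by Remark~\ref{rmk:connectedinftycat} it suffices to lift the map of $\E_{2}$-spaces $K^{\mathrm{part}}(\F_p)\to\End(\id_{\CAlg^{\Glo}_{p}})$ along the $\F_p$-homology equivalence $K^{\mathrm{part}}(\F_p)\to\Z_{\geq 0}$ of Theorem~\ref{thm:KpartFp}, and for this it is enough that $\End(\id_{\CAlg^{\Glo}_{p}})$ be a $p$-complete space. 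The equivariant input needed here is that $\triv^{G}$ carries $p$-complete spectra to $p$-complete genuine $G$-spectra, so that each value $A(BG)\simeq\triv^{G}A(*)$ of a global algebra with $A(*)$ $p$-complete is itself $p$-complete; since mapping spaces between $p$-complete genuine equivariant spectra are $p$-complete and the full subcategory of $p$-complete spaces is closed under limits, $\End(\id_{\CAlg^{\Glo}_{p}})$ is $p$-complete. Under the resulting $B\Z_{\geq 0}$-action, $1\in\Z_{\geq 0}$ corresponds to $[\F_p]\in\pi_{0}K^{\mathrm{part}}(\F_p)$ and hence acts by the Frobenius $\varphi_{C_{p}}$.

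Most of this is formal bookkeeping once Theorems~\ref{thm:Qactgloalg}, \ref{thm:QequalsS}, and \ref{thm:KpartFp} are in place; the two points that genuinely require an argument are (i) that the backward (left) morphisms act by equivalences on $\CAlg^{\Glo}$ --- which is a careful reading of the coCartesian condition of Definition~\ref{defn:gloalg} against the explicit formulas of Observation~\ref{obs:twist} --- and (ii) the $p$-completeness of $\End(\id_{\CAlg^{\Glo}_{p}})$, which rests on $\triv^{G}$ commuting with $p$-completion; I expect (ii) to be the main obstacle. A minor additional point is to fix the convention identifying $\mathcal{Q}^{\mathrm{ab}}[\mathcal{L}^{-1}]$ with $BK^{\mathrm{part}}(\Z)$, i.e.\ the meaning of $K^{\mathrm{part}}(\Z)$ in the statement of part (1).
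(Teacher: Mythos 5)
Your proposal is correct and is essentially the paper's own proof: the paper likewise restricts the strong action of Theorem \ref{thm:Qactgloalg} to the abelian groups, observes that the left morphisms are inverted (precisely the coCartesian condition of Definition \ref{defn:gloalg} you invoke via Observation \ref{obs:twist}), applies Corollary \ref{cor:QequalsS} to get the $BK^{\mathrm{part}}$-action, and proves part (2) exactly as you do, by rerunning the endgame of Theorem \ref{thm:mainaction} with $K^{\mathrm{part}}(\F_p)\to\Z_{\geq 0}$ and $p$-completeness of $\End(\id_{\CAlg^{\Glo}_p})$. The extra details you supply---the objectwise triviality of $(-)^G$ on global algebras and the $p$-completeness check via $\triv^G$---are points the paper leaves implicit, and your reading of $K^{\mathrm{part}}(\Z)$ as the partial $K$-theory of finite abelian groups matches the paper's intent.
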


\begin{proof}
To see the first part, restrict the action of \ref{thm:Qactgloalg} to the full subcategory of $\mathcal{Q}$ spanned by the abelian groups (cf. Remark \ref{rmk:Qabelian}).  This inverts the left morphisms and thus, by Corollary \ref{cor:QequalsS}, gives an action of $BK^{\mathrm{part}}(\Z)$.  The proof of the second part is exactly analogous to the proof of Theorem \ref{thm:mainaction}.  
\end{proof}

\subsection{Perfect $\E_{\infty}$-rings and the action of $S^1$}\label{sub:perfect}
\begin{defn}
An $\E_{\infty}$-ring $A$ is \emph{$p$-perfect} if $A$ is $F_p$-stable and the Frobenius map $\varphi  : A \to A^{tC_p}$ is an equivalence.  We will denote the $\infty$-category of $p$-perfect $\E_{\infty}$-rings by $\CAlg_p^{\mathrm{perf}}$.  
\end{defn}

\begin{prop}\label{prop:perfcrit}
Let $A$ be a $p$-perfect $\E_{\infty}$-ring.  Then for all finite dimensional $\F_p$-vector spaces $V$, the Frobenius map for $V$ (Construction \ref{cnstr:genfrobmaps}) $$\varphi^V:A \to A^{\tau V}$$ is an equivalence.  
\end{prop}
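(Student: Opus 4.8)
The plan is to reduce the claim to the one-dimensional case $V=\F_p$, where it is exactly the definition of $p$-perfect. First, since $A$ is $F_p$-stable, the canonical map $\can^V\colon A\to A^{\tau V}$ is an equivalence for every finite-dimensional $\F_p$-vector space $V$ (Definition \ref{defn:frobstable}, Construction \ref{cnstr:gencanmaps}); moreover, on the subcategory $\CAlg^F_p$ the functor $(-)^{\tau V}$ is canonically equivalent to the identity via $\can^V$, so $\varphi^V$ is an equivalence on $A$ if and only if the natural endomorphism $\psi_V:=(\can^V)^{-1}\circ\varphi^V$ of $\mathrm{id}_{\CAlg^F_p}$ is an equivalence when evaluated at $A$. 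The point is that $\psi_V$ is precisely what the $Q$-construction attaches to the object $V$: under the (by the proof of Theorem \ref{thm:mainaction}, strong) monoidal functor $\Theta_p\colon\QVect\to\Fun(\CAlg^F_p,\CAlg^F_p)$ of Theorem \ref{thm:mainalgaction}, the right morphism $\sigma_V=(\ast\leftarrow\ast\hookrightarrow V)$ goes to $\varphi^V$ while the left morphism $\can^V$ is inverted, so after identifying $V$ with $\ast$ in $\QVect[\mathcal{L}^{-1}]$ along $\can^V$, the morphism $\sigma_V$ becomes the endomorphism $\psi_V$.

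Next I would identify $\psi_V$ with an iterate of the Frobenius. Recall from the proof of Theorem \ref{thm:mainaction} that $\Theta_p$ factors through the equivalence $\QVect[\mathcal{L}^{-1}]\simeq BK^{\mathrm{part}}(\F_p)$ (Corollary \ref{cor:QequalsS}) and then, via the $\F_p$-homology equivalence of Theorem \ref{thm:KpartFp}, through $B\Z_{\geq0}$; under $K^{\mathrm{part}}(\F_p)\to\pi_0K^{\mathrm{part}}(\F_p)\simeq\Z_{\geq0}$ the class of $V$ is $\dim_{\F_p}V$ (Proposition \ref{prop:Kpart0}). Tracing $\sigma_V$ through these identifications gives that $\psi_V$ is the $(\dim_{\F_p}V)$-fold composite of $\psi_{\F_p}=(\can^{\F_p})^{-1}\circ\varphi^{\F_p}$, i.e.\ the Frobenius natural transformation that $1\in\Z_{\geq0}$ acts by. Alternatively — avoiding any appeal to Theorem \ref{thm:KpartFp} — one can argue by induction on $\dim_{\F_p}V$: writing $V\cong W\oplus\F_p$, the span $\sigma_V$ is the monoidal product of $\sigma_W$ and $\sigma_{\F_p}$ in $\QVect$, so strong monoidality of $\Theta_p$ on $\CAlg^F_p$ (in particular the fact, established in the proof of Theorem \ref{thm:mainaction}, that the structure maps $A^{\tau(W\oplus\F_p)}\to(A^{\tau W})^{\tau\F_p}$ are equivalences for $F_p$-stable $A$) expresses $\varphi^V_A$ as the composite of $\varphi^W_A\colon A\to A^{\tau W}$, the map $\varphi^{\F_p}_{A^{\tau W}}\colon A^{\tau W}\to(A^{\tau W})^{\tau\F_p}$, and the structure equivalence $(A^{\tau W})^{\tau\F_p}\simeq A^{\tau V}$.

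Finally I would conclude. For the base case one needs that $\psi_{\F_p}$ is an equivalence on $A$: this holds because $\varphi^{\F_p}\colon A\to A^{tC_p}$ is an equivalence by the definition of $p$-perfect, while $\can^{\F_p}\colon A\to A^{tC_p}$ is an equivalence by $F_p$-stability (Example \ref{exm:introlin}, Theorem \ref{thm:agmsegal}). For the inductive route one additionally needs that $A^{\tau W}$ is again $p$-perfect; it is $F_p$-stable, being equivalent to $A$ via $\can^W$, and naturality of the transformation $\varphi^{\F_p}$ along the equivalence $\can^W\colon A\xrightarrow{\sim}A^{\tau W}$ forces $\varphi^{\F_p}_{A^{\tau W}}$ to be an equivalence because $\varphi^{\F_p}_A$ is. In either approach, $\psi_V$ evaluated at $A$ is then a composite of $\dim_{\F_p}V$ equivalences, so $\varphi^V\colon A\to A^{\tau V}$ is an equivalence. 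The only real work is the compatibility bookkeeping in the second paragraph — matching $\sigma_V$, $\varphi^V$, and the class $\dim_{\F_p}V$ across the chain $\QVect\to\QVect[\mathcal{L}^{-1}]\simeq BK^{\mathrm{part}}(\F_p)\to B\Z_{\geq0}$, or in the inductive version the whiskering/structure-map identification of $\Theta_p(\sigma_W\oplus\sigma_{\F_p})$; everything else is formal.
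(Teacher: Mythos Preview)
Your proposal is correct, and your inductive route is essentially the paper's argument. The paper does it in a single step rather than by induction: writing $V\cong C_p^{\times n}$, it observes that both the right morphism $(*\leftarrow *\hookrightarrow C_p^{\times n})$ and the left morphism $(*\twoheadleftarrow C_p^{\times n}=C_p^{\times n})$ are $n$-fold monoidal products in $\QVect$, so the oplax structure of Theorem~\ref{thm:mainalgaction} yields a commutative diagram
\[
\begin{tikzcd}
 & ((A^{tC_p})^{\cdots})^{tC_p} & \\
A\arrow[ru, "\varphi^{\circ n}"] \arrow[r, "\varphi^{V}",swap] & A^{\tau V}\arrow[u] & A \arrow[l, "\can^{V}"]\arrow[lu,"\can^{\circ n}",swap]
\end{tikzcd}
\]
with the vertical arrow the (op)lax structure map. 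The right triangle shows the vertical map is an equivalence (both $\can$'s are, by $F_p$-stability), and then the left triangle gives the result since $\varphi^{\circ n}$ is a composite of equivalences. Your induction unwinds this one factor at a time; your handling of ``$A^{\tau W}$ is again $p$-perfect'' via naturality of $\varphi$ along $\can^W$ is exactly the implicit step needed to know $\varphi^{\circ n}$ is an equivalence.

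Your first route through the $B\Z_{\geq 0}$-action works but is heavier than necessary: it invokes Theorem~\ref{thm:KpartFp} (the $\F_p$-homology computation of $K^{\mathrm{part}}(\F_p)$) to identify $\psi_V$ with $\psi_{\F_p}^{\dim V}$, whereas the paper only needs the oplax monoidal structure of Theorem~\ref{thm:mainalgaction}. The compatibility bookkeeping you flag --- matching $\sigma_V$ to the class $[V]=\dim V$ across $\QVect\to BK^{\mathrm{part}}(\F_p)\to B\Z_{\geq 0}$ --- is genuine and would require some care, so the direct diagram chase is cleaner.
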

\begin{proof}
It suffices to show that $\varphi^{C_p^{\times n}}:A \to A^{\tau C_p^{\times n}}$ is an equivalence for all $n$.  Note that inside $\QVect$, the span $(*\leftarrow C_p^{\times n} \rightarrow C_p^{\times n})$ is the $n$-fold sum of $(*\leftarrow C_p \rightarrow C_p)$, and similarly $(*\leftarrow * \rightarrow C_p^{\times n}) = (* \leftarrow * \rightarrow C_p)^{\times n}$.  It follows from Theorem \ref{thm:mainalgaction} that there is a commutative diagram
\begin{equation*}
\begin{tikzcd}
 & ((A^{tC_p})^{\cdots})^{tC_p}& \\
A\arrow[ru, "\varphi^{\circ n}"] \arrow[r, "\varphi^{C_p^{\times n}}",swap] & A^{\tau C_p^{\times n}} \arrow[u]& A. \arrow[l, "\mathrm{can}^{C_p^{\times n}}"]\arrow[lu,"\mathrm{can}^{\circ n}",swap]\\
\end{tikzcd}
\end{equation*}
But all the outer arrows except for $\varphi^{C_p^{\times n}}$ are assumed to be equivalences, so that one is as well. 
\end{proof}

\begin{exm}
The $p$-complete sphere $\pS$ is $p$-perfect.  This is because the Frobenius is a map of $\E_{\infty}$-rings, and since $\pS$ is the initial $p$-complete $\E_{\infty}$-ring, the Frobenius must be the unit map for $(\pS)^{tC_p}$, which we have seen is an equivalence (e.g. \Cref{exm:introlin}).  
\end{exm}

\begin{exm}\label{exm:cochainsperfect}
Let $A\in \CAlg_p^{\mathrm{perf}}$ and $X$ be a finite or $p$-complete finite space.  Then the mapping spectrum $A^X$ is also a $p$-perfect $\E_{\infty}$-ring. First, since any space is canonically an $\E_{\infty}$-coalgebra in spaces, the construction $X \mapsto A^X$ takes values in $\E_{\infty}$-rings, and takes colimits of spaces to limits of $\E_{\infty}$-rings (which are computed in spectra).  Since the proper Tate construction commutes with finite limits and any ($p$-complete) finite space is built out of a finite number of ($p$-complete) spheres, the general case follows from the case when $X$ is a point, which is true by assumption.  In particular, for any ($p$-complete) finite space $X$, the cochain algebra $(\pS)^X$ is a $p$-perfect $\E_{\infty}$-ring.  
\end{exm}

\begin{exm}\label{exm:sphericalwittperfect}
Let $R$ be a (discrete) perfect $\F_p$-algebra.  Then the $\E_{\infty}$-ring $W^+(R)$ of spherical Witt vectors is $p$-perfect.  Since it is $F_p$-stable by Proposition \ref{prop:sfpbar}, it suffices to check that the Frobenius map is an equivalence.  Since $W^+(R)$ is $F_p$-stable, we may identify the Frobenius map with an endomorphism $\tilde{\varphi}: W^+(R) \to W^+(R)$ of $\E_{\infty}$-rings.  Such an endomorphism is determined on $\pi_0$ by the defining properties of spherical Witt vectors.  By comparing with the $\E_{\infty}$-Frobenius map for $\pi_0(W^+(R))$, we see that $\tilde{\varphi}$ is in fact the map induced by the Frobenius on $R$, and is therefore an equivalence because $R$ was assumed to be perfect.  
\end{exm}

\begin{exm}
The inclusion $\CAlg^F_p \subset \CAlg^{\mathrm{perf}}_p$ is proper.  For instance, consider  $\pS \oplus M$, the trivial square zero extension of $\pS$ by a nonzero spectrum $M$ which is finite over $\pS$.  While this ring is $F_p$-stable by \Cref{cor:Fstab}, we claim that it is not $p$-perfect.  By definition, the multiplication map $(\pS \oplus M)^{\otimes p}\to \pS \oplus M$ is $C_p$-equivariantly null on the $M^{\otimes p}$ summand in the source.  After applying $(-)^{tC_p}$ and using that $(-)^{tC_p}$ kills free orbits of $C_p$, the multiplication map becomes a map
\[
(\pS^{\otimes p})^{tC_p} \oplus (M^{\otimes p})^{tC_p} \simeq ((\pS \oplus M)^{\otimes p})^{tC_p} \to (\pS \oplus M)^{tC_p},
\]
which must then factor through $\pS^{tC_p}$ by the first observation.  Consequently, the $\E_{\infty}$-Frobenius on $\pS\oplus M$ factors through $\pS^{tC_p} \simeq \pS$, and therefore cannot be an equivalence.  
\end{exm}

The Frobenius action further simplifies when restricted to $p$-perfect algebras.  By Theorem \ref{thm:mainaction}, we obtain a monoidal functor $$B\Z_{\geq 0} \to \Fun(\CAlg^{\mathrm{perf}}_p,\CAlg^{\mathrm{perf}}_p). $$  By the definition of $p$-perfect, this restricted functor has the property that every morphism in $B\Z_{\geq 0}$ is sent to an equivalence.  It therefore factors through the group completion map $B\Z_{\geq 0}\to B\Z \simeq S^1$, and so we have:

\begin{cor}[Theorem $\text{A}^{\circ}$]\label{cor:perfectaction}
The $\infty$-category $\CAlgperfp$ of $p$-perfect $\E_{\infty}$-rings admits an action of $S^1$ whose monodromy induces the Frobenius automorphism on each object.
\end{cor}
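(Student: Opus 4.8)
The plan is to deduce this directly from Theorem~\ref{thm:mainaction} by a formal group‑completion argument, with essentially no new homotopy‑theoretic input. First I would observe that the full subcategory $\CAlgperfp\subseteq\CAlg^F_p$ is preserved by the $B\Z_{\geq 0}$‑action of Theorem~\ref{thm:mainaction}: that action is carried by the identity endofunctor of $\CAlg^F_p$ together with the natural transformation $\varphi\colon\id\to\id$, so restricting along the inclusion yields a monoidal functor $B\Z_{\geq 0}\to\Fun(\CAlgperfp,\CAlgperfp)$ in which $1\in\Z_{\geq 0}$ again acts by $\varphi$. Under the equivalence of Remark~\ref{rmk:connectedinftycat}, this monoidal functor is the same datum as an $\E_2$‑map $\Z_{\geq 0}\to\End(\id_{\CAlgperfp})$.

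Next I would use the definition of $p$‑perfect. For every $p$‑perfect $\E_\infty$‑ring $A$ the Frobenius $\varphi_A\colon A\to A^{tC_p}$ is an equivalence, and since $A$ is in particular $F_p$‑stable the canonical map identifies $A^{tC_p}$ with $A$; hence $\varphi\in\End(\id_{\CAlgperfp})$ is a natural \emph{iso}morphism, and so is each power $\varphi^n$. Therefore the $\E_2$‑map $\Z_{\geq 0}\to\End(\id_{\CAlgperfp})$ lands in the maximal subgroup $\End(\id_{\CAlgperfp})^{\times}$, which is a grouplike $\E_2$‑space. By the universal property of group completion the map factors uniquely through $\Z_{\geq 0}^{\mathrm{gp}}=\Z$; translating back along Remark~\ref{rmk:connectedinftycat} — equivalently, using that $B\Z_{\geq 0}\to B\Z$ exhibits $S^1$ as the localization of $B\Z_{\geq 0}$ at all of its morphisms, cf.\ \cite[Proposition 4.1.7.4]{HA} — this produces a monoidal functor $S^1\simeq B\Z\to\Fun(\CAlgperfp,\CAlgperfp)$, i.e.\ an $S^1$‑action on $\CAlgperfp$. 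By construction its monodromy, namely the automorphism of $\id_{\CAlgperfp}$ obtained by traversing the generator of $\pi_1S^1$, is $\varphi$, which on each object $A$ is the Frobenius automorphism of $A$.

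The substantive input is of course Theorem~\ref{thm:mainaction} (and hence Theorems~\ref{thm:QequalsS} and~\ref{thm:KpartFp}); once that $B\Z_{\geq 0}$‑action is available the present statement is a soft consequence. The only step that requires a little care is carrying out the ``invert and factor through group completion'' passage at the level of monoidal $\infty$‑categories rather than of underlying spaces — i.e.\ knowing that the monoidal localization $B\Z_{\geq 0}\to B\Z$ has the expected universal property among monoidal functors whose target inverts the relevant morphisms — but this is exactly the content of the localization result already invoked in the proof of Theorem~\ref{thm:mainaction}, so no new obstacle arises.
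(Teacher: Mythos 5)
Your argument is correct and matches the paper's: both restrict the $B\Z_{\geq 0}$-action of Theorem~\ref{thm:mainaction} to $\CAlgperfp$, observe that the Frobenius is now invertible there, and factor through the group completion $B\Z_{\geq 0}\to B\Z\simeq S^1$. The paper records this same reasoning in a single paragraph preceding the corollary (and separately remarks that one could also deduce it directly from Theorem $\text{A}^{\natural}$ together with Quillen's computation of $K(\F_p)$, bypassing partial $K$-theory entirely).
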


\begin{rmk}
Corollary \ref{cor:perfectaction} can also be deduced directly from Theorem \ref{thm:mainalgaction} using Quillen's computation of $K(\F_p)$ (Theorem \ref{thm:quillenKFp}), and thus does not logically depend on the results of Sections \ref{sect:partK} and \ref{sect:KpartFp}.  It does not, however, imply Theorem \ref{thm:mainaction}, which applies to the larger class of $F_p$-stable $\E_{\infty}$-rings.  
\end{rmk}

\section{Integral models for the unstable homotopy category} \label{sect:padic}

In this final section, we apply our results on the Frobenius action to obtain models for spaces in terms of $\E_{\infty}$-rings.  We reiterate that the idea for this application is due to Thomas Nikolaus, and many of the key ideas are due to Mandell's work \cite{MandellZ}.  In \S \ref{subsect:padic}, we deduce the fully faithfulness statement of Theorem B from Theorem A.  In \S \ref{sub:essim}, we complete the proof of Theorem B by determining the essential image.  Then, in \S \ref{subsect:integral}, we deduce Theorem C from Theorem B.  Finally, in \S \ref{subsect:extensions}, we discuss some open questions and possible extensions of this work.  

\subsection{$p$-adic homotopy theory over the sphere}\label{subsect:padic}
Recall from the introduction that $\Spacesfnp \subset \Spaces$ denotes the full subcategory of spaces $X$ which simply connected and $p$-complete finite, and $\CAlg^{\varphi = 1}_p := (\CAlgperfp)^{hS^1}$ denotes the $\infty$-category of $p$-Frobenius fixed $\E_{\infty}$-rings.  Here we prove half of Theorem B:

\begin{thm}\label{thm:B1}
The functor $(\Spacesfnp)^{\op} \to \CAlg_{\pS}$ given by $X \mapsto (\pS)^X$ lifts to a fully faithful functor 
$$(\pS)^{(-)}_{\varphi=1}: (\Spacesfnp)^{\op} \to \CAlgffp.$$
\end{thm}

The proof begins by noting that for any $p$-perfect $\E_{\infty}$-ring $A$, one can extract a $p$-Frobenius fixed algebra of ``Frobenius fixed points" (Lemma \ref{lem:frobadj}).   
Composing this procedure with the functor $X \mapsto W^+(\Fpbar)^X$ (cf. Example \ref{exm:sphericalWitt}), we construct the functor $(\pS)^{(-)}_{\varphi=1}$ in the statement of Theorem B (Construction \ref{cnstr:thmBfun}).  By construction, this latter functor is fully faithful if and only if the functor $W^+(\Fpbar)^{(-)}: (\Spacesfnp)^{\op} \to  \CAlg_{W^+(\Fpbar)}$ is fully faithful.  We verify this by realizing an unpublished observation of Mandell which relates cochains with $W^+(\Fpbar)$ coefficients to the previously understood case of $\Fpbar$ cochains (Corollary \ref{cor:sfpbarlift}).  

\begin{lem}\label{lem:frobadj}
There is an adjunction $$i^*: (\CAlg^{\mathrm{perf}}_p)^{hS^1} \xrightleftharpoons{\quad} \CAlg^{\mathrm{perf}}_p : i_*$$ such that
\begin{itemize}
\item The left adjoint $i^*$ sends a $p$-Frobenius fixed algebra to its underlying $p$-perfect $\E_{\infty}$-ring.  
\item Any $p$-perfect algebra $A$ acquires an action of $\Z$ by Frobenius and the counit map $i^* i_* A \to A$ is homotopic to the natural map $A^{h\Z} \to A$.  
\end{itemize}
\end{lem}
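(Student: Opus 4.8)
The plan is to exhibit $i_{*}$ as a relative right Kan extension and then compute it directly. Recall that the $S^1$-action of Corollary \ref{cor:perfectaction} is the datum of a functor $\rho\colon BS^1 \to \Cat_{\infty}$ with $\rho(\ast) \simeq \CAlgperfp$ at the basepoint $\ast \in BS^1 \simeq \CP^{\infty}$, and that $(\CAlgperfp)^{hS^1} = \lim_{BS^1}\rho$ is the $\infty$-category of sections of the coCartesian fibration $q\colon \mathcal{E} \to BS^1$ classified by $\rho$ (every section is automatically coCartesian, since $BS^1$ is an $\infty$-groupoid). Under this identification $i^{*}$ is restriction of sections along $\ast \hookrightarrow BS^1$, so it sends a $p$-Frobenius fixed algebra to its value at $\ast$, i.e.\ to its underlying $p$-perfect $\E_{\infty}$-ring; this is the first bullet point.

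First I would build the right adjoint $i_{*}$ as the $q$-relative right Kan extension along $\ast \hookrightarrow BS^1$, which exists provided the pointwise relative right Kan extension of each $A\colon \ast \to \mathcal{E}$ does (standard theory of relative Kan extensions, \cite[\S 4.3.2]{HTT}). By the pointwise formula its value at $\ast$ is the limit, formed in the fiber $\mathcal{E}_{\ast} = \CAlgperfp$, of the composite $\ast \times_{BS^1}(BS^1)_{\ast/} \to \mathcal{E} \to \mathcal{E}_{\ast}$, the last map being coCartesian transport. Since $BS^1$ is an $\infty$-groupoid, the indexing $\infty$-category $\ast \times_{BS^1}(BS^1)_{\ast/}$ is the loop space $\Omega_{\ast} BS^1 \simeq S^1 \simeq B\Z$, and the above diagram is exactly the one classifying the $\Z$-action on $A$ generated by the monodromy of $\rho$ at $A$, which by Corollary \ref{cor:perfectaction} is the Frobenius self-equivalence $\varphi_A$. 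Hence the pointwise value is $A^{h\Z}$, homotopy fixed points for the Frobenius action; since $BS^1$ is connected, this is the only limit that must be shown to exist.

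The crux is to check that $A^{h\Z}$ exists in $\CAlgperfp$. Being a limit over $B\Z \simeq S^1$, it is computed by the equalizer of $\id_A$ and $\varphi_A$, hence is a \emph{finite} limit in $\CAlg$, so it suffices to show that $\CAlgperfp$ is closed under finite limits in $\CAlg$. Being $p$-complete is closed under all limits; and the two remaining defining conditions---that $\mathrm{can}^{V}\colon X \to X^{\tau V}$ be an equivalence for all finite-dimensional $\F_p$-vector spaces $V$, and that $\varphi\colon X \to X^{tC_p}$ be an equivalence---are preserved under finite limits because the functors $(-)^{\tau V}$ and $(-)^{tC_p}$ are exact (for the proper Tate construction, use the description in Remark \ref{rmk:propertatecofib}, or that $(-)^{\tau V} = \Phi^{V}\beta(-)$ with $\beta$ limit-preserving and $\Phi^{V}$ finite-limit-preserving), and a finite limit of equivalences is an equivalence. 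Therefore $A^{h\Z}$ lies in $\CAlgperfp$, the relative Kan extension exists, and $i_{*}$ is the sought-after right adjoint.

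The second bullet is then immediate from the construction: the counit $i^{*}i_{*}A \to A$ is, by the universal property of the relative right Kan extension, the canonical comparison map from its value $A^{h\Z}$ at $\ast$ to the diagram value at the constant loop, i.e.\ the natural projection $A^{h\Z} \to A$ out of the homotopy fixed points. I expect the main obstacle to be the verification in the third paragraph---specifically that the proper Tate constructions $(-)^{\tau V}$ are exact, so that $F_p$-stability and $p$-perfectness persist under finite limits---whereas the conceptual heart is the comma-category computation identifying the relative Kan extension with $(-)^{h\Z}$ for the Frobenius action.
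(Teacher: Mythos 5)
Your proof is correct and takes essentially the same route as the paper: both identify $i^*$ with restriction along $\{*\}\hookrightarrow BS^1$, build $i_*$ as a relative right Kan extension whose pointwise value over the base point is the limit of the $\Z$-action by Frobenius (i.e.\ $A^{h\Z}\simeq\fib(1-\varphi_A)$), and thereby reduce the whole lemma to showing this finite limit lies in $\CAlgperfp$. The paper simply asserts that $\CAlgperfp$ has the requisite finite limits, whereas you supply the justification — exactness of $(-)^{\tau V}$ and $(-)^{tC_p}$ so that $F_p$-stability and $p$-perfectness are preserved under finite limits — which is the right argument and a genuine gap-filler.
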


\begin{rmk}\label{rmk:computeZfp}
The $\E_{\infty}$-ring $A^{h\Z}$ is computed by the procedure $$A^{h\Z} \simeq \lim_{S^1} A \simeq  \fib(\id_A -\varphi_A).$$  Thus, the above lemma says that given a $p$-perfect $\E_{\infty}$-ring $A$, one can take its ``Frobenius fixed points" by taking a limit over its $S^1$ orbit, and this procedure determines a $p$-Frobenius fixed algebra.  
\end{rmk}

\begin{proof}
Let us consider $BS^1$ as a pointed space, and denote the inclusion of the basepoint by $\{ * \} \xrightarrow{i} BS^1$.  The $S^1$ action of Corollary \ref{cor:perfectaction} determines a pullback square
\begin{equation*}
\begin{tikzcd}
\CAlg^{\mathrm{perf}}_p \arrow[d,"q"]\arrow[r] & \mathcal{D} \arrow[d,"f"] \\
\{ * \} \arrow[r,"i"] & BS^1 \\
\end{tikzcd}
\end{equation*}
where $f$ is a coCartesian (and Cartesian) fibration.  Let $\sect^{\circ}(f) \subset \sect (f)$ be the full subcategory of sections which send any morphism in $BS^1$ to a coCartesian morphism in $\D$.  We have an equivalence $\sect^{\circ}(f) \simeq (\CAlg^{\mathrm{perf}}_p)^{hS^1},$ and the functor $i^*$ of the lemma statement is the restriction of a section of $f$ along $i$.  

The adjoint will be given by $f$-right Kan extension along $i$ \cite[Definition 2.8]{QuigShah}.  By \cite[Corollary 2.11]{QuigShah}, this exists as long as for any $A\in \CAlg^{\mathrm{perf}}_p$, the induced diagram $$S^1\simeq *\times_{BS^1} BS^1_{*/} \to \D$$ has an $f$-limit.  Since $f$ is a Cartesian fibration, this relative limit reduces to taking a limit over a diagram $S^1 \to \CAlg^{\mathrm{perf}}_p$, which exists because $\CAlg^{\mathrm{perf}}_p$ has finite limits.  Moreover, the limit is computed as the fiber of $1-\varphi$ and the resulting section of $f$ is in the full subcategory $\sect^{\circ}(f)$.  
\end{proof}

\begin{notation*}
By analogy to the equivalence $\pS \simeq W^+(\F_p)$, we will hereafter use the more compact notation $\pSbar := W^+(\overline{\F}_p)$ to denote the spherical Witt vectors of $\Fpbar$ (cf. Example \ref{exm:sphericalWitt}). 
\end{notation*}

In Example \ref{exm:sphericalwittperfect}, we saw that $\pSbar$ is a $p$-perfect $\E_{\infty}$-ring and $\varphi_{\pSbar}$ is the unique ring endomorphism of $\pSbar$ which induces the Witt vector Frobenius on $\pi_0$.  

\begin{lem}\label{lem:counitwitt}
There is an equivalence of $\E_{\infty}$-rings  $i^* i_* \pSbar \simeq \pS$ under which the counit map $\epsilon: i^*i_*\pSbar \to \pSbar$ is homotopic to the unit map $\pS \to \pSbar$ for the ring $\pSbar$.  
\end{lem}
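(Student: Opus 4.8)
The plan is to deduce everything from Lemma~\ref{lem:frobadj}, which identifies $i^*i_*\pSbar$ with the ``Frobenius fixed points'' $\pSbar^{h\Z}\simeq\fib(\id_{\pSbar}-\varphi_{\pSbar})$, the counit $i^*i_*\pSbar\to\pSbar$ being the natural map out of this fiber. Here $\varphi_{\pSbar}$ is the endomorphism of $\pSbar$ coming from the $\E_\infty$-Frobenius via $F_p$-stability; by Example~\ref{exm:sphericalwittperfect} it is the unique $\E_\infty$-ring endomorphism of $\pSbar$ inducing the Witt vector Frobenius on $\pi_0(\pSbar)=W(\Fpbar)$, and being an endomorphism of an $\E_\infty$-ring it is automatically a map of $\pS$-algebras (as $\pS$ is initial among $p$-complete $\E_\infty$-rings). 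Since $i^*i_*\pSbar$ lies in $\CAlgperfp$ and is therefore $p$-complete, there is an essentially unique $\E_\infty$-ring map $\beta\colon\pS\to i^*i_*\pSbar$, and by the same initiality the composite of $\beta$ with the counit $i^*i_*\pSbar\to\pSbar$ is forced to be the unit map $u\colon\pS\to\pSbar$. Thus it suffices to prove $\beta$ is an equivalence: transporting the counit along $\beta$ then recovers $u$, which is exactly the assertion of the lemma.

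Next I would compute on homotopy groups. Since $\pSbar$ is flat over $\pS$ (Example~\ref{exm:sphericalWitt}), one has $\pi_n(\pSbar)\cong\pi_n(\pS)\otimes_{\Z_p}W(\Fpbar)$, and because $\varphi_{\pSbar}$ is $\pS$-linear and $\pi_*(\pSbar)$ is generated over $\pi_*(\pS)$ by $\pi_0$, the endomorphism $\varphi_{\pSbar}$ acts on $\pi_n(\pSbar)$ as $\id_{\pi_n(\pS)}\otimes\varphi_W$, where $\varphi_W$ is the Witt Frobenius. The only external input is the classical Artin--Schreier exact sequence for the algebraically closed field $\Fpbar$,
\[
0\to\Z_p\to W(\Fpbar)\xrightarrow{\ \id-\varphi_W\ }W(\Fpbar)\to 0 ,
\]
i.e. surjectivity of $\id-\varphi_W$ with kernel $W(\F_p)=\Z_p$. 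Tensoring this over $\Z_p$ with $\pi_n(\pS)$ and using that $W(\Fpbar)$ is a flat (indeed torsion-free over the DVR $\Z_p$) module, so that $\operatorname{Tor}_1^{\Z_p}(\pi_n(\pS),W(\Fpbar))=0$, yields the exact sequence
\[
0\to\pi_n(\pS)\xrightarrow{\ u_*\ }\pi_n(\pSbar)\xrightarrow{\ \id-\varphi_{\pSbar}\ }\pi_n(\pSbar)\to 0 .
\]
In particular $\id-\varphi_{\pSbar}$ is surjective on every homotopy group, so the long exact sequence of the fiber sequence $\fib(\id-\varphi_{\pSbar})\to\pSbar\xrightarrow{\id-\varphi_{\pSbar}}\pSbar$ degenerates: the natural map $\fib(\id-\varphi_{\pSbar})\to\pSbar$ is injective on $\pi_*$ with image the kernel $u_*(\pi_*(\pS))$. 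Since $\beta$ followed by this map is $u$, and $u_*$ is injective, $\beta$ induces an isomorphism on all homotopy groups and is therefore an equivalence.

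The argument is essentially formal once Lemma~\ref{lem:frobadj} and the flatness of $\pSbar$ over $\pS$ are available, so I do not expect a genuine obstacle. The only points requiring mild care are the identification of the homotopy-level action of $\varphi_{\pSbar}$ as $\id\otimes\varphi_W$ (handled by $\pS$-linearity plus the generation statement) and the observation that the unique ring map out of $\pS$ is automatically compatible with the counit (handled by the initiality of $\pS$ among $p$-complete $\E_\infty$-rings); the substantive external fact is simply the vanishing of the first Artin--Schreier cohomology of $\Fpbar$.
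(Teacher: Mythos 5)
Your proof is correct and follows essentially the same route as the paper: both reduce via Lemma \ref{lem:frobadj} to identifying $\pS$ with $\fib(\id-\varphi_{\pSbar})$ and then verify this on homotopy groups using the isomorphism $\pi_*(\pSbar)\cong W(\Fpbar)\otimes_{\Z_p}\pi_*(\pS)$ together with the Artin--Schreier sequence $0\to\Z_p\to W(\Fpbar)\xrightarrow{1-\varphi_W}W(\Fpbar)\to 0$ (which the paper leaves implicit). The only cosmetic difference is that you produce the comparison map $\pS\to i^*i_*\pSbar$ by initiality of $\pS$ among $p$-complete $\E_\infty$-rings, whereas the paper gets it from the nullhomotopy of $(1-\varphi_{\pSbar})\circ u$ coming from the Frobenius being the identity on $\pS$.
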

\begin{proof}
Since $\pS$ is the unit in $\CAlg^{\mathrm{perf}}_p$ and the counit map $\epsilon: i^* i_* \pSbar \to \pSbar$ is a ring map, the unit map for the ring $\pSbar$ factors canonically through $\epsilon$ as the composite $\pS \to i^*i_*\pSbar \xrightarrow{\epsilon} \pSbar$.  The unit map $\pS\to \pSbar$ is given on homotopy groups by the inclusion
 \begin{equation}\label{eqn:witttensor}
 \pi_*(\pS) \to  \pi_*(\pSbar ) \cong W(\Fpbar) \otimes_{\Z_p} \pi_*(\pS)
 \end{equation}
induced by the natural inclusion $\Z_p \to W(\Fpbar)$.  Thus, by the initial remarks, it suffices to show that the counit map $\epsilon: i^* i_*\pSbar \to \pSbar$ admits the same description on homotopy groups.  This follows directly by the fiber sequence
\[
i^* i_* \pSbar \xrightarrow{\epsilon} \pSbar \xrightarrow{1 - \varphi_{\pSbar}} \pSbar
\]
of \Cref{rmk:computeZfp} and the fact that the map $1-\varphi_{\pSbar }$ on homotopy groups is the map induced by $$1-\varphi_{W(\overline{\F}_p)} : W(\overline{\F}_p) \to W(\overline{\F}_p)$$ on the first tensor factor of (\ref{eqn:witttensor}), where $\varphi_{W(\overline{\F}_p)}$ is the usual Witt vector Frobenius.  
\end{proof}

\begin{cnstr}\label{cnstr:thmBfun}
We now construct the functor $$(\pS)^{(-)}_{\varphi=1}: (\Spacesfnp)^{\op}\to (\CAlgperfp)^{hS^1}$$  appearing in the statement of Theorem B.  Consider the functor $(\pSbar)^{(-)}:\Spaces^{\op} \to \CAlg.$  By Example \ref{exm:cochainsperfect}, this functor takes $p$-complete finite spaces to $p$-perfect $\E_{\infty}$-rings, so it restricts to a functor $$(\pSbar )^{(-)}: (\Spacesfnp)^{\op} \to \CAlg_p^{\mathrm{perf}}.$$  Define the desired functor by the formula
$$(\pS)^{(-)}_{\varphi =1} := i_*(\pSbar)^{(-)}: (\Spacesfnp)^{\op}\to (\CAlgperfp)^{hS^1}.$$  
By Lemma \ref{lem:counitwitt}, the functor $i^* (i_*(\pSbar)^{(-)})$ agrees with $(\pS)^{(-)}$ as required by the theorem statement. 
\end{cnstr}

The proof of \Cref{thm:B1} requires one additional fact:

\begin{prop}[Mandell, Lurie]  \label{prop:formallyetale}
Let $X$ be a space.  Then, the $\E_{\infty}$-algebra $\F_p^X$ is formally \'etale over $\F_p$, i.e. the cotangent complex $L_{\F_p^X/ \F_p}$ is contractible.
\end{prop}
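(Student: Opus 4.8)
We sketch the strategy; the cited source carries it out in detail. The plan is a dévissage reducing to Eilenberg--MacLane spaces, resting on two standard facts about the cotangent complex. First, the assignment $A\mapsto L_{A/\F_p}$ commutes with filtered colimits of $\E_{\infty}$-$\F_p$-algebras. Second, formal étaleness over $\F_p$ is stable under relative tensor products: if $A,B,C$ are $\E_{\infty}$-$\F_p$-algebras with $L_{A/\F_p}$, $L_{B/\F_p}$, $L_{C/\F_p}$ all contractible and maps $C\to A$, $C\to B$ are given, then $L_{(A\otimes_C B)/\F_p}$ is contractible. Indeed, the transitivity sequence for $\F_p\to C\to B$ gives $L_{B/C}\simeq 0$; base change then gives $L_{(A\otimes_C B)/A}\simeq (A\otimes_C B)\otimes_B L_{B/C}\simeq 0$; and a final transitivity sequence for $\F_p\to A\to A\otimes_C B$ gives $L_{(A\otimes_C B)/\F_p}\simeq 0$.

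First I would reduce to $X$ connected, using $\F_p^{\coprod_\alpha X_\alpha}\simeq\prod_\alpha\F_p^{X_\alpha}$ together with the fact that a product of formally étale $\F_p$-algebras is again formally étale (for a product of copies of $\F_p$ this is visible by writing it as a filtered colimit of finite products of copies of $\F_p$, hence as an ind-étale algebra). For $X$ pointed and connected, the Postnikov tower gives $\F_p^X\simeq\colim_n\F_p^{\tau_{\le n}X}$ as a filtered colimit (the map $X\to\tau_{\le n}X$ is an isomorphism on $\F_p$-cohomology in degrees $\le n$), so by the first fact it suffices to treat each $\tau_{\le n}X$. Since $\tau_{\le i}X$ is the homotopy pullback $\tau_{\le i-1}X\times_{K(\pi_iX,\,i+1)}\ast$ with simply connected base, the Eilenberg--Moore equivalence $\F_p^{\tau_{\le i}X}\simeq\F_p^{\tau_{\le i-1}X}\otimes_{\F_p^{K(\pi_iX,\,i+1)}}\F_p$, combined with the second fact and induction on $i$ (starting from $\F_p^{\ast}=\F_p$), reduces everything to showing that $\F_p^{K(\pi,n)}$ is formally étale for every abelian group $\pi$ and every $n\ge 1$. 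Writing $\pi$ as a filtered union of finitely generated subgroups, applying Künneth to the finite-type factors, using the $\F_p$-acyclicity of $K(\Z/\ell^a,n)$ for primes $\ell\ne p$, and using the Eilenberg--Moore reductions induced by the extensions $0\to\Z/p^{a-1}\to\Z/p^a\to\Z/p\to 0$, one is reduced to the two cases $\pi=\Z$ and $\pi=\Z/p$.

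The remaining point -- that $L_{\F_p^{K(\Z,n)}/\F_p}$ and $L_{\F_p^{K(\Z/p,n)}/\F_p}$ are contractible -- is the heart of the argument and the step I expect to be the main obstacle. It is a genuine computation: the $\F_p$-cohomology of an Eilenberg--MacLane space is a polynomial (and exterior) algebra on infinitely many generators, so these algebras are \emph{not} formally étale as discrete rings; what saves the day is the $\E_{\infty}$-structure -- equivalently, the action of the Steenrod/power operations -- which supplies exactly the relations needed to make the cotangent complex, computed as the cofiber of (free module on relations) over (free module on generators), contractible. The most efficient route is a bootstrap using the loop-space identity $L_{\F_p^{\Omega Y}/\F_p}\simeq\Sigma\bigl(\F_p^{\Omega Y}\otimes_{\F_p^Y}L_{\F_p^Y/\F_p}\bigr)$, valid for simply connected $Y$ via Eilenberg--Moore and transitivity, together with connectivity estimates for the cotangent complexes of these coconnective $\E_{\infty}$-$\F_p$-algebras.

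Finally, I would note that for the application made of this proposition in the present paper one only needs $X$ a simply connected finite complex; in that case $\F_p^X$ is a perfect $\F_p$-module, the Künneth and Eilenberg--Moore inputs are unproblematic, and the dévissage is correspondingly lighter.
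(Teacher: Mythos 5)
First, note that the paper does not prove this proposition at all: it is quoted verbatim from Lurie's $p$-adic homotopy theory notes (Theorem 3.1.9), so the only thing your sketch can be measured against is the cited source, whose overall shape (reduction to Eilenberg--MacLane spaces) your d\'evissage does mirror. The problem is that your outline omits exactly the step where all the content lives, and you say so yourself: the vanishing of $L_{\F_p^{K(\Z/p,n)}/\F_p}$ and $L_{\F_p^{K(\Z,n)}/\F_p}$ is asserted, not proven. The ``bootstrap via the loop-space identity plus connectivity estimates'' is not a proof and is doubtful as stated: the identity $L_{\F_p^{\Omega Y}/\F_p}\simeq\Sigma\bigl(\F_p^{\Omega Y}\otimes_{\F_p^Y}L_{\F_p^Y/\F_p}\bigr)$ only transports information from $K(\pi,n+1)$ down to $K(\pi,n)$, so the induction has no base case; iterating gives $L\simeq\Sigma^{k}\bigl(\F_p^{K(\pi,n)}\otimes_{\F_p^{K(\pi,n+k)}}L_{\F_p^{K(\pi,n+k)}/\F_p}\bigr)$, and to conclude anything you would need an unconditional coconnectivity bound on cotangent complexes of these coconnective algebras that survives the relative tensor product (whose bar construction shifts degrees the wrong way). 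No such estimate is offered, and for coconnective $\E_{\infty}$-$\F_p$-algebras it is not a formal matter. In Mandell's work and in the cited notes, this step is carried by a genuinely different input: an explicit presentation of $C^*(K(\Z/p,n);\Fpbar)$ as the fiber of an Artin--Schreier-type map $1-P^{0}$ on a free $\E_{\infty}$-algebra on one generator (the ``unstable'' condition $P^{0}x=x$ on the fundamental class), from which the vanishing of the cotangent complex is read off. That power-operation mechanism is the missing idea; gesturing at ``the Steenrod/power operations supply the relations'' does not substitute for it.

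Two of your reduction steps are also unjustified at the level of generality the proposition claims (an arbitrary space $X$). The cotangent complex does not commute with infinite products or with cofiltered limits, so neither ``a product of formally \'etale $\F_p$-algebras is formally \'etale'' (your parenthetical only treats $\prod\F_p$, i.e.\ discrete $X$) nor the passage from finitely generated $\pi$ to general $\pi$ (which turns $\F_p^{K(\pi,n)}$ into an inverse limit of cochain algebras) follows from the two facts you set up; likewise the principal Postnikov tower you use exists only for simple/nilpotent $X$, whereas the statement allows nontrivial $\pi_1$-actions. These issues disappear in the regime the present paper actually uses -- $p$-completions of simply connected finite complexes, as you correctly observe -- but in that regime your argument reduces the proposition entirely to the Eilenberg--MacLane computation you have not supplied, so the proposal as written does not establish the statement.
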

\begin{proof}
This follows from the results of \cite{Lurpadic}: namely, the $\E_{\infty}$-algebra $\F_p^{X}$ is equivalent to the algebra of $\F_p$ cochains on the $p$-profinite completion of $X$ \cite[Notation 3.3.14]{Lurpadic}.  This is a (filtered) colimit of algebras of the form $\F_p^Y$ for $p$-finite $Y$, which are formally \'etale over $\F_p$ by \cite[Theorem 2.4.9]{Lurpadic}, and so the result follows because formally \'etale $\F_p$-algebras are closed under colimits.  
\end{proof}

\begin{cor}\label{cor:sfpbarlift}
Let $X\in \Spacesfnp$ and let $Y\in \Spaces$ be any space.  Then the natural map $$\CAlg_{\pSbar}((\pSbar)^X, (\pSbar)^Y ) \to \CAlg_{\overline{\F}_p}(\overline{\F}_p^X , \overline{\F}_p^Y)$$ is an equivalence.   
\end{cor}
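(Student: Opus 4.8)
\textbf{Proof proposal for Corollary \ref{cor:sfpbarlift}.}

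The plan is to reduce the statement about cochains with $\pSbar$ coefficients to the corresponding statement with $\Fpbar$ coefficients, which is governed by Mandell's theorem (Theorem \ref{thm:Mandell}), via a base-change/étaleness argument. First I would observe that $\pSbar = W^+(\Fpbar)$ is $p$-complete and that, by Example \ref{exm:cochainsperfect}, $(\pSbar)^X$ is a $p$-complete (indeed $p$-perfect) $\E_{\infty}$-ring for $X \in \Spacesfnp$; moreover, since $X$ is the $p$-completion of a simply connected finite complex, $(\pSbar)^X$ is finite over $\pSbar$ and its mod-$p$ reduction $(\pSbar)^X \otimes_{\pSbar} \Fpbar$ is naturally identified with $\Fpbar^X$ (here one uses that $\Fpbar = \pSbar/p$ up to the relevant connectivity, together with the fact that smashing with $\Fpbar$ commutes with the finite limit computing $(\pSbar)^X$). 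The key input is then Proposition \ref{prop:formallyetale}: $\Fpbar^X$ is formally étale over $\Fpbar$, i.e. $L_{\Fpbar^X/\Fpbar} \simeq 0$ (after base change from $\F_p$ to $\Fpbar$, which is flat, étaleness is preserved).

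Next I would run the standard deformation-theory argument. Because $\pSbar$ is $p$-complete and $(\pSbar)^X$ is finite over it, $(\pSbar)^X$ is the limit of its derived mod-$p^n$ reductions, and one builds a compatible system of $\E_{\infty}$-$\pSbar$-algebras deforming $\Fpbar^X$: vanishing of the cotangent complex $L_{\Fpbar^X/\Fpbar}$ makes each square-zero extension problem along the tower $\pSbar/p^{n+1} \to \pSbar/p^n$ have a contractible space of solutions, so $(\pSbar)^X$ is the essentially unique $p$-complete formally étale $\pSbar$-algebra lifting $\Fpbar^X$. More precisely, for any $p$-complete $\E_\infty$-$\pSbar$-algebra $B$ with mod-$p$ reduction $\bar B$, formal étaleness of $(\pSbar)^X$ over $\pSbar$ (inherited from that of $\Fpbar^X$ over $\Fpbar$ by base change, since $L_{(\pSbar)^X/\pSbar} \otimes \Fpbar \simeq L_{\Fpbar^X/\Fpbar} \simeq 0$ and $(\pSbar)^X$ is $p$-complete so its cotangent complex is detected mod $p$) gives that the natural map
\[
\Map_{\CAlg_{\pSbar}}\big((\pSbar)^X, B\big) \longrightarrow \Map_{\CAlg_{\Fpbar}}\big(\Fpbar^X, \bar B\big)
\]
is an equivalence. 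Applying this with $B = (\pSbar)^Y$, whose mod-$p$ reduction is $\Fpbar^Y$ (again because $(-)^Y$ takes the colimit presenting $Y$ to a limit and smashing with $\Fpbar$ commutes with such limits once one notes $\Fpbar^Y$ is the reduction of $(\pSbar)^Y$), yields exactly the asserted equivalence
\[
\CAlg_{\pSbar}\big((\pSbar)^X, (\pSbar)^Y\big) \xrightarrow{\ \sim\ } \CAlg_{\Fpbar}\big(\Fpbar^X, \Fpbar^Y\big).
\]

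The main obstacle I anticipate is bookkeeping around $p$-completeness and finiteness: one must ensure that $(\pSbar)^Y$ — with $Y$ an \emph{arbitrary} space, not necessarily finite — still has mod-$p$ reduction $\Fpbar^Y$ and is $p$-complete, so that the deformation-theoretic mapping-space identification applies with it as target. This is where the finiteness of $X$ (not $Y$) is doing the real work: it guarantees $(\pSbar)^X$ is a compact, formally étale $\pSbar$-algebra, so that mapping \emph{out} of it commutes with the relevant limits and completions, whereas no such hypothesis on $Y$ is needed once one checks the mod-$p$ reduction of $(\pSbar)^Y$ is $\Fpbar^Y$ (which follows from $\Fpbar \otimes_{\pSbar} (\pSbar)^Y \simeq (\Fpbar \otimes_{\pSbar} \pSbar)^Y \simeq \Fpbar^Y$ since $Y$ is a colimit of points and cotensoring sends it to a limit, and $\Fpbar \otimes_{\pSbar}(-)$ preserves limits of $p$-complete towers). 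Once these compatibilities are pinned down, the étaleness argument is formal.
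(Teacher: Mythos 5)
Your core strategy — reduce to Mandell via the cotangent complex calculation (Proposition \ref{prop:formallyetale}) and square-zero deformation theory — is the right one, and it is in fact the paper's argument. But two genuine gaps remain.

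First, you skip the paper's very first move: since $Y \mapsto (\pSbar)^Y$ and $Y\mapsto \Fpbar^Y$ both send colimits of spaces to limits of $\E_{\infty}$-rings, and mapping \emph{out} of a fixed object sends limits to limits, both sides of the asserted equivalence are limit-preserving in $Y$, so one may take $Y = *$. Without this reduction you are forced to resolve a general target $(\pSbar)^Y$ as a tower of square-zero extensions over $\Fpbar^Y$ with $\Fpbar$-module layers, and that is not at all formal (note already that $\pSbar/p^n$ is not an $\E_\infty$-ring for $p=2$, so "derived mod-$p^n$ reductions" do not give the tower you want). Once $Y=*$, the paper uses a very specific and well-behaved tower: the Postnikov tower interpolating $\pSbar \to W(\Fpbar)$, followed by the length-$n$ Witt vector tower $W(\Fpbar) \to W_n(\Fpbar) \to \cdots \to \Fpbar$; at every stage the layer is a (shifted copy of) $\Fpbar$, exactly what the cotangent-complex computation controls.

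Second, your claim that $(\pSbar)^X$ is formally \'etale over $\pSbar$ (that $L_{(\pSbar)^X/\pSbar}$ vanishes $p$-completely because "$(\pSbar)^X$ is $p$-complete so its cotangent complex is detected mod $p$") overshoots and isn't justified. $p$-completeness of a ring does not confer $p$-completeness on its relative cotangent complex, and $(\pSbar)^X$ is coconnective, so the usual connectivity arguments that might rescue this don't apply directly. What the paper actually proves — and all it needs — is the weaker statement that $\Mod_{(\pSbar)^X}\bigl(L_{(\pSbar)^X/\pSbar},\, M\bigr)$ is contractible for $\Fpbar$-modules $M$, which follows by base change ($(\pSbar)^X \otimes_{\pSbar}\Fpbar \simeq \Fpbar^X$ because $X$ is finite) from $L_{\Fpbar^X/\Fpbar}\simeq 0$. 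Because only this weaker vanishing is available, the paper cannot conclude the lifting spaces are automatically contractible; it must separately show each fiber is \emph{nonempty}, which it does with a retract argument combining Mandell's theorem with the inductive hypothesis, and only then uses [HA 7.4.1.8] together with the cotangent-complex vanishing to conclude contractibility. Your sketch silently assumes nonemptiness, which is precisely what the stronger, unproven vanishing would have bought you.
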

\begin{proof}
Both sides send colimits in $Y$ to limits in mapping spaces, so it suffices to consider the case $Y=*$.  Since $X$ is assumed to be $p$-complete finite, the natural map $$(\pSbar)^X \otimes_{\pSbar} \Fpbar \to \Fpbar^X$$ is an equivalence, and so there is an equivalence $$\CAlg_{\overline{\F}_p}(\overline{\F}_p^X ,\overline{\F}_p) \simeq \CAlg_{\pSbar}((\pSbar)^X, \overline{\F}_p).$$ Thus, it suffices to see that the natural map $$\CAlg_{\pSbar}((\pSbar)^X, \pSbar) \to \CAlg_{\pSbar}((\pSbar)^X, \overline{\F}_p)$$ is an equivalence.   This follows from the following two claims:

\begin{claim1*}
For each $n$, the natural map $$\CAlg_{\pSbar}((\pSbar)^X, W_n(\overline{\F}_p)) \to \CAlg_{\pSbar}((\pSbar)^X, \overline{\F}_p)$$ is an equivalence.
\end{claim1*}

\begin{claim2*}
The natural map $$\CAlg_{\pSbar}((\pSbar)^X, \pSbar) \to \CAlg_{\pSbar}((\pSbar)^X, W(\Fpbar))$$ is an equivalence.
\end{claim2*}

\begin{proof}[Proof of Claim 1]
The proof is by induction.  The base case $n=1$ is a tautology.  Assume the statement has been proven for $n$ and consider the map  $$\gamma_*:  \CAlg_{\pSbar}((\pSbar)^X, W_{n+1}(\overline{\F}_p)) \to  \CAlg_{\pSbar}((\pSbar)^X, W_{n}(\overline{\F}_p))$$ induced by the projection $\gamma: W_{n+1}(\overline{\F}_p)\to W_n (\overline{\F}_p).$  The map $\gamma:W_{n+1}(\overline{\F}_p)\to W_n (\overline{\F}_p)$ is a square zero extension of $W_n(\Fpbar)$ by $\Fpbar$ (in the sense of \cite[\S 7.4.1]{HA}); thus, for any map $(\pSbar)^X \to W_n(\Fpbar)$ (which we now fix),  by  \cite[Remark 7.4.1.8]{HA}, the fibers of $\gamma_*$ are equivalent to the spaces of paths between certain pairs of points in the mapping space $ \Mod_{(\pSbar)^X} (L_{(\pSbar)^X / \pSbar}, \Sigma^{-1} \overline{\F}_p)$.  

We will finish by showing that, in fact, the mapping spectrum $ \Mod_{(\pSbar)^X} (L_{(\pSbar)^X / \pSbar}, \overline{\F}_p)$ is contractible.  Recall that since $X$ is $p$-complete finite, we have an equivalence $$(\pSbar)^X \otimes_{\pSbar} \overline{\F}_p \simeq \overline{\F}_p^X$$ of $\Fpbar$-algebras.  
By the base change properties of the cotangent complex, this yields an equivalence $$\Mod_{(\pSbar)^X} (L_{(\pSbar)^X / \pSbar}, \overline{\F}_p) \simeq \Mod_{(\overline{\F}_p)^X} (L_{(\pSbar)^X / \pSbar}\otimes_{(\pSbar)^X} \overline{\F}_p^X , \overline{\F}_p) \simeq \Mod_{\overline{\F}_p^X} (L_{\overline{\F}_p^X/ \overline{\F}_p}, \overline{\F}_p).$$  
This space is contractible because $$L_{\overline{\F}_p^X/ \overline{\F}_p} \simeq \overline{\F}_p^X \otimes_{\F_p^X} L_{\F_p^X/\F_p} \simeq 0$$ by Proposition \ref{prop:formallyetale}.  

\end{proof}

The proof of Claim 2 is similar, as $\pSbar \to W(\overline{\F}_p)$ factors as a sequence of square zero extensions by shifted copies of $\overline{\F}_p$.  It follows that the composite $$\CAlg_{\pSbar}((\pSbar)^X, \pSbar) \to \CAlg_{\pSbar}((\pSbar)^X, W(\Fpbar)) \to \CAlg_{\pSbar}((\pSbar)^X, \overline{\F}_p)$$ is an equivalence, which concludes the proof of Corollary \ref{cor:sfpbarlift}.  
\end{proof}

\Cref{thm:B1} now follows:
\begin{proof}[Proof of \Cref{thm:B1}]
Let $X,Y\in \Spacesfnp$.  We would like to show that the natural map $$\Spacesfnp(Y,X) \to \CAlgffp((\pS)^X_{\ff} , (\pS)^Y_{\ff})$$ is an equivalence.  Unwinding the definitions, we have
\begin{align*}
\CAlgffp((\pS)^X_{\ff} , (\pS)^Y_{\ff}) &\simeq \CAlgffp(i_*(\pSbar)^X, i_*(\pSbar)^Y)\\
&\simeq \CAlg_{\pS}(i^*i_*(\pSbar)^X, (\pSbar)^Y)\\
&\simeq \CAlg_{\pS}((\pS)^X, (\pSbar)^Y)\\
&\simeq \CAlg_{\pSbar}((\pSbar)^X, (\pSbar)^Y)\\
&\simeq\CAlg_{\overline{\F}_p}(\overline{\F}_p^X, \overline{\F}_p^Y)  &(\text{Corollary \ref{cor:sfpbarlift}})\\
&\simeq \Spacesfnp(Y, X),  &(\text{Theorem \ref{thm:Mandell}})\\
\end{align*}
as desired.  
\end{proof}

\subsection{The essential image}\label{sub:essim}

In this section, we will determine the essential image of the $p$-Frobenius fixed cochain functor, thus finishing the proof of Theorem B.   We first need some helpful terminology.

\begin{defn}
Let $R$ be a (discrete) commutative ring and let $A$ be an $\E_{\infty}$-algebra over $R$.  We will say $A$ is \emph{simply connected over $R$} if the unit map $R\to A$ induces an isomorphism in $\pi_k$ for $k\geq -1$.
\end{defn}

Our goal is to show:

\begin{thm}\label{thm:essentialimage}
Let $A \in \CAlgffp$ be a $p$-Frobenius fixed $\E_{\infty}$-ring whose underlying $\E_{\infty}$-ring is finite over $\pS$, and assume that $A\otimes \F_p$ is simply connected over $\F_p$.  Then there is a simply connected $p$-complete finite space $X$ such that $A \simeq (\pS)^X_{\varphi=1}$.  
\end{thm}

The main ingredient is a theorem of Mandell, which characterizes the essential image of the $\Fpbar$-cochain functor (on $p$-complete spaces of finite type).  The characterization is in terms of a certain power operation denoted $\Sq^0$ for $p=2$ and $P^0$ for $p$ odd (which we will refer to as $P^0$ at all primes for the sake of uniformity) which acts on the homotopy groups of any $\E_{\infty}$-algebra over $\F_p$; we refer the reader to \cite[\S 11]{Mandell} or \cite[\S 2.2]{Lurpadic} for background on $P^0$.

\begin{thm}[Mandell \cite{Mandell}]\label{thm:mandellimage}
The following conditions are equivalent for an $\E_{\infty}$-algebra $R$ over $\Fpbar$:
\begin{enumerate}
\item There exists a simply connected $p$-complete space $X$ of finite type and an equivalence of $\E_{\infty}$-algebras $\Fpbar^X \simeq R$.
\item The $\E_\infty$-algebra $R$ is simply connected over $\Fpbar$ and each $\pi_i(R)$ is a finite dimensional $\Fpbar$-vector space which is generated over $\Fpbar$ by classes fixed by the operation $P^0$.
\end{enumerate}
\end{thm}

\begin{prop}\label{prop:Fpbariscochains}
Let $A \in \CAlgffp$ be a $p$-Frobenius fixed $\E_{\infty}$-ring whose underlying spectrum is finite over $\pS$.  Then the power operation $P^0$ acts as the identity on $\pi_*(A\otimes \F_p)$.
\end{prop}
\begin{proof}
Because $(-)^{tC_p}$ is a lax symmetric monoidal functor, there is a natural map of $\E_{\infty}$-rings
\[
\nu: A^{tC_p}\otimes \F_p^{tC_p} \to(A \otimes \F_p)^{tC_p}.
\]
Since $\can: \id \to (-)^{tC_p}$ is a lax symmetric monoidal natural transformation, $\nu$ fits into a commutative diagram of $\E_{\infty}$-rings
\begin{equation}\label{eqn:canlax}
\begin{tikzcd}
A \otimes \F_p \arrow[r,"\can_{A \otimes \F_p}"]\arrow[rd, swap, "\can_{A} \otimes \can_{\F_p}"]& (A \otimes \F_p)^{tC_p}\\
& A^{tC_p}\otimes \F_p^{tC_p}\arrow[u,"\nu"].
\end{tikzcd}
\end{equation}
Moreover, let $\can_{\F_p}^* (A \otimes \F_p)$ denote $(A \otimes \F_p) \otimes_{\F_p} \F_p^{tC_p}$, where $\F_p^{tC_p}$ is regarded as an $\F_p$-algebra via $\can$.  Then, since $\nu$ is a $\F_p^{tC_p}$-algebra map, (\ref{eqn:canlax}) induces a commutative triangle

\begin{equation}\label{eqn:canlax2}
\begin{tikzcd}
\can^*(A \otimes \F_p) \arrow[r,"\can_{A \otimes \F_p}"]\arrow[rd, swap, "\can_{A} \otimes \can_{\F_p}"]& (A \otimes \F_p)^{tC_p}\\
&  A^{tC_p} \otimes \F_p^{tC_p}\arrow[u,"\nu"].
\end{tikzcd}
\end{equation}

Observe that all the morphisms in (\ref{eqn:canlax2}) are equivalences in the special case when $A= \pS$: this is clear for $\can_{A \otimes \F_p}$ and is the case for $\can_{A} \otimes \can_{\F_p}$ by the Segal conjecture (\Cref{exm:frobsphereequiv}); hence, it follows for $\nu$ by commutativity of the diagram.  As a consequence, it follows that all the maps in (\ref{eqn:canlax2}) are equivalences for any $A$ which is finite over $\pS$, as in the statement of the proposition.  

Since $\varphi: \id \to (-)^{tC_p}$ is a lax symmetric monoidal natural transformation (of endofunctors on $\E_{\infty}$-rings), one also has a commutative triangle analogous to (\ref{eqn:canlax}) with $\varphi$ replacing $\can$.  Pasting this triangle together with (\ref{eqn:canlax2}), we obtain a commutative diagram
\begin{equation}\label{eqn:canfroblax}
\begin{tikzcd}
A \otimes \F_p \arrow[r,"\varphi_{A \otimes \F_p}"]\arrow[rd,"\varphi_{A}\otimes \varphi_{\F_p}"'] & (A \otimes \F_p)^{tC_p} & \can^*(A \otimes \F_p) \arrow[l,"\sim","\can_{A \otimes \F_p}"'] \arrow[ld, "\can_{A}\otimes \can_{\F_p}", "\sim"']\\
& A^{tC_p} \otimes \F_p^{tC_p} \arrow[u,"\nu", "\sim"'] & .\\
\end{tikzcd}
\end{equation}
of $\E_{\infty}$-rings.  

To complete the proof, we compare the upper and lower composite maps $A \otimes \F_p \to \can^*(A \otimes \F_p) := (A \otimes \F_p)\otimes_{\F_p} \F_p^{tC_p}$ in this diagram.  In fact, we will compare, on homotopy groups, their composites with the $\F_p$-module map $\mu: \can^*(A \otimes \F_p) \to A \otimes \F_p$, induced by the canonical map (of $\F_p$-modules) $\F_p^{tC_p} \to \F_p$ which projects onto the unit copy of $\F_p$.  

\begin{enumerate}
\item The first map is the upper composite 
\[
\mu \circ (\can_{A \otimes \F_p})^{-1} \circ \varphi_{A \otimes \F_p}: A \otimes \F_p \to A \otimes \F_p.
\]
We note that by the formula of \Cref{exm:totalsteenrod}, this map is given on homotopy groups by the formula $x \mapsto P^0(x)$ (the projection $\mu$ exactly projects away from the components of $\F_p^{tC_p}$ involving $t$ or $e$).  
\item The second map is the lower composite 
\[
\mu\circ  (\can_{A}\otimes \can_{\F_p})^{-1}\circ  (\varphi_{A}\otimes \varphi_{\F_p}): A \otimes \F_p \to A \otimes \F_p.
\]
Since $A$ is assumed to be $p$-Frobenius fixed, the maps $\can_A : A \to A^{tC_p}$ and $\varphi_A: A \to A^{tC_p}$ are homotopic, and are both equivalences.  One deduces that the map $(\can_{A}\otimes \can_{\F_p})^{-1} (\varphi_{A}\otimes \varphi_{\F_p})$ is homotopic to the map $\id \otimes \varphi_{\F_p}  : A \otimes \F_p \to A\otimes \F_p^{tC_p} $.  By \Cref{exm:F2Frob}, the degree $0$ component of $\varphi_{\F_p}$ is the identity; it follows that the (spectrum) map $\mu \circ (\can_{A}\otimes \can_{\F_p})^{-1} \circ (\varphi_{A}\otimes \varphi_{F_p})$ is homotopic to the identity.  
\end{enumerate}

Since the diagram (\ref{eqn:canfroblax}) commutes, these two composites are homotopic, and thus we conclude that $P^0$ acts as the identity on $\pi_*(A \otimes \F_p)$.  
\end{proof}

\begin{cor}\label{cor:Fpbariscochains}
Let $A \in \CAlgffp$ be a $p$-Frobenius fixed $\E_{\infty}$-ring whose underlying spectrum is finite over $\pS$ and such that $A\otimes \F_p$ is simply connected over $\F_p$.  Then there exists a simply connected $p$-complete finite space $X$ and an equivalence $A\otimes \Fpbar \simeq \Fpbar^X$ of $\E_{\infty}$-algebras over $\Fpbar$.  
\end{cor}
\begin{proof}
We have $\pi_i(A\otimes \Fpbar) \simeq \pi_i(A\otimes \F_p)\otimes_{\F_p}\Fpbar$ for all $i$.  Since the natural map $A\otimes \F_p \to \A \otimes \Fpbar$ is a map of $\E_{\infty}$-algebras over $\F_p$, it commutes with $P^0$; hence, by \Cref{prop:Fpbariscochains}, we conclude that each $\pi_i(A\otimes \Fpbar)$ is generated by fixed points of $P^0$ as an $\Fpbar$ vector space.  Moreover, since $A$ is assumed to be finite over $\pS$, each $\pi_i(A)$ is a finite dimensional $\Fpbar$-vector space.  Hence, by Mandell's theorem (\Cref{thm:mandellimage}), we conclude that there is a simply connected $p$-complete space $X$ of finite type and an equivalence $\Fpbar^X \simeq A\otimes \Fpbar$ of $\Fpbar$-algebras.  Furthermore, again because $A$ is finite over $\pS$, the $p$-complete space $X$ is necessarily $p$-complete finite (and not just of finite type).  
\end{proof}

Finally, we are ready to prove the main theorem of the section.

\begin{proof}[Proof of \Cref{thm:essentialimage}]

By \Cref{cor:Fpbariscochains}, there exists a simply connected $p$-complete finite space $X$ and an equivalence $\eta: A\otimes \Fpbar \simeq \Fpbar^X$.  We will consider a composite:
\begin{align}\label{eqn:essimcomp}
\CAlgffp(A, \pSff) \xrightarrow{f_1} \CAlg(A, \pSbar) \xrightarrow{f_2} \CAlg(A, \Fpbar) &\simeq \CAlg_{\Fpbar}(A\otimes \Fpbar, \Fpbar)\\ 
&\simeq \CAlg_{\Fpbar}(\Fpbar^X, \Fpbar)\nonumber
\end{align}
where the final equivalence is the inverse of restriction along $\eta$.  

The map $f_1$ is given by forgetting the $p$-Frobenius-fixed structure and composing with the natural map $\pS\to\pSbar$.  By \Cref{lem:counitwitt}, this admits another description in terms of the $(i^*, i_*)$-adjunction of \Cref{lem:frobadj}; namely, since $A$ is assumed to be $p$-Frobenius fixed (i.e., we have written $A$ for $i^*A$ in the target of $f_1$), one has an equivalence 
\[
\CAlgffp( A, i_* \pSbar) \simeq \CAlgperfp(i^*A, \pSbar)
\]
and $i_* \pSbar$ is identified with $\pSff$ by \Cref{lem:counitwitt} (and the fact that $\pSff$ is the initial object).  This equivalence can be identified with $f_1$, which is therefore an equivalence.  

The map $f_2$ is obtained by composing with the natural map $\pSbar \to \Fpbar$.  Note that $f_2$ can also be identified with the natural map
\[
\CAlg_{\pSbar}(A\otimes \pSbar, \pSbar) \to \CAlg_{\pSbar}(A\otimes \pSbar, \Fpbar)
\]
induced by $\pSbar\to \Fpbar$.  We may now apply the argument of \Cref{cor:sfpbarlift} with $(\pSbar)^X$ replaced by $A\otimes \pSbar$: the only fact used about $(\pSbar)^X$ in that argument was that $(\pSbar)^X \otimes_{\pSbar} \Fpbar \simeq \Fpbar^X$ (which has vanishing cotangent complex over $\Fpbar$), and this fact is also satisfied by $A\otimes \pSbar$ by the initial remarks of this proof.  The argument implies $f_2$ is also an equivalence.  

We now return to the composite (\ref{eqn:essimcomp}), which we have now shown is an equivalence.  By Mandell's theorem (\Cref{thm:Mandell}), one has an equivalence $X \simeq \CAlg_{\Fpbar}(\Fpbar^X, \Fpbar)$.  Going backwards along the composite (\ref{eqn:essimcomp}), we obtain an equivalence (and in particular a map) $X \xrightarrow{\sim} \CAlg(A, \pSbar)$, which determines a map of $\E_{\infty}$-rings $A \to \pSbar^X$.  This induces a map of $p$-Frobenius-fixed $\E_{\infty}$-rings $A \to \pSff^X$; by construction, the tensor product of this map (on underlying $\E_{\infty}$-rings) with $\Fpbar$ is the equivalence of $\E_{\infty}$-rings $\eta: A\otimes \Fpbar \simeq \Fpbar^X$.  Thus, since $A$ and $\pS^X$ are finite over $\pS$, we conclude that the map $A\to \pSff^X$ is an equivalence.  

\end{proof}


\subsection{Integral models for spaces} \label{subsect:integral}

Here, we assemble Sullivan's rational homotopy theory with Theorem B to prove (see Definition \ref{defn:assembly} for notation):

\begin{thmC}
The functor
$$\bS^{(-)}_{\varphi = 1} : (\Spacesf)^{\op} \to \CAlg^{\varphi = 1}.$$ 
is fully faithful on the full subcategory $\Spacesfn \subset \Spacesf$ of simply connected finite complexes.  
\end{thmC}

\begin{proof}
As in the proof of Theorem B, it suffices to show that for any $X\in \Spacesfn$, the natural map $$X \to \CAlgff(\bS^X_{\ff} , \bS_{\ff})$$ is an equivalence.  By definition of the $\infty$-category $\CAlgff$, this amounts to showing that the square
\begin{equation}\label{dia:proofC}
\begin{tikzcd}
X \arrow[d]\arrow[r] & \prod_p \CAlgffp((\pS)^X_{\ff},(\pS)_{\ff}) \arrow[d]\\
\CAlg(\bS^X, \bS)\arrow[r] & \prod_p \CAlg((\pS)^X, \pS)\\
\end{tikzcd}
\end{equation}
is Cartesian.  

\begin{notation*}
Let $\bS^{\wedge} := \prod_p \pS$, $\Z^{\wedge} := \prod_p \Z_p$, $X^{\wedge} := \prod_p X_p^{\wedge}$, and $\Q^{\wedge} := \Z^{\wedge} \otimes \Q \simeq \bS^{\wedge}\otimes \Q$.  
\end{notation*}

Using the equivalence $ \prod_p \CAlg((\pS)^X, \pS) \simeq \CAlg(\bS^X, \bS^{\wedge})$, we may realize (\ref{dia:proofC}) as the top square in the following larger diagram:

\begin{equation}\label{dia:thmC}
\begin{tikzcd}
X \arrow[d]\arrow[r] & \prod_p \CAlgffp((\pS)^X_{\ff},(\pS)_{\ff}) \arrow[d]\\
\CAlg(\bS^X, \bS)\arrow[r] \arrow[d] & \CAlg(\bS^X, \bS^{\wedge})\arrow[d] \\
\CAlg(\bS^X, \Q)  \arrow[r] &  \CAlg(\bS^X,  \Q^{\wedge}),
\end{tikzcd}
\end{equation}
where the bottom square is the Cartesian square coming from considering maps of $\E_{\infty}$-rings from $\bS^X$ into the Cartesian square
\begin{equation*}
\begin{tikzcd}
\bS \arrow[r] \arrow[d]& \bS^{\wedge} \arrow[d]\\
\Q \arrow[r]  & \Q^{\wedge}.
\end{tikzcd}
\end{equation*}

It therefore suffices to show that the large outer rectangle in (\ref{dia:thmC}) is Cartesian.  We do this by showing that it is equivalent to the square
\begin{equation} \label{dia:arithC}
\begin{tikzcd}
X \arrow[r] \arrow[d]& X^{\wedge} \arrow[d]\\
X_{\Q} \arrow[r]  & (X^{\wedge})_{\Q},
\end{tikzcd}
\end{equation}
which is Cartesian by work of Sullivan \cite[Proposition 3.20]{Sullivan}.  Since the space $X$ is assumed to be a finite complex, we have equivalences 
\[
\CAlg(\bS^X, \Q) \simeq \CAlg_{\Q}((\bS^X)_{\Q}, \Q) \simeq \CAlg_{\Q}(\Q^X, \Q).
\]
Combining this with Sullivan's theorem on rational homotopy theory (Theorem \ref{thm:RHT}), we conclude that the left vertical composite in (\ref{dia:thmC}) is equivalent to the rationalization map $X \to X_{\Q}$.  On the other hand, by Theorem B,  the top map in (\ref{dia:thmC}) can be identified with the natural map $X \to \prod_p X^{\wedge}_p$.   Hence, by the universal property of rationalization, it suffices to show that the right vertical maps in (\ref{dia:thmC}) and (\ref{dia:arithC}) agree.

This follows from work of Mandell \cite{MandellZ}:  the right vertical composite in (\ref{dia:thmC}) can be factored as a composite
\[
\prod_p \CAlgffp((\pS)^X_{\ff},(\pS)_{\ff})  \to \CAlg(\bS^X, \bS^{\wedge}) \to \CAlg(\bS^X, \Z^{\wedge}) \to \CAlg(\bS^X, \Q^{\wedge}).
\]
We have seen that the first map can be identified with the natural map $X^{\wedge} \to \mathcal{L}X^{\wedge}$ given by taking the constant free loop (cf. \Cref{lem:frobadj}), and the second map is canonically an equivalence (by the product over primes $p$ of the $\F_p$ analog of \Cref{cor:sfpbarlift}).  Then, a theorem of Mandell \cite[Theorem 1.11]{MandellZ} asserts that the final map can be identified with the composite
\[
\mathcal{L}X^{\wedge} \to X^{\wedge} \to (X^{\wedge})_{\Q}.
\]
Combining these, we conclude that the right vertical composite in (\ref{dia:thmC}) is homotopic to the rationalization map $X^{\wedge} \to (X^{\wedge})_{\Q}$.  
\end{proof}

As remarked in the introduction, we have not determined the essential image of the Frobenius fixed cochain functor $\bSff^{(-)}$, though such a statement may well be within reach.  We formulate our guess as a question:

\begin{qst}\label{qst:Zessim}
Let $A\in \CAlgff$ be a Frobenius fixed $\E_{\infty}$-ring which is finite over $\bS$ and such that $A\otimes \Z$ is simply connected over $\Z$.  Then does there exist a simply connected finite complex $X$ and an equivalence of Frobenius fixed $\E_{\infty}$-rings $A\simeq \bSff^X$?
\end{qst}

\subsection{Further extensions and questions}\label{subsect:extensions}

We include here a series of remarks highlighting additional questions which are not addressed in this paper.  

1. Mandell's theorem (Theorem \ref{thm:Mandell}) applies to finite type $p$-complete spaces which are not necessarily finite.  The author does not know if there is a version of Theorem B which works in this generality.  For example, Mandell's theorem applies to the space $BC_p$, providing an identification $$\Hom_{\CAlg_{\F_p}}(\F_p^{BC_p}, \F_p) \simeq \mathcal{L}BC_p.$$  On the other hand, the following observation shows that the $\E_{\infty}$-algebra $(\pS)^{BC_p}$ fails to be $F_p$-stable, so our results do not apply.  

\begin{obs}
Letting $\simeq_p$ denote equivalence after $p$-completion, one articulation of the Segal conjecture for an abelian $p$-group $G$ is that $$(\pS)^{BG} \simeq_p \bigoplus_{H\subset G} \Sigma^{\infty}_+ B(G/H),$$ where the sum ranges over subgroups $H\subset G$ (cf. \cite[Chapter XX]{MayAlaska} for an exposition).  Thus, we have $(\pS)^{BC_p} \simeq_p \pS \oplus \Sigma^{\infty}_+ BC_p$, whereas $((\pS)^{BC_p})^{tC_p}$ is the cofiber of a map from 
$$((\pS)^{BC_p})_{hC_p} \simeq_p (\pS \oplus \Sigma^{\infty}_+ BC_p)_{hC_p} \simeq_p \Sigma^{\infty}_+ BC_p \oplus \Sigma^{\infty}_+ B(C_p\times C_p)$$
to
$$((\pS)^{BC_p})^{hC_p} = (\pS)^{BC_p\times C_p} \simeq_p \pS \oplus (\Sigma^{\infty}_+ BC_p)^{\oplus p+1} \oplus B(C_p\times C_p).$$  By inspection, one concludes the spectrum $(\pS)^{BC_p}$ is not $F_p$-stable.  
\end{obs}

Nevertheless, one can wonder whether one can recover the space $BC_p$ from the algebra $(\pS)^{BC_p}$:

\begin{qst}
What is the space of $\E_{\infty}$-ring maps $\Hom_{\CAlg}((\pS)^{BC_p},\pS)$?  
\end{qst}

2. Recall the 2-category $\widetilde{\mathcal{Q}}$ from Remark \ref{rmk:Qtilde}, and let $\widetilde{\QVect}$ denote the full subcategory on groupoids of the form $BV$ where $V$ is an $\F_p$-vector space.  Our methods show that in fact the $\E_{\infty}$-space $|\widetilde{\QVect}|$ acts on $\CAlg^{\mathrm{perf}}_p$ (and the analogous statement in the non-group-complete setting).

\begin{qst}
What is the homotopy type of $|\widetilde{\QVect}|$? For instance, is it $p$-adically discrete?  What about the corresponding partial $K$-theory?
\end{qst}

3. In \S \ref{sect:algact}, we defined a notion of global algebra which has genuine equivariant multiplication maps corresponding to all finite covers of groupoids.  It is natural to consider the analogous object with multiplicative transfers for \emph{all} maps of groupoids:

\begin{defn}
An \emph{extended global algebra} is a section of the fibration $\Psi^+: \GlopSp \to \Glop$ which is coCartesian over the left morphisms.  Let $\CAlg^{\Glop}$ denote the $\infty$-category of extended global algebras.  
\end{defn}

One can show that the functor $\bS^{(-)}: (\Spacesfn)^{\op} \to \CAlg$ refines to a functor $\bS_{\Glop}^{(-)}: (\Spacesfn)^{\op} \to \CAlg^{\Glop}.$  The additional functorialities corresponding to geometric fixed points can be seen as giving trivializations of the Frobenius maps.  We therefore conjecture that a space can be recovered from its cochains as an extended global algebra:

\begin{cnj}
The functor $\bS_{\Glop}^{(-)}:(\Spacesfn)^{\op} \to \CAlg^{\Glop}$ is fully faithful.
\end{cnj}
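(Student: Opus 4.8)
The plan is to deduce the conjecture from Theorem~C. The strategy is to produce a comparison functor $\mathcal{R}\colon \CAlg^{\Glop}\to \CAlg^{\varphi=1}$ together with an equivalence $\mathcal{R}\circ S_{\Glop}^{(-)}\simeq S^{(-)}_{\varphi=1}$, and then to show that $\mathcal{R}$ is fully faithful on the essential image of $S_{\Glop}^{(-)}$. Granting this, faithfulness of $S_{\Glop}^{(-)}$ is immediate, since composing with $\mathcal{R}$ yields the faithful functor $S^{(-)}_{\varphi=1}$ of Theorem~C, and fullness then follows from the fully-faithful range of $\mathcal{R}$ together with Theorem~C.

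\textbf{Constructing $\mathcal{R}$.} Restricting a section along $\Glo\hookrightarrow\Glop$ yields the underlying global algebra, and hence --- via Theorem~\ref{thm:globalalgs} applied after Borelification --- an underlying $\E_{\infty}$-ring, which on the input $S_{\Glop}^X$ lies in $\CAlgperf$ (Definition~\ref{defn:assembly}) by Example~\ref{exm:cochainsperfect}. The substance is to use the morphisms of $\Glop$ that do not lie in $\Glo$ --- the geometric-fixed-point transfers attached to surjections of finite groups, which by Remark~\ref{rmk:mapfromspangpd} record the identifications $\Phi^G\triv^G\simeq\id$ --- to extract, coherently in $G$, trivializations of the generalized Frobenius maps $\varphi^G\colon A\to A^{\tau G}$ (heuristically, these spans fill $\varphi^G$ through $\Phi^G(\triv^G A)\simeq A$). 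The remaining point is to organize these trivializations, with $G$ ranging over elementary abelian $p$-groups, into a homotopy fixed point for the $B\Z_{\geq0}$-action of Theorem~\ref{thm:mainaction}; this should run the partial-$K$-theory argument of \S\S\ref{sect:partK}--\ref{sect:KpartFp} "in reverse", producing a section of the action groupoid rather than the action itself, and would combine with Theorem~\ref{thm:gloalgpartK}(2). Assembling the resulting $p$-Frobenius-fixed data over all primes with the (automatic) rational structure gives $\mathcal{R}(A)\in\CAlg^{\varphi=1}$, and one checks $\mathcal{R}\circ S_{\Glop}^{(-)}\simeq S^{(-)}_{\varphi=1}$ by unwinding Construction~\ref{cnstr:thmBfun}.

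\textbf{Fully-faithfulness of $\mathcal{R}$ on the image.} By the arithmetic-fracture argument in the proof of Theorem~C it suffices to work one prime at a time and rationally. Rationally the extended structure is essentially unique, because the relevant proper Tate constructions vanish, so the content is $p$-adic: one must show that $\mathcal{R}$ restricts to an equivalence between the full subcategory of $\CAlg^{\Glop}$ on the objects $(\pS)^X$ and the corresponding subcategory of $\CAlgffp$. I would prove this by a relative Kan extension computation along $\Span(\Fin)\hookrightarrow\Glop$ in the spirit of Theorem~\ref{thm:globalalgs}, the new feature being that this Kan extension genuinely detects the extra $\Glop$-coherences rather than collapsing onto the underlying $\E_{\infty}$-ring; the "new" mapping spaces that arise are then controlled by the Segal conjecture for elementary abelian $p$-groups (Theorem~\ref{thm:agmsegal}), as in the $F_p$-stability results of \S\ref{sub:Fstable}, together with the cotangent-complex vanishing of Proposition~\ref{prop:formallyetale}, exactly as in the proof of Corollary~\ref{cor:sfpbarlift}.

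\textbf{Main obstacle.} The hard part is promoting the geometric-fixed-point functorialities from a group action to genuine homotopy-fixed-point data, uniformly in all finite groups and all primes simultaneously. The action of Theorem~\ref{thm:mainaction} was produced only after localizing at the left morphisms of $\QVect$ and passing through the $\F_p$-homology computation $K^{\mathrm{part}}(\F_p)\to\Z_{\geq0}$ of Theorem~\ref{thm:KpartFp}; by contrast $\Glop$ carries coherences indexed by the entire category of finite groups, so a structural identification of $\CAlg^{\Glop}$ with a fixed-point category would force one to understand the homotopy type of $|\widetilde{\mathcal{Q}}|$ --- equivalently the partial $K$-theory of finite groups --- which is precisely Question~2 of \S\ref{subsect:extensions} and is not settled here. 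A safer fallback is to avoid the global comparison entirely and argue directly with mapping spaces, replaying the proof of Corollary~\ref{cor:sfpbarlift} with $\Fpbar$-coefficients replaced by their genuine-equivariant refinement and verifying, prime by prime, that the natural map $X^{\wedge}_p\to \CAlg^{\Glop}_p(S_{\Glop}^X,S_{\Glop})$ has contractible fibers.
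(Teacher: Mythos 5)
The statement you set out to prove is presented in the paper as a \emph{conjecture} in \S\ref{subsect:extensions}, not as a theorem; there is no proof in the paper to compare against. Your write-up correctly treats it as an open problem --- and, as you acknowledge yourself in the final paragraph, it remains one after your sketch.

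The centerpiece of your plan is a comparison functor $\mathcal{R}\colon\CAlg^{\Glop}\to\CAlg^{\varphi=1}$ compatible with $S_{\Glop}^{(-)}$ and $S^{(-)}_{\varphi=1}$, from which the conjecture would follow from Theorem~C by formal nonsense once $\mathcal{R}$ is known to be fully faithful on the relevant full subcategory. But constructing $\mathcal{R}$ is exactly where the content lives. An extended global algebra carries coherences indexed by the 2-category $\widetilde{\mathcal{Q}}$ of finite groups; to repackage those as a genuine $S^1$-homotopy-fixed-point datum for the $B\Z_{\geq 0}$-action of Theorem~\ref{thm:mainaction} one would need control over $|\widetilde{\QVect}|$ (or, integrally, the partial $K$-theory of finite groups), and you correctly observe that this is precisely Question~2 of \S\ref{subsect:extensions}, which the paper leaves open. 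The phrase ``run the partial-$K$-theory argument in reverse'' does not describe a step; it names the problem. Your fallback --- computing the mapping spaces $\CAlg^{\Glop}(S_{\Glop}^X,S_{\Glop})$ directly by a genuine-equivariant refinement of Corollary~\ref{cor:sfpbarlift} --- is a reasonable program, but it is unexecuted: you do not say what the equivariant replacement of $\Fpbar$-cochains is, why the cotangent-complex vanishing of Proposition~\ref{prop:formallyetale} has an equivariant analogue, or why Theorem~\ref{thm:agmsegal} controls the $\Glop$-indexed mapping spaces rather than merely the $\mathcal{Q}$-indexed ones that the paper's machinery governs. In short, you have a sensible research plan that correctly identifies the obstruction and its relation to the paper's open questions, but not a proof --- which is consistent with the paper recording this statement as a conjecture rather than a theorem.
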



\appendix

\section{Generalities on producing actions}\label{sect:actgen} 

In this appendix, we will provide the technical details necessary for producing the integral Frobenius action of Theorem $\text{A}^{\natural}$ (Theorem \ref{thm:mainalgaction}).  Let us first describe, informally, a simpler case of the setup.  Let $q: \mE \to S$ be a coCartesian fibration of $\infty$-categories.  There is a monoidal $\infty$-category $\Fun(S,S)_{/\id}$ whose objects can be thought of as pairs $(\varphi, \eta)$ where $\varphi\in \Fun(S,S)$ and $\eta: \varphi \to \id$ is a natural transformation.  The monoidal structure is described on objects by the formula 
\begin{equation*}\label{eqn:endidmon}
(\varphi_1,\eta_1)\circ (\varphi_2,\eta_2) = (\varphi_1\circ \varphi_2, \varphi_1\circ \varphi_2 \xrightarrow{\eta_1(\varphi_2(-))} \varphi_2 \xrightarrow{\eta_2} \id).
\end{equation*} 
In this situation, we will show (\Cref{lem:endbact}) that the sections of $q$ admit a natural action of the monoidal $\infty$-category $\Fun(S,S)_{/\id}.$  Concretely, the action of $(\varphi,\eta)$ on a section $\sigma: S\to \mE$ produces a new section which takes $s$ to $ (\eta_s)_* \sigma(\varphi(s)).$

We work with an extension of this situation: for the remainder of this appendix, fix a pullback square of $\infty$-categories
\begin{equation*}
\begin{tikzcd}
\mE \arrow[r]\arrow[d,"q"] & \mE^+ \arrow[d,"q^+"]\\
S\arrow[r,"i"] & S^+\\
\end{tikzcd}
\end{equation*}
where the vertical arrows are coCartesian fibrations and $i$ is the inclusion of a (not necessarily full) subcategory.  We will see that the extension of $q$ to $q^+:\mE^+ \to S^+$ describes additional symmetries on the sections of $q$ beyond those coming from $\Fun(S,S)_{/\id}$ as above.  In particular, we produce an action of the $\infty$-category $\Fun(S,S)\times_{\Fun(S,S^+)} \Fun(S,S^+)_{/i}$.  Its objects are the data of a functor $\varphi:S\to S$ together with a natural transformation $\eta: i\varphi \to i$.  The monoidal structure can be described informally by the formula
$$(\varphi_1,\eta_1) \circ (\varphi_2,\eta_2) = (\varphi_1\circ \varphi_2, i\varphi_1\circ \varphi_2 \xrightarrow{\eta_1(\varphi_2(-))} i\varphi_2 \xrightarrow{\eta_2} i).$$  Before we construct the monoidal structure formally, we need some preliminary lemmas.  

\begin{lem}\label{lem:oplax}
Let $\C,\D$ be $\infty$-categories and $$F: \C \xrightleftharpoons{\quad} \D :G$$ be functors such that $F$ is left adjoint to $G$ and $G$ is fully faithful (so $F$ is a localization in the sense of \cite[Section 5.2.7]{HTT}).  Then there is a natural oplax monoidal functor $$\Fun(\C,\C)\to \Fun(\D,\D)$$ which sends a functor $\varphi:\C \to \C$ to $F\varphi G:\D \to \D$.  
\end{lem}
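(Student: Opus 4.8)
The plan is to take $\Phi := F(-)G$, sending $\varphi\colon\C\to\C$ to $F\varphi G\colon\D\to\D$, and to equip it with oplax monoidal structure maps built from the unit $\eta\colon\id_\C\to GF$ and counit $\varepsilon\colon FG\to\id_\D$ of the adjunction $F\dashv G$. The unit comparison is $\Phi(\id_\C)=FG\xrightarrow{\varepsilon}\id_\D$, which is an equivalence precisely because $G$ is fully faithful; and for $\varphi,\psi\colon\C\to\C$ the comultiplication comparison $\Phi(\varphi\psi)=F\varphi\psi G\to F\varphi GF\psi G=\Phi(\varphi)\Phi(\psi)$ is obtained by whiskering $\eta$ between $\varphi$ and $\psi$. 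A preliminary hand-check of the oplax axioms is reassuring: coassociativity is immediate from naturality of $\eta$, and the two counitality constraints unwind to exactly the two triangle identities for the adjunction.

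To make this a coherent construction I would use the ($\infty$-categorical) mate correspondence. First, $\Phi=(F\circ-)\circ(-\circ G)$ is the left adjoint of $\Psi:=(-\circ F)\circ(G\circ-)=G(-)F\colon\Fun(\D,\D)\to\Fun(\C,\C)$; this follows from applying the $2$-functors $\Fun(X,-)$ and $\Fun(-,X)$ to $F\dashv G$, which yield $(F\circ-)\dashv(G\circ-)$ and $(-\circ G)\dashv(-\circ F)$. Second, $\Psi$ is canonically lax monoidal: its multiplication constraint $\Psi(\psi_1)\Psi(\psi_2)=G\psi_1(FG)\psi_2 F\xrightarrow{\varepsilon}G\psi_1\psi_2 F=\Psi(\psi_1\psi_2)$ is whiskered from the counit (and is therefore an equivalence), and its unit constraint is $\id_\C\xrightarrow{\eta}GF=\Psi(\id_\D)$. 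One then takes the mate of this lax structure to obtain the desired oplax monoidal structure on $\Phi$, whose underlying comparison maps are the ones above — in particular $\Phi(\id_\C)\simeq\id_\D$, since $\varepsilon$ is an equivalence. Making ``$\Psi$ is lax monoidal'' precise requires producing it as a morphism $\Fun(\D,\D)^\otimes\to\Fun(\C,\C)^\otimes$ of (associative) monoidal $\infty$-categories over $\Delta^{\op}$ preserving inert-coCartesian edges, with the active edges sent to the whiskered-counit comparison; the needed simplicial identities reduce to the triangle identities plus invertibility of $\varepsilon$. A variant of the same fibered construction, carried out over a base parametrizing adjunction data, gives the naturality asserted in the lemma.

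The main obstacle is the coherence bookkeeping rather than any conceptual difficulty: oplax monoidal functors between merely associative (non-symmetric) monoidal $\infty$-categories need a carefully chosen framework, and the real work is in checking that the comparison maps assembled from $\eta$ and $\varepsilon$ organize into a coherent operadic (or mate-theoretic) datum — each such check ultimately amounting to the triangle identities together with the equivalence $\varepsilon\colon FG\xrightarrow{\sim}\id_\D$, which is exactly where the hypothesis that $G$ is fully faithful is used.
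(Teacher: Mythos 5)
Your overall route is genuinely different from the paper's, and in substance it is viable: the adjunction $F(-)G \dashv G(-)F$ between $\Fun(\C,\C)$ and $\Fun(\D,\D)$ is correct (for the reasons you give), the comparison maps you write down (counit for $\Phi(\id_\C)=FG\to\id_\D$, whiskered unit for $F\varphi\psi G\to F\varphi GF\psi G$) are exactly the structure maps the paper's construction produces, and if pushed through the general calculus of mates your argument would in fact prove the lemma for an arbitrary adjunction $F\dashv G$, since an oplax unit constraint $FG\to\id_\D$ need not be invertible; that would partially address the Remark following the lemma. The paper instead factors $F(-)G$ through the full subcategory $\Fun^{\mathrm{loc}}(\C,\C)\simeq\Fun(\C,\D)$ of functors landing in $\D$: the inclusion $i$ into $\Fun(\C,\C)$ and the restriction $j$ to $\Fun(\D,\D)$ are maps of simplicial monoids, so their monoidal structure is strict and carries no coherence burden, and the only nontrivial input is the single cited fact that the left adjoint of such a monoidal inclusion is oplax monoidal. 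So the trade-off is: your argument is more direct and potentially more general, but it needs heavier machinery to be complete; the paper's subcategory trick buys strictness and reduces everything to one quotable statement.

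The gap, as written, sits precisely at the step you defer. Your right adjoint $\Psi=G(-)F$ is lax but not strong monoidal (its unit constraint $\eta:\id_\C\to GF$ is not an equivalence, even though the multiplication constraints are), so the standard statement that adjoints of \emph{strong} monoidal functors inherit (op)lax structures does not apply; you need the genuinely lax version of doctrinal adjunction for associative monoidal $\infty$-categories. Saying that ``the needed simplicial identities reduce to the triangle identities plus invertibility of $\varepsilon$'' is not yet a proof in this setting: a lax monoidal structure is a homotopy-coherent datum, namely a map over $\Delta^{\op}$ (or over the associative operad) preserving inert edges, and it is not pinned down by finitely many equations. To close the argument you must either construct that fibered map in full (which is real work, comparable to the appendix constructions in this paper) or cite the $\infty$-categorical mate/lax-adjunction machinery that establishes ``left adjoints of lax monoidal functors are canonically oplax monoidal''; with such a citation your proof is complete, and without it the coherence you call bookkeeping is exactly the content of the lemma.
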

\begin{proof}
This is (the opposite of) \cite[Lemma 3.10]{BrantCampNuit}.
\end{proof}
\begin{rmk}
In fact, the hypotheses of Lemma \ref{lem:oplax} are stronger than should be necessary -- it should be sufficient that $F$ and $G$ be functors together with a natural transformation $\id \to GF.$  We would be happy to see a proof of this stronger statement.  
\end{rmk}

\begin{lem}\label{lem:endbact}
Let $B$ be an $\infty$-category and $q:\mE \to B$ be a Cartesian fibration.  Then:
\begin{enumerate}
\item The $\infty$-category $\End(B)_{/\id}$ has a monoidal structure which is given on objects by the formula
\[
(\varphi_1 \xrightarrow{\eta_1} \id)\circ (\varphi_2 \xrightarrow{\eta_2} \id) = (\varphi_1\circ \varphi_2 \xrightarrow{\eta_2\circ \eta_1(\varphi_2(-))} \id).
\]
\item The total space $\mE$ acquires a natural action of the monoidal $\infty$-category $\End(B)_{/\id}$ where the action of $(\eta:\phi \to \id) \in \End(B)_{/\id}$ takes $(b\in B, x\in \mE_b) \in \mE$ to $(\phi(b), \eta_x^* x\in \mE_{\phi(b)}).$
\end{enumerate}
\end{lem}
\begin{proof}
The monoidal structure on $\End(B)_{/\id}$ can be modeled explicitly by a simplicial monoid (which we will also denote by $\End(B)_{/\id}$) characterized as follows: for any finite nonempty linearly ordered set $J$, let $J^+$ denote $J\cup \{+ \}$ where $\{+ \}$ is a new maximal element; then, the set of maps $\Delta^J \to \End(B)_{/\id}$ is given by the set of maps $\Delta^{J^+} \times B \to B$ with the property that the restriction to $\{ + \} \subset \Delta^{J^+}$ is the identity.  The monoid structure takes two maps $f,g:\Delta^{J^+}\times B \to B$ to the composite $$g\circ f : \Delta^{J^+}\times B \to \Delta^{J^+} \times \Delta^{J^+} \times B \xrightarrow{\id \times g} \Delta^{J^+} \times B \xrightarrow{f} B$$ where the first map is induced by the diagonal on $\Delta^{J^+}.$  

The simplicial monoid $\End(B)_{/\id}$ left acts on the underlying simplicial set of the arrow category $\mathrm{Arr}(B)= \Fun(\Delta^1,B)$ of $B$.  On objects, the action of $(\eta: \phi \to \id)\in \End(B)_{/\id}$ on an arrow $(b\to c)\in \mathrm{Arr}(B)$ produces an arrow representing the composite $\phi(b) \xrightarrow{\eta_b} b \to c.$   

In general, we describe this action directly on $J$-simplices; let $f:\Delta^{J^+}\times B \to B$ determine a $J$-simplex of $\End(B)_{/\id}$ and $\gamma: \Delta^J \times \Delta^1 \to B$ be a $J$-simplex of $\mathrm{Arr}(B)$.  There is a map $\psi_J: \Delta^J \times \Delta^1 \to \Delta^{J^+}$ which sends $(j,0)$ to $j$ and $(j,1)$ to $+$ for all $j\in J$.  The action on $J$-simplices produces from $f$ and $\gamma$ a new $J$-simplex of $\mathrm{Arr}(B)$ defined by the composite $$f\gamma: \Delta^J \times \Delta^1 \xrightarrow{\psi_J \times \id} \Delta^{J^+} \times (\Delta^J\times \Delta^1) \xrightarrow{\id \times \gamma} \Delta^{J^+} \times B \xrightarrow{f} B.$$  It is immediate that this defines a left action, and so we have an action at the level of $\infty$-categories of the monoidal $\infty$-category $\End(B)_{/\id}$ on $\mathrm{Arr}(B).$   

Moreover, the action fixes the target of the arrow; thus, if we consider $\mathrm{Arr}(B)$ as lying over $B$ via the target map, $\End(B)_{/\id}$ acts on $\mathrm{Arr}(B)$ as an object over $B$.  Thus, for any functor $q:\mE \to B$, the fiber product $\mathrm{Arr}(B)\times_{B} \mE$ acquires a left action of $\End(B)_{/\id}.$   

One has a fully faithful functor $\mE \to \mathrm{Arr}(B)\times_B \mE$ given by sending $x\in \mE$ to $x$ together with the identity arrow $q(x) \to q(x)$.  When $q$ is a Cartesian fibration, this functor admits a right adjoint which takes $x\in \mE$ together with an arrow $f:y\to q(x)$ to the pullback $f^*x.$  By (the opposite of) Lemma \ref{lem:oplax}, we obtain a lax action of $\End(B)_{/\id}$ on $\mE$.  Explicitly, the action of $(\eta:\phi \to \id) \in \End(B)_{/\id}$ takes $(b\in B, x\in \mE_b) \in \mE$ to $(\phi(b), \eta_x^* x\in \mE_{\phi(b)}).$  We immediately see that the lax structure map is an equivalence, and so we have produced the desired action.
\end{proof}


Our strategy for producing and understanding the monoidal structure on $\Fun(S,S)\times_{\Fun(S,S^+)}\Fun(S,S^+)_{/i}$ is to construct an $\infty$-category $\mathcal{M}$ which is tensored over $\Cat_{\infty}$ and such that $\Fun(S,S)\times_{\Fun(S,S^+)}\Fun(S,S^+)_{/i} $ arises as the endomorphism $\infty$-category of an object of $\mathcal{M}$ in the sense of \cite[Section 4.7.1]{HA}.  

\begin{cnstr}\label{cnstr:slicemonoidal}
Let $\mathcal{M}\to \Cat_{\infty}$ denote the Cartesian fibration classified by the functor $\Cat_{\infty} \to \Cat_{\infty}^{\op}$ sending an $\infty$-category $\C$ to the functor category $\Fun(\C,S^+)^{\op}.$  One thinks of the objects of $\mathcal{M}$ as pairs $(\C, \varphi)$, where $\C$ is an $\infty$-category together with a functor $\varphi: \C\to S^+$.   By Lemma \ref{lem:endbact}, the $\infty$-category $\mathcal{M}$ acquires an action of $\End(\Cat_{\infty})_{/\id}$.  Since $\Cat_{\infty}$ is a Cartesian monoidal $\infty$-category, there is a monoidal functor $(\Cat_{\infty})_{/*} \to \End(\Cat_{\infty})_{/\id}$ where $*$ denotes the terminal $\infty$-category.  Since $*$ is terminal, the natural monoidal functor $(\Cat_{\infty})_{/*}\to \Cat_{\infty}$ is an equivalence.  We conclude that the $\infty$-category $\mathcal{M}$ is naturally left tensored over $\Cat_{\infty}$ in the sense of \cite[Definition 4.2.1.19]{HA}; explicitly, for $\D\in \Cat_{\infty}$, the tensor is described by the formula  $$D\otimes (\C, \varphi) = (\D\times \C, \D\times \C \to \C \xrightarrow{\varphi} S^+).$$  It therefore makes sense to discuss endomorphism $\infty$-categories of objects of $M$.  

\begin{lem}\label{lem:monendcat}
The object $(S,i)\in\mathcal{M}$ admits an endomorphism $\infty$-category which is equivalent to $\Fun(S,S)\times_{\Fun(S,S^+)}\Fun(S,S^+)_{/i}.$  
\end{lem}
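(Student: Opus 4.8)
The plan is to identify the endomorphism $\infty$-category of $(S,i)\in\mathcal{M}$ by directly invoking the characterization of endomorphism objects in \cite[Section 4.7.1]{HA}. Recall that for an $\infty$-category $\mathcal{M}$ left-tensored over a monoidal $\infty$-category $\mathcal{V}$ (here $\mathcal{V}=\Cat_\infty$ with its Cartesian structure) and an object $M\in\mathcal{M}$, an endomorphism object is a final object of the $\infty$-category $\mathcal{V}[M]$ whose objects are pairs $(V,\mu)$ with $V\in\mathcal{V}$ and $\mu:V\otimes M\to M$ an action map. So first I would unwind what $\Cat_\infty[(S,i)]$ is: an object is a pair $(\D,\mu)$ where $\mu:\D\otimes(S,i)=(\D\times S,\ i\circ\mathrm{pr}_S)\to (S,i)$ is a morphism in $\mathcal{M}$.

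Next I would unwind morphisms in $\mathcal{M}$ using Construction \ref{cnstr:slicemonoidal}: a morphism $(\C,\varphi)\to(\C',\varphi')$ in $\mathcal{M}$ (which is the Cartesian fibration classified by $\C\mapsto\Fun(\C,S^+)^{\op}$) is the data of a functor $g:\C\to\C'$ together with a natural transformation $\varphi\to\varphi'\circ g$ in $\Fun(\C,S^+)$. Applying this to $\mu:(\D\times S,i\circ\mathrm{pr}_S)\to(S,i)$, the data is: a functor $F:\D\times S\to S$, together with a natural transformation $i\circ\mathrm{pr}_S\to i\circ F$ of functors $\D\times S\to S^+$. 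Currying over $\D$, this is exactly a functor $\D\to\Fun(S,S)$ together with, for each object of $\D$, a natural transformation $i\to i\circ(\text{that functor})$ — compatibly in $\D$ — which is precisely a functor $\D\to\Fun(S,S)\times_{\Fun(S,S^+)}\Fun(S,S^+)_{/i}$, where the slice is formed over $i$ viewed via the "source" leg. (One should be slightly careful about the variance of the slice: the natural transformation points from $i\circ\mathrm{pr}_S$ toward $i\circ F$, i.e. it is a map $i\to i\varphi$ in the undercategory; since $\Fun(S,S^+)_{/i}$ and $\Fun(S,S^+)^{i/}$ differ only by $(-)^{\op}$ and the fibration $\mathcal{M}$ was built with $(-)^{\op}$ on the fibers precisely to fix this, the two conventions match up — I would spell this out.) Thus $\Cat_\infty[(S,i)]\simeq \Fun\bigl(-,\,\Fun(S,S)\times_{\Fun(S,S^+)}\Fun(S,S^+)_{/i}\bigr)$ as a presheaf on $\Cat_\infty$, represented by $\mathcal{E}:=\Fun(S,S)\times_{\Fun(S,S^+)}\Fun(S,S^+)_{/i}$.

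Finally, since the representing object $\mathcal{E}$ is a final object of $\Cat_\infty[(S,i)]$ — it corresponds to $\D=\mathcal{E}$ together with the identity/evaluation action — it is by definition an endomorphism object of $(S,i)$. Concretely, I would exhibit the universal action map: the evaluation functor $\mathcal{E}\times S\to S$ (project to the $\Fun(S,S)$ factor and evaluate) together with the tautological natural transformation $i\circ\mathrm{pr}_S\to i\circ\mathrm{ev}$ coming from the universal natural transformation on $\Fun(S,S^+)_{/i}$; then check it is final by the currying identification above. By \cite[Proposition 4.7.1.2, 4.7.1.3]{HA} (or its monoidal refinement), the endomorphism object is then automatically an associative algebra in $\Cat_\infty$, i.e. a monoidal $\infty$-category, and this is the monoidal structure on $\mathcal{E}$ described informally before the lemma; I would remark that the composition law reads $(\varphi_1,\eta_1)\circ(\varphi_2,\eta_2)=(\varphi_1\varphi_2,\ i\varphi_1\varphi_2\xrightarrow{\eta_1}i\varphi_2\xrightarrow{\eta_2}i)$, matching the claim. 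The main obstacle I anticipate is purely bookkeeping: getting the variances straight (the $(-)^{\op}$ in the definition of the fibration $\mathcal{M}$ versus the direction of the action transformation versus over- vs. undercategories) so that the slice genuinely comes out as $\Fun(S,S^+)_{/i}$ and not $\Fun(S,S^+)^{i/}$; once that is pinned down, the rest is a formal application of the (co)representability characterization of endomorphism objects together with the adjunction $\Fun(\D\times S,S)\simeq\Fun(\D,\Fun(S,S))$.
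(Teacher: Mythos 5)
Your approach is essentially the paper's: both verify the endomorphism-object property by showing that for every $\infty$-category $K$ (your $\D$), maps $K\otimes(S,i)\to(S,i)$ in $\mathcal{M}$ curry into functors $K\to\Fun(S,S)\times_{\Fun(S,S^+)}\Fun(S,S^+)_{/i}$, naturally in $K$. However, your unwinding of morphisms in $\mathcal{M}$ has the variance backwards, and the patch you propose is not correct as stated. Since $\mathcal{M}$ is the Cartesian fibration classified by $\C\mapsto\Fun(\C,S^+)^{\op}$, a morphism $(\C,\varphi)\to(\C',\varphi')$ covering $g:\C\to\C'$ is a morphism $\varphi\to\varphi'\circ g$ \emph{in $\Fun(\C,S^+)^{\op}$}, i.e.\ a natural transformation $\varphi'\circ g\to\varphi$ in $\Fun(\C,S^+)$ — not $\varphi\to\varphi'\circ g$ as you wrote. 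Applied to $\mu:(\D\times S,i\circ\mathrm{pr}_S)\to(S,i)$, this is a natural transformation $i\circ F\to i\circ\mathrm{pr}_S$, so currying lands you directly in the overcategory $\Fun(S,S^+)_{/i}$. You cannot then invoke the $(-)^{\op}$ in the definition of $\mathcal{M}$ a second time to convert an undercategory into an overcategory: that $(-)^{\op}$ has already been spent in passing from a map in $\Fun(\C,S^+)^{\op}$ to one in $\Fun(\C,S^+)$. Once that bookkeeping is fixed, the rest of your argument matches the paper's proof.
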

\begin{proof}
Unwinding the definitions, there is a natural map in $\mathcal{M}$: $$\mathrm{ev}: \Fun(S,S)\times_{\Fun(S,S^+)}\Fun(S,S^+)_{/i} \otimes (S,i) \to (S,i).$$  We would like to check that this exhibits $\Fun(S,S)\times_{\Fun(S,S^+)}\Fun(S,S^+)_{/i}$ as an endomorphism object for $(S, i)\in \mathcal{M}$ in the sense that for any $\infty$-category $K$, the map $\mathrm{ev}$ induces a homotopy equivalence of spaces $$\Hom_{\Cat_{\infty}}(K , \Fun(S,S)\times_{\Fun(S,S^+)}\Fun(S,S^+)_{/i}) \xrightarrow{\sim} \Hom_{\mathcal{M}}( K\otimes (S,i), (S,i)).$$  %
To verify this, we note that both sides admit compatible maps to $\Hom_{\Cat_{\infty}}(K\times S,S).$  It suffices to choose a particular functor $\psi: K\times S\to S$ and check the claim fiberwise over $\psi$.  On the left hand side, this fiber is the fiber of the natural map $\Fun(K, \Fun(S,S^+)_{/i}) \to \Fun(K,\Fun(S,S^+))$ over the map $i\circ \psi : K\times S \to S^+$.  This can be identified as the space of natural transformations filling in the diagram
\begin{equation*}
\begin{tikzcd}
K\times S \arrow[r,"\psi"]\arrow[d] & S\arrow[d,"i"]\\
S\arrow[r,"i"] &S^+
\end{tikzcd}
\end{equation*}
where the left vertical arrow is projection onto the second coordinate.  This is evidently also the fiber on the right hand side and $\mathrm{ev}$ induces the desired equivalence.  
\end{proof}
\end{cnstr}

By \cite[Section 4.7.1]{HA}, this endows $\Fun(S,S)\times_{\Fun(S,S^+)}\Fun(S,S^+)_{/i}$ with the structure of a monoidal $\infty$-category.  Moreover, this description as an endomorphism object allows us to construct actions of the monoidal $\infty$-category  $\Fun(S,S)\times_{\Fun(S,S^+)}\Fun(S,S^+)_{/i}$ on other $\infty$-categories.  We will construct the right action of $\Fun(S,S)\times_{\Fun(S,S^+)}\Fun(S,S^+)_{/i}$ on $\sect(q)$ by first producing an action on a closely related $\infty$-category.

\begin{defn}\label{defn:laxsect}
Let $\sect^+(q)$ denote the $\infty$-category $\Fun(S,\mE^+)\times_{\Fun(S,S^+)} \Fun(S,S^+)_{/i}.$  
\end{defn}

\begin{obs}\label{obs:action}
The $\infty$-category $\sect^+(q)$ can be identified as the $\infty$-category of maps in $\mathcal{M}$ from $(S,i)$ to $(\mE^+,q^+).$  The verification is identical to Lemma \ref{lem:monendcat} so we omit it. 
As a result, $\sect^+(q)$ admits a canonical right action of $\End_{\mathcal{M}}((S,i)) \simeq \Fun(S,S)\times_{\Fun(S,S^+)}\Fun(S,S^+)_{/i}.$  
\end{obs}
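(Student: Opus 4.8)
\textbf{Proof proposal for Observation \ref{obs:action}.}

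The plan is to reduce the statement to an application of the endomorphism-object formalism of \cite[Section 4.7.1]{HA}, exactly as was done for Lemma \ref{lem:monendcat}, and then invoke the general fact that if $X$ is an object of an $\infty$-category $\mathcal{M}$ left-tensored over $\Cat_\infty$ and $X$ admits an endomorphism object $\End_{\mathcal M}(X)$, then for any object $Y \in \mathcal{M}$ the mapping space $\Map_{\mathcal M}(X, Y)$ (suitably promoted to an $\infty$-category using the tensoring) carries a canonical right action of the monoidal $\infty$-category $\End_{\mathcal M}(X)$. Concretely, recall from Construction \ref{cnstr:slicemonoidal} that $\mathcal{M} \to \Cat_\infty$ is the Cartesian fibration classified by $\C \mapsto \Fun(\C, S^+)^{\op}$, and that $\mathcal{M}$ is left-tensored over $\Cat_\infty$ via $\D \otimes (\C, \varphi) = (\D \times \C, \D \times \C \to \C \xrightarrow{\varphi} S^+)$. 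The two objects in play are $(S, i)$ — which by Lemma \ref{lem:monendcat} has endomorphism $\infty$-category $\Fun(S,S) \times_{\Fun(S,S^+)} \Fun(S,S^+)_{/i}$ — and $(\mE^+, q^+)$, which is a legitimate object of $\mathcal{M}$ since $q^+ : \mE^+ \to S^+$ is a functor.

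First I would establish the identification $\sect^+(q) \simeq \Map_{\mathcal{M}}\big((S,i), (\mE^+, q^+)\big)$ as $\infty$-categories, where the right-hand side denotes the internal mapping object in the $\Cat_\infty$-enriched structure on $\mathcal{M}$ — i.e.\ the $\infty$-category whose $K$-points are maps $K \otimes (S,i) \to (\mE^+, q^+)$ in $\mathcal{M}$. Unwinding the tensoring, a map $K \otimes (S,i) \to (\mE^+, q^+)$ is the data of a functor $K \times S \to \mE^+$ together with a natural transformation filling the square with corners $K \times S \to S$ (projection), $S \xrightarrow{i} S^+$, the composite $K \times S \to \mE^+ \to S^+$, and $q^+$. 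By exactly the fiberwise-over-$\psi$ argument used in the proof of Lemma \ref{lem:monendcat} — fixing a functor $\psi : K \times S \to S^+$ obtained as the composite of a functor $K \times S \to \mE^+$ with $q^+$ is not quite the right bookkeeping; rather one fixes the underlying functor $K \times S \to \mE^+$ and checks the space of fillers fiberwise — one identifies this with the space of functors $K \to \sect^+(q) = \Fun(S, \mE^+) \times_{\Fun(S,S^+)} \Fun(S,S^+)_{/i}$. This is the step the Observation is referring to when it says ``the verification is identical to Lemma \ref{lem:monendcat}''; I would spell out just enough of it to confirm the fiber of $\Fun(K, \Fun(S,S^+)_{/i}) \to \Fun(K, \Fun(S,S^+))$ over the appropriate functor matches the fiber on the $\mathcal{M}$ side.

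Next I would invoke the general machinery: since $(S, i)$ admits the endomorphism $\infty$-category $\End_{\mathcal M}(S,i) \simeq \Fun(S,S) \times_{\Fun(S,S^+)} \Fun(S,S^+)_{/i}$ — which is precisely what Lemma \ref{lem:monendcat} provides, together with \cite[Section 4.7.1]{HA} which upgrades this to a monoidal $\infty$-category with the evident monoidal structure — the internal mapping object $\Map_{\mathcal M}\big((S,i), (\mE^+, q^+)\big)$ carries a canonical right module structure over this monoidal $\infty$-category. This is the content of the enriched-category statement that $\Map(X, -)$ is functorial in the first variable against composition, or equivalently that composition $\Map(X, Y) \times \End(X) \to \Map(X, Y)$ is coherently associative and unital; in \cite[Section 4.7.1]{HA} this falls out of the construction of the endomorphism object as a terminal object in an appropriate $\infty$-category of ``actions'', into which $(\Map(X,Y), \Map(X,Y) \otimes \text{[something acting on } X])$ maps. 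Combining with the identification of the previous paragraph, this yields the desired right action of $\Fun(S,S) \times_{\Fun(S,S^+)} \Fun(S,S^+)_{/i}$ on $\sect^+(q)$, and unwinding the tensoring formula shows the action has the expected pointwise description (twist the source functor, postcompose the structure transformations).

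The main obstacle I anticipate is purely bookkeeping: making the fiberwise identification in the first step precise without re-deriving all of Lemma \ref{lem:monendcat}, since the square one must fill in for $\sect^+(q)$ involves $\mE^+$ rather than $S$ in the target and one has to be careful about which data is being fixed when passing to fibers. There is no genuine mathematical difficulty — the $\infty$-category $\sect^+(q)$ was defined (Definition \ref{defn:laxsect}) precisely as the pullback $\Fun(S, \mE^+) \times_{\Fun(S,S^+)} \Fun(S,S^+)_{/i}$ so that it would match the internal mapping object on the nose — but one should take care that the enrichment conventions (variance of the Cartesian fibration, op's) are consistent throughout, since $\mathcal{M} \to \Cat_\infty$ is classified by $\C \mapsto \Fun(\C, S^+)^{\op}$ and the op must be tracked so that composition goes the right way and one obtains a \emph{right} action rather than a left one.
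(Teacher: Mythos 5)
Your proposal is correct and follows essentially the same route as the paper: identify $\sect^+(q)$ with the morphism object $\mathrm{Map}_{\mathcal{M}}\big((S,i),(\mE^+,q^+)\big)$ by repeating the fiberwise universal-property check of Lemma \ref{lem:monendcat} (fixing the underlying functor $K\times S\to \mE^+$ and comparing the fibers, which in both cases are spaces of natural transformations down to $i$ in $\Fun(S,S^+)$), and then invoke the general endomorphism/morphism-object machinery of \cite[Section 4.7.1]{HA} to obtain the canonical right action of $\End_{\mathcal{M}}((S,i))\simeq \Fun(S,S)\times_{\Fun(S,S^+)}\Fun(S,S^+)_{/i}$. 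The paper omits exactly this verification with the same justification, so there is nothing further to add.
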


\begin{rmk}
Concretely, an object of $\sect^+(q)$ is a pair $(f,\eta)$ where $f:S \to \mE^+$ is a functor together with a natural transformation $\nu: q^+\circ f \to i.$   The action of $(\varphi,\eta)\in\Fun(S,S)\times_{\Fun(S,S^+)}\Fun(S,S^+)_{/i}$ on $(f,\nu)$ produces $(f\circ \varphi, \nu')\in \sect^+(q)$ where $\nu'$ denotes the composite $$q^+ f \varphi \xrightarrow{\nu(\varphi(-))} i\varphi \xrightarrow{\eta} i.$$
\end{rmk}

Note that $\Fun(S,S^+)_{/i}$ has a final object given by $i \simeq i.$  The inclusion of this final object induces a fully faithful embedding $\{ *\} \to \Fun(S,S^+)_{/i}$, from which one obtains a fully faithful embedding $j$ via the diagram
\begin{equation}\label{dia:sectdefn}
\begin{tikzcd}
\sect(q) \arrow[r,"j",hookrightarrow] \arrow[d,equals]& \sect^+(q) \arrow[d,equals]\\
\Fun(S,\mE^+)\times_{\Fun(S,S^+)} \{ * \} \arrow[r,hookrightarrow ] & \Fun(S,\mE^+)\times_{\Fun(S,S^+)} \Fun(S,S^+)_{/i} .\\
\end{tikzcd}
\end{equation}

\begin{prop}\label{prop:sectadj}
The inclusion $j:\sect(q) \to \sect^+(q)$ admits a left adjoint.  
\end{prop}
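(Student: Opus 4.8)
The plan is to produce the left adjoint as an $\mathbb{L}$-relative left Kan extension along the inclusion in diagram (\ref{dia:sectdefn}), in a way compatible with the constructions of Construction \ref{cnstr:slicemonoidal}, so that the adjunction automatically intertwines the right $\Fun(S,S)\times_{\Fun(S,S^+)}\Fun(S,S^+)_{/i}$-actions. The first step is to recall from Observation \ref{obs:action} that $\sect^+(q) \simeq \Map_{\mathcal{M}}\big((S,i),(\mathcal{E}^+,q^+)\big)$, and that under this identification $\sect(q)$ sits inside $\sect^+(q)$ as the pullback of the fully faithful inclusion of the terminal object $\{*\}\hookrightarrow \Fun(S,S^+)_{/i}$ along the functor $\sect^+(q) \to \Fun(S,S^+)_{/i}$ recording the natural transformation $\nu\colon q^+\circ f \to i$. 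Thus it suffices to show that this fiber inclusion admits a left adjoint; I would do this by exhibiting, for any $(f,\nu)\in\sect^+(q)$, a coCartesian-arrow-type universal object over it lying in $\sect(q)$.

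The construction of the left adjoint itself is the heart of the matter. Given $(f,\nu)\in \sect^+(q)$, i.e.\ a functor $f\colon S\to \mathcal{E}^+$ together with $\nu\colon q^+f\to i$, I would form the new functor $S\to \mathcal{E}^+$ by declaring its value on $s\in S$ to be the target of the $q^+$-coCartesian lift of $\nu_s\colon (q^+f)(s)\to i(s)$ starting at $f(s)$; here one uses that $q^+$ is a coCartesian fibration (this is exactly the input the statement relies on). Call this new functor $\widehat{f}$. By construction $q^+\widehat{f}\simeq i$, so the pair $(\widehat{f},\mathrm{id})$ defines an object of $\sect^+(q)$ which factors through $\Fun(S,\mathcal{E}^+)\times_{\Fun(S,S^+)}\{*\} = \sect(q)$; the coCartesian structure maps $f(s)\to \widehat{f}(s)$ assemble into a natural transformation $(f,\nu)\to j(\widehat{f})$ serving as the unit. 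To check the universal property, one fixes a section $g\in\sect(q)$ and verifies that postcomposition with the unit induces an equivalence $\Map_{\sect(q)}(\widehat{f},g)\xrightarrow{\sim}\Map_{\sect^+(q)}((f,\nu),j(g))$; this is a pointwise (in $s$) verification that reduces, via the fiberwise description of mapping spaces in functor categories over $\Fun(S,S^+)$, to the defining universal property of $q^+$-coCartesian edges, applied with the target already lying in the fiber $\mathcal{E}^+_{i(s)}=\mathcal{E}_{i(s)}$ where $\nu$ becomes the identity. A clean way to package this is to invoke \cite[Proposition 7.3.2.6]{HA} (existence of relative left Kan extensions / relative adjoints along a fibration) applied to $q^+$, much as in the dual argument of the lemma preceding Corollary \ref{cor:sectboradj}; the hypotheses there become precisely the pointwise existence of these coCartesian lifts.

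The main obstacle I anticipate is not the existence of the left adjoint pointwise — that is a formal consequence of $q^+$ being coCartesian — but rather checking that the whole construction is compatible with the ambient structure, namely that $\widehat{(-)}$ descends from a functor on $\Map_{\mathcal{M}}((S,i),(\mathcal{E}^+,q^+))$ and respects the $\Cat_\infty$-tensoring used to define the monoidal structure, so that Proposition \ref{prop:sectadj} can be upgraded (as it will be used in \S\ref{sect:algact}) to a statement about oplax actions via Lemma \ref{lem:oplax}. I would handle this by phrasing the adjunction as a relative adjunction of $\mathcal{M}$-objects over $(S,i)$: the inclusion $\{*\}\hookrightarrow\Fun(S,S^+)_{/i}$ is a map of $\Cat_\infty$-tensored $\infty$-categories admitting a left adjoint (collapse to the terminal object on the nose), and the displayed pullback square (\ref{dia:sectdefn}) is one of $\Cat_\infty$-tensored $\infty$-categories, so the left adjoint is produced by base change of adjoints, with the coCartesian-lift description above identifying it concretely. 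Verifying that base change of an internal adjunction along a pullback of tensored categories again yields an internal adjunction is the one genuinely technical point, and I would cite \cite[Proposition 7.3.2.11]{HA} together with the analysis in Construction \ref{cnstr:slicemonoidal} to dispatch it.
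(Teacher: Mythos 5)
Your core construction is the same as the paper's: for $(f,\nu)\in \sect^+(q)$, define $\widehat f(s)$ to be the target of the $q^+$-coCartesian lift of $\nu_s$ starting at $f(s)$, and show the resulting arrow $(f,\nu)\to j(\widehat f)$ is a unit for the adjunction. The paper implements this via explicit marked-anodyne lifting arguments (\cite[Propositions 3.1.2.1 and 3.1.3.3]{HTT}), whereas you invoke ``the defining universal property of coCartesian edges pointwise.'' That phrasing conflates two steps: what is actually being used is that $q^+_*:\Fun(S,\mE^+)\to\Fun(S,S^+)$ is again a coCartesian fibration whose coCartesian edges are detected pointwise (\cite[Proposition 3.1.2.1]{HTT}), so the verification takes place at the level of the functor-category fibration, not literally ``in $s$.'' Once one identifies $\theta\colon f\to\widehat f$ as a $q^+_*$-coCartesian edge lying over $\nu$, the equivalence $\Map_{\sect(q)}(\widehat f,g)\simeq\Map_{\sect^+(q)}((f,\nu),j(g))$ is exactly the defining property of that coCartesian edge in the fibration $q^+_*$, which is what the paper's $\Lambda^2_0/\Lambda^2_1$ lifting argument packages. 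Also, \cite[Proposition 7.3.2.6]{HA} (relative Kan extension along a fibration over a simplex) is not quite the right citation here; the relevant abstract fact is that for a coCartesian fibration $p\colon\mathcal{X}\to\mathcal{Y}$ and an object $y$, the fiber inclusion $\mathcal{X}_y\hookrightarrow\mathcal{X}\times_{\mathcal{Y}}\mathcal{Y}_{/y}$ is a reflective localization, and the paper proves this directly rather than quoting it.

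Your third paragraph is worrying about something that the argument never needs. Proposition \ref{prop:sectadj} is used via Lemma \ref{lem:oplax}, which requires only that $j$ be fully faithful with a left adjoint; the oplax monoidal functor $\Fun(\sect^+(q),\sect^+(q))\to\Fun(\sect(q),\sect(q))$ then comes for free, and precomposition with the monoidal functor from Observation \ref{obs:action} produces the oplax action. No compatibility of the left adjoint with the $\Cat_\infty$-tensoring or the $\mathcal{M}$-structure is invoked anywhere, so the ``relative adjunction of $\mathcal{M}$-objects'' machinery and the appeal to \cite[Proposition 7.3.2.11]{HA} are both superfluous here. That machinery does appear elsewhere in the paper (in the proof of Proposition \ref{prop:Qinside} and around Corollary \ref{cor:sectboradj}), which may be the source of the confusion, but it is not part of this proposition.
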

\begin{proof}

We will apply the theory of marked simplicial sets as developed in \cite[Section 3.1]{HTT} and freely use the notation therein.  

Let $(f,\eta)\in \sect^+(q).$ We will start by defining a section $f'\in \sect(q)$ together with a map $\theta : (f,\eta) \to j(f')$ in $\sect^+(q).$  Concretely, $f'$ will be given by the formula $f'(s) = (\eta_{s})_* f(s)$ for $s\in S$.  

Consider the diagram of marked simplicial sets
\begin{equation*}
\begin{tikzcd}
S^{\flat} \times \{ 0\} \arrow[r]\arrow[d] & (\mE^+)^{\natural} \arrow[d,"q^+"]\\
S^{\flat}\times (\Delta^1)^{\sharp} \arrow[r] & (S^+)^{\sharp}
\end{tikzcd}
\end{equation*}
where the bottom arrow encodes the natural transformation $\eta: q^+ \circ f\to \id$ and the top arrow is defined by $f$.  The opposite of the left vertical arrow is marked anodyne, and $q^+$ is a coCartesian fibration, so we obtain a lift $S\times \Delta^1 \to \mE^+$, which determines a map $\theta$ from $(f,\eta)$ to its restriction to $S \times \{1 \}$, which we define to be the desired section $f'.$  

To show that $j$ has an left adjoint, it suffices to check that for any section $\sigma \in \sect(q)$, $\theta$ induces an equivalence $$\Hom_{\sect(q)}(f',\sigma) \simeq \Hom_{\sect^+(q)}((f,\eta), j(\sigma)).$$ 

We can compute these mapping spaces by expressing $\sect^+(q)$ and $\sect(q)$ as pullbacks as in diagram (\ref{dia:sectdefn}). In what follows, we will abuse notation by regarding the section $\sigma$ as a map $\sigma: S\to \mE^+$ together with an identification of $q^+\sigma$ with $i$, and similarly with $f'$.  Unwinding the definitions, one needs to show that $\theta$ induces an equivalence between the following two spaces:
\begin{enumerate}
\item The fiber of the map of spaces $\Hom_{\Fun(S,\mE^+)}(f',\sigma) \to \Hom_{\Fun(S,S^+)}(q^+\circ f' , q^+\circ \sigma)$ over the canonical element (corresponding to the fact that $f'$ and $\sigma$ are sections) which we will call $\nu_0 : q^+\circ f' \to q^+\circ \sigma$.
\item The fiber of the map of spaces $\Hom_{\Fun(S,\mE^+)}(f,\sigma) \to \Hom_{\Fun(S,S^+)}(q^+\circ f , q^+\circ \sigma)$ over the natural transformation $\nu_1:q^+\circ f\to q^+\circ \sigma$ given by precomposition of $\nu_0$ with the natural transformation $\nu_2: q^+\circ f\to q^+\circ f'$ determined by $\theta$.
\end{enumerate}

The natural transformations $\nu_0,\nu_1,\nu_2$ determine a 2-simplex $\nu: \Delta^2 \to \Fun(S,S^+)$ by sending the edge opposite $k$ to $\nu_k$.  We will consider the following subsets of $\Delta^2$ as marked simplicial sets where the edge $[0,1]$ is marked:
\begin{equation}\label{dia:subDelta2}
\begin{tikzcd}
\lbrack 0,1\rbrack \coprod \{2\} \arrow[r,"i_1"]\arrow[d,"i_0"] & \Lambda^2_1 \arrow[d,"j_1"]\\
\Lambda^2_0 \arrow[r,"j_0"] & \Delta^2.
\end{tikzcd}
\end{equation}
We have a commutative diagram 
\begin{equation}\label{dia:Delta2lift}
\begin{tikzcd}
\lbrack 0,1\rbrack \coprod \{2\} \arrow[r]\arrow[d] & \Fun(S,\mE^+) \arrow[d,"q^+_*"]\\
\Delta^2 \arrow[r] & \Fun(S,S^+)
\end{tikzcd}
\end{equation}
where the top horizontal map sends $[0,1]$ to the map $f\to f'$ induced by $\theta$ and sends $\{ 2\}$ to $\sigma$, and $q^+_*$ is a coCartesian fibration by \cite[Proposition 3.1.2.1]{HTT}.  

The space (1) above is the space of dotted lifts in the diagram
$$
\begin{tikzcd}
\lbrack 0,1\rbrack \coprod \{2\} \arrow[r]\arrow[d,"i_1"] & \Fun(S,\mE^+) \arrow[d,"q^+_*"]\\
\Lambda_1^2 \arrow[ru,dashed]\arrow[r] & \Fun(S,S^+).
\end{tikzcd}
$$
Analogously, the space (2) is the space of dotted lifts in the diagram
$$
\begin{tikzcd}
\lbrack 0,1\rbrack \coprod \{2\} \arrow[r]\arrow[d,"i_0"] & \Fun(S,\mE^+) \arrow[d,"q^+_*"]\\
\Lambda_0^2 \arrow[r]\arrow[ru,dashed] & \Fun(S,S^+).
\end{tikzcd}
$$
However, note that the opposites of the inclusions $j_0$ and $j_1$ (from Diagram (\ref{dia:subDelta2}) above) are marked anodyne.  Hence, by \cite[Proposition 3.1.3.3]{HTT}, both spaces are equivalent to the space of lifts in Diagram (\ref{dia:Delta2lift}) and thus are equivalent, as desired. 
\end{proof}

Applying Lemma \ref{lem:oplax} to the adjunction of Proposition \ref{prop:sectadj}, we find that there is an oplax monoidal functor $$\Fun(\sect^+(q),\sect^+(q)) \to \Fun(\sect(q) , \sect(q)).$$  Composing with the right action of Observation \ref{obs:action}, we obtain an oplax monoidal functor $$(\Fun(S,S)\times_{\Fun(S,S^+)}\Fun(S,S^+)_{/i})^{\mathrm{op}} \to \Fun(\sect(q),\sect(q)).$$  In fact, the oplax structure maps are equivalences, and so we have shown:

\begin{prop}\label{prop:mainaction}
Let $q^+:\mE^+ \to S^+$ be a coCartesian fibration of $\infty$-categories, let $i:S \to S^+$ be a subcategory, and let $q$ denote the restriction of $q^+$ along $i$.  Then there is a natural right action of $\Fun(S,S)\times_{\Fun(S,S^+)}\Fun(S,S^+)_{/i}$ on $\sect(q)$ which, for $\sigma \in \sect(q)$ and $(\varphi \xrightarrow{\eta} \id) \in \Fun(S,S)\times_{\Fun(S,S^+)}\Fun(S,S^+)_{/i}$, can be described on objects by the formula $$\sigma (\varphi \xrightarrow{\eta} \id) =  (s\mapsto \eta_{s*}\sigma(\varphi (s)))\in \sect(q).$$
\end{prop}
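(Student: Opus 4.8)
The plan is to assemble the proposition from the pieces developed above. Write $E := \Fun(S,S)\times_{\Fun(S,S^+)}\Fun(S,S^+)_{/i}$, regarded as a monoidal $\infty$-category via the identification $E\simeq\End_{\mathcal{M}}((S,i))$ of Lemma~\ref{lem:monendcat}. By Observation~\ref{obs:action} together with that identification, $\sect^+(q)\simeq\Map_{\mathcal{M}}((S,i),(\mE^+,q^+))$ carries a right action of $E$; unwinding, $a=(\varphi\xrightarrow{\eta}\id)\in E$ sends a pair $(f,\nu)$ — with $f\colon S\to\mE^+$ and $\nu\colon q^+f\to i$ — to $(f\circ\varphi,\ \nu')$, where $\nu'$ is the composite $q^+f\varphi\xrightarrow{\nu_{\varphi(-)}}i\varphi\xrightarrow{\eta}i$. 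First I would apply Lemma~\ref{lem:oplax} to the localization $L\dashv j$ of Proposition~\ref{prop:sectadj} to obtain an oplax monoidal functor $\Fun(\sect^+(q),\sect^+(q))\to\Fun(\sect(q),\sect(q))$, $\Phi\mapsto L\Phi j$, and then precompose with the monoidal functor $E^{\op}\to\Fun(\sect^+(q),\sect^+(q))$ classifying the right action to produce an oplax monoidal functor $\rho\colon E^{\op}\to\Fun(\sect(q),\sect(q))$. This $\rho$ is the candidate action, and it remains to (i) match it with the stated formula and (ii) upgrade it from oplax to strong.

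For (i), note that $j(\sigma)\in\sect^+(q)$ is the pair $(\sigma,\ \mathrm{can})$ with $\mathrm{can}\colon q^+\sigma\simeq i$ the canonical equivalence, so acting by $a=(\varphi\xrightarrow{\eta}\id)$ gives $(\sigma\varphi,\ \eta\circ\mathrm{can}_{\varphi(-)})$; applying the left adjoint $L$, which by the construction in the proof of Proposition~\ref{prop:sectadj} sends $(f,\nu)$ to the section $s\mapsto\nu_{s*}f(s)$ (coCartesian pushforward of $f(s)$ along $\nu_s$), yields $\rho(a)(\sigma)(s)\simeq(\eta_s\circ\mathrm{can}_{\varphi(s)})_*\,\sigma(\varphi(s))\simeq\eta_{s*}\,\sigma(\varphi(s))$, using that coCartesian pushforward along an equivalence is an equivalence and that pushforwards compose. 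This is exactly the displayed formula.

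The main obstacle is (ii): that the oplax structure maps of $\rho$ are equivalences. Since the right action on $\sect^+(q)$ is strong monoidal, the structure map of $\rho$ for $a_1,a_2$ identifies with $L$ applied to $\alpha_1(\varepsilon_{\alpha_2 j(-)})$, where $\alpha_k$ is the endofunctor of $\sect^+(q)$ given by acting by $a_k$ and $\varepsilon\colon\id\to jL$ is the localization unit; so it suffices to show that each $\alpha$ carries maps inverted by $L$ (``$L$-equivalences'') to $L$-equivalences, as $\varepsilon_X$ is always an $L$-equivalence. Here I would combine the two formulas above: writing $a=(\varphi\xrightarrow{\eta}\id)$, one computes $L(\alpha(f,\nu))(s)\simeq(\eta_s\circ\nu_{\varphi(s)})_*\,f(\varphi(s))\simeq\eta_{s*}\bigl(L(f,\nu)(\varphi(s))\bigr)$, which exhibits a natural equivalence $L\circ\alpha\simeq H_a\circ L$ of functors $\sect^+(q)\to\sect(q)$, where $H_a\colon\sect(q)\to\sect(q)$ is the evident functor $\tau\mapsto(s\mapsto\eta_{s*}\tau(\varphi(s)))$. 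Because $H_a$ preserves all equivalences, $\alpha$ preserves $L$-equivalences, hence $L$ inverts $\alpha_1(\varepsilon_{\alpha_2 j(-)})$ and the structure map is an equivalence. I expect the one genuinely delicate point to be establishing the \emph{naturality} of $L\circ\alpha\simeq H_a\circ L$ rigorously, which requires tracking the marked-anodyne lifts defining $L$ in the proof of Proposition~\ref{prop:sectadj} through the explicit description of the $\mathcal{M}$-action; the monoidal coherence that makes $\rho$ well defined in the first place is already supplied by Construction~\ref{cnstr:slicemonoidal} and Lemma~\ref{lem:monendcat}.
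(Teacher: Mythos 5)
Your proof follows exactly the same route as the paper: apply Lemma~\ref{lem:oplax} to the adjunction of Proposition~\ref{prop:sectadj}, compose with the action of Observation~\ref{obs:action}, and then check the oplax structure maps are invertible. In fact you supply \emph{more} justification than the paper itself, which simply asserts ``the oplax structure maps are equivalences'' without argument; your step~(ii), reducing to preservation of $L$-equivalences via the intertwining $L\circ\alpha\simeq H_a\circ L$ read off from the explicit coCartesian-pushforward description of $L$, is a sound way to fill that gap.
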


\bibliographystyle{plain}
\bibliography{Bibliography}

\end{document}